%Settings Andy Hirsch

%TODO: Unify Corollary 1.3, Definition of martingale solutions, and uniqueness in last section

%either: impose integrability in definition or assume in the theorems
% does Ondrejat hold with all those integrability stuff?

%How to deal with M^\alpha calculus? Seems to be unavoidable in Settings if we cover fractional Laplacians.

%Idea: Assumption: 0-sectorial and M^\alpha calculus
%       directly behind: Lemma, this is implied by Gaussian estimates or fractional power case.
%Attention: injective part
		%Questions: A_1^\beta=(A^\beta)_1? \overline{R(A)}=\overline{R(A^\beta)}
		%both seems to be true in Lzwei, but L^{\alpha+1}?
		%at least spectral independance and restriction of L^2-series seem to help

\documentclass[12pt,a4paper,reqno]{amsart}
% Abstand obere Blattkante zur Kopfzeile ist 2.54cm - 15mm
\setlength{\topmargin}{-15mm}

\usepackage[left=2cm, right=2cm, bottom=3cm]{geometry} %Test zum Abändern der Seitenränder und Seitenhöhen
\usepackage{xcolor}

\newcommand\dela[1]{}%% 20  August 2016
%% 20 August 2016
 % 20 August 2016

%%Standardsettings
%\documentclass{article}
%\usepackage[left=2cm, right=2cm, bottom=3cm]{geometry} %Test zum Abändern der Seitenränder und Seitenhöhen

%\documentclass
%[paper=20cm:28cm,  %17cm:25cm
%DIV=14,
%%twoside=on,
%fontsize=10pt,
%BCOR=15mm,
%parskip=half,
%cleardoublepage=empty,
%headinclude,
%pagesize]
%{article}

%% Damit die Verwendung der deutschen Sprache nicht ganz so umst\"andlich wird,
%% sollte man die folgenden Pakete einbinden:
%\usepackage[latin1]{inputenc}% erm\"oglich die direkte Eingabe der Umlaute
%\usepackage[T1]{fontenc} % das Trennen der Umlaute
%\usepackage{ngerman} % hiermit werden deutsche Bezeichnungen genutzt und
%                     % die W\"orter werden anhand der neue Rechtschreibung
%		     % automatisch getrennt.

\usepackage[ngerman, english]{babel}

% Font wählen, Computer Modern ist KEIN geeigneter Druckfont!
%\usepackage{times}
\usepackage{palatino}
%\usepackage{bookman}

% Im folgenden an Paketen auswählen, was man braucht
\usepackage{amsmath}
\usepackage{amsthm}
\usepackage{amssymb}
\usepackage{url}
\usepackage{hyperref}
\usepackage{graphicx}
\usepackage{url}
\usepackage{listings}
\lstset{basicstyle=\scriptsize,
	xleftmargin=10mm,
	numbers=left, numberstyle=\tiny, stepnumber=2,
	frame=single, framexleftmargin=8mm, framexbottommargin=5mm,
	captionpos=b,
	aboveskip=10mm, belowskip=10mm}

\makeindex

%Seitenumbruch erlaubt

\allowdisplaybreaks

%Befehle

% Umgebungen für Definitionen, Sätze, usw.
% Es werden Sätze, Definitionen etc innerhalb einer Section mit
% 1.1, 1.2 etc durchnummeriert, ebenso die Gleichungen mit (1.1), (1.2) ..
\newtheorem{Satz}{Theorem}[section]
\newtheorem{Theorem}[Satz]{Theorem}
\newtheorem{Lemma}[Satz]{Lemma}		
\newtheorem{Korollar}[Satz]{Corollary}	
\newtheorem{Prop}[Satz]{Proposition}	
\numberwithin{equation}{section}
\theoremstyle{definition}

\newtheorem{Definition}[Satz]{Definition}

\newtheorem{Remark}[Satz]{Remark}

\newtheorem{Assumption}[Satz]{Assumption}
\newtheorem{AssumptionNot}[Satz]{Assumption and Notation}
% einige Abkuerzungen

%\newcommand{\BIGOP}[1]{\mathop{\mathchoice%
%		{\raise-0.22em\hbox{\huge $#1$}}%
%		{\raise-0.05em\hbox{\Large $#1$}}{\hbox{\large $#1$}}{#1}}}
%\newcommand{\bigtimes}{\BIGOP{\times}}
% nur fuer Bigboxplus andere Korrekturen
\newcommand{\BIGboxplus}{\mathop{\mathchoice%
		{\raise-0.35em\hbox{\huge $\boxplus$}}%
		{\raise-0.15em\hbox{\Large $\boxplus$}}{\hbox{\large $\boxplus$}}{\boxplus}}}

%elementare Mengen
\newcommand{\C}{\mathbb{C}} % komplexe
 % komplexe
\newcommand{\R}{\mathbb{R}} % reelle
\newcommand{\Rd}{{\mathbb{R}^d}} % reelle
\newcommand{\Q}{\mathbb{Q}} % rationale
\newcommand{\Z}{\mathbb{Z}} % ganze
\newcommand{\N}{\mathbb{N}} % natuerliche

\newcommand{\E}{\mathbb{E}}
\newcommand{\Prob}{\mathbb{P}}
\newcommand{\F}{\mathcal{F}}
\newcommand{\Filtration}{\mathbb{F}}
\newcommand{\tildeProb}{\tilde{\Prob}}
\newcommand{\Etilde}{\tilde{\E}}
\newcommand{\HS}{\operatorname{HS}}
\newcommand{\tr}{\operatorname{tr}}

\newcommand{\Yosida}{R_\lambda}

\newcommand{\mass}{\mathcal{M}}
\newcommand{\energy}{\mathcal{E}}

\newcommand{\EA}{{E_A}}

\newcommand{\EAdual}{{E_A^*}}

\newcommand{\Hs}{{H^s(M)}}
\newcommand{\LinftyEA}{{L^\infty(0,T;\EA)}}

\newcommand{\StetigEAdual}{{C([0,T],\EAdual)}}
\newcommand{\LInfty}{{L^\infty(M)}}
\newcommand{\LzweiTimeSum}{{L^2([s,t]\times \N)}}
\newcommand{\LzweiTimeSumHminusEins}{{L^2([s,t]\times \N,\EAdual)}}
\newcommand{\weaklyContinousEA}{C_w([0,T],\EA)}
\newcommand{\stetigBall}{C([0,T],\mathbb{B}_{\EA}^r)}
\newcommand{\stetigBallX}{C([0,T],\mathbb{B}_{X}^r)}

\newcommand{\LalphaPlusEins}{{L^{\alpha+1}(M)}}
\newcommand{\LalphaPlusEinsAlphaPlusEins}{{L^{\alpha+1}(0,T;\LalphaPlusEins)}}
\newcommand{\LalphaPlusEinsDual}{{L^{\frac{\alpha+1}{\alpha}}(M)}}

\newcommand{\D}{\mathcal{D}}
\newcommand{\df }{\mathrm{d}}
\newcommand{\im }{\mathrm{i}}
\newcommand{\sumM }{\sum_{m=1}^{\infty}}
\newcommand{\Real}{\operatorname{Re}}
\newcommand{\Ima}{\operatorname{Im}}
\newcommand{\skpH}[2]{\big(#1,#2\big)_{H}}
\newcommand{\skp}[2]{\big(#1,#2\big)}
\newcommand{\skpHReal}[2]{\Real \big(#1,#2\big)_{H}}
\newcommand{\skpLzwei}[2]{\big(#1,#2\big)_{L^2}}
\newcommand{\skpHBig}[2]{\Big(#1,#2\Big)_{H}}
\newcommand{\sqrtA}{A^{\frac{1}{2}}}

\newcommand{\norm}[1]{\Vert #1 \Vert}
\newcommand{\bigNorm}[1]{\left\Vert #1 \right\Vert}
\newcommand{\quadVar}[1]{\langle \langle #1 \rangle \rangle}
\newcommand{\duality}[2]{\langle #1, #2 \rangle}
\newcommand{\dualityReal}[2]{\Real \langle #1, #2 \rangle}

\newcommand{\Fhat}{\hat{F}}
\newcommand{\rhodot}{\dot{\rho}}

\newcommand{\smin}{\frac{\alpha-1}{\alpha}}

\newcommand{\Addresses}{{% additional braces for segregating \footnotesize
		\bigskip
		\footnotesize
		
		Zdzis{\l}aw ~Brze{\'{z}}niak, Department of Mathematics, University of York,
			Heslington, York, YO105DD, UK\par\nopagebreak
		\textit{E-mail address}: zdzislaw.brzezniak@york.ac.uk
		\medskip
		
		Fabian~Hornung, Institute for Analysis, Karlsruhe Institute for Technology (KIT), 76128 Karlsruhe, Germany\par\nopagebreak
		\textit{E-mail address}: fabian.hornung@kit.edu
		
		\medskip
		
		Lutz~Weis, Institute for Analysis, Karlsruhe Institute for Technology (KIT), 76128 Karlsruhe, Germany\par\nopagebreak
		\textit{E-mail address}: lutz.weis@kit.edu
		
	}}

\title[Stochastic nonlinear Schr\"odinger equation in the energy space]{Martingale solutions for the stochastic nonlinear Schr\"odinger equation in the energy space}
%\author{Zdzislaw Brzezniak  \\
%	Ihr Unternehmen / Universit\"at  \\
%	Teststra\ss e -99 \\
%	0123456 Testhausen \\
%	\and
%	Der Andere  \\
%	Sein Unternehmen / Universit\"at \\
%	Musterstra\ss e 00 \\
%	6543210 Musterdorf \\
%	}

\author[Z. BRZE{\'Z}NIAK, F. HORNUNG AND L. WEIS]{Zdzis{\l}aw Brze{\'z}niak, Fabian Hornung  and  Lutz Weis	}
\thanks{We gratefully acknowledge financial support by the
Deutsche Forschungsgemeinschaft (DFG) through CRC 1173.}
%The  work of Zdzis{\l}aw Brze{\'z}niak  was partially supported by the.  The research on which we report in this paper was started  at Institute for Analysis, Karlsruhe Institute for Technology (KIT) in  2016.    The first named author wishes to thank Institute for Analysis, Karlsruhe Institute for Technology (KIT) for hospitality.}
\date{\today}

% Hinweis: \title{um was auch immer es geht}, \author{wer es auch immer
% geschrieben hat} und  \date{wann auch immer das war} k\"onnen vor
% oder nach dem  Kommando \begin{document} stehen
% Aber der \maketitle Befehl mu\ss{} nach dem \begin{document} Kommando stehen!
\begin{document}

\begin{abstract}
	We consider a stochastic nonlinear Schr\"{o}dinger equation with multiplicative noise in an abstract framework that
	covers subcritical focusing and defocusing Stochastic NLSE in $H^1$ on compact manifolds and bounded domains. We construct a martingale solution using a modified Faedo-Galerkin-method based on the Littlewood-Paley-decomposition. For the 2d manifolds with bounded geometry, we use the Strichartz estimates to show the pathwise uniqueness of solutions.
\end{abstract}
\maketitle

%\begin{keyword}
%	Stochastic Nonlinear Schr\"{o}dinger Equation, Galerkin Method
%\end{keyword}

\keywords{\textbf{Keywords:} Nonlinear Schr\"{o}dinger equation, multiplicative noise, Galerkin approximation, compactness method, pathwise uniqueness}

\section{Introduction}

The article is concerned with the following nonlinear stochastic Schr\"{o}dinger equation
\begin{equation}
\label{ProblemStratonovich}
%\begin{gathered}
%(\operatorname{SNLS})\end{gathered}
\left\{
\begin{aligned}
\df u(t)&= \left(-\im A u(t)-\im  F(u(t))\right) dt-\im B u(t) \circ \df W(t),\qquad t> 0,\\
u(0)&=u_0,
\end{aligned}\right.
\end{equation}
in the energy space $E_A:=\D(\sqrtA),$ where $A$ is a selfadjoint, non-negative operator $A$ with a compact resolvent in an $L^2$-space $H,$ $F$ a nonlinearity,  $B$ a linear bounded operator,  $W$ is a Wiener process and the equation is understood in the  (multiplicative) Stratonovich sense. \\
%Two basic examples  for $A$ are the Laplace-Beltrami operator $-\Delta_g$ on a compact riemannian manifold $(M,g)$ and the Laplacian on a bounded domain of $\Rd$ with Neumann or Dirichlet boundary conditions.
Three basic examples of the operator $A$ are
\begin{itemize}
	\item the negative Laplace-Beltrami operator $-\Delta_g$ on a compact riemannian manifold $(M,g)$ without boundary,
	\item the negative Laplacian $-\Delta$ on a bounded domain of $\Rd$ with Neumann or Dirichlet boundary conditions,
	\item fractional powers of the first two examples.
\end{itemize}
The two basic model nonlinearities are
\begin{itemize}
	\item
	the defocusing power nonlinearity $F_{\alpha}^+(u):=\vert u\vert^{\alpha-1}u$ with subcritical exponents in the sense that the embedding $\EA \hookrightarrow L^{\alpha+1}$ is compact
	\item
	and the focusing  nonlinearity $F_{\alpha}^-(u):=-\vert u\vert^{\alpha-1}u$ with an additional restriction to the power $\alpha.$
\end{itemize}
The typical noise term has the form
\begin{align}\label{StratonovichDifferential}
-\im B u(t) \circ \df W(t) &=-\im  \sumM e_m u(t) \circ \df \beta_m(t)=-\frac{1}{2}\sumM e_m^2 u(t)-\im \sumM e_m u(t) \df \beta_m(t)
\end{align}
with a sequence of independent standard real Brownian motions $\left(\beta_m\right)_{m\in\N}$ and functions $\left(e_m\right)_{m\in\N}$ satisfying certain regularity and decay conditions that guarantee the convergence of the series on the RHS of \eqref{StratonovichDifferential} in the space $\EA.$

The main aim of this study  is twofold. Firstly, it proposes to
construct a martingale solution of problem
$\eqref{ProblemStratonovich}$ by a
a stochastic version of a compactness method. Secondly, it proposes to
prove the uniqueness of solutions by means of the stochastic Strichartz
estimates. In this respect it differs from many previous papers on
stochastic nonlinear Schr\"odinger equations, notably
\cite{BrzezniakStrichartz}, \cite{BouardLzwei}, \cite{FHornung}, and references therein,
in which the proofs of both the existence and the uniqueness were
obtained by means of appropriate stochastic Strichartz estimates. The
compactness approach to the existence of solutions of 1-D stochastic
Schr\"odinger equations in  variational form has recently been  used
in  a paper \cite{lisei2016stochastic} by Keller and Lisei.
% It has
%also been used in the deterministic setting, for instance, by Burq,
%G\'{e}rard, Tzvetkov in \cite{Burq}, who however used approximation by
%smoother solutions and not the Galerkin method we use.
Classical references for the construction of weak solutions of the deterministic NLSE by a  combination of a compactness method and the Galerkin approximation are \cite{GajewskiDeutsch} and \cite{Gajewski} for intervals and \cite{Nasibov} as well as \cite{Vladimirov} for domains of arbitrary dimension. Let us point out
that Burq, G\'{e}rard, Tzvetkov in \cite{Burq} also used a compactness method in the proof of their Theorem 3 but instead of the Galerkin approximation they used an approximation by more regular solutions. In particular, we give a new proof of these results. But we would like to emphasise  that the deterministic case is significantly simpler since our spectral theoretic methods to construct the approximations of the noise term are not needed.

In technical sense, the present paper is  motivated by the  construction of a global solution of the cubic  equation on compact 3d-manifolds $M$ generalizing the existence part, see Theorem 3 of Burq, G\'{e}rard, Tzvetkov in \cite{Burq},  to the stochastic setting.
In three dimensions, the fixed point argument from \cite{BrzezniakStrichartz} is restricted to higher regularity, because it requires the  Sobolev embeddings $H^{s,q} \hookrightarrow L^\infty,$ which are more restrictive in 3D than in 2D. Hence, this approach only yields local solutions, which is the motivation for constructing a global  solution in $H^1(M)$ with an approximation procedure based on the conservation laws of the NLSE without using the dispersive properties of the Schr\"{o}dinger group.
We remark that in \cite{Burq}, the authors also prove uniqueness for the deterministic NLSE in 3D. For the equation with noise this question will be addressed in a forthcoming paper.

In the present paper, we construct a martingale solution of problem $\eqref{ProblemStratonovich}$ by a modified Faedo-Galerkin approximation
\begin{equation}
%\begin{gathered}
%(\operatorname{SNLS})\end{gathered}
\left\{
\begin{aligned}\label{GalerkinIntroduction}
\df u_n(t)&= \left(-\im A u_n(t)-\im P_n F\left( u_n(t)\right) \right) \df t-\im  S_n B(S_n u_n(t)) \circ \df W(t),\quad t>0,
\\
u_n(0)&=P_n u_0,
\end{aligned}\right.
\end{equation}
in finite dimensional subspaces $H_n$ of $H$ spanned by some eigenvectors of $A.$
Here,\\ $P_n: H\to H_n$ are the standard orthogonal projections and $S_n: H\to H_n$ are selfadjoint operators derived from the  Littlewood-Paley-decomposition associated to $A.$
The reason for using the operators $\left(S_n\right)_{n\in\N}$ lies in the uniform estimate
\begin{align*}
\sup_{n\in\N}\norm{S_n}_{L^p\to L^p}<\infty, \quad 1<p<\infty,
\end{align*}
which turns out to be necessary in the estimates of the noise due to the $L^p$-structure of the energy, see \eqref{energyIntro} below, and which is false if one replaces $S_n$ by $P_n.$
Using the Littlewood-Paley decomposition via the operators $(S_n)_{n\in\N}$ can be viewed as the one of the main analytical contributions of this paper. We remark that in the mean time, a similar construction has  been used in \cite{hornung2017strong} to construct a solution of a stochastic nonlinear Maxwell equation by estimates in $L^q$ for some $q>2$. This indicates that our method has potential to increase the field of application of the classical Faedo-Galerkin method significantly.

On the other hand, the orthogonal projections $P_n$ are used in the deterministic part, because they do not destroy the cancellation effects which lead to the mass and energy conservation
\begin{align}\label{energyIntro}
\norm{u}_{L^2}^2=\operatorname{const}, \qquad
\frac{1}{2}\norm{\sqrtA u}_{L^2}^2+\Fhat(u)=\operatorname{const}
\end{align}
for solutions $u$ of problem $\eqref{ProblemStratonovich}$ in the deterministic setting, where $\Fhat$ denotes the antiderivative of the nonlinearity $F.$ Note that in the case $F_\alpha^\pm(u)=\pm\vert u\vert^{\alpha-1}u$, the antiderivative is given by $\hat{F}_\alpha^\pm=\pm \frac{1}{\alpha+1}\norm{u}_{L^{\alpha+1}}^{\alpha+1}.$
In the stochastic case, the mass conservation $\norm{u_n}_{L^2}^2=\operatorname{const}$ for solutions of $\eqref{GalerkinIntroduction}$ holds almost surely due to the Stratonovich form of the noise.
Moreover, the conservation of the energy is carried over in the sense that  a Gronwall type argument yields the uniform a priori estimates,  for every $T>0$,
\begin{align}\label{uniformIntroduction}
\sup_{n\in\N}\E \Big[\sup_{t\in[0,T]} \norm{u_n(t)}_\EA^2\Big]<\infty ,\qquad
\sup_{n\in\N}\E \Big[\sup_{t\in[0,T]} \norm{u_n(t)}_\LalphaPlusEins^{\alpha+1}\Big]<\infty.
\end{align}
Combined with the Aldous condition $[A],$ see Definition \ref{DefinitionAldous},  which is a stochastic version of the equicontinuity, \dela{see Definition $\ref{DefinitionAldous},$} the   estimates $\eqref{uniformIntroduction}$ lead to the tightness of the sequence $\left(u_n\right)_{n\in\N}$ in the locally convex space
\begin{align*}
Z_T:=\StetigEAdual\cap \LalphaPlusEinsAlphaPlusEins \cap \weaklyContinousEA,
\end{align*}
where $\weaklyContinousEA$ denotes the space of continuous functions with respect to the weak topology in $\EA.$
%The space $Z_T$ can be used to pass to the limit in the Galerkin equation $\eqref{GalerkinIntroduction}$ and the third space guarantees nice path properties of the limit. \\
The construction of a martingale solution is similar to \cite{BrzezniakMotyl} and employs a limit argument based on Jakubowski's extension of the Skorohod Theorem to nonmetric spaces and the Martingale Representation Theorem from \cite{daPrato}, chapter 8. Our main result is the following Theorem.

\begin{Theorem}\label{MainTheorem}
	Let $T>0$ and $u_0\in E_A.$  Under the assumptions $\ref{spaceAssumptions}$, $\ref{nonlinearAssumptions}$, $\ref{focusing},$ $\ref{stochasticAssumptions},$ there exists a martingale solution  $\left(\tilde{\Omega},\tilde{\F},\tilde{\Prob},\tilde{W},\tilde{\Filtration},u\right)$ of equation $\eqref{ProblemStratonovich}$ (see Definition $\ref{MartingaleSolutionDef}$), which satisfies
	%	\begin{align}\label{propertySolution}
	%		u\in L^2(\tilde{\Omega},\LinftyEA)\cap L^{\alpha+1}(\tilde{\Omega},L^\infty(0,T;\LalphaPlusEins)).
	%	\end{align}
	\begin{align}\label{propertySolution}
	u\in L^q(\tilde{\Omega},\LinftyEA)
	\end{align}
	for all $q\in [1,\infty)$ and
	\begin{align*}
	\norm{u(t)}_{L^2(M)}=\norm{u_0}_{L^2(M)}\qquad \text{$\tilde{\Prob}$-a.s. for all $t\in[0,T]$.}
	\end{align*}
	%	with the estimates
	%	\begin{align}\label{estimateSolution}
	%	\Etilde \left[ \norm{u}_\LinftyEA^2\right]\le C,\qquad
	%	\Etilde \left[ \norm{u}_{L^\infty(0,T;\LalphaPlusEins)}^{\alpha+1}\right]\le C
	%	\end{align}
	%	for a constant $C=C( T,\norm{u_0}_\EA)>0.$
\end{Theorem}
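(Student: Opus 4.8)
The plan is to follow the stochastic compactness method outlined above, constructing the solution as a limit of the Galerkin approximations \eqref{GalerkinIntroduction}. First I would establish global well-posedness of the finite-dimensional system \eqref{GalerkinIntroduction}. On each finite-dimensional space $H_n$ all norms are equivalent, the restriction of $A$ to $H_n$ is bounded, the truncated nonlinearity $P_n F$ is locally Lipschitz, and $S_n B(S_n\cdot)$ is a bounded linear map; hence standard finite-dimensional SDE theory yields a unique local solution $u_n$ up to an explosion time. To rule out blow-up and to obtain the uniform bounds \eqref{uniformIntroduction}, I would apply the It\^o formula to the mass $\norm{u_n}_{\Lzwo}^2$ and to the energy $\frac{1}{2}\norm{\sqrtA u_n}_{\Lzwo}^2+\Fhat(u_n)$. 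The Stratonovich structure of the noise makes the mass pathwise conserved, $\norm{u_n(t)}_{\Lzwo}=\norm{P_n u_0}_{\Lzwo}\le\norm{u_0}_{\Lzwo}$, while for the energy the Stratonovich--It\^o correction cancels the leading-order noise contribution; controlling the remaining martingale and drift terms by the Burkholder--Davis--Gundy inequality and a Gronwall argument gives \eqref{uniformIntroduction}. This is precisely where the uniform bound $\sup_n\norm{S_n}_{L^p\to L^p}<\infty$ is indispensable, since the energy estimate involves the $L^{\alpha+1}$-structure of $\Fhat$.

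Next I would verify tightness of the laws of $(u_n)_{n\in\N}$ on the space $Z_T$. The bounds \eqref{uniformIntroduction} immediately yield tightness in the weak topologies underlying $\weaklyContinousEA$ and $\LalphaPlusEinsAlphaPlusEins$ via bounded, hence weakly compact, balls. To obtain tightness in $\StetigEAdual$ I would verify the Aldous condition $[A]$ of Definition \ref{DefinitionAldous}: decomposing the increments $u_n(\tau+\theta)-u_n(\tau)$ over stopping times $\tau$ into a deterministic and a stochastic part, each is estimated in $\EAdual$ by a positive power of $\theta$ using \eqref{uniformIntroduction}, the boundedness of $A\colon\EA\to\EAdual$, and the It\^o isometry. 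Combining the two gives tightness on $Z_T$.

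Then, since $Z_T$ is not metrizable, I would invoke Jakubowski's version of the Skorohod representation theorem to obtain, on a new probability space $(\tilde\Omega,\tilde\F,\tilde\Prob)$, random variables $(\tilde u_n,\tilde W_n)$ with the same laws as $(u_n,W)$ and converging $\tilde\Prob$-almost surely to a limit $(u,\tilde W)$ with $\tilde u_n\to u$ in $Z_T$. Passing to the limit in the drift then reduces to two points: the linear term converges using the weak convergence in $\EA$ forced by the uniform bound \eqref{uniformIntroduction} together with the boundedness of $A\colon\EA\to\EAdual$, while for the nonlinear term the strong convergence $\tilde u_n\to u$ in $\LalphaPlusEinsAlphaPlusEins$, which stems from the compact embedding $\EA\hookrightarrow\LalphaPlusEinsKurz$ and tightness in that space, combined with the continuity of $F$ as a map into the dual of $\LalphaPlusEins$, yields $P_n F(\tilde u_n)\to F(u)$; the projections $P_n$ disappear in the limit since $P_n\to I$ strongly.

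The main obstacle is the identification of the stochastic integral in the limit. Rather than passing to the limit in $\int_0^\cdot S_n B(S_n\tilde u_n)\,\df\tilde W_n$ directly, I would follow the martingale route: define $M(t):=u(t)-u_0+\int_0^t\left(\im A u(s)+\im F(u(s))\right)\df s$ and show, using the convergences above and the uniform integrability from \eqref{uniformIntroduction}, that $M$ is a continuous $\EAdual$-valued martingale with respect to the filtration $\tilde\Filtration$ generated by $u$, whose quadratic variation matches the one induced by the operator $B$. The Martingale Representation Theorem of \cite{daPrato} then supplies the Wiener process $\tilde W$ and represents $M$ as the desired Stratonovich, equivalently It\^o-plus-correction, stochastic integral, producing a martingale solution in the sense of Definition \ref{MartingaleSolutionDef}. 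Finally, the regularity \eqref{propertySolution} follows from \eqref{uniformIntroduction} and the weak lower semicontinuity of the $\EA$-norm by a Fatou argument on $(\tilde\Omega,\tilde\F,\tilde\Prob)$, and the pathwise mass conservation passes to the limit from the identity $\norm{u_n(t)}_{\Lzwo}=\norm{P_n u_0}_{\Lzwo}$.
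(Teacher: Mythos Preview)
Your outline matches the paper's approach almost exactly: Galerkin approximation with $P_n$ and $S_n$, pathwise mass conservation on $H_n$, energy estimates via It\^o and Gronwall, the Aldous condition for tightness in $\StetigEAdual$, Skorohod--Jakubowski on $Z_T$, convergence of the deterministic terms, and finally the martingale representation theorem. Two points need correction.

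First, a minor one: your candidate martingale $M(t)=u(t)-u_0+\int_0^t\bigl(\im A u(s)+\im F(u(s))\bigr)\,\df s$ omits the Stratonovich correction $\mu(u)=-\tfrac12\sum_m B_m^2 u$. The equation is being solved in It\^o form \eqref{Problem}, so the process whose quadratic variation you must identify is $N(t)=-u(t)+u_0+\int_0^t\bigl(-\im A u-\im F(u)+\mu(u)\bigr)\,\df s$; without the $\mu$ term the quadratic-variation matching will fail.

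Second, and more seriously, your last sentence is a genuine gap. The identity $\norm{\tilde u_n(t)}_{L^2}=\norm{P_n u_0}_{L^2}$ does \emph{not} pass to the limit: the convergence $\tilde u_n\to u$ in $Z_T$ gives only weak convergence in $\EA$ pointwise in $t$ (via $\weaklyContinousEA$) and strong convergence in $\EAdual$, neither of which preserves the $L^2$-norm. From weak lower semicontinuity you obtain $\norm{u(t)}_{L^2}\le\norm{u_0}_{L^2}$, but not equality. The paper flags this explicitly and proves mass conservation separately (Proposition~\ref{MassConservationMartingaleSolution}) by applying It\^o's formula in infinite dimensions to $\norm{R_\lambda u(t)}_{L^2}^2$ with the Yosida approximation $R_\lambda=\lambda(\lambda+A)^{-1}$, then letting $\lambda\to\infty$. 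The regularization is needed because $u$ only takes values in $\EA$, not in $\D(A)$, so the formal computation leading to the cancellations $\Real\langle u,-\im A u\rangle=0$ and $\Real\langle u,-\im F(u)\rangle=0$ must be justified.
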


As an application of Theorem $\ref{MainTheorem},$ we get the following Corollary. Note that an analogous result holds in the case of a bounded domain, see Corollary $\ref{CorollaryDomain}$.

\begin{Korollar}\label{CorollaryManifold}
	Let $(M,g)$ be a compact $d$-dimensional riemannian manifold without boundary. Let $T>0$ and $u_0\in H^1(M).$ Under assumption $\ref{stochasticAssumptions}$ and either i) or ii)
	\begin{itemize}
		\item[i)] $F(u)= \vert u\vert^{\alpha-1}u$ with $ \alpha \in \left(1,1+\frac{4}{(d-2)_+}\right),$
		\item[ii)]$F(u)= -\vert u\vert^{\alpha-1}u$ with $ \alpha \in \left(1,1+\frac{4}{d}\right),$
	\end{itemize}
	the equation
	\begin{equation}
	\label{ProblemStratonovichManifold}
	%\begin{gathered}
	%(\operatorname{SNLS})\end{gathered}
	\left\{
	\begin{aligned}
	\df u(t)&= \left(\im \Delta_g u(t)-\im  F(u(t)\right) dt-\im B u(t) \circ \df W(t)\quad \text{in} \hspace{0.1cm} H^1(M),\\
	u(0)&=u_0,
	\end{aligned}\right.
	\end{equation}
	has a martingale solution with	
	\begin{align}\label{propertySolution-2}
	u\in L^q(\tilde{\Omega},{L^\infty(0,T;H^1(M))}),
	\end{align}
	for all $q\in [1,\infty)$ and
	\begin{align*}
	\norm{u(t)}_{L^2(M)}=\norm{u_0}_{L^2(M)}\qquad \text{$\tilde{\Prob}$-a.s. for all $t\in[0,T]$.}
	\end{align*}
\end{Korollar}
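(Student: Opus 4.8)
The plan is to verify that the concrete equation \eqref{ProblemStratonovichManifold} fits the abstract framework of Theorem \ref{MainTheorem}, so that the corollary follows by specialization. Concretely, I would set $A:=-\Delta_g$ with domain $\D(A)=\Hzwei$, and take $B$ to be the operator from Assumption \ref{stochasticAssumptions}, which is assumed directly. The bulk of the work is therefore to check Assumptions \ref{spaceAssumptions}, \ref{nonlinearAssumptions} and \ref{focusing} for this choice; once this is done, Theorem \ref{MainTheorem} applies verbatim.

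First I would treat the linear part. On a compact riemannian manifold without boundary, $-\Delta_g$ is selfadjoint and non-negative on $L^2(M)$, and by the Rellich--Kondrachov theorem the embedding $H^1(M)\hookrightarrow L^2(M)$ is compact, so $A$ has compact resolvent. The identification $\D(\sqrtA)=H^1(M)$ with equivalent norms is standard, so that $\EA=H^1(M)$ and the energy norm is equivalent to the $H^1(M)$-norm. This verifies Assumption \ref{spaceAssumptions} and identifies the state space in \eqref{propertySolution-2}.

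Next I would verify the nonlinear assumptions for $F(u)=\pm|u|^{\alpha-1}u$. The key structural requirement is subcriticality, i.e. the compactness of the embedding $\EA\hookrightarrow\LalphaPlusEins$. Again by Rellich--Kondrachov this holds precisely when $\alpha+1<\frac{2d}{d-2}$ for $d\geq 3$, and for all $\alpha>1$ when $d\in\{1,2\}$, that is, exactly for $\alpha\in\left(1,1+\frac{4}{(d-2)_+}\right)$. This settles case i). The continuity, growth, and the form of the antiderivative $\hat{F}_\alpha^\pm=\pm\frac{1}{\alpha+1}\norm{\cdot}_{\LalphaPlusEins}^{\alpha+1}$ required in Assumption \ref{nonlinearAssumptions} are immediate from the explicit formula for $F$.

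The main obstacle is the focusing case ii), where one must verify the additional condition in Assumption \ref{focusing} ensuring that the energy stays coercive despite the negative sign of $\hat{F}$. Here I would invoke the Gagliardo--Nirenberg inequality on $M$,
\begin{align*}
\norm{u}_{\LalphaPlusEins}^{\alpha+1}\lec \norm{u}_{H^1(M)}^{\theta(\alpha+1)}\,\norm{u}_{L^2(M)}^{(1-\theta)(\alpha+1)}+\norm{u}_{L^2(M)}^{\alpha+1},\qquad \theta=\frac{d(\alpha-1)}{2(\alpha+1)},
\end{align*}
so that the $H^1$-exponent equals $\theta(\alpha+1)=\frac{d(\alpha-1)}{2}$. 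The mass-subcritical restriction $\alpha<1+\frac{4}{d}$ is exactly the condition $\theta(\alpha+1)<2$, which lets Young's inequality absorb the critical term: for any $\varepsilon>0$ one obtains $\frac{1}{\alpha+1}\norm{u}_{\LalphaPlusEins}^{\alpha+1}\leq \varepsilon\norm{\sqrtA u}_{L^2}^2+C_\varepsilon\big(\norm{u}_{L^2(M)}\big)$. Combined with the $L^2(M)$-conservation from Theorem \ref{MainTheorem}, this yields the lower bound on $\frac{1}{2}\norm{\sqrtA u}_{L^2}^2+\hat{F}(u)$ in terms of the conserved mass and hence the coercivity demanded by Assumption \ref{focusing}. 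With all assumptions verified, Theorem \ref{MainTheorem} produces the martingale solution together with \eqref{propertySolution-2} and the stated mass conservation.
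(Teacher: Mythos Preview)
Your overall strategy is correct and matches the paper's: verify Assumptions \ref{spaceAssumptions}, \ref{nonlinearAssumptions}, \ref{focusing} and then invoke Theorem \ref{MainTheorem}. However, there is a genuine gap in your verification of Assumption \ref{spaceAssumptions}. You only check parts (i), (iii) and (iv) (selfadjointness, compact resolvent, identification $\EA=H^1(M)$, compact Sobolev embedding), but you completely omit part (ii): the existence of a strictly positive selfadjoint operator $S$ commuting with $A$, with compact resolvent, $\D(S^k)\hookrightarrow\EA$, and satisfying the generalized Gaussian bounds \eqref{generalizedGaussianEstimate}. This is not a formality: the entire Galerkin scheme in Section \ref{sectionGalerkin} relies on $S$ to build the operators $S_n$ via the Littlewood-Paley decomposition, and the Gaussian bounds are precisely what guarantee the crucial uniform estimate $\sup_n\norm{S_n}_{\mathcal{L}(L^{\alpha+1})}<\infty$ in Proposition \ref{PaleyLittlewoodLemma}. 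The paper takes $S:=I-\Delta_g$ and invokes heat-kernel bounds on compact manifolds (Grigor'yan) to obtain \eqref{GaussianEstimate} with $p_0=1$; you need to supply this step. Relatedly, the doubling property \eqref{doubling} of $(M,\rho,\mu)$ is part of the standing hypotheses of Section 2 and also needs to be checked; the paper does this explicitly in Step 1 of its proof.

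A smaller point: in the focusing case your Gagliardo--Nirenberg argument is essentially the right one, but what Assumption \ref{focusing} i') actually asks for is the interpolation inequality \eqref{boundantiderivativeFocusing2} (equivalently \eqref{interpolationFocusing}) with $\beta_2=\theta(\alpha+1)<2$, not a coercivity statement combined with mass conservation. The mass conservation is used later, inside the proof of Proposition \ref{EstimatesGalerkinSolutionFocusing}; here you only need to exhibit the inequality $\norm{u}_\LalphaPlusEins^{\alpha+1}\lesssim\norm{u}_H^{\beta_1}\norm{u}_\EA^{\beta_2}$. The paper obtains this by interpolating $L^{\alpha+1}$ between $L^2$ and $L^{p_{\max}}$ and then using $H^1\hookrightarrow L^{p_{\max}}$, treating $d\ge3$, $d=2$, $d=1$ separately; your single formula with $\theta=\tfrac{d(\alpha-1)}{2(\alpha+1)}$ is the $d\ge3$ case and should be supplemented (or justified) for $d\le2$.
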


Furthermore, we address the question of uniqueness of the solution from Corollary $\ref{CorollaryManifold}$ in two dimensions.
\begin{Korollar}\label{CorollaryManifold2d}
	In the situation of Corollary $\ref{CorollaryManifold}$ with $d=2,$ there exists a unique strong solution of $\eqref{ProblemStratonovichManifold}$ in $H^1(M)$ and the martingale solutions are unique in law.
\end{Korollar}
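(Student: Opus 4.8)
The plan is to derive the statement from \emph{pathwise uniqueness} combined with a Yamada--Watanabe type theorem. By Corollary \ref{CorollaryManifold} a martingale solution already exists, so once pathwise uniqueness is established, the infinite-dimensional version of the Yamada--Watanabe theorem (equivalently, the Gy\"ongy--Krylov characterization of convergence in probability) upgrades weak existence plus pathwise uniqueness to the existence of a \emph{unique strong solution} and to uniqueness in law of the martingale solutions. Thus the whole task reduces to showing that two solutions $u_1,u_2$ defined on the same stochastic basis, driven by the same Wiener process $W$ and starting from the same datum $u_0\in H^1(M)$, coincide $\tilde\Prob$-almost surely.

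The decisive tool for $d=2$ is the (stochastic) Strichartz estimate. First I would exploit that the noise $-\im Bu\circ\df W$ is \emph{linear} in $u$: either one works with the random unitary propagator associated with $-\im A$ and the linear multiplicative noise, or one applies the phase transformation that removes the Stratonovich term at the cost of a random lower-order perturbation, so that each $u_i$ solves a Schr\"odinger-type equation with forcing $-\im F(u_i)$. On a $2$-dimensional compact manifold the Burq--G\'erard--Tzvetkov estimates give, for an admissible pair $(p,q)$ with $\tfrac{2}{p}+\tfrac{2}{q}=1$, the local bound $\norm{e^{\im t\Delta_g}f}_{L^p(0,T;L^q(M))}\lesssim\norm{f}_{H^{1/p}(M)}$, i.e. with a loss of $1/p$ derivatives. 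Combining this with the a priori bound $u_i\in L^q(\tilde\Omega,L^\infty(0,T;H^1(M)))$ from \eqref{propertySolution-2}, and estimating the stochastic convolution in the $L^p(0,T;L^q)$ norm via the factorization method together with the Burkholder--Davis--Gundy inequality in the $q$-th moment, I would show that each solution in fact has paths in $L^p(0,T;L^q(M))$ with finite relevant norm. This extra integrability, which the $L^\infty(0,T;H^1)$ bound alone does not supply, is the analytic heart of the argument.

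With both solutions in the Strichartz class, set $w=u_1-u_2$, so $w(0)=0$ and $w$ solves a \emph{linear} stochastic problem with source $-\im(F(u_1)-F(u_2))$. Writing $w$ in mild form for the random propagator and applying the inhomogeneous (dual) Strichartz estimate bounds $\norm{w}_{L^p(0,\tau;L^q)}$ by the dual-Strichartz norm of $F(u_1)-F(u_2)$. The pointwise bound $\vert F(u_1)-F(u_2)\vert\lesssim\big(\vert u_1\vert^{\alpha-1}+\vert u_2\vert^{\alpha-1}\big)\vert w\vert$ together with H\"older's inequality then yields, on a short random interval $[0,\tau]$, an estimate of the form $\norm{w}_{L^p(0,\tau;L^q)}\le C\,\tau^{\theta}\big(\norm{u_1}+\norm{u_2}\big)^{\alpha-1}\norm{w}_{L^p(0,\tau;L^q)}$ with $\theta>0$; choosing $\tau$ small relative to a stopping time on which the Strichartz norms of $u_1,u_2$ are controlled forces $w\equiv0$ on $[0,\tau]$. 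Iterating along a partition of $[0,T]$ built from these stopping times then gives $u_1=u_2$ on all of $[0,T]$, almost surely.

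The main obstacle I anticipate is precisely this stochastic Strichartz analysis on the compact manifold: the whole scheme must be run \emph{with} the loss of derivatives in the Burq--G\'erard--Tzvetkov estimates, so that the subcriticality of $\alpha$ (the ranges in Corollary \ref{CorollaryManifold} i), ii) for $d=2$) is genuinely needed to close the H\"older estimate, and one must control the stochastic convolution in the non-Hilbertian norm $L^p(0,T;L^q(M))$, which requires maximal inequalities beyond the $L^2$-Hilbert setting. A secondary technical point is the localization: since \eqref{propertySolution-2} gives the $H^1$, and hence the Strichartz, bounds only in expectation rather than uniformly in $\omega$, the contraction estimate must be carried out on stopping-time-defined subintervals and then patched together to exhaust $[0,T]$.
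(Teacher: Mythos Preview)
Your high-level architecture---martingale existence from Corollary \ref{CorollaryManifold}, pathwise uniqueness, then a Yamada--Watanabe type result (the paper invokes Ondrej\'at \cite{OndrejatUniqueness})---matches the paper exactly. The difference is in how pathwise uniqueness is obtained.

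The paper does \emph{not} run a contraction in a Strichartz norm, nor does it remove the noise by a phase transformation or random propagator. Instead it uses Strichartz estimates (deterministic and stochastic, Proposition \ref{extraqIntegrability}) only as a \emph{regularity bootstrap}: each solution is shown to lie a.s.\ in $L^q(0,T;H^{\tilde s-\frac{1+\varepsilon}{q},p}(M))\hookrightarrow L^q(0,T;L^\infty(M))$ for suitable exponents. Uniqueness itself is then a pure $L^2$ energy argument: an It\^o computation on $\norm{u_1-u_2}_{L^2}^2$ (Lemma \ref{SolutionDifferenceLemma}) shows that, because the $B_m$ are selfadjoint and the noise is in Stratonovich form, all stochastic terms and correction terms cancel exactly, leaving
\[
\norm{w(t)}_{L^2}^2 \lesssim \int_0^t \norm{w(\tau)}_{L^2}^2\big(\norm{u_1(\tau)}_{L^\infty}^{\alpha-1}+\norm{u_2(\tau)}_{L^\infty}^{\alpha-1}\big)\,\df\tau,
\]
and Gronwall applies directly since the bracket lies in $L^1(0,T)$ by the regularity step. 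Your route---contraction in the Strichartz norm with stopping-time localization---is essentially the Brze\'zniak--Millet strategy, which the paper discusses in a remark after Theorem \ref{Uniqueness2d}; it is workable but heavier, requires either Strichartz estimates for a random propagator or extra structural assumptions on $B$ for the phase transformation, and forgoes the exact cancellation that makes the paper's argument a one-line Gronwall. Note also that the paper's uniqueness theorem covers all $\alpha>1$ in $d=2$ (the subcriticality restriction in Corollary \ref{CorollaryManifold} is needed only for \emph{existence}), whereas closing a Strichartz contraction typically forces tighter constraints on $\alpha$.
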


We obtain pathwise uniqueness by an improvement of the regularity of solutions based on the Strichartz estimates by Bernicot and Samoyeau from \cite{Bernicot} and Brze{\'{z}}niak and Millet from \cite{BrzezniakStrichartz}. Ondrej{\'a}t showed in \cite{OndrejatUniqueness} in a quite general setting, that this is sufficient to get a strong solution. In fact, our uniqueness result is more general than we have formulated in Corollary \ref{CorollaryManifold2d}. On the one hand, we allow possibly non-compact of manifolds with bounded geometry. On the other hand, uniqueness holds in the strictly larger class $L^{r}(\Omega,L^\beta(0,T;H^s(M)))$ with $r> \alpha,$ $\beta:=\max\left\{2,\alpha\right\}$ and
\begin{align*}
s \in \begin{cases}
(\frac{2\alpha-1}{2\alpha},1] & \text{for } \alpha\in (1,3], \\
(\frac{\alpha(\alpha-1)-1}{\alpha(\alpha-1)},1] & \text{for } \alpha>3.
\end{cases}
\end{align*}
For the details, we refer to Theorem \ref{Uniqueness2d}

%The Galerkin method was used by Grecksch and Lisei to solve stochastic NLSE with a nonlinearity $F$ satisfying local Lipschitz assumptions in $H.$ Note, that in general, this assumption is not fullfilled by the power nonlinearity $F_\alpha^\pm(u)=\pm\vert u\vert^{\alpha-1}u.$\\
%Recently, stochastic NLSE on bounded 1D domains were studied by Keller and Lisei in \cite{keller2015variational} and \cite{lisei2016stochastic}. In \cite{keller2015variational} the authors considered It\^{o} noise with time dependent coefficients instead of a space dependent Stratonovich noise as in the present paper and used a transformation to a random equation (and in this sense this approach is similar to the one used in \cite{BarbuH1}) which is solved by  a Galerkin approximation. In \cite{lisei2016stochastic}, the nonlinear term is different, namely $F(u)=\im \vert u\vert^{\alpha-1}u$ instead of $F(u)=\pm \vert u\vert^{\alpha-1}u$ in our notation and the authors use a direct Galerkin approximation and  compactness  for the stochastic equation. In this special 1d situation, the authors obtained strong unique solutions.

Let us point out that the stochastic nonlinear Schr\"odinger equations are used in the fiber optics, nonlinear photonics and optical wave turbulence, see for instance a recent review paper  \cite{Turitsyn+_2012} by S Turitsyn et al. and references therein. 
There is also an extended literature on the nonlinear Schr\"odinger equations on special manifolds, as e.g. Schwarzschild manifolds, see papers \cite{Laba+Soffer_2000}, \cite{Andresson_2015-symmetries} and \cite{Marzula+_2010-Strichartz}. In these papers the Schr\"odinger equation is somehow related to the corresponding nonlinear wave equation which in turn appears in the theory of  gravitational fields.
Furthermore, we would like to mention the article \cite{parthasarathy1968derivation} which deals with the derivation of the Schr\"odinger equation on manifolds.
From a mathematical point of view, important questions are  how the geometry of the manifold influences the 
qualitative behavior of solutions and how the geometry of the manifold and the external noise influence the well-posedness theory. 
Nonlinear Schr\"odinger equations on manifolds have been studied  e.g. by Burg et al. \cite{Burq+Gerard+Tzvetkov_2003,Burq}, see also references therein.  The motivation for these authors was  "to evaluate
the impact of geometry of the manifold  on the well-posedness theory, having in
mind the infinite propagation speed of the Schr\"odinger equation". 

The paper is organized as follows. In the Sections 2 and 3, we fix the notation\dela{s}, formulate our Assumptions and present a number of typical examples of operators $A,$ a model nonlinearity $F$ and  noise coefficients $B$ covered by our framework.  In Section 4,  we are concerned with the compactness results that we will be using later on. In Section 5, we formulate the Galerkin approximation equations and prove the a priori estimates which are sufficient for compactness in view of Section 4.  Section 6 is devoted to the proof of Theorem 1 and in Section 7, we focus on uniqueness in the case of 2d manifolds with bounded geometry.

\section{Notation\dela{s} and Assumptions }
In this section, we want to fix the notations, explain the assumptions and formulate an abstract framework for the stochastic nonlinear Schr\"{o}dinger equation. \\

Let $\left(X,\Sigma,\mu\right)$ be a $\sigma$-finite measure space with metric $\rho$ satisfying the \emph{doubling property}, i.e. $\mu(B(x,r))<\infty$ for all $x\in X$ and $r>0$ and
\begin{align}\label{doubling}
\mu(B(x,2r))\lesssim \mu(B(x,r)).
\end{align}
This estimate implies 
\begin{align}\label{doublingDimension}
\mu(B(x,tr))\lesssim t^d \mu(B(x,r)),\qquad x\in X,\quad r>0,\quad t\ge 1
\end{align}
and the number $d\in\N$ is called \emph{doubling dimension.}	
Let $M\subset X$ be an open subset with finite measure and  $L^q(M)$ for $q\in [1,\infty]$ the space of equivalence classes of $\C$-valued $q-$integrable functions. For $q\in [1,\infty],$ let  $q':=\frac{q}{q-1}\in [1,\infty]$ be the conjugate exponent. In particular, for $q\in [1,\infty]$ it holds that $\frac{1}{q}+\frac{1}{q'}=1.$
We further abbreviate ${H}:=L^2(M)$.
In the special case that $M$ is a Riemannian manifold, $H^{s,q}(M)$ denotes the fractional Sobolev space of regularity $s\in \R$ and integrability $q\in (1,\infty)$ and we shortly write $H^s(M):=H^{s,2}(M).$ For a definition of these spaces, we refer to Definition \ref{DefinitionSobolevManifold}. 

If functions $a,b\ge 0$ satisfy the inequality $a\le C(A) b$ with a constant $C(A)>0$ depending on the expression $A$, we write $a \lesssim_A b.$ If we have $a \lesssim_A b$ and $b \lesssim_A a,$ we write $a \eqsim_A b.$
For two   Banach spaces $E,F$, we denote by $\mathcal{L}(E,F)$ the space of linear bounded operators $B: E\to F$ and abbreviate $\mathcal{L}(E):=\mathcal{L}(E,E).$  Furthermore, we write $E\hookrightarrow F,$ if $E$ is continuously embedded in $F;$ i.e. $E\subset F$ with natural embedding  $j\in \mathcal{L}(E,F).$ The space $C^{1,2}([0,T]\times E,F)$ consists of all functions $\varPhi\colon [0,T]\times E\to F$ such that $\varPhi(\cdot,x)\in C^1([0,T],F)$ for every $x\in E$ and $\varPhi(t,\cdot)\in C^2(E,F)$ for every $t\in[0,T].$ For two Hilbert spaces $H_1$ and $H_2,$ the space of Hilbert-Schmidt operators $B: H_1\to H_2$ is abbreviated by $\HS(H_1,H_2).$ The resolvent set of a densely defined linear operator $A\colon E\supset \mathcal{D}(A)\to E$ on a Banach space $E$ is denoted by $\rho(A).$
For a probability space $\left(\Omega, \F, \Prob\right),$ the law of a random variable $X: \Omega \to E$ is denoted by $\Prob^X$.

\begin{AssumptionNot}\label{spaceAssumptions}
	We assume the following: 
	\begin{itemize}
		\item[i)] Let $A$ be a non-negative selfadjoint operator on ${H}$ with domain $\mathcal{D}(A).$
		%			\item[ii)] The semigroup $\left(T(t)\right)_{t\ge 0}$ generated by $-A$ has a  kernel $p(t,x,y),$ i.e.
		%			\begin{align*}
		%				T(t)f(x)= \int_M p(t,x,y) f(y) \mu(dy), \quad t\ge 0, \quad \text{a.e. } x\in M
		%			\end{align*}
		%			for all $f\in H,$ which satisfies the upper Gaussian estimate
		%			\begin{align}\label{GaussianEstimate}
		%				\vert p(t,x,y)\vert \le \frac{C}{\mu(B(x,t^\frac{1}{m}))} \exp \left\{-c \left(\frac{\rho(x,y)^m}{t}\right)^{\frac{1}{m-1}}\right\},
		%			\end{align}
		%			for all $t>0$ and almost all $(x,y)\in M\times M$ with constants $c,C>0$ and $m\ge 2.$
		%\item [ii)] There is a strictly positive selfadjoint operator $S$ on ${H}$ with compact resolvent commuting with $A.$ Moreover, we assume that the semigroup $\left(T(t)\right)_{t\ge 0}$ generated by $-S$ has \emph{upper Gaussian bounds}, i.e. for all $t>0$ there is a measurable function $p(t,\cdot,\cdot): M\times M\to \R$ with
		%\begin{align*}
		%T(t)f(x)= \int_M p(t,x,y) f(y) \mu(dy), \quad t> 0, \quad \text{a.e. } x\in M
		%\end{align*}
		%for all $f\in H$ and
		%\begin{align}\label{GaussianEstimate}
		%\vert p(t,x,y)\vert \le \frac{C}{\mu(B(x,t^\frac{1}{m}))} \exp \left\{-c \left(\frac{\rho(x,y)^m}{t}\right)^{\frac{1}{m-1}}\right\},
		%\end{align}
		%for all $t>0$ and almost all $(x,y)\in M\times M$ with constants $c,C>0$ and $m\ge 2.$
		\item [ii)] There is a strictly positive selfadjoint operator $S$ on ${H}$ with compact resolvent commuting with $A$ which fulfills
		$\D(S^k)\hookrightarrow E_A$
		for sufficiently large $k.$ Moreover, we assume that $S$  has \emph{generalized Gaussian $(p_0,p_0')$-bounds} for some $p_0\in [1,2),$ i.e. 
		\begin{align}\label{generalizedGaussianEstimate}
		\Vert \mathbf{1}_{B(x,t^\frac{1}{m})}e^{-tS}\mathbf{1}_{B(y,t^\frac{1}{m})}\Vert_{\mathcal{L}(L^{p_0},L^{p_0'})} \le C{\mu(B(x,t^\frac{1}{m}))}^{\frac{1}{p_0'}-\frac{1}{p_0}} \exp \left\{-c \left(\frac{\rho(x,y)^m}{t}\right)^{\frac{1}{m-1}}\right\},
		\end{align}
		for all $t>0$ and  $(x,y)\in M\times M$ with constants $c,C>0$ and $m\ge 2.$
		\item[iii)] 		
		The Hilbert space $E_A:=\D(\sqrtA)$ equipped with the inner product
		\begin{align*}
		\skp{u}{v}_{\EA}:=\skpH{u}{v}+\skpH{\sqrtA u}{\sqrtA v},\qquad u,v\in \EA,
		\end{align*}
		%			\begin{align*}
		%			\left(E_A, (\cdot,\cdot)_{E_A}\right):=(\D(\sqrtA), (\cdot,\cdot)_{\D(\sqrtA)})
		%			\end{align*}
		is called the \emph{energy space} and the induced norm $\norm{\cdot}_{E_A}$ is called the  \emph{energy norm} associated to $A.$ We denote the dual space of $E_A$ by $E_A^*$ and abbreviate the duality with $\duality{\cdot}{\cdot}:= \duality{\cdot}{\cdot}_{E_A^*,E_A},$ where the complex conjugation is taken over the second variable of the duality. 
		%			\coma{I think we need to mention about $H \cong H^*$ before we speak about Gelfanf triple. FH: Isn't this trivial, when $H=L^2(M)$?} 
		Note that $\left(E_A, H, E_A^*\right)$ is a Gelfand triple, i.e.
		\begin{align*}
		E_A\hookrightarrow H \cong H^* \hookrightarrow E_A^*.
		\end{align*}
		\item[iv)] Let $\alpha \in (1,p_0'-1)$ be such that $\EA$ is compactly embedded in $\LalphaPlusEins.$ We set
		\begin{align*}
		p_{\max}:= \sup \left\{p\in (1,\infty]: \EA \hookrightarrow L^p(M) \quad\text{is continuous}\right\}\dela{\in [\alpha+1,\infty].}
		\end{align*}
		and note that $p_{\max}\in [\alpha+1,\infty].$
		In the case $p_{\max}<\infty,$ we assume that $\EA \hookrightarrow L^{p_{\max}}(M)$ is continuous, but not necessarily compact.
		%			\item[iv)] There is a sequence $\left(H_n\right)_{n\in\N}$ of finite dimensional subspaces of ${H}$ with $H_n \subset \D(A)$ and a sequence $\left(P_n\right)_{n\in\N}$ of orthogonal projections $P_n: H \to H_n$ which are bounded operators from $E_A$ to $E_A$  and satisfy $P_n \varphi \to \varphi$ in $E_A$ for all $\varphi\in E_A.$ Furthermore, we assume that $P_n$ is bounded from $\LalphaPlusEins$ to $\LalphaPlusEins$ and
		%			\begin{align*}
		%			\sup_{n\in\N} \norm{P_n}_{\LalphaPlusEins \to \LalphaPlusEins}<\infty.
		%			\end{align*}
	\end{itemize}
\end{AssumptionNot}
%\begin{Remark}
%	The assumption of \emph{upper Gaussian bounds} is sufficient but not necessary for the spectral multiplier theorems needed to construct the Galerkin equation in Section 5. Since Gaussian bounds are enough for our main examples, we decided to do without more general, but less well known notions such as \emph{generalized Gaussian bounds} and \emph{Mihlin functional calculus} on $L^{\alpha+1}(M).$ For the details, we refer to \cite{kunstmannUhl} and \cite{KrieglerWeis}. 
%\end{Remark}	
\begin{Remark}\label{GaussianRemark}
	\begin{itemize}
		\item[a)] The operator $S$ plays the role of an auxiliary operator to cover the different examples from Section 3 in a unified framework. Typical choices are $S:=I+A,$ $S:=A$ or $S:=I+A^{1/\beta}$ for some $\beta>0.$ 
		\item[b)] 	If $p_0=1,$ then it is proved in \cite{blunckKunstmann} that \eqref{generalizedGaussianEstimate} is equivalent to the usual upper Gaussian estimate, i.e. for all $t>0$ there is a measurable function $p(t,\cdot,\cdot): M\times M\to \R$ with
		\begin{align*}
		(e^{-tS}f)(x)= \int_M p(t,x,y) f(y) \mu(dy), \quad t> 0, \quad \text{a.e. } x\in M
		\end{align*}
		for all $f\in H$ and
		\begin{align}\label{GaussianEstimate}
		\vert p(t,x,y)\vert \le \frac{C}{\mu(B(x,t^\frac{1}{m}))} \exp \left\{-c \left(\frac{\rho(x,y)^m}{t}\right)^{\frac{1}{m-1}}\right\},
		\end{align}
		for all $t>0$ and almost all $(x,y)\in M\times M$ with constants $c,C>0$ and $m\ge 2.$
		\item[c)]The generalized Gaussian estimate \eqref{generalizedGaussianEstimate} is used in the proof of Proposition \ref{PaleyLittlewoodLemma}, where spectral multiplier theorems for $S$ in $L^p(M)$ for $p\in (p_0,p_0'),$ respectively a Mihlin $\mathcal{M}^\beta$ functional calculus of $S$ for some $\beta>0$ are employed. The Mihlin functional calculus is defined and studied in \cite{krieglerDissertation} and \cite{KrieglerWeis}.    For additional information about spectral multiplier theorems for operators with generalized Gaussian estimates, we refer to \cite{Uhl},  \cite{kunstmannUhl}. Note that spectral multiplier results with different assumptions are also sufficient for our analysis below, see e.g. \cite{OuhabazGaussianBounds}, where a result for the Laplace-Beltrami operator on a compact riemannian manifold is explicitly stated without mentioning the doubling property in this particular case. 
	\end{itemize}
	
\end{Remark}
We start with some conclusions which can be deduced  from Assumption $\ref{spaceAssumptions}.$	
\begin{Lemma}\label{spaceLemma}
	\begin{itemize}
		\item[a)] 		There is a non-negative selfadjoint operator $\hat{A}$ on $E_A^*$ with $\D(\hat{A})=E_A$ with $\hat{A}=A$ on $H.$
		\item[b)]  The embedding
		$\EA \hookrightarrow {H}$
		is compact.
		%			\item[c)] There is an orthonormal basis $\left(h_n\right)_{n\in \N}$ and a nondecreasing sequence $\left(\lambda_n\right)_{n\in\N}$ with $\lambda_n\to \infty$ as $n\to \infty$ and  
		%	\begin{align*}
		%	A x=\sum_{n=1}^\infty \lambda_n \skpH{x}{h_n} h_n, \quad x\in \D(A)=\left\{x\in H: \sum_{n=1}^\infty \lambda_n^2 \vert \skpH{x}{h_n}\vert^2<\infty\right\},
		%	\end{align*}
		%\dela{	For the fractional domains of $A$ we have the representation}
		%	\begin{align*}
		%		\D(A^s):=\left\{x\in H: \sum_{n=1}^\infty \lambda_n^{2s} \vert \skpH{x}{h_n}\vert^2<\infty\right\},\qquad s\ge 0,
		%	\end{align*}
		%where $A^s$ is fractional power of order $s$  of $A$.
		%%	and $\D(A^{-s}),$ $s>0,$ is the completion of ${H}$ w.r.t. the norm
		%%	\begin{align*}
		%%		\norm{x}_{\D(A^{-s})}^2:=\sum_{n=1}^\infty \lambda_n^{-2s} \vert \skpH{x}{h_n}\vert^2,\quad x\in {H}.
		%%	\end{align*}
		\item[c)] There is an orthonormal basis $\left(h_n\right)_{n\in \N}$ and a nondecreasing sequence $\left(\lambda_n\right)_{n\in\N}$ with $\lambda_n>0$ and  $\lambda_n\to \infty$ as $n\to \infty$ and  
		\begin{align*}
		S x=\sum_{n=1}^\infty \lambda_n \skpH{x}{h_n} h_n, \quad x\in \D(S)=\left\{x\in H: \sum_{n=1}^\infty \lambda_n^2 \vert \skpH{x}{h_n}\vert^2<\infty\right\},
		\end{align*}
		%			\dela{	For the fractional domains of $A$ we have the representation}
		%			\begin{align*}
		%			\D(A^s):=\left\{x\in H: \sum_{n=1}^\infty \lambda_n^{2s} \vert \skpH{x}{h_n}\vert^2<\infty\right\},\qquad s\ge 0,
		%			\end{align*}
		%			where $A^s$ is fractional power of order $s$  of $A$.
		%	and $\D(A^{-s}),$ $s>0,$ is the completion of ${H}$ w.r.t. the norm
		%	\begin{align*}
		%		\norm{x}_{\D(A^{-s})}^2:=\sum_{n=1}^\infty \lambda_n^{-2s} \vert \skpH{x}{h_n}\vert^2,\quad x\in {H}.
		%	\end{align*}
	\end{itemize}
\end{Lemma}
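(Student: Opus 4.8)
\emph{Proof proposal.} The three statements are of increasing analytic delicacy, and I would prove them in the order (c), (b), (a), because the spectral data from (c) and the compactness from (b) are exactly what makes the construction in (a) transparent. Part (c) is a direct appeal to the spectral theorem. By Assumption \ref{spaceAssumptions} ii) the operator $S$ is strictly positive, selfadjoint and has compact resolvent on the separable Hilbert space $H=L^2(M)$, so $(I+S)^{-1}$ is a compact, selfadjoint, positive operator. The spectral theorem for compact selfadjoint operators then furnishes an orthonormal basis $(h_n)_{n\in\N}$ of eigenvectors whose eigenvalues, after reordering, form a nondecreasing sequence $(\lambda_n)_{n\in\N}$; strict positivity forces $\lambda_n>0$, and compactness of the resolvent forces the eigenvalues to accumulate only at infinity, whence $\lambda_n\to\infty$. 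The series representation of $S$ and the description of $\D(S)$ are then precisely the spectral theorem for the unbounded operator $S$, so this part is routine.

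For part (b) I would avoid any independent resolvent hypothesis on $A$ and argue instead from Assumption \ref{spaceAssumptions} iv). That assumption provides the compact embedding $\EA\hookrightarrow\LalphaPlusEins$, and since $M$ has finite measure and $\alpha+1>2$, Jensen's (or H\"older's) inequality yields the continuous embedding $\LalphaPlusEins\hookrightarrow L^2(M)=H$. The composition of a compact operator with a bounded one is compact, so $\EA\hookrightarrow H$ is compact. As a by-product, to be used in (a), I would record that this forces $A$ to have compact resolvent: the operator $(I+A)^{-1/2}$ maps $H$ boundedly into $\EA=\D(\sqrtA)$, and composing with the compact embedding just obtained shows $(I+A)^{-1/2}$, hence $(I+A)^{-1}$, is compact on $H$.

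For part (a) I would exploit the eigenbasis of $A$ supplied by (b). Let $(e_n)_{n\in\N}$ be an orthonormal basis of $H$ with $Ae_n=\nu_n e_n$, $0\le\nu_1\le\nu_2\le\dots$, $\nu_n\to\infty$, and set $u_n:=\skpH{u}{e_n}$. In these coordinates $\EA$ and $\EAdual$ are the weighted sequence spaces with weights $(1+\nu_n)$ and $(1+\nu_n)^{-1}$, paired through $H$ by $\duality{\cdot}{\cdot}$. I would then define $\hat A$ on $\EAdual$ by the spectral series $\hat A u:=\sum_n \nu_n u_n e_n$, equivalently as the continuous extension of $A$ to the bounded operator $\EA\to\EAdual$ associated with the form $(u,v)\mapsto\skpH{\sqrtA u}{\sqrtA v}$. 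A short computation with Fourier coefficients shows $\hat A u\in\EAdual$ exactly when $\sum_n\nu_n^2(1+\nu_n)^{-1}|u_n|^2<\infty$, which is equivalent to $\sum_n(1+\nu_n)|u_n|^2<\infty$, i.e.\ to $u\in\EA$, giving $\D(\hat A)=\EA$. Non-negativity and selfadjointness on $\EAdual$ follow because $(e_n)$ is orthogonal in $\EAdual$ with $\norm{e_n}_{\EAdual}^2=(1+\nu_n)^{-1}$ and $\hat A$ is diagonal with real non-negative entries $\nu_n$ in this basis; equivalently, $I+\hat A$ is by construction the Riesz isomorphism $\EA\to\EAdual$ induced by $\skp{\cdot}{\cdot}_{\EA}$. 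Finally, for $u\in\D(A)$ the defining series converges in $H$ to $Au$, so $\hat A=A$ on $H$.

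The only genuinely delicate point is part (a): one must take the Hilbert-space adjoint with respect to the inner product of $\EAdual$ (with conjugation in the second slot, as fixed in Assumption \ref{spaceAssumptions} iii)) rather than that of $H$, and one must use the Gelfand-triple identification $H\cong H^*$ consistently when asserting $\hat A=A$ on $H$. Organising the argument through the common eigenbasis of $A$ renders all of these identifications explicit and reduces selfadjointness and the domain identity to elementary facts about weighted $\ell^2$-spaces; a reader preferring a basis-free route may instead invoke the standard result that the form-associated operator of a Gelfand triple $V\hookrightarrow H\hookrightarrow V^*$ is selfadjoint on $V^*$ with domain $V$.
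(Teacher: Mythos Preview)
Your arguments for parts (b) and (c) are correct and coincide with the paper's: (c) is the spectral theorem for selfadjoint operators with compact resolvent, and (b) is the factorisation $\EA\hookrightarrow\LalphaPlusEins\hookrightarrow H$ through the compact embedding of Assumption~\ref{spaceAssumptions}~iv) and the bounded inclusion $\LalphaPlusEins\hookrightarrow H$ coming from $\mu(M)<\infty$.

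For part (a) you take a genuinely different route. The paper proceeds basis-free: it defines $\hat{A}$ via the form $\duality{\hat{A}\varphi}{\psi}:=\skpH{\sqrtA\varphi}{\sqrtA\psi}$, checks boundedness $\EA\to\EAdual$, invokes Lax--Milgram to see that $I+\hat{A}:\EA\to\EAdual$ is a surjective isometry, endows $\EAdual$ with the inner product $\skp{f^*}{g^*}_{\EAdual}:=\skp{(I+\hat{A})^{-1}f^*}{(I+\hat{A})^{-1}g^*}_{\EA}$, verifies symmetry of $\hat{A}$ in $\EAdual$, and concludes selfadjointness from $-1\in\rho(\hat{A})$. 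You instead first extract from (b) that $A$ itself has compact resolvent, then pass to an eigenbasis $(e_n)$ of $A$ and identify $\EA,H,\EAdual$ with weighted $\ell^2$-spaces; $\hat{A}$ becomes a diagonal multiplication operator, and the domain identity and selfadjointness reduce to elementary sequence-space computations. Both approaches are valid. Yours is more explicit and makes the Gelfand-triple identifications tangible, at the cost of the extra step of deducing compact resolvent for $A$; the paper's form-based argument is shorter and works without ever diagonalising $A$, which is perhaps cleaner since the assumptions only postulate compact resolvent for $S$, not for $A$. Your closing remark about the ``form-associated operator of a Gelfand triple'' is in fact precisely the paper's approach.
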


\begin{proof}
	\emph{ad a).}
	The operator $\hat{A}$ is defined by
	\begin{align*}
	\duality{\hat{A}\varphi}{\psi}:= \skpH{\sqrtA \varphi}{\sqrtA \psi}, \quad \varphi,\psi \in E_A.
	\end{align*}
	The estimate
	\begin{align*}
	\vert \duality{\hat{A}\varphi}{\psi}\vert \le \norm{\sqrtA \varphi}_H \norm{\sqrtA \psi}_H\le \norm{\varphi}_{E_A} \norm{\psi}_{E_A}
	\end{align*}
	shows that $\hat{A}$ is well-defined and a bounded operator from $E_A$ to $E_A^*$ with
	$\norm{\hat{A}}\le 1.$  Moreover, one can apply the Lax-Milgram-Theorem to see that $I+\hat{A}$ is a surjective isometry from $\EA$ to $\EAdual.$ If one equips $\EAdual$ with the inner product
	\begin{align*}
	\skp{f^*}{g^*}_{\EAdual}:=\skp{(I+\hat{A})^{-1}f^*}{(I+\hat{A})^{-1}g^*}_\EA, \qquad f^*,g^*\in \EAdual,
	\end{align*}	
	one can show the symmetry of $\hat{A}$ as an unbounded operator in $\EAdual.$ Hence, $\hat{A}$ is selfadjoint, because $-1\in \rho(\hat{A}).$ 
	
	\emph{ad b).} The  embedding $\EA \hookrightarrow \LalphaPlusEins$ is compact by Assumption $\ref{spaceAssumptions}$ iv) and \\$\LalphaPlusEins \hookrightarrow {H}$ is continuous due to $\mu(M)<\infty.$ Hence, $\EA \hookrightarrow H$ is compact. \\	
	%		\emph{ad c).} From b), we conclude that $\D(A)\hookrightarrow H$ is compact. Therefore, we can apply the spectral theorem.
	\emph{ad c).} Immediate consequence of the spectral theorem, since $S$ has a compact resolvent. 
\end{proof}	
In most cases where this does not cause ambiguity or confusion, we also use the notations $A$ for $\hat{A}.$ We continue with the assumptions on the nonlinear part of our problem.

\begin{Assumption}\label{nonlinearAssumptions}
	Let $\alpha\in(1,p_0'-1)$ be chosen as in Assumption $\ref{spaceAssumptions}.$ Then, we assume the following:
	\begin{itemize}
		\item[i)] Let $F: \LalphaPlusEins \to \LalphaPlusEinsDual$ be a function satisfying the following estimate
		%		\coma{The inequality below suggests that $F$ is homogenous of order $\alpha$. Could it be homogenous of order $\beta<\alpha$. Can we have a linear combination of such $F$? \\F.H.: No. The degree of $F$ has to correspond to the $L^p$-norm from the energy. If you dont have this, the cancellation effects on the bottom of p22 do not work. Hence adding nonlinearities of lower order $\beta<\alpha$ is not possible. However, sums of nonlinearities of the same degree are OK and also the multiplication with some positive scalar.}
		\begin{align}\label{nonlinearityEstimate}
		\norm{F(u)}_\LalphaPlusEinsDual \lesssim \norm{u}_\LalphaPlusEins^\alpha,\quad u\in \LalphaPlusEins.
		\end{align}
		Note that this leads to $F: \EA \to \EAdual$ by Assumption $\ref{spaceAssumptions}$ iv), because $\EA\hookrightarrow\LalphaPlusEins$ implies $(\LalphaPlusEins)^*=\LalphaPlusEinsDual\hookrightarrow \EAdual.$ We further assume and $F(0)=0$ and
		\begin{align}\label{nonlinearityComplex}
		\Real \duality{\im u}{F(u)}=0, \quad u\in \LalphaPlusEins.
		\end{align}
		%		\item[vi)] $F: \EA\to \EAdual$ is continuously Fr\'{e}chet differentiable with
		%		\begin{align}\label{deriveNonlinearBound}
		%			\vert F'[u](h,h)\vert \lesssim \norm{u}_\LalphaPlusEins^{\alpha-1} \norm{h}_\LalphaPlusEins^2
		%		\end{align}
		%		for all $u,h\in \EA.$
		%		and has an antiderivative $\hat{F},$ i.e. there is $\hat{F}: \EA\to \R$ with $\hat{F}'=F.$
		\item[ii)] The map $F: \LalphaPlusEins\to \LalphaPlusEinsDual$ is continuously real Fr\'{e}chet differentiable with
		\begin{align}\label{deriveNonlinearBound}
		\Vert F'[u]\Vert_{L^{\alpha+1}\to L^\frac{\alpha+1}{\alpha}} \lesssim \norm{u}_\LalphaPlusEins^{\alpha-1}, \quad u\in \LalphaPlusEins.
		\end{align}
		\item[iii)] The map $F$ has a real antiderivative $\hat{F},$ i.e. there exists a Fr\'{e}chet-differentiable map \\ $\hat{F}: \LalphaPlusEins\to \R$ with
		\begin{align}\label{antiderivative}
		\Fhat'[u]h=\Real \duality{F(u)}{h},\quad u,h\in \LalphaPlusEins.
		\end{align}
		%		 We further assume the lower bound
		%%		\begin{align}\label{boundantiderivative}
		%%			\norm{F(u)}_\LalphaPlusEinsDual \norm{u}_\LalphaPlusEins\lesssim \Fhat(u), \quad
		%%		\norm{F'[u]}_{L^{\alpha+1}\to L^\frac{\alpha+1}{\alpha}} \norm{u}_\LalphaPlusEins^2\lesssim \Fhat(u),
		%%		\end{align}
		%		\begin{align}\label{boundantiderivative}
		%			 \norm{u}_\LalphaPlusEins^{\alpha+1}\lesssim \Fhat(u)
		%		\end{align}
		%		for the antiderivative.
	\end{itemize}
\end{Assumption}
By Assumption $\ref{nonlinearAssumptions}$ ii) and the mean value theorem for Fr\'{e}chet differentiable maps, we get
\begin{align}\label{nonlinearityLocallyLipschitz}
\norm{  F(x)-F(y)}_{\LalphaPlusEinsDual}&\le\sup_{t\in [0,1]}\norm{F'[tx+(1-t)y]}	\norm{  x-y}_{\LalphaPlusEins}\nonumber\\
&\lesssim \left(\norm{x}_\LalphaPlusEins+\norm{y}_\LalphaPlusEins\right)^{\alpha-1} \norm{x-y}_\LalphaPlusEins,	\qquad x,y\in \LalphaPlusEins,	
\end{align}	
which means that the nonlinearity is  Lipschitz on bounded sets of $\LalphaPlusEins.$\\

We will cover the following two standard types of nonlinearities.
\begin{Definition}
	Let $F$ satisfy Assumption $\ref{nonlinearAssumptions}.$
	Then, $F$ is called \emph{defocusing}, if $\Fhat(u)\ge 0$  and \emph{focusing}, if $\Fhat(u)\le 0$ for all $u\in \LalphaPlusEins.$
	
\end{Definition}

%\begin{Definition}
%	If Assumption $\ref{focusing}$ i) holds, $F$ is called \emph{defocusing}. In the case i'), we call $F$ \emph{focusing.}
%\end{Definition}

\begin{Assumption}\label{focusing}
	We assume either i) or i'):
	\begin{itemize}
		\item[i)] Let $F$ be defocusing and satisfy
		\begin{align}\label{boundantiderivative}
		\norm{u}_\LalphaPlusEins^{\alpha+1}\lesssim \Fhat(u), \quad u\in \LalphaPlusEins.
		\end{align}
		\item[i')] Let $F$ be focusing and satisfy
		\begin{align}\label{boundantiderivativeFocusing1}
		-\Fhat(u)\lesssim\norm{u}_\LalphaPlusEins^{\alpha+1}, \quad u\in \LalphaPlusEins.
		\end{align}
		and there is $\theta \in (0,\frac{2}{\alpha+1})$ with
		\begin{align}\label{interpolationFocusing}
		\left({H},\EA\right)_{\theta,1}\hookrightarrow \LalphaPlusEins.
		\end{align}
	\end{itemize}
\end{Assumption}
Here $\left(\cdot,\cdot\right)_{\theta,1}$ denotes the real interpolation space and 
we remark that by \cite{TriebelInterpolationTheory}, Lemma 1.10.1, \eqref{interpolationFocusing} is equivalent to
\begin{align}\label{boundantiderivativeFocusing2}
\norm{u}_\LalphaPlusEins^{\alpha+1}\lesssim \norm{u}_H^{\beta_1} \norm{u}_\EA^{\beta_2},\quad u\in \EA.
\end{align}
for some $\beta_1>0$ and $\beta_2 \in (0,2)$ with $\alpha+1= \beta_1+\beta_2.$
Let us continue with the definitions and assumptions for the stochastic part.

\begin{Assumption}\label{stochasticAssumptions}
	We assume the following:
	\begin{itemize}
		\item[i)] Let $(\Omega,\F,\Prob)$ be a probability space, $Y$ a separable real Hilbert space with ONB $(f_m)_{m\in\N}$ and $W$ a $Y$-canonical cylindrical Wiener process adapted to a filtration $\Filtration$ satisfying the usual conditions.
		\item[ii)] Let $B: {H} \to \HS(Y,{H})$ be a linear
		%			\coma{Is there no problem with the fact that $Y$ is real and $H$ is a complex vector space?\\
		%			FH:	Yes, that's a problem. In the sixth section, we use $H$ as a real Hilbert space equipped with the inner product $\Real (u,v)_H$ to be able to apply da Prato/Zabczyk's real HS-theory. Shall we consider $H$ as real HS in the whole paper? In the chapter on Galerkin approximation it's always the real  product and never the complex one that appears.I think we had this discussion before without finishing it.} 
		operator and set $B_m u:=B(u)f_m$ for $u\in {H}$ and $m\in \N.$ Additionally, we assume that $B_m\in\mathcal{L}(H)$ is selfadjoint for every $m\in\N$ with
		\begin{align}\label{noiseBoundsH}
		\sumM \norm{B_m}_{\mathcal{L}({H})}^2<\infty
		\end{align}
		and assume $B_m\in \mathcal{L}(\EA)$ and $B_m\in \mathcal{L}(\LalphaPlusEins)$ for $m\in\N$ and $\alpha\in (1,p_0'-1)$ as in Assumption and Notation \ref{spaceAssumptions} with
		\begin{align}\label{noiseBoundsEnergy}
		\sumM\norm{B_m}_{\mathcal{L}(\EA)}^2<\infty,\hspace{1cm}
		\sumM \norm{B_m}_{\mathcal{L}(L^{\alpha+1})}^2<\infty.
		\end{align}
	\end{itemize}
	
	%		Note that we can extend $B_m$ to a bounded operator on $\EAdual$ with
	%		\begin{align*}
	%		\sumM \norm{B_m}_{E_A^*\to E_A^*}^2<\infty.
	%		\end{align*}
\end{Assumption}

For the special case, when the $B_m$ are pointwise multiplication operators, see section \ref{NoiseSection} below.

\begin{Remark}
	The estimates $\eqref{noiseBoundsH}$ and $\eqref{noiseBoundsEnergy}$ imply
	\begin{align*}
	B\in \mathcal{L}({H},\HS(Y,{H})),\quad B\in \mathcal{L}(\EA,\HS(Y,\EA)),\quad B\in \mathcal{L}(\LalphaPlusEins,\gamma(Y,\LalphaPlusEins)),
	\end{align*}
	where $\gamma(Y,\LalphaPlusEins)$ denotes the spaces of $\gamma$-radonifying operators from $Y$ to $\LalphaPlusEins.$
	%\coma{Can you add a justification of the last claim?\\
	%FH: By the identity $\gamma(\N,\LalphaPlusEins)=L^{\alpha+1}(M,l^2(\N))$ and Minkowski's inequality due to $\alpha\ge 1$
	%\begin{align*}
	%	\norm{B (u)}_{\gamma(Y,\LalphaPlusEins)}
	%	&=\norm{B(u)f_m}_{\gamma(\N,\LalphaPlusEins)}
	%	\eqsim \norm{B_m u}_{L^{\alpha+1}(M,l^2(\N))}\\
	%	&\lesssim \norm{B_m u}_{l^2(\N,\LalphaPlusEins)}
	%	\le\left( \sumM \norm{B_m}_{\mathcal{L}(L^{\alpha+1})}^2\right)^\frac{1}{2} \norm{u}_\LalphaPlusEins
	%\end{align*}
	%I was a little bit vague here, because we did not explain and actually do not need the $\gamma$-spaces. But, it is nice to know, that the stochastic integral can be defined in $\LalphaPlusEins$ under our assumptions. Should be more precise here, leave it like that or completely leave it out?
	%}
\end{Remark}

Finally, we have sufficient background  to formulate the problem which we want to solve.
We investigate the following stochastic evolution equation in the Stratonovich form	
\begin{equation}\label{ProblemStratonovich2}
%\begin{gathered}
%(\operatorname{SNLS})\end{gathered}
\left\{
\begin{aligned}
\df u(t)&= \left(-\im A u(t)-\im  F(u(t)\right) dt-\im B u(t) \circ \df W(t),\hspace{0,3 cm} t\in (0,T),\\
u(0)&=u_0,
\end{aligned}\right.
\end{equation}
where the stochastic differential is defined by	
\begin{align}
-\im B u(t) \circ\df W(t)=-\im B u(t) \df W(t)+\frac{1}{2} \tr_Y\left(\mathcal{M}(u(t))\right) \df t,
\end{align}
with the bilinear form $\mathcal{M}(u)$ on $Y\times Y$ defined by
\begin{align*}
\mathcal{M}(u)(y_1,y_2):= -\im B'[u](-\im B(u)y_1)y_2, \quad u\in {H},\quad y_1,y_2\in Y.
\end{align*}
For \dela{our}{the} purpose of giving a rigorous definition of a solution to problem \eqref{ProblemStratonovich2}, it is useful to rewrite the equation in the It\^o form. Therefore,
we first compute
\begin{align*}
\tr_Y\left(\mathcal{M}(u)\right)&=\sumM -\im B'[u](-\im B(u)f_m)f_m= -\sumM B\left(B(u)f_m\right)f_m\\
&= -\sumM B\left( B_m u\right)f_m=-\sumM B_m^2 u.
\end{align*}
Hence,  equation $\eqref{ProblemStratonovich2}$ will be understood in the following It\^o form 	
\begin{equation}\label{Problem}
%\begin{gathered}
%(\operatorname{SNLS})\end{gathered}
\left\{
\begin{aligned}
\df u(t)&= \left(-\im A u(t)-\im  F(u(t)+ \mu \left(u(t)\right) \right) \df t-\im B u(t) \df W(t),\hspace{0,3 cm} t\in (0,T),\\
u(0)&=u_0,
\end{aligned}\right.
\end{equation}
where the linear operator $\mu$ defined by
\begin{align*}
\mu(u) := -\frac{1}{2} \sumM B_m^2 u,\qquad u\in{H},
\end{align*}
is the Stratonovich correction term. \\

Most of our paper will be concerned with the construction of a martingale solution.

\begin{Definition}\label{MartingaleSolutionDef}
	Let $T>0$ and $u_0\in E_A.$ A \emph{martingale solution} of the equation $\eqref{ProblemStratonovich}$ is a system $\left(\tilde{\Omega},\tilde{\F},\tilde{\Prob},\tilde{W},\tilde{\Filtration},u\right)$ consisting of
	\begin{itemize}
		\item a probability space $\left(\tilde{\Omega},\tilde{\F},\tilde{\Prob}\right);$
		\item a $Y$-valued cylindrical Wiener $\tilde{W}$ process on $\tilde{\Omega};$
		\item  a filtration $\tilde{\Filtration}=\left(\tilde{\F}_t\right)_{t\in [0,T]}$ with the usual conditions;
		\item a continuous, $\tilde{\Filtration}$-adapted, $\EAdual$-valued process  such that $u\in L^2(\Omega\times [0,T],\EAdual)$ and almost all paths are in $\weaklyContinousEA$,
	\end{itemize}
	such that the equality
	\begin{align}\label{ItoFormSolution}
	u(t)=  u_0+ \int_0^t \left[-\im A u(s)-\im F(u(s))+\mu(u(s))\right] \df s- \im \int_0^t B u(s) \df \tilde{W}(s)
	\end{align}
	holds almost surely in $\EAdual$ for all $t\in [0,T].$
\end{Definition}

\section{Examples}\label{ExampleSection}

In this section, we consider concrete situations and verify that they are covered by the general framework presented in the last section.

\subsection{The Model Nonlinearit\dela{y}{ies}}
%	We turn our attention to a nonlinearity which is admissible in the sense of Assumption $\ref{nonlinearAssumptions}.$

The class of the general nonlinearities from the Assumptions $\ref{nonlinearAssumptions}$ and $\ref{focusing}$ covers the standard focusing and defocusing power nonlinearity.

\begin{Prop}\label{powerTypeNonlinearity}
	Let $\alpha\in(1,\infty)$ be chosen as in Assumption $\ref{spaceAssumptions}.$ Define the following function
	\begin{align*}
	F_{\alpha}^\pm(u):=\pm\vert u\vert^{\alpha-1}u, \qquad \Fhat_\alpha^\pm(u):=\pm\frac{1}{\alpha+1} \norm{u}_\LalphaPlusEins^{\alpha+1},\qquad  u\in \LalphaPlusEins.
	\end{align*}
	%	\begin{enumerate}
	%		\item[a)] $F_{\alpha}^\pm$ satisfies Assumption $\ref{nonlinearAssumptions}$ with antiderivative $\Fhat_\alpha^\pm.$
	%		\item[b)] $F_{\alpha}^+$ satisfies Assumption $\ref{focusing}$ i) and  $F_{\alpha}^-$ satisfies i') under the restriction of Lemma $\ref{powersFocusing}.$
	%	\end{enumerate}
	Then, $F_{\alpha}^\pm$ satisfies Assumption $\ref{nonlinearAssumptions}$ with antiderivative $\Fhat_\alpha^\pm.$
\end{Prop}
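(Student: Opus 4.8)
The plan is to verify the three items of Assumption \ref{nonlinearAssumptions} for $F_\alpha^\pm$ by reducing all statements to the scalar map $g^\pm\colon\C\to\C$, $g^\pm(z):=\pm\vert z\vert^{\alpha-1}z$, and then transferring its properties to the superposition (Nemytskii) operator $u\mapsto g^\pm(u(\cdot))$ between $\LalphaPlusEins$ and $\LalphaPlusEinsDual$. First I would record that, since $\alpha>1$, the map $g^\pm$ is real Fr\'echet differentiable on all of $\C$ (viewing $\C\cong\R^2$), with $Dg^\pm(0)=0$ and, away from the origin, Wirtinger calculus giving
\begin{align*}
Dg^\pm(z)[h]=\pm\Big(\tfrac{\alpha+1}{2}\vert z\vert^{\alpha-1}h+\tfrac{\alpha-1}{2}\vert z\vert^{\alpha-3}z^2\bar h\Big),\qquad z\ne 0.
\end{align*}
In particular the real-linear map $Dg^\pm(z)$ obeys the pointwise bound $\vert Dg^\pm(z)\vert\le\alpha\vert z\vert^{\alpha-1}$, and $z\mapsto Dg^\pm(z)$ is continuous on $\C$ (the bound forces $Dg^\pm(z)\to 0=Dg^\pm(0)$ as $z\to 0$). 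These two facts are the only analytic input needed below.

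For item i), the growth bound is in fact an equality: since $\vert F_\alpha^\pm(u)\vert=\vert u\vert^\alpha$, a direct computation gives $\norm{F_\alpha^\pm(u)}_\LalphaPlusEinsDual^{\frac{\alpha+1}{\alpha}}=\int_M\vert u\vert^{\alpha+1}\,\mu(\df x)=\norm{u}_\LalphaPlusEins^{\alpha+1}$, hence $\norm{F_\alpha^\pm(u)}_\LalphaPlusEinsDual=\norm{u}_\LalphaPlusEins^{\alpha}$, which yields \eqref{nonlinearityEstimate} and shows $F_\alpha^\pm\colon\LalphaPlusEins\to\LalphaPlusEinsDual$; the identity $F_\alpha^\pm(0)=0$ is immediate. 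For the gauge condition \eqref{nonlinearityComplex}, using that the duality carries the complex conjugation on its second entry, I would compute
\begin{align*}
\duality{\im u}{F_\alpha^\pm(u)}=\pm\int_M(\im u)\,\vert u\vert^{\alpha-1}\bar u\,\mu(\df x)=\pm\im\int_M\vert u\vert^{\alpha+1}\,\mu(\df x),
\end{align*}
which is purely imaginary, so its real part vanishes.

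The main obstacle is item ii), the continuous Fr\'echet differentiability of $F_\alpha^\pm$ with candidate derivative $(F_\alpha^\pm)'[u]h=Dg^\pm(u(\cdot))[h(\cdot)]$. Granting the candidate, its bound follows from $\vert Dg^\pm(z)\vert\le\alpha\vert z\vert^{\alpha-1}$ and H\"older's inequality with exponents $\tfrac{\alpha+1}{\alpha-1}$ and $\alpha+1$, which gives $\norm{(F_\alpha^\pm)'[u]}_{L^{\alpha+1}\to L^{(\alpha+1)/\alpha}}\lesssim\norm{u}_\LalphaPlusEins^{\alpha-1}$, i.e. \eqref{deriveNonlinearBound}. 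To show this operator is genuinely the derivative and that the map $u\mapsto(F_\alpha^\pm)'[u]$ is continuous, I would argue in two steps. First, G\^ateaux differentiability: starting from the pointwise fundamental theorem of calculus
\begin{align*}
g^\pm(z+w)-g^\pm(z)-Dg^\pm(z)[w]=\int_0^1\big(Dg^\pm(z+tw)-Dg^\pm(z)\big)[w]\,dt,
\end{align*}
I would substitute $z=u(x)$, $w=h(x)$, dominate the integrand by $C(\vert u\vert+\vert h\vert)^{\alpha-1}\vert h\vert\in\LalphaPlusEinsDual$ (using the bound on $Dg^\pm$), and pass to the limit by a Vitali/dominated-convergence argument exploiting the continuity of $z\mapsto Dg^\pm(z)$. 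Second, norm-continuity of $u\mapsto(F_\alpha^\pm)'[u]$: the entries $\vert u\vert^{\alpha-1}$ and $\vert u\vert^{\alpha-3}u^2$ of $Dg^\pm(u)$ are superposition operators of growth $\vert z\vert^{\alpha-1}$, hence continuous $\LalphaPlusEins\to L^{(\alpha+1)/(\alpha-1)}$ by the classical Krasnoselskii continuity theorem, and composing with H\"older gives operator-norm continuity. A G\^ateaux-differentiable map with continuous derivative is $C^1$, so item ii) follows. The only delicate point is the behaviour near the origin, which is harmless because $\alpha-1>0$.

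Finally, for item iii) I would set $\Fhat_\alpha^\pm(u)=\pm\tfrac{1}{\alpha+1}\int_M\vert u\vert^{\alpha+1}\,\mu(\df x)$ and differentiate the scalar map $z\mapsto\vert z\vert^{\alpha+1}=(z\bar z)^{(\alpha+1)/2}$, whose real derivative is $(\alpha+1)\vert z\vert^{\alpha-1}\Real(z\bar h)$. Integrating over $M$ (the rigorous differentiability being obtained by the same superposition-operator scheme as in item ii), now for the scalar-valued functional) yields
\begin{align*}
(\Fhat_\alpha^\pm)'[u]h=\pm\Real\int_M\vert u\vert^{\alpha-1}u\,\bar h\,\mu(\df x)=\Real\duality{F_\alpha^\pm(u)}{h},
\end{align*}
which is exactly \eqref{antiderivative}. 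This completes the verification.
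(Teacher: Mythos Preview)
Your proof is correct and follows the same underlying strategy as the paper: verify item~i) by direct computation of the $L^{(\alpha+1)/\alpha}$-norm and of the gauge condition, and verify items~ii) and~iii) by treating $F_\alpha^\pm$ as the Nemytskii operator associated to the scalar map $g^\pm(z)=\pm|z|^{\alpha-1}z$ and exploiting its $C^1$-regularity and growth bound $|Dg^\pm(z)|\le\alpha|z|^{\alpha-1}$.

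The difference is purely organizational. The paper does not write out the superposition-operator differentiability argument in the body of the proof; instead it packages items~ii) and~iii) into a single application of a general lemma (Lemma~\ref{FrechetNonlinear}, quoted from \cite{ISEM}) about $C^1$-Nemytskii operators $L^p\to L^{p/\alpha}$ induced by $C^1$ maps $\varPhi\colon\R^2\to\R^2$ with $|\varPhi(a,b)|\lesssim(a^2+b^2)^{\alpha/2}$ and $|\varPhi'(a,b)|\lesssim(a^2+b^2)^{(\alpha-1)/2}$, taking $\varPhi(a,b)=(a^2+b^2)^{(\alpha-1)/2}(a,b)^T$ and $p=\alpha+1$. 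Your Wirtinger computation, H\"older bound, fundamental-theorem-of-calculus remainder estimate, and Krasnoselskii-type continuity argument are exactly the ingredients of the proof of that lemma, specialized to this $\varPhi$. So your approach is more self-contained but longer, while the paper's is more modular; neither buys anything mathematically over the other.
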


\begin{proof}
	Obviously, $F_{\alpha}^\pm:\LalphaPlusEins \to \LalphaPlusEinsDual$ due to
	\begin{align*}
	\norm{F_{\alpha}^\pm(u)}_\LalphaPlusEinsDual=\norm{u}_\LalphaPlusEins^\alpha,\qquad u\in \LalphaPlusEins.
	\end{align*}	
	%	and the  property $F_{\alpha}^\pm: H_n \to H$ is guaranteed by $H_n\subset \LInfty.$ Indeed,
	%	\begin{align*}
	%	\norm{F_{\alpha}^\pm(v)}_{H}=\norm{\vert v\vert^{\alpha-1}v}_{H} \le \norm{v}_\LInfty^{\alpha-1}\norm{v}_{H}<\infty
	%	\end{align*}
	%	for $v\in H_n.$
	Furthermore,
	\begin{align*}
	\Real \duality{\im v}{F_{\alpha}^\pm(v)}=\pm \Real\int_M \im v \vert v\vert^{\alpha-1}\overline{v}\df \mu=\pm \Real\left[\im \norm{v}_\LalphaPlusEins^{\alpha+1}\right]=0.
	\end{align*}
	We can apply the Lemma $\ref{FrechetNonlinear}$ below with $p=\alpha+1$ and
	\begin{align*}
	\varPhi(a,b)=\left(a^2+b^2\right)^\frac{\alpha-1}{2} \left(\begin{array}{c} a \\ b \end{array}\right),\qquad a,b\in\R,
	\end{align*}
	to obtain part ii) and iii) of Assumption $\ref{nonlinearAssumptions}.$
	%		 Furthermore, the function $F_\alpha^+$ is defocusing and $F_\alpha^-$ is focusing. The restrictions in the focusing case have been calculated in Lemma $\ref{powersFocusing}.$	
	%		For Assumption $\ref{focusing},$ we restrict to
	%		$\alpha \in \left(1,1+\frac{2(p_{\max}-2)}{p_{\max}}\right)$
	%		in the case $p_{\max}<\infty$ and $\alpha \in (1,3)$ if $p_{\max}=\infty.$
	%		(see Lemma $\ref{powersFocusing}$). \\
\end{proof}	
The next Lemma contains the differentiablity properties of the nonlinearity. For a proof, we refer to the lecture notes \cite{ISEM}, Lemma 9.1 and Lemma 9.2.
\begin{Lemma}\label{FrechetNonlinear}
	Let $\left(S,\mathcal{A},\mu\right)$ be a measure space and $\alpha>1.$
	\begin{itemize}
		\item[a)] Let $p>1.$ Then, the map $G_1: L^p(S)\to \R $ defined by $G_1(u):= \norm{u}_{L^p(S)}^p$ is continuously Fr\'{e}chet differentiable and for all $u,h\in L^p(S),$ we have
		\begin{align*}
		G_1'[u]h=\Real\int_S \vert u\vert^{p-1}u \overline{h} \df \mu.
		\end{align*}		
		%			\item[b)] Let $p>\alpha$ and $\varPhi=(\varPhi_1,\varPhi_2)\in C^1(\R^2,\R^2).$ Assume that there is $C>0$ with
		%			\begin{align*}
		%			\vert \varPhi(z) \vert \le C \vert z\vert^{\alpha}, \hspace{0.7cm}
		%			\vert \varPhi'(z) \vert \le C \vert z\vert^{\alpha-1}, \qquad z\in \R^2.
		%			\end{align*}
		\item[b)] Let $p>\alpha$ and $\varPhi=(\varPhi_1,\varPhi_2)\in C^1(\R^2,\R^2).$ Assume that there is $C>0$ with
		\begin{align*}
		\vert \varPhi(a,b) \vert \le C \left(a^2+b^2\right)^\frac{\alpha}{2}, \qquad
		\vert \varPhi'(a,b) \vert \le C \left(a^2+b^2\right)^\frac{\alpha-1}{2}, \qquad a,b\in \R.
		\end{align*}
		Then, the map
		\begin{align*}
		G: L^p(S) \rightarrow L^{\frac{p}{\alpha}}(S), \hspace{0.7cm}G(u):=\varPhi_1(\Real u,\Ima u)+\im \varPhi_2(\Real u,\Ima u)
		\end{align*}
		is continuously Fr\'{e}chet differentiable and for $u,h\in L^p(S),$ we have
		\begin{align*}
		G'[u]h=\nabla \varPhi_1(\Real u,\Ima u)\cdot \!\left(\begin{array}{c} \Real h \\ \Ima h \end{array}\!\right) +\im \nabla\varPhi_2(\Real u,\Ima u)\cdot \!\left(\begin{array}{c} \Real h \\ \Ima h \end{array}\!\right)
		\end{align*}
		and
		\begin{align*}
		\Vert G'[u]\Vert_{L^p \to L^{\frac{p}{\alpha}}}\le C \Vert u \Vert_{L^p}^{\alpha-1}.
		\end{align*}
		
	\end{itemize}	
	%		Furthermore, we have
	%		\begin{align}\label{lokalLipschitz}
	%		\Vert G(u)-G(v)\Vert_{L^{\frac{p}{\alpha}}} \le C \left(\Vert u\Vert_{L^p}+\Vert v\Vert_{L^p}\right)^{\alpha-1} \Vert u-v\Vert_{L^p}
	%		\end{align}
	%		for all $u,v\in L^p(S).$
\end{Lemma}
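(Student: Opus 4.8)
The plan is to treat both parts as differentiability statements for superposition (Nemytskii) operators between $L^p$-spaces and to reduce everything to a single continuity property of such an operator. First I would prove part b) in full, and then obtain a) from it by the chain rule, since $G_1$ is the composition of a Nemytskii map with the linear integration functional.

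For part b) I would start with well-definedness. The growth bound $\vert\varPhi(a,b)\vert\le C(a^2+b^2)^{\alpha/2}$ gives $\vert G(u)(x)\vert\le C\vert u(x)\vert^\alpha$ pointwise, whence $\norm{G(u)}_{L^{p/\alpha}(S)}\le C\norm{u}_{L^p(S)}^\alpha$, so $G$ maps $L^p(S)$ into $L^{p/\alpha}(S)$. The candidate derivative is the pointwise Jacobian action $\nabla\varPhi_1\cdot(\Real h,\Ima h)+\im\,\nabla\varPhi_2\cdot(\Real h,\Ima h)$ evaluated at $(\Real u,\Ima u)$. Its boundedness and the asserted operator norm follow from the second bound $\vert\varPhi'(a,b)\vert\le C(a^2+b^2)^{(\alpha-1)/2}$ together with H\"older's inequality: since $\frac{\alpha-1}{p}+\frac1p=\frac{\alpha}p$, the product of a factor in $L^{p/(\alpha-1)}(S)$ with one in $L^p(S)$ lands in $L^{p/\alpha}(S)$, and $\norm{\vert u\vert^{\alpha-1}}_{L^{p/(\alpha-1)}}=\norm{u}_{L^p}^{\alpha-1}$, giving $\norm{G'[u]}_{L^p\to L^{p/\alpha}}\le C\norm{u}_{L^p}^{\alpha-1}$.

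The heart of the argument is to upgrade this candidate to a genuine Fr\'echet derivative and to prove continuity of $u\mapsto G'[u]$. Writing $\varPhi'(v):=\varPhi'(\Real v,\Ima v)$ and using the fundamental theorem of calculus pointwise, the Fr\'echet remainder is $\int_0^1\big(\varPhi'(u+th)-\varPhi'(u)\big)\cdot(\Real h,\Ima h)\,\df t$. Applying the same H\"older estimate inside the $t$-integral bounds this remainder in $L^{p/\alpha}(S)$ by $\sup_{t\in[0,1]}\norm{\varPhi'(u+th)-\varPhi'(u)}_{L^{p/(\alpha-1)}}\cdot\norm{h}_{L^p}$, and the increment $G'[u+h]-G'[u]$ in operator norm by $\norm{\varPhi'(u+h)-\varPhi'(u)}_{L^{p/(\alpha-1)}}$. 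Since $u+th\to u$ in $L^p(S)$ uniformly in $t$ as $\norm{h}_{L^p}\to0$, both the $o(\norm{h}_{L^p})$-remainder estimate and the continuity of the derivative reduce to one statement: the superposition operator $N\colon v\mapsto\varPhi'(\Real v,\Ima v)$ is continuous from $L^p(S)$ into $L^{p/(\alpha-1)}(S)$.

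This continuity is the main obstacle, and I would prove it by a Krasnoselskii/Vitali-type argument. Given $v_n\to v$ in $L^p(S)$, by the subsequence principle it suffices to show every subsequence has a further subsequence along which $N(v_n)\to N(v)$ in $L^{p/(\alpha-1)}(S)$; one extracts (via the Riesz theorem) a subsequence converging $\mu$-a.e. and dominated by some $g\in L^p(S)$, uses continuity of $\varPhi'$ for a.e.\ convergence of $N(v_n)$, and the bound $\vert N(v_n)\vert^{p/(\alpha-1)}\le C\vert v_n\vert^{p}\le C g^p\in L^1(S)$ as a dominating function, so dominated convergence finishes. Finally, part a) follows by applying the same scheme to the real scalar $\varPhi(a,b)=(a^2+b^2)^{p/2}$, which lies in $C^1(\R^2)$ for $p>1$ and obeys the growth bounds with $\alpha=p$: the map $u\mapsto\vert u\vert^p$ is then $C^1$ from $L^p(S)$ into $L^1(S)$, and composing with the bounded linear functional $f\mapsto\int_S f\,\df\mu$ gives, by the chain rule, $G_1'[u]h=p\,\Real\int_S\vert u\vert^{p-2}u\,\overline{h}\,\df\mu$, the asserted derivative of $G_1$.
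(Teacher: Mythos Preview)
Your argument is correct and is exactly the standard Nemytskii/Krasnosel'skii approach to differentiability of superposition operators: growth bounds give well-definedness and the operator norm estimate via H\"older, the mean value representation reduces Fr\'echet differentiability and continuity of the derivative to continuity of $v\mapsto\varPhi'(\Real v,\Ima v)$ from $L^p$ to $L^{p/(\alpha-1)}$, and the latter follows by the subsequence--a.e.--dominated-convergence argument you describe. The paper does not give its own proof of this lemma at all; it simply cites the lecture notes \cite{ISEM}, Lemmas~9.1 and~9.2, where precisely this argument is carried out, so there is nothing to compare.

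Two minor remarks. First, for part a) you apply ``the same scheme'' with $\alpha=p$, which lies outside the hypothesis $p>\alpha$ of part b) as stated; this is harmless since your proof of b) only uses $p\ge\alpha>1$, so the boundary case goes through unchanged. Second, your formula $G_1'[u]h=p\,\Real\int_S\vert u\vert^{p-2}u\,\overline{h}\,\df\mu$ is the correct one; the version printed in the statement (with $\vert u\vert^{p-1}u$ and no factor $p$) is a typo, as one sees already from the integrability of the integrand and from how the lemma is applied in Proposition~\ref{powerTypeNonlinearity}.
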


\subsection{The Laplace-Beltrami Operator on compact manifolds}\label{Manifold}
In this subsection, we deduce Corollary $\ref{CorollaryManifold}$ from Theorem $\ref{MainTheorem}.$
Let $(M,g)$ be a compact d-dimensional riemannian manifold without boundary and $A:=-\Delta_g$ be the Laplace-Beltrami operator on $M.$
%The manifold $M$ has the doubling property and $S$ has upper Gaussian bounds by \cite{OuhabazGaussianBounds}, Section 7.2., and in particular, $S$ has generalized Gaussian bounds with $p_0=1.$
%The compact manifold $M$ fulfills the Ahlfors-property
%\begin{align}\label{Ahlfors}
%	\mu(B(x,r))\eqsim r^d
%\end{align}
%for $x\in M$ and $r>0$ which implies the doubling property \eqref{doubling}. 

\begin{proof}[Proof of Corollary \ref{CorollaryManifold}]
	%	In the following, we check the assumptions of Theorem \ref{MainTheorem}.\\
	\emph{Step 1.} Let $X=M$, $\rho$ be the geodesic distance and $\mu$ be the canonical volume measure on $X$.  From \cite{CoulhonRuss}, Section 4, p.~329, we obtain the local doubling property of $X$, i.e.  there is $C_1>0$ such that for all $x\in X$ and $r\in (0,1)$ we have
	\begin{align}\label{localDoubling}
	\mu(B(x,2r))\le C_1 \mu(B(x,r)).
	\end{align} 
	%	Dominated convergence implies that the function $f\colon X\times [1,\max\{1,\operatorname{diam}(M)\}]\to (0,\infty)$ defined by 
	%	\begin{align*}
	%		f(x,r)=\frac{\mu(B(x,2r))}{\mu(B(x,r))},\qquad x\in X,\quad r\in [1,\max\{1,\operatorname{diam}(M)\}],
	%	\end{align*}	
	%	is continuous. 
	%	Since $X\times [1,\max\{1,\operatorname{diam}(M)\}]$ is compact, we therefore obtain that 
	%\begin{align}\label{intermediateRadius}
	%\sup_{x\in X, r\in [1,\max\{1,\operatorname{diam}(M)\}]}\frac{\mu(B(x,2r))}{\mu(B(x,r))}<\infty.
	%\end{align}
	Dominated convergence implies that the function $f\colon X\times [1,\max\{1,\operatorname{diam}(M)\}]\allowbreak\to (0,\infty)$ defined by 
	\begin{align*}
	f(x,r)=\mu(B(x,r)),\qquad x\in X,\quad r\in [1,\max\{1,\operatorname{diam}(M)\}],
	\end{align*}	
	is continuous. 
	Since $X\times [1,\max\{1,\operatorname{diam}(M)\}]$ is compact, we therefore obtain that 
	\begin{align}\label{intermediateRadiusInf}
	%		\sup_{x\in X, r\in [1,\max\{1,\operatorname{diam}(M)\}]}\mu(B(x,2r))<\infty, \qquad 
	C_2:=\inf_{x\in X, r\in [1,\max\{1,\operatorname{diam}(M)\}]}\mu(B(x,r))>0.
	\end{align}
	In particular, this yields
	\begin{align}\label{intermediateRadius}
	\mu(B(x,2r))\le \frac{\mu(M)}{C_2} \mu(B(x,r))
	\end{align} 
	for every $x\in X$ and $r\in [1,\max\{1,\operatorname{diam}(M)\}]$.
	For $x\in X$ and $r>\operatorname{diam}(M)$, we get
	\begin{align}\label{largeRadius}
	\mu(B(x,2r))=\mu(M)=\mu(B(x,r)).
	\end{align}
	Combining \eqref{localDoubling}, \eqref{intermediateRadius} and \eqref{largeRadius} implies the doubling property \eqref{doubling}.\\
	
	\emph{Step 2.} Let $S:=I-\Delta_g.$ Then, $S$ is selfadjoint, strictly positive and commutes with $A.$ Moreover, $S$ has a compact resolvent and $\D(S^k)\hookrightarrow E_A$ holds for every $k\in\N.$ Furthermore, $S$ has upper Gaussian bounds by \cite{GrigoryanLectureNotes}, Corollary 5.5 and Theorem 6.1, since these results imply
	\begin{align*}
	\vert p(t,x,y)\vert \le \frac{C}{t^{d/2}} e^{-t} \exp \left\{-c \frac{\rho(x,y)^2}{t}\right\}, \qquad t>0,\quad (x,y)\in M\times M
	\end{align*}
	for the kernel $p$ of the semigroup $\left(e^{-tS}\right)_{t\ge 0}.$  This is sufficient for \eqref{GaussianEstimate} since \eqref{doublingDimension} implies 
	\begin{align*}
	\frac{1}{t^{d/2}}\lesssim \frac{\mu(B(x,1))}{\mu(B(x,t^{1/2}))}\le \frac{\mu(M)}{\mu(B(x,t^{1/2}))},\qquad t>0.
	\end{align*}
	% \cite{OuhabazGaussianBounds}, Section 7.2.,
	In particular, $S$ has generalized Gaussian bounds with $p_0=1$, see Remark \ref{GaussianRemark}. 
	%	Moreover, we have the following relation between the  scale of Sobolev spaces  from Appendix B and the fractional domains of the Laplace-Beltrami operator.
	Next note that by Proposition $\ref{PropertiesFractionalSobolev}$ a), the scale of Sobolev spaces on $M$ is given by
	\begin{align*}
	H^s(M)=R\left(S^{-\frac{s}{2}}\right)=\D \left(S^{\frac{s}{2}}\right)=\D \left((-\Delta_g)^{\frac{s}{2}}\right), \quad s> 0,
	\end{align*}
	where the last identity can be deduced from the spectral theorem and $(1+\lambda)^s\eqsim_s 1+\lambda^s.$
	\dela{Especially}{In particular}, we have $E_A=H^1(M).$	
	Let $1<\alpha <1+\frac{4}{(d-2)_+}.$  Then, by Proposition $\ref{PropertiesFractionalSobolev}$ c) and Lemma $\ref{spaceLemma},$ the embeddings
	\begin{align*}
	E_A=H^1(M)\hookrightarrow H^{-1}(M)=E_A^*, \qquad E_A=H^1(M)\hookrightarrow \LalphaPlusEins
	\end{align*}
	are compact. Hence, Assumption \ref{spaceAssumptions} holds with our choice of $A$ and $S.$\\
	
	\emph{Step 3.} In view of Proposition \ref{powerTypeNonlinearity}, Assumption \ref{nonlinearAssumptions} holds. Next, we check Assumption \ref{focusing}. 	Obviously, $F_\alpha^+$  fulfills i) for $ \alpha \in \left(1,1+\frac{4}{(d-2)_+}\right)$. Let us consider $F_\alpha^-$ for $ \alpha \in \left(1,1+\frac{4}{d}\right).$
	
	\emph{Case 1}. Let $d\ge 3.$ Then, $p_{\max}:=\frac{2d}{d-2}$ is the maximal exponent with $H^1(M)\hookrightarrow L^{p_{\max}}(M).$
	Since $\alpha \in (1,p_{\max}-1),$  we can interpolate $\LalphaPlusEins$ between ${H}$ and $L^{p_{\max}}(M)$ and get
	\begin{align*}
	\norm{u}_\LalphaPlusEins\le \norm{u}_{L^2}^{1-\theta} \norm{u}_{L^{p_{\max}}(M)}^\theta
	\lesssim \norm{u}_{L^2}^{1-\theta} \norm{u}_{H^1(M)}^\theta.
	%		\left(1+A\right)^\frac{1}{2}
	\end{align*}
	%		with $\theta\in (0,1)$ determined by
	%		\begin{align*}
	%		\frac{1}{\alpha+1}=\frac{1-\theta}{2}+\frac{\theta}{p_{\max}} \quad \Leftrightarrow \quad \theta=\frac{(\alpha-1)p_{\max}}{(\alpha+1) (p_{\max}-2)}.
	%		\end{align*}
	with $\theta=\frac{d(\alpha-1)}{2(\alpha+1)}\in (0,1).$
	The restriction $\beta_2:=\theta (\alpha+1)<2$ from Assumption $\ref{focusing}$ i') is equivalent to $\alpha <1+\frac{4}{d}.$\\	
	\emph{Case 2}.
	In the case $d=2,$  Assumption i') is guaranteed for $\alpha \in (1,3).$ To see this, take $p>\frac{4}{3-\alpha}$ which is equivalent to $\theta (\alpha+1)<2$ when $\theta\in (0,1)$ is chosen as
	\begin{align*}
	\theta=\frac{(\alpha-1)p}{(\alpha+1) (p-2)}.
	\end{align*}
	We have $H^1(M)\hookrightarrow L^p(M)$ and as above, interpolation between ${H}$ and $L^p(M)$ yields
	\begin{align*}
	\norm{u}_\LalphaPlusEins^{\alpha+1}\lesssim \norm{u}_{L^2}^{(\alpha+1)(1-\theta)}  \norm{u}_{\EA}^{(\alpha+1)\theta}.
	%		\left(1+A\right)^\frac{1}{2}
	\end{align*}	
	\emph{Case 3}. Let $d=1$ and fix $\varepsilon\in (0,\frac{1}{2}).$ Proposition $\ref{PropertiesFractionalSobolev}$ yields
	\begin{align*}
	H^{\frac{1}{2}+\varepsilon}(M)\hookrightarrow \LInfty,\qquad H^{\frac{1}{2}+\varepsilon}(M)=\left[L^2(M),H^1(M)\right]_{\frac{1}{2}+\varepsilon}.
	\end{align*}  Hence,
	\begin{align*}
	\norm{v}_{L^{\alpha+1}}^{\alpha+1}
	\le \norm{v}_{L^2}^2\norm{v}_{L^\infty}^{\alpha-1}
	\lesssim \norm{v}_{L^2}^2\norm{v}_{H^{\frac{1}{2}+\varepsilon}}^{\alpha-1}
	\lesssim\norm{v}_{L^2}^{2+(\frac{1}{2}-\varepsilon)(\alpha-1)}\norm{v}_{H^1}^{(\frac{1}{2}+\varepsilon)(\alpha-1)}.
	\end{align*}
	The condition $(\frac{1}{2}+\varepsilon)(\alpha-1)<2$ is equivalent to $\alpha<1+\frac{4}{1+2\varepsilon}.$ Choosing $\varepsilon$ small enough, we see that Assumption $\ref{focusing}$ i') is true for $\alpha\in (1,5).$\\
	
	\emph{Step 4.} The Steps 1-3 and Theorem \ref{MainTheorem} complete the proof of Corollary \ref{CorollaryManifold}.
\end{proof}

\begin{Remark}
	Note, that the 3-dimensional case with a cubic defocusing nonlinearity, i.e.
	\begin{align*}
	d=\alpha=3,\quad F(u)=F_3^+(u)=\vert u\vert^2 u
	\end{align*}
	is admissible in our framework. In the deterministic setting, i.e. $B=0,$ a global unique weak solution to this problem in $H^1(M)$ was constructed in \cite{Burq}, Theorem 3. Uniqueness in the stochastic case will be proved in a forthcoming paper.
	In \cite{BrzezniakStrichartz}, the authors considered the stochastic problem, but only obtained global solutions in the 2-dimensional case.
\end{Remark}

\subsection{Laplacians on bounded domains}\label{Domain}
%
%Let $X=\Rd$ and $M\subset \Rd$ be a bounded domain.
%We consider  the Dirichlet form
%$a_V: V \times V \to \C$ ,
%\begin{align*}
%a_V(u,v)=\int_M \nabla u  \nabla v \df x, \quad u,v\in V,
%\end{align*}
%with associated operator $\left(A_v,\D(A_V)\right)$ in the following situations.
%\begin{itemize}
%	\item[i)] $V=H^1_0(M)$
%	\item[ii)] $V=H^1(M)$ and $M$ has Lipschitz-boundary.
%\end{itemize}
%Note, that $A_{H^1_0(M)}$ is the Dirichlet Laplacian and $A_{H^1(M)}$ is the Neumann Laplacian. We remark, that one could consider more general forms and subspaces $V$ leading to uniformly elliptic operators with various boundary conditions, but for the sake of simplicity, we concentrate on the present two examples.
%
%
%In both cases, $V=E_{A_V}$ by the square root property (see \cite{OuhabazHeatEquations}, Theorem 8.1) and the  embedding $E_{A_V}\hookrightarrow \LalphaPlusEins$ is compact iff $1<\alpha<p_{\max}:=1+\frac{4}{(d-2)_+}.$ Hence, we obtain the same range of admissible powers $\alpha$ for the focusing and the defocusing nonlinearity as in the case of the Riemannian manifold without boundary. \\

We can apply Theorem $\ref{MainTheorem}$ to the stochastic NLSE on bounded domains.

%\begin{Korollar}\label{CorollaryDomain}
%	Let $M\subset \Rd$ a bounded domain and $\Delta$ the Laplacian with Dirichlet or Neumann boundary conditions. In the Neumann case, we assume that $\partial M$ is Lipschitz. Under assumption $\ref{stochasticAssumptions}$ and either i) or ii)
%	\begin{itemize}
%		\item[i)] $F(u)= \vert u\vert^{\alpha-1}u$ with $ \alpha \in \left(1,1+\frac{4}{(d-2)_+}\right)$,
%		\item[ii)]$F(u)= -\vert u\vert^{\alpha-1}u$ with $ \alpha \in \left(1,1+\frac{4}{d}\right)$,
%	\end{itemize}
%	the equation
%	\begin{equation}
%	\label{ProblemStratonovichDomain}
%	%\begin{gathered}
%	%(\operatorname{SNLSE})\end{gathered}
%	\left\{
%	\begin{aligned}
%	\df u(t)&= \left(\im \Delta u(t)-\im  F(u(t)\right) dt-\im B u(t) \circ \df W(t)\quad \text{in} \hspace{0.1cm} H^1(M),\\
%	u(0)&=u_0\in H^1(M),
%	\end{aligned}\right.
%	\end{equation}
%	has a martingale solution which satisfies
%		\begin{align*}
%		u\in L^2(\tilde{\Omega},L^\infty(0,T;H^1(M)))\cap L^{\alpha+1}(\tilde{\Omega},L^\infty(0,T;\LalphaPlusEins))
%		\end{align*}
%		with the estimates
%		\begin{align*}
%		\Etilde \left[ \norm{u}_{L^\infty(0,T;H^1(M))}^2\right]\le C,\qquad
%		\Etilde \left[ \norm{u}_{L^\infty(0,T;\LalphaPlusEins)}^{\alpha+1}\right]\le C
%		\end{align*}
%		for a constant $C=C( T,\norm{u_0}_\EA)>0.$
%\end{Korollar}

\begin{Korollar}\label{CorollaryDomain}
	Let $M\subset \Rd$ be a bounded domain and $\Delta$ be the Laplacian with Dirichlet or Neumann boundary conditions. In the Neumann case, we assume that $\partial M$ is Lipschitz. Under assumption $\ref{stochasticAssumptions}$ and either i) or ii)
	\begin{itemize}
		\item[i)] $F(u)= \vert u\vert^{\alpha-1}u$ with $ \alpha \in \left(1,1+\frac{4}{(d-2)_+}\right)$,
		\item[ii)]$F(u)= -\vert u\vert^{\alpha-1}u$ with $ \alpha \in \left(1,1+\frac{4}{d}\right),$
	\end{itemize}
	the equation
	\begin{equation}
	\label{ProblemStratonovichDomain}
	%\begin{gathered}
	%(\operatorname{SNLSE})\end{gathered}
	\left\{
	\begin{aligned}
	\df u(t)&= \left(\im \Delta u(t)-\im  F(u(t)\right) dt-\im B u(t) \circ \df W(t)\quad \text{in} \hspace{0.1cm} H^1(M),\\
	u(0)&=u_0\in H^1(M),
	\end{aligned}\right.
	\end{equation}
	has a martingale solution which satisfies
	\begin{align*}
	u\in L^q(\tilde{\Omega},L^\infty(0,T;H^1(M)))
	\end{align*}
	for all $q\in [1,\infty).$
\end{Korollar}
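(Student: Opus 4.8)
The plan is to mirror the proof of Corollary~\ref{CorollaryManifold} almost verbatim: I would check that Assumptions~\ref{spaceAssumptions}, \ref{nonlinearAssumptions} and \ref{focusing} hold for $A:=-\Delta$ with the prescribed boundary conditions (Assumption~\ref{stochasticAssumptions} being part of the hypotheses) and then invoke Theorem~\ref{MainTheorem}. For the underlying metric measure space I would take $X=\Rd$ with the Euclidean distance and Lebesgue measure; this is globally doubling with doubling dimension $d$, so \eqref{doubling} and \eqref{doublingDimension} are immediate, and $M$ is an open subset of finite measure as the framework requires. The auxiliary operator is $S:=I-\Delta$ (with the same boundary conditions), which is selfadjoint, strictly positive, commutes with $A$, has compact resolvent by the compact embedding below, and satisfies $\D(S^k)\hookrightarrow\EA$ for $k$ large by elliptic regularity.

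The first genuine step is the generalized Gaussian bound \eqref{generalizedGaussianEstimate} with $p_0=1$. For the Dirichlet Laplacian the heat kernel on $M$ is dominated pointwise by the free Gaussian kernel on $\Rd$ (by positivity and the parabolic maximum principle), so, together with $\mu(B(x,t^{1/2}))\eqsim t^{d/2}$ in $\Rd$, this yields \eqref{GaussianEstimate} with $m=2$, the extra factor $e^{-t}$ coming from the shift by $I$. For the Neumann Laplacian the analogous upper Gaussian estimate is exactly what the Lipschitz regularity of $\partial M$ buys us, via the Gaussian upper bounds available for the Neumann heat semigroup on Lipschitz domains. By Remark~\ref{GaussianRemark}~b) either bound gives the generalized Gaussian estimate with $p_0=1$, so Assumption~\ref{spaceAssumptions}~ii) holds.

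Next I would identify the energy space: $\EA=\D(\sqrtA)$ equals $H_0^1(M)$ in the Dirichlet case and $H^1(M)$ in the Neumann case, the latter again using the Lipschitz boundary to identify the form domain. The Rellich--Kondrachov theorem supplies the compact embedding $\EA\hookrightarrow\LalphaPlusEins$ precisely when $\alpha+1<p_{\max}=\tfrac{2d}{d-2}$ for $d\ge 3$ and for every finite $\alpha+1$ when $d\le 2$, which is exactly the range $\alpha\in(1,1+\tfrac{4}{(d-2)_+})$; the same theorem gives the compact embedding $\EA\hookrightarrow H$ and the Gelfand triple, completing Assumption~\ref{spaceAssumptions}. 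Assumption~\ref{nonlinearAssumptions} follows word for word from Proposition~\ref{powerTypeNonlinearity}, and in the focusing case Assumption~\ref{focusing}~i') reduces, through the same Gagliardo--Nirenberg interpolation between $H$ and $L^{p_{\max}}(M)$ carried out in Cases~1--3 of the manifold proof, to the constraint $\alpha<1+\tfrac{4}{d}$. With all assumptions verified, Theorem~\ref{MainTheorem} yields the martingale solution with the stated integrability $u\in L^q(\tilde\Omega,L^\infty(0,T;H^1(M)))$.

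The hard part is isolated in the first step: the only genuinely new analytic input compared with the compact-manifold case is the Gaussian heat-kernel bound for the \emph{Neumann} Laplacian on a Lipschitz domain, which is why the Lipschitz hypothesis on $\partial M$ appears only in the Neumann case. The Dirichlet bound and the doubling property are essentially free, and everything downstream is identical to the proof of Corollary~\ref{CorollaryManifold}. A secondary technical point to treat carefully, again dependent on the boundary regularity, is the clean identification of $\D(\sqrtA)$ with $H_0^1(M)$ resp. $H^1(M)$ and of the fractional domains $\D(S^{s/2})$ with the corresponding Sobolev scale, which underpins both the embedding $\D(S^k)\hookrightarrow\EA$ and the interpolation inequality in the focusing case.
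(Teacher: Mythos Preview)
Your proposal is correct and follows essentially the same route as the paper: verify Assumptions~\ref{spaceAssumptions}, \ref{nonlinearAssumptions}, \ref{focusing} for $A=-\Delta$ on the bounded domain and then apply Theorem~\ref{MainTheorem}, with the focusing exponent range handled exactly as in the manifold case.

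The only noteworthy difference is in the choice of the auxiliary operator $S$. You take $S=I-\Delta$ uniformly for both boundary conditions, whereas the paper distinguishes the two cases: for Dirichlet it takes $S=-\Delta_D$ directly (no shift is needed because $-\Delta_D$ is already strictly positive by Poincar\'e), and for Neumann it takes $S=\varepsilon I-\Delta_N$, emphasizing that the Neumann heat kernel only satisfies the Gaussian bound with an extra factor $e^{\varepsilon t}$ (Ouhabaz, Theorem~6.10), so a shift is genuinely required there. Your uniform choice $S=I-\Delta$ is a valid special case of the paper's $\varepsilon I-\Delta_N$ with $\varepsilon=1$, and in the Dirichlet case the superfluous shift does no harm. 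The paper also identifies $E_A=V$ via the square root property (Kato), which is the precise mechanism behind your ``clean identification of $\D(\sqrtA)$''; the full fractional-domain characterization you flag as a secondary concern is not actually needed, since the interpolation in the focusing case uses only the classical Sobolev embeddings for $H^1_0(M)$ resp.\ $H^1(M)$.
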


We remark, that one could consider uniformly elliptic operators and more general boundary conditions, but for the sake of simplicity, we concentrate on the present two examples.

\begin{proof} In the setting of the second section, we choose $X=\Rd.$ Hence, the doubling property is fulfilled.
	We consider  the Dirichlet form
	$a_V: V \times V \to \C$ ,
	\begin{align*}
	a_V(u,v)=\int_M \nabla u \cdot \nabla v \df x, \quad u,v\in V,
	\end{align*}
	with associated operator $\left(A_V,\D(A_V)\right)$ in the following two situations:
	\begin{itemize}
		\item[i)] $V=H^1_0(M)$
		\item[ii)] $V=H^1(M)$ and $M$ has Lipschitz-boundary.
	\end{itemize}
	The operator $A_{H^1_0(M)}=\Delta_D$ is the Dirichlet Laplacian and $A_{H^1(M)}=\Delta_N$ is the Neumann Laplacian.
	%Assume that the boundary is sufficiently smooth. Then, we have $h_n\in \LInfty$ such that we can use the model nonlinearity defined above. (\textbf{TODO: MORE DETAILS}) 	
	%Because of the boundary regularity of $M,$ we can apply a bootstrip argument with Theorem 8.13 in \cite{gilbargTrudinger}, to obtain $h_n\in H^{2l}(M)\hookrightarrow \LInfty$ for $l>\frac{d}{4}.$
	%\begin{align*}
	%	\D(\Delta_D)=H^2(M)\cap H^1_0(M),\qquad \D(\Delta_N)=\{u\in H^2(M): \partial_\nu u=0\}
	%\end{align*}	
	In both cases, $V=E_{A_V}$ by the square root property (see \cite{OuhabazHeatEquations}, Theorem 8.1) and the  embedding $E_{A_V}\hookrightarrow \LalphaPlusEins$ is compact iff $1<\alpha <p_{\max}-1$ with $p_{\max}:=2+\frac{4}{(d-2)_+}.$ Hence, we obtain the same range of admissible powers $\alpha$ for the focusing and the defocusing nonlinearity as in the case of the Riemannian manifold without boundary. \\
	
	In the Dirichlet case, we choose $S:=A=-\Delta_D,$ which is a strictly positive operator and  \cite{OuhabazHeatEquations}, Theorem 6.10, yields the Gaussian estimate for the associated semigroup. Hence, we can directly apply Theorem $\ref{MainTheorem}$ to construct a martingale solution of problem $\eqref{ProblemStratonovichDomain}.$ \\
	
	In the Neumann case, we have $0\in \sigma(\Delta_N)$ and the kernel of the semigroup $\left(e^{-t\Delta_N}\right)_{t\ge 0}$ only satisfies the estimate
	\begin{align*}
	\vert p(t,x,y)\vert \le \frac{C_\varepsilon}{\mu(B(x,t^\frac{1}{m}))} e^{\varepsilon t}\exp \left\{-c \left(\frac{\rho(x,y)^m}{t}\right)^{\frac{1}{m-1}}\right\}
	\end{align*}
	for all $t>0$ and almost all $(x,y)\in M\times M$  with an arbitrary $\varepsilon>0,$ see \cite{OuhabazHeatEquations}, Theorem 6.10. In order to get a strictly positive operator with the Gaussian bound from Remark \ref{GaussianRemark}, we fix $\varepsilon>0$ and choose  $S:=\varepsilon I -\Delta_N.$
	%	We apply Theorem $\ref{MainTheorem}$ to  $A:=-A_\varepsilon$ and the model nonlinearity $F:=F_{\alpha}^\pm$ with the same exponents as in the last subsection (note that Lemma $\ref{focusingPowersManifold}$ also holds on bounded domains) and obtain a Martingale solution $\left(\tilde{\Omega},\tilde{\F},\tilde{\Prob},\tilde{W},\tilde{\Filtration},u_\varepsilon\right).$ Now, we define
	%	\begin{align*}
	%	u(t):=e^{-\im \varepsilon t}u_\varepsilon(t), \quad t\in [0,T]
	%	\end{align*}
	%	and the It\^{o} formula yields
	%	\begin{align*}
	%	u(t)=&  u_0+ \int_0^t e^{-\im \varepsilon s}\left[\im (\Delta_N+\varepsilon) u_\varepsilon(s)-\im F_\alpha^\pm(u_\varepsilon(s))+\mu(u_\varepsilon(s))\right] \df s\\&-\int_0^t \im \varepsilon e^{-\im \varepsilon s}u_\varepsilon(s) \df s-\im \int_0^t e^{-\im \varepsilon s} B u_\varepsilon(s) \df \tilde{W}(s)\\
	%	=&u_0+ \int_0^t \left[\im \Delta_N u(s)-\im F_\alpha^\pm(u(s))+\mu(u(s))\right] \df s- \im \int_0^t B u(s) \df \tilde{W}(s)
	%	\end{align*}
	%	almost surely in $\EAdual.$ Here, we used
	%	\begin{align*}
	%	e^{-\im \varepsilon s} F_\alpha^\pm(u_\varepsilon(s))
	%	&=\pm\vert u_\varepsilon(s)\vert^{\alpha-1} e^{-\im \varepsilon s} u_\varepsilon(s)
	%	=\pm\vert e^{-\im \varepsilon s} u_\varepsilon(s)\vert^{\alpha-1} e^{-\im \varepsilon s} u_\varepsilon(s)\\
	%	&=\pm\vert  u\vert^{\alpha-1}  u(s)=F_\alpha^\pm(u(s)),\quad s\in [0,T].
	%	\end{align*}
	%	Hence, $\left(\tilde{\Omega},\tilde{\F},\tilde{\Prob},\tilde{W},\tilde{\Filtration},u\right)$ is a Martingale solution of the problem $\eqref{ProblemStratonovichDomain}.$
	Finally, the computation of the admissible range of exponents $\alpha$ in the focusing case is similar to the third step of the proof of Corollary \ref{CorollaryManifold}.
\end{proof}

\subsection{The fractional NLSE}
In this subsection, we show how the range of admissible nonlinearities change when the Laplacians in the previous examples are replaced by their fractional powers $\left(-\Delta\right)^\beta$ for $\beta>0.$
Exemplary, we treat the case of a compact riemannian manifold without boundary. Similar results are also true for the Dirichlet and the Neumann Laplacian on a bounded domain. Let us point out that there exists a huge literature on the subject of fractional NLSE apparently starting with a paper \cite{Laskin_2000} by N Laskin. \\
In the setting of Section $\ref{Manifold},$ we look at the fractional Laplace-Beltrami operator given by
$A:=\left(-\Delta_g\right)^\beta$ for $\beta>0,$ which is also a selfadjoint positive operator by the functional calculus and once again, we choose $S:=I-\Delta_g.$
%The upper Gaussian estimates transfer from $-\Delta_g$ to $A.$
We apply Theorem $\ref{MainTheorem}$ with
\begin{align*}
\EA=\D(\sqrtA)=\D\left(\left(I-\Delta_g\right)^\frac{\beta}{2}\right)=H^\beta(M),
\end{align*}
see Proposition $\ref{PropertiesFractionalSobolev}$ a).  Note that $\D(S^k)\hookrightarrow E_A$ holds for every $k\in\N$ with $k\ge \frac{\beta}{2}.$ The range of admissible pairs $(\alpha,\beta)$ in the defocusing case is given by
\begin{align*}
\beta>\frac{d}{2}-\frac{d}{\alpha+1} \quad \Leftrightarrow \quad\alpha \in \left(1,1+\frac{4\beta}{(d-2\beta)_+}\right),
\end{align*}
since this is exactly the range of $\alpha$ and $\beta$ with a compact embedding $E_A \hookrightarrow \LalphaPlusEins$ (see  Proposition $\ref{PropertiesFractionalSobolev}$ c)).
In the focusing case, analogous calculations as in the third step of the proof of Corollary \ref{CorollaryManifold}  (with the distinction of $\beta> \frac{d}{2},$ $\beta=\frac{d}{2}$ and $\beta<\frac{d}{2}$) imply that the range of exponents reduces to
\begin{align*}
\alpha\in \left(1,1+\frac{4\beta}{d}\right).
\end{align*}
Hence, we get the following Corollary.

\begin{Korollar}\label{CorollaryManifoldFractional}
	Let $(M,g)$ be a compact $d$-dimensional riemannian manifold without boundary, $\beta>0$ and \\$u_0\in H^\beta(M).$ Under assumption $\ref{stochasticAssumptions}$ and either i) or ii)
	\begin{itemize}
		\item[i)] $F(u)= \vert u\vert^{\alpha-1}u$ with $ \alpha \in \left(1,1+\frac{4\beta}{(d-2\beta)_+}\right)$,
		\item[ii)]$F(u)= -\vert u\vert^{\alpha-1}u$ with $ \alpha \in \left(1,1+\frac{4\beta}{d}\right),$
	\end{itemize}
	the equation
	\begin{equation}
	\label{ProblemStratonovichManifoldFractional}
	%\begin{gathered}
	%(\operatorname{SNLSE})\end{gathered}
	\left\{
	\begin{aligned}
	\df u(t)&= \left(-\im \left(-\Delta_g\right)^\beta u(t)-\im  F(u(t)\right) dt-\im B u(t) \circ \df W(t),\quad t>0,\\
	u(0)&=u_0\in H^\beta(M),
	\end{aligned}\right.
	\end{equation}
	has a martingale solution $\left(\tilde{\Omega},\tilde{\F},\tilde{\Prob},\tilde{W},\tilde{\Filtration},u\right)$ in $ H^\beta(M)$
	with
	\begin{align}\label{propertySolutionFractional}
	u\in L^q(\tilde{\Omega},L^\infty(0,T;H^\beta(M)))
	\end{align}
	for all $q\in [1,\infty).$
\end{Korollar}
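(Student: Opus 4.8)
The plan is to deduce Corollary~\ref{CorollaryManifoldFractional} from Theorem~\ref{MainTheorem} by verifying the four Assumptions \ref{spaceAssumptions}, \ref{nonlinearAssumptions}, \ref{focusing}, \ref{stochasticAssumptions} for the choice $A:=(-\Delta_g)^\beta$ and $S:=I-\Delta_g$, exactly as in the proofs of Corollaries \ref{CorollaryManifold} and \ref{CorollaryDomain}. The doubling property of $(M,\rho,\mu)$, where $\rho$ is the geodesic distance and $\mu$ the Riemannian volume, is established in Step~1 of the proof of Corollary~\ref{CorollaryManifold} and carries over verbatim since it concerns only the metric measure space and not the operator $A$. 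Likewise, since $S=I-\Delta_g$ is unchanged, its strict positivity, compact resolvent, commutation with $A$ (both being functions of $-\Delta_g$ via the spectral theorem), and its generalized Gaussian $(1,\infty)$-bounds are already verified in Step~2 of that proof; I would only add the remark that $\D(S^k)=H^{2k}(M)\hookrightarrow H^\beta(M)=E_A$ holds as soon as $2k\ge\beta$, which is why the auxiliary regularity exponent $k$ must be taken $\ge\beta/2$.

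The genuinely $\beta$-dependent input is the identification $E_A=\D(\sqrtA)=\D\big((I-\Delta_g)^{\beta/2}\big)=H^\beta(M)$, which follows from Proposition~\ref{PropertiesFractionalSobolev}~a) together with the elementary spectral equivalence $(1+\lambda)^\beta\eqsim_\beta 1+\lambda^\beta$ for $\lambda\ge 0$ (so that $\D((-\Delta_g)^{\beta/2})$ and $\D((I-\Delta_g)^{\beta/2})$ coincide with equivalent norms). The compactness of the embedding $E_A=H^\beta(M)\hookrightarrow\LalphaPlusEins$ required in Assumption~\ref{spaceAssumptions}~iv) is precisely the statement of Proposition~\ref{PropertiesFractionalSobolev}~c) and is equivalent to the range $\beta>\frac d2-\frac d{\alpha+1}$, i.e. $\alpha<1+\frac{4\beta}{(d-2\beta)_+}$; this is the defocusing range in i). For the nonlinearity, Proposition~\ref{powerTypeNonlinearity} already gives Assumption~\ref{nonlinearAssumptions} for $F_\alpha^\pm$ with antiderivative $\Fhat_\alpha^\pm$, independently of $A$, so nothing new is needed there.

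The only step requiring real work is Assumption~\ref{focusing}~i') in the focusing case ii), where one must produce $\theta\in(0,\tfrac{2}{\alpha+1})$ with $(H,E_A)_{\theta,1}\hookrightarrow\LalphaPlusEins$, equivalently an interpolation inequality $\norm{u}_\LalphaPlusEins^{\alpha+1}\lesssim\norm{u}_{L^2}^{\beta_1}\norm{u}_{H^\beta}^{\beta_2}$ with $\beta_2<2$. The plan is to repeat the case distinction of Step~3 of the proof of Corollary~\ref{CorollaryManifold}, now splitting according to $\beta>\tfrac d2$, $\beta=\tfrac d2$, and $\beta<\tfrac d2$ rather than $d\ge 3$, $d=2$, $d=1$. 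In the subcritical regime $\beta<\tfrac d2$ one interpolates $\LalphaPlusEins$ between $H=L^2(M)$ and the endpoint $L^{p_{\max}}(M)$ with $p_{\max}=\tfrac{2d}{d-2\beta}$ (valid by Proposition~\ref{PropertiesFractionalSobolev} and the embedding $H^\beta(M)\hookrightarrow L^{p_{\max}}(M)$), obtaining $\theta=\tfrac{d(\alpha-1)}{2\beta(\alpha+1)}$; the constraint $\beta_2=\theta(\alpha+1)<2$ then reduces exactly to $\alpha<1+\tfrac{4\beta}{d}$. The critical case $\beta=\tfrac d2$ and the supercritical case $\beta>\tfrac d2$ (where $H^\beta\hookrightarrow L^\infty$) are handled by interpolating through an intermediate $L^p$ or through $H^{d/2+\varepsilon}$ as in Case~2 and Case~3 of the cited proof, and in each case the admissible range collapses to $\alpha\in(1,1+\tfrac{4\beta}{d})$, which is the range stated in ii). Finally, Assumption~\ref{stochasticAssumptions} is imposed directly in the hypothesis of the Corollary, so once the above verifications are complete, Theorem~\ref{MainTheorem} applies and yields the martingale solution in $H^\beta(M)$ together with the regularity \eqref{propertySolutionFractional} and mass conservation. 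The main obstacle is purely the bookkeeping of the three-fold case distinction in the focusing interpolation estimate; the rest is a transfer of the already-proved manifold argument with $1$ replaced by $\beta$.
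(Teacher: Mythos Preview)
Your proposal is correct and follows essentially the same approach as the paper: set $A=(-\Delta_g)^\beta$, $S=I-\Delta_g$, identify $E_A=H^\beta(M)$ via Proposition~\ref{PropertiesFractionalSobolev}~a), invoke Proposition~\ref{PropertiesFractionalSobolev}~c) for the compact embedding giving the defocusing range, and in the focusing case repeat the interpolation argument from Step~3 of the proof of Corollary~\ref{CorollaryManifold} with the three-fold distinction $\beta<\tfrac d2$, $\beta=\tfrac d2$, $\beta>\tfrac d2$. The paper's own argument is given in the paragraph preceding the statement of the Corollary and matches your outline point for point, only with less detail.
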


\subsection{The Model Noise}\label{NoiseSection}
In Corollaries $\ref{CorollaryManifold}$ and $\ref{CorollaryDomain},$ we considered the general linear noise from Assumption $\ref{stochasticAssumptions}.$
If $M$ is either a compact riemannian manifold or a bounded domain, let us consider the following example.
Let  $\left(B_m\right)_{m\in\N}$  the multiplication operators given by
\begin{align*}
B_m u:= e_m u
\end{align*}
for $u\in {H}$ with real valued functions $e_m,$ $m\in \N,$ that satisfy
\begin{align}\label{assumptionEm}
e_m \in F:=\begin{cases}
H^{1,d}(M) \cap \LInfty, &  d\ge 3,\\
H^{1,q}(M),& d=2,\\
H^{1}(M),& d=1,\\
%		 \\
%		H^1(M), &\text{else}.
\end{cases}
\end{align}
for some $q>2$ in the case $d=2.$
Moreover, we assume
\begin{align*}
\sumM\norm{e_m}_{F}^2<\infty,
\end{align*}
%	To verify this, we check the Assumption $\ref{stochasticAssumptions}$ and start with $d\ge 3.$
We get
\begin{align*}
\norm{e_m u}_{L^p}\le \norm{e_m}_\LInfty \norm{u}_{L^p},\qquad u\in L^p(M),
\end{align*}
for $p\in [1,\infty].$ First, let $d\ge 3.$
The Sobolev embedding $H^1(M) \hookrightarrow L^{p_{\max}}(M)$ for $p_{\max}=\frac{2d}{d-2}$ and the H\"older inequality with $\frac{1}{2}=\frac{1}{d}+\frac{1}{p_{\max}}$ yield
\begin{align*}
\norm{\nabla \left(e_m u\right)}_{L^2} \le& \norm{ u \nabla e_m }_{L^2}+ \norm{ e_m \nabla u }_{L^2}
\le \norm{\nabla e_m}_{L^d} \norm{u}_{L^{p_{\max}}}+\norm{e_m}_\LInfty \norm{\nabla u}_{L^2}\\
\lesssim& \left(\norm{\nabla e_m}_{L^d}+\norm{e_m}_\LInfty\right) \norm{u}_{H^1},\qquad u\in H^1(M).
\end{align*}
Now, let $d=2$ and $q>2$ as in \eqref{assumptionEm}. Then, we have $F \hookrightarrow  \LInfty.$
%	since
%	%	\begin{align*}
%	%		H^{1,q}(M) \hookrightarrow C^{0,1-\frac{d}{q}}(M)\hookrightarrow \LInfty.
%	%	\end{align*}
%	\begin{align*}
%	H^{1,q}(M) \hookrightarrow \LInfty,\qquad q>2.
%	\end{align*}
Furthermore, we choose $p>2$ according to $\frac{1}{2}=\frac{1}{q}+\frac{1}{p}$  and observe $H^1(M)\hookrightarrow L^p(M).$   As above, we obtain
\begin{align*}
\norm{\nabla \left(e_m u\right)}_{L^2}
&\lesssim \left(\norm{\nabla e_m}_{L^q}+\norm{e_m}_\LInfty\right) \norm{u}_{H^1}\lesssim \norm{e_m}_{H^{1,q}}\norm{u}_{H^1},\qquad u\in H^1(M).
\end{align*}
Hence, we conclude in both cases
\begin{align*}
\norm{e_m u}_{H^1}\lesssim \norm{e_m}_F \norm{u}_{H^1},\qquad m\in\N,\quad u\in H^1(M).
\end{align*}
For $d=1,$ this inequality directly follows from the embedding $H^1(M) \hookrightarrow \LInfty.$ Therefore, we obtain
\begin{align*}
\sumM \norm{B_m}_{\mathcal{L}(\EA)}^2<\infty.
\end{align*}
for arbitrary dimension $d.$
The  properties of $B_m$ as operator in $\mathcal{L}(\LalphaPlusEins)$ and in $\mathcal{L}(L^2(M))$ can be deduced from the embedding $F\hookrightarrow \LInfty.$

We close this section by remarks on natural generalizations of the linear, conservative noise considered in this paper. The details have been worked out in the second author's dissertation \cite{FhornungDiss}.

\begin{Remark}
	As in \cite{BrzezniakStrichartz}, Section 8,
	it is possible to replace the linear Stratonovich noise in Theorem  \ref{MainTheorem}, see also Assumption \ref{stochasticAssumptions}, by a nonlinear one of the form
	\begin{align*}
	B_m(u):= -\im B_m\left( g(\vert u\vert^2)u\right),\qquad \mu(u):=-\frac{1}{2}\sumM B_m^2 \left(g(\vert u\vert^2)^2 u\right),
	\end{align*}
	where we assume the Lipschitz and linear growth conditions
	\begin{align*}
	\norm{g(\vert u\vert^2)^j u}_\EA\lesssim \norm{u}_\EA,\qquad \norm{g(\vert u\vert^2)^j u}_{L^p}\lesssim \norm{u}_{L^p},\qquad \norm{g(\vert u\vert^2)^j u-g(\vert v\vert^2)^jv}_{L^p}\lesssim \norm{u-v}_{L^p}
	\end{align*}
	for $j\in \left\{1,2\right\}$ and $p\in \left\{\alpha+1,2\right\}.$ In the case of $H^1$-based energy spaces, i.e. the $A=-\Delta$ on a bounded domain or $A=-\Delta_g$ on a riemannian manifold, one can take
	$g\in C^2([0,\infty),\R)$ which satisfies the following conditions:
	\begin{align}\label{assumptionSaturatedNonlinearity}
	\sup_{r>0} \vert g(r)\vert<\infty,\qquad \sup_{r>0} (1+r)\vert g'(r)\vert<\infty,\qquad  \sup_{r>0} (1+r^\frac{3}{2})\vert g''(r)\vert<\infty.
	\end{align}
	This kind of nonlinearity is often called saturated and typical examples are given by
	\begin{align*}
	g_1(r)=\frac{r}{1+\sigma r},\qquad g_2(r)=\frac{r(2+\sigma r)}{(1+\sigma r)^2},\qquad g_3(r)=\frac{\log(1+\sigma r)}{1+\log(1+\sigma r)},\qquad r\in [0,\infty),
	\end{align*}
	for a constant $\sigma>0.$
	For the Galerkin equation, we then take
	\begin{equation*}
	%\begin{gathered}
	%(\operatorname{SNLSE})\end{gathered}
	\left\{
	\begin{aligned}\label{GalerkinNonlinearRemark}
	\df u_n&= \left(-\im A u_n-\im P_n F\left( u_n\right) -\frac{1}{2}\sumM S_n B_m^2\left(g(\vert u_n\vert^2)^2 u_n\right)\right) \df t-\im \sumM S_n B_m( g(\vert u_n\vert^2) u_n)  \df \beta_m,
	\\
	u_n(0)&=P_n u_0.
	\end{aligned}\right.
	\end{equation*}
	Unfortunately, this approximation does not respect mass conservation, but one still has
	\begin{align}\label{massEstimateNonConservative}
	\sup_{n\in\N}\E \left[\sup_{t\in [0,T]}\norm{u_n(t)}_H^2\right]\lesssim 1,
	\end{align}
	which is enough for our purpose.
\end{Remark}

\begin{Remark}
	Another possible generalization of the noise is to drop the assumption that $B_m,$ $m\in\N,$ is selfadjoint. Then, the correction term $\mu$ has the form
	\begin{align*}
	\mu(u):= -\frac{1}{2}\sumM B_m^* B_m u.
	\end{align*}
	This kind of noise is called non-conservative and was considered in \cite{BarbuH1} and \cite{FHornung}. The existence result is then based on the approximation
	\begin{equation*}
	%\begin{gathered}
	%(\operatorname{SNLSE})\end{gathered}
	\left\{
	\begin{aligned}\label{GalerkinNonConservativeRemark}
	\df u_n&= \left(-\im A u_n-\im P_n F\left( u_n\right) -\frac{1}{2}\sumM S_n B_m^* B_m u_n\right) \df t-\im \sumM S_n B_m u_n  \df \beta_m,
	\\
	u_n(0)&=P_n u_0,
	\end{aligned}\right.
	\end{equation*}
	and the a priori estimates as well as the convergence results can be proved analogously. We only have to replace mass conservation by the estimate $\eqref{massEstimateNonConservative}.$ The uniqueness result in section 7, however, only holds for selfadjoint $B_m,$ since this is the crucial assumption in Lemma \ref{SolutionDifferenceLemma}.
\end{Remark}

\section{Compactness and Tightness Criteria}	\label{CompactnessSection}

This section is devoted to the compactness results which will be used to get  a martingale solution of $\eqref{ProblemStratonovich}$ by the Faedo-Galerkin method.

Let $A$ and $\alpha>1$ be chosen according to Assumption $\ref{spaceAssumptions}.$ We recall that the energy space $\EA$ is defined by $\EA:=\D(\sqrtA).$
We start with a criterion for convergence of a sequence in $\stetigBall,$ where the ball $\mathbb{B}_{\EA}^r$ is equipped with the weak topology.

\begin{Lemma}\label{convergenceStetigBall}
	Let $r>0$ and $\left(u_n\right)_{n\in\N} \subset \LinftyEA$ be a sequence with the properties
	\begin{enumerate}
		\item[a)] $\sup_{n\in\N} \norm{u_n}_\LinftyEA \le r$,
		\item[b)] $u_n\to u$ in $\StetigEAdual$ for $n\to \infty.$
	\end{enumerate}
	Then $u_n,u\in \stetigBall$ for all $n\in\N$ and $u_n \to u$ in $\stetigBall$ for $n\to \infty.$
\end{Lemma}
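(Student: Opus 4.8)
The plan is to reduce the whole statement to one topological fact: \emph{on the bounded set $\ballHeins$ the weak topology of $\EA$ coincides with the norm topology of $\EAdual$}. Once this is in hand, both the claimed path continuity and the claimed convergence will follow almost immediately from hypothesis b). I expect this identification of topologies to be the only real obstacle; everything around it is routine.

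To establish the key fact I would use that $\EA \hookrightarrow H$ is compact (Lemma~\ref{spaceLemma} b)), whence $H \hookrightarrow \EAdual$, and therefore $\EA \hookrightarrow \EAdual$, is compact as well. If $x_j \rightharpoonup x$ weakly in $\EA$ with $x_j,x \in \ballHeins$, then the compact embedding sends this weakly convergent sequence to a norm-convergent one in $H$, and the continuous embedding $H \hookrightarrow \EAdual$ gives $x_j \to x$ in $\EAdual$. Conversely, if $x_j \to x$ in $\EAdual$ with $x_j \in \ballHeins$, then since $\ballHeins$ is weakly compact in the Hilbert space $\EA$ and the weak topology is metrizable on it (the Hilbert space $\EA$ is separable, e.g.\ by the compact resolvent of $S$), every subsequence admits a further subsequence converging weakly in $\EA$ to some $y \in \ballHeins$; by the first implication this weak limit equals the $\EAdual$-limit $x$, so $y=x$, and a standard subsequence argument yields $x_j \rightharpoonup x$ in $\EA$. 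Thus the two topologies agree on $\ballHeins$, which is therefore a compact metric space in either description and, in particular, closed in $\EAdual$.

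Next I would upgrade the almost-everywhere bound in a) to a pointwise one. For fixed $n$ the set $\{t : \norm{u_n(t)}_\EA \le r\}$ has full measure, hence is dense in $[0,T]$; since $u_n \in \StetigEAdual$ and $\ballHeins$ is closed in $\EAdual$, continuity forces $u_n(t) \in \ballHeins$ for every $t\in[0,T]$. Letting $u_n(t)\to u(t)$ in $\EAdual$ for each fixed $t$ and using closedness of $\ballHeins$ once more gives $u(t)\in\ballHeins$ for all $t$. Hence $u_n$ and $u$ are $\ballHeins$-valued, and being continuous into $\EAdual$ they are continuous into $\ballHeins$ equipped with the $\EAdual$-norm topology, which by the key fact is the weak topology of $\EA$; that is, $u_n,u\in\stetigBall$.

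Finally, convergence in $\stetigBall$ is uniform convergence with respect to any metric inducing the weak topology on $\ballHeins$. By the key fact this topology is exactly the $\EAdual$-norm topology, so — all paths taking values in $\ballHeins$ — convergence in $\stetigBall$ is precisely uniform $\EAdual$-convergence on $[0,T]$, which is hypothesis b). If one prefers a fixed weak metric $q$ on $\ballHeins$, one notes that the identity map from $(\ballHeins,\norm{\cdot}_\EAdual)$ onto $(\ballHeins,q)$ is a continuous bijection of compact metric spaces, hence uniformly continuous, so $\sup_{t}\norm{u_n(t)-u(t)}_\EAdual\to 0$ forces $\sup_{t} q(u_n(t),u(t))\to 0$. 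The decisive ingredient throughout is the compactness of $\EA\hookrightarrow H$, without which weak $\EA$-convergence would not upgrade to $\EAdual$-norm convergence.
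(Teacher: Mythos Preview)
Your proof is correct, but the route differs from the paper's. The paper does not identify the two topologies on the ball; instead it invokes the Strauss Lemma~\ref{StraussLemma} to place each $u_n$ in $\weaklyContinousEA$, obtains $u\in\LinftyEA$ via Banach--Alaoglu on $L^\infty(0,T;\EA)$ (weak-$*$ compactness, then uniqueness of limits in $L^\infty(0,T;\EAdual)$), and then proves $\sup_t\vert\duality{u_n(t)-u(t)}{h}\vert\to 0$ for every $h\in\EAdual$ by a two-step density argument: first for $h\in\EA$ directly from b), then for general $h\in\EAdual$ using the uniform $\EA$-bound on $u_n,u$ and density of $\EA$ in $\EAdual$; the passage from $\weaklyContinousEA$-convergence to $\stetigBall$-convergence is Lemma~\ref{ConvergenceInStetigBall}. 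The notable structural difference is that the paper's argument uses only the dense continuous embedding $\EA\hookrightarrow\EAdual$ and reflexivity, whereas your topological identification hinges on the \emph{compactness} of $\EA\hookrightarrow\EAdual$ (via Lemma~\ref{spaceLemma}\,b)); in the present setting this compactness is available, so your approach is legitimate and arguably cleaner, but it is less portable to Gelfand triples without a compact pivot. Conversely, your treatment of $u(t)\in\ballHeins$ for every $t$ (closedness of the ball in $\EAdual$ plus pointwise limits) is more elementary than the paper's detour through weak-$*$ limits in $L^\infty(0,T;\EA)$.
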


\begin{proof}
	The Strauss-Lemma $\ref{StraussLemma}$ and the assumptions guarantee that
	\begin{align*}
	u_n \in \StetigEAdual \cap \LinftyEA \subset \weaklyContinousEA
	\end{align*}
	for all $n\in \N$ and $\sup_{t\in [0,T]} \norm{u_n(t)}_\EA \le  r.$ Hence, we infer that $u_n\in \stetigBall$ for all $n\in \N.$ For $h\in \EA$
	\begin{align*}
	\sup_{s\in[0,T]} \left\vert \duality{u_n(s)-u(s)}{h}\right\vert \le \norm{u_n-u}_{C([0,T],\EAdual)} \norm{h}_\EA \to 0,\qquad n\to \infty.
	\end{align*}
	By a) and Banach-Alaoglu, we get a subsequence $\left(u_{n_k}\right)_{k\in\N}$ and $v\in \LinftyEA$ with $u_{n_k} \rightharpoonup^* v$ in $\LinftyEA$ and by the uniqueness of the weak star limit in  $L^\infty(0,T;\EAdual),$ we conclude  $u=v \in \LinftyEA$ with $\norm{u}_\LinftyEA\le r.$\\	
	Let $\varepsilon>0$ and $h\in \EAdual.$ By the density of $\EA$ in $\EAdual,$ we choose $h_\varepsilon\in \EA$ with $\norm{h-h_\varepsilon}_\EAdual\le \frac{\varepsilon}{4 r}$ and obtain for large $n\in \N$
	\begin{align*}
	\left\vert \duality{u_n(s)-u(s)}{h}\right\vert&\le
	\left\vert \duality{u_n(s)-u(s)}{h-h_\varepsilon}\right\vert+
	\left\vert \duality{u_n(s)-u(s)}{h_\varepsilon}\right\vert\\
	&\le \norm{u_n(s)-u(s)}_\EA \norm{h-h_\varepsilon}_\EAdual+ \left\vert \duality{u_n(s)-u(s)}{h_\varepsilon}\right\vert\\
	&\le 2 r \frac{\varepsilon}{4 r}+\frac{\varepsilon}{2}=\varepsilon
	\end{align*}
	independent of $s\in[0,T].$ This implies
	$\sup_{s\in[0,T]} \left\vert \duality{u_n(s)-u(s)}{h}\right\vert \to 0$
	for $n\to \infty$ and all $h\in  \EAdual,$ i.e. $u_n \to u$ in $\weaklyContinousEA.$ By Lemma \ref{ConvergenceInStetigBall}, we obtain the assertion.
\end{proof}

We define a  Banach space $\tilde{Z}_T$ by
\begin{align*}
\tilde{Z}_T:=\StetigEAdual\cap \LalphaPlusEinsAlphaPlusEins
\end{align*}
and a locally convex space $Z_T$ by
\begin{align*}
Z_T:=\tilde{Z}_T \cap \weaklyContinousEA.
\end{align*}
The latter is equipped  with the Borel $\sigma$-algebra, i.e. the $\sigma$-algebra generated by the open sets in the locally convex topology of $Z_T.$
In the next Proposition, we give a criterion for compactness in $Z_T.$

\begin{Prop}\label{CompactnessDeterministic}
	Let $K$ be a subset of $Z_T$ and $r>0$ such that
	\begin{enumerate}
		\item[a)] $
		\sup_{u\in K} \Vert u\Vert_{\LinftyEA}\le r ;
		$
		\item[b)] $K$ is equicontinuous in $\StetigEAdual,$ i.e.
		\begin{align*}
		\lim_{\delta \to 0} \sup_{u\in K} \sup_{\vert t-s\vert\le \delta} \Vert u(t)-u(s)\Vert_{\EAdual}=0.
		\end{align*}
	\end{enumerate}
	Then, $K$ is relatively compact in $Z_T.$
\end{Prop}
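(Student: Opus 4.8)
The plan is to reduce relative compactness in the locally convex space $Z_T$ to relative \emph{sequential} compactness and to extract, from an arbitrary sequence $(u_n)_{n\in\N}\subset K$, a single subsequence converging simultaneously in all three constituent topologies $\StetigEAdual$, $\LalphaPlusEinsAlphaPlusEins$ and $\weaklyContinousEA$. Before starting I would record two embedding facts that drive the argument. By Lemma \ref{spaceLemma} b) and the Gelfand triple $\EA\hookrightarrow H\hookrightarrow\EAdual$, the embedding $\EA\hookrightarrow\EAdual$ is compact. Moreover, by Assumption \ref{spaceAssumptions} iv) together with $\mu(M)<\infty$ (so that $\LalphaPlusEins\hookrightarrow\LalphaPlusEinsDual=(\LalphaPlusEinsKurz)^*\hookrightarrow\EAdual$, using $\alpha>1$), one obtains the chain $\EA\hookrightarrow\LalphaPlusEins\hookrightarrow\EAdual$ in which the first embedding is compact.

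First I would handle $\StetigEAdual$. By the uniform bound (a), for each fixed $t$ the family $\{u_n(t):n\in\N\}$ is bounded in $\EA$, hence relatively compact in $\EAdual$ by compactness of $\EA\hookrightarrow\EAdual$; together with the equicontinuity (b), the Arzel\`a--Ascoli theorem produces a subsequence (not relabelled) and a limit $u$ with $u_n\to u$ in $\StetigEAdual$. The weak-continuous component then comes for free: since $\sup_{n}\norm{u_n}_\LinftyEA\le r$ and $u_n\to u$ in $\StetigEAdual$, Lemma \ref{convergenceStetigBall} yields $u_n,u\in\stetigBall$ and $u_n\to u$ in $\stetigBall$, i.e. convergence in $\weaklyContinousEA$.

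For the $\LalphaPlusEinsAlphaPlusEins$-component I would invoke the Ehrling-type interpolation inequality attached to the chain $\EA\hookrightarrow\LalphaPlusEins\hookrightarrow\EAdual$ with compact first embedding: for every $\varepsilon>0$ there is $C_\varepsilon>0$ with $\norm{v}_\LalphaPlusEins\le\varepsilon\norm{v}_\EA+C_\varepsilon\norm{v}_\EAdual$ for all $v\in\EA$. Applied to $v=u_n(t)-u(t)$, bounding $\norm{u_n(t)-u(t)}_\EA\le 2r$ by (a) and using $\sup_{t\in[0,T]}\norm{u_n(t)-u(t)}_\EAdual\to0$ from the first step, this gives $\sup_{t\in[0,T]}\norm{u_n(t)-u(t)}_\LalphaPlusEins\to0$; integrating the $(\alpha+1)$-th power over the finite interval $[0,T]$ then yields $u_n\to u$ in $\LalphaPlusEinsAlphaPlusEins$. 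Thus $u_n\to u$ in $Z_T$, so $K$ is relatively sequentially compact.

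It remains to upgrade sequential compactness to compactness, and this is where I expect the only genuine subtlety to lie, because $Z_T$ is not metrizable due to the weak topology. The remedy is to confine everything to the set $B_r:=\{u:\norm{u}_\LinftyEA\le r\}$, which contains $K$ by (a) and is $Z_T$-closed (weak lower semicontinuity of $\norm{\cdot}_\EA$ passes the bound to any limit). On $B_r$ each of the three topologies is metrizable: the two normed ones trivially, and $\weaklyContinousEA$ because the weak topology on the bounded, separable set $\ballHeins$ is metrizable (this is exactly the content underlying Lemma \ref{ConvergenceInStetigBall}). Hence the $Z_T$-topology restricted to $B_r$ is metrizable, so relative sequential compactness of $K$ coincides with relative compactness, and $\overline{K}\subset B_r$ is compact. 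The analytic heart of the argument is the interpolation step, which relies crucially on the compactness of $\EA\hookrightarrow\LalphaPlusEins$ from Assumption \ref{spaceAssumptions} iv); the rest is Arzel\`a--Ascoli and careful bookkeeping on the ball.
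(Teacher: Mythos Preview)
Your proof is correct and follows essentially the same route as the paper: Arzel\`a--Ascoli for $\StetigEAdual$, Lemma \ref{convergenceStetigBall} for $\weaklyContinousEA$, and the Lions/Ehrling interpolation $\EA\hookrightarrow\LalphaPlusEins\hookrightarrow\EAdual$ for $\LalphaPlusEinsAlphaPlusEins$. Two minor differences are worth noting. First, you apply Ehrling to $u_n(t)-u(t)$ (using $u\in\stetigBall$ from Step~2) and obtain the stronger conclusion $u_n\to u$ in $L^\infty(0,T;\LalphaPlusEins)$, whereas the paper applies Lions' Lemma in Cauchy form to $z_n-z_m$; both work. Second, your final paragraph on metrizability of the $Z_T$-topology restricted to $B_r$ is a genuine addition: the paper's proof literally establishes only relative \emph{sequential} compactness and does not address the passage to topological compactness in the locally convex space $Z_T$. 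Your argument---that on $B_r$ each of the three topologies is metrizable (the weak one via the metric on $\ballHeins$ from Appendix~A), hence so is their supremum---cleanly closes this point and is needed for the application in Proposition~\ref{TightnessCriterion}, where one takes a closure and invokes compactness in the sense required by Proposition~\ref{SkohorodJakubowski}.
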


\begin{proof}
	Let $K$ be a subset of $Z_T$ such that the assumptions $a)$ and $b)$ are fullfilled and $\left(z_n\right)_{n\in\N}\subset K.$ We want to construct a subsequence converging in $\LalphaPlusEinsAlphaPlusEins,$ $\StetigEAdual$ and $\weaklyContinousEA.$\\
	
	\emph{Step 1:} By $a),$ we can choose a constant $C>0$ and for each $n\in\N$ a  null set $I_n$ with
	$\Vert z_n(t)\Vert_\EA \le C$
	for all $t\in [0,T]\setminus I_n.$ The set $I:= \bigcup_{n\in\N} I_n$ is also a nullset and for each $t\in [0,T]\setminus I,$ the sequence $\left(z_n(t)\right)_{n\in\N}$ is bounded in $\EA.$ \\
	Let $\left(t_j\right)_{j\in\N}\subset [0,T]\setminus I$ be a sequence, which is dense in $[0,T].$ By Lemma $\ref{spaceLemma},$ the embedding $\EA\hookrightarrow {H}$ is compact, which yields that $\EA \hookrightarrow \EAdual$ is also compact. Therefore, we can choose for each $j\in\N$ a Cauchy subsequence in $\EAdual$ again denoted by $\left(z_n(t_j)\right)_{n\in\N}.$ By a diagonalisation argument, one obtains a common Cauchy subsequence $\left(z_{n}(t_j)\right)_{n\in\N}.$ \\
	Let $\varepsilon>0.$ Assumption $b)$ yields $\delta>0$ with
	\begin{align}\label{EquicontArzelaAscoli}
	\sup_{u\in K} \sup_{\vert t-s\vert\le \delta} \Vert u(t)-u(s)\Vert_\EAdual\le \frac{\varepsilon}{3}.
	\end{align}
	%	Now, we fix $t\in[0,T]$ and take $j\in\N$ with $\vert t_j-t\vert\le \delta$ and $n,m$ large enough to have
	%	\begin{align*}
	%	\Vert z_n(t_j)-z_m(t_j)\Vert_\EAdual \le \frac{\varepsilon}{3}.
	%	\end{align*}
	%	We  conclude
	%	\begin{align*}
	%	\Vert z_n(t)-z_m(t)\Vert_\EAdual \le& \Vert z_n(t)-z_n(t_j)\Vert_\EAdual+\Vert z_n(t_j)-z_m(t_j)\Vert_\EAdual+\Vert z_m(t_j)-z_m(t)\Vert_\EAdual\le \varepsilon,
	%	\end{align*}
	%	which means that $\left(z_n\right)_{n\in\N}$ is a Cauchy sequence in $\StetigEAdual.$ \\
	%
	Let us choose finitely many open balls $U_\delta^1,\dots, U_\delta^L$ of radius $\delta$ covering $[0,T].$ By density, each of these balls contains an element of the sequence $\left(t_j\right)_{j\in\N},$ say $t_{j_l}\in U_\delta^l$ for $l\in \left\{1,\dots, L\right\}.$ In particular, the sequence $\left(z_{n}(t_{j_l})\right)_{n\in\N}$ is Cauchy for all $l\in \left\{1,\dots, L\right\}.$
	Hence,
	\begin{align}\label{CauchyArzelaAscoli}
	\Vert z_n(t_{j_l})-z_m(t_{j_l})\Vert_\EAdual \le \frac{\varepsilon}{3},\qquad l=1,\dots,L,
	\end{align}
	if we choose $m,n\in \N$ sufficiently large.
	Now, we fix $t\in[0,T]$ and take $l\in \{1,\dots, L\}$ with $\vert t_{j_l}-t\vert\le \delta.$ 
	%	and $n,m$ large enough to have
	%	\begin{align*}
	%	\Vert z_n(t_j)-z_m(t_j)\Vert_\EAdual \le \frac{\varepsilon}{3}.
	%	\end{align*}
	We  use  \eqref{EquicontArzelaAscoli} and \eqref{CauchyArzelaAscoli} to get
	\begin{align}\label{Arzela}
	\Vert z_n(t)-z_m(t)\Vert_\EAdual \le& \Vert z_n(t)-z_n(t_{j_l})\Vert_\EAdual+\Vert z_n(t_{j_l})-z_m(t_{j_l})\Vert_\EAdual+\Vert z_m(t_{j_l})-z_m(t)\Vert_\EAdual\le \varepsilon.
	\end{align}
	This means that $\left(z_n\right)_{n\in\N}$ is a Cauchy sequence in $\StetigEAdual$ since the estimate \eqref{Arzela} is uniform in $t\in [0,T].$ \\

	\emph{Step 2:}
	The first step yields $z\in \StetigEAdual$ with $z_n \to z$ in $\StetigEAdual$ for $n\to \infty$
	and assumption a) implies, that there is $r>0$ with
	$\sup_{n\in\N} \norm{z_n}_\LinftyEA\le r.$\\
	Therefore, we obtain $z\in\stetigBall$ and $z_n\to z$ in $\stetigBall$ for $n\to \infty$  by Lemma $\ref{convergenceStetigBall}.$ Hence, $z_n\to z$ in $\weaklyContinousEA.$\\

	\emph{Step 3:} We fix again $\varepsilon>0.$ By \dela{Lions'}{the Lions} Lemma $\ref{LionsLemma}$ with  $X_0=\EA,$ $X=\LalphaPlusEins,$ \\$X_1=\EAdual,$ $p=\alpha+1$ and $\varepsilon_0=\frac{\varepsilon}{2 T (2C)^{\alpha+1}}$ we get
	\begin{align}\label{LionsEstimate}
	\Vert v \Vert_\LalphaPlusEins^{\alpha+1} \le \varepsilon_0 \Vert v\Vert_{\EA}^{\alpha+1}+C_{\varepsilon_0} \Vert v\Vert_{\EAdual}^{\alpha+1}
	\end{align}
	for all $v\in\EA.$ The first step allows us to choose $n,m\in\N$ large enough that
	\begin{align*}
	\Vert z_n-z_m\Vert_{C([0,T],\EAdual)}^{\alpha+1}\le \frac{\varepsilon}{2 C_{\varepsilon_0}T}
	\end{align*} The special choice $v=z_n(t)-z_m(t)$ for $t\in [0,T]$ in $\eqref{LionsEstimate}$ and integration with respect to time yields
	\begin{align*}
	\Vert z_n-z_m \Vert_\LalphaPlusEinsAlphaPlusEins^{\alpha+1} &\le {\varepsilon_0} \Vert z_n-z_m\Vert_{L^{\alpha+1}(0,T;\EA)}^{\alpha+1}+C_{\varepsilon_0} \Vert z_n-z_m\Vert_{L^{\alpha+1}(0,T;\EAdual)}^{\alpha+1}\\
	&\le  {\varepsilon_0} T \Vert z_n-z_m\Vert_{L^\infty(0,T;\EA)}^{\alpha+1}+C_{\varepsilon_0} T\Vert z_n-z_m\Vert_{C([0,T],\EAdual)}^{\alpha+1}\\
	&\le  {\varepsilon_0} T \left(2 C\right)^{\alpha+1}+C_{\varepsilon_0} T\Vert z_n-z_m\Vert_{C([0,T],\EAdual)}^{\alpha+1}\\
	&\le \frac{\varepsilon}{2}+\frac{\varepsilon}{2}=\varepsilon.
	\end{align*}
	Hence, the sequence $\left(z_n\right)_{n\in\N}$ is also Cauchy in $\LalphaPlusEinsAlphaPlusEins$.
\end{proof}

In the following, we want to obtain a criterion for tightness in $Z_T.$ Therefore, we introduce the Aldous condition.

\begin{Definition}\label{DefinitionAldous}
	Let $(X_n)_{n\in\N}$ be a sequence of  stochastic processes in a Banach space $E.$ Assume that for every $\varepsilon>0$ and $\eta>0$ there is $\delta>0$ such that for every sequence $(\tau_n)_{n\in\N}$ of $[0,T]$-valued stopping times one has
	\begin{align*}
	\sup_{n\in\N} \sup_{0<\theta \le \delta} \Prob \left\{ \Vert X_n((\tau_n+\theta)\land T)-X_n(\tau_n)\Vert_E\ge \eta \right\}\le \varepsilon.
	\end{align*}
	In this case, we say that $(X_n)_{n\in\N}$
	satisfies the Aldous condition $[A].$
\end{Definition}

The following Lemma (see \cite{Motyl}, Lemma A.7) gives us a useful consequence of the Aldous condition $[A].$

\begin{Lemma} \label{AldousLemma}			
	Let $(X_n)_{n\in\N}$ be a sequence of continuous stochastic processes in a Banach space $E,$ which satisfies the Aldous condition $[A].$ Then, for every $\varepsilon>0$ there exists a measurable subset $A_\varepsilon \subset C([0,T],E)$ such that
	\begin{align*}
	\Prob^{X_n}(A_\varepsilon)\ge 1-\varepsilon,\qquad
	\lim_{\delta\to 0} \sup_{u\in A_\varepsilon} \sup_{\vert t-s\vert\le \delta} \Vert u(t)-u(s)\Vert_E=0.
	\end{align*}
\end{Lemma}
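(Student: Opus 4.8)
The plan is to reduce the assertion to a single uniform bound on the modulus of continuity in probability, and then to build $A_\varepsilon$ from it by a Borel--Cantelli--type intersection. For $u\in C([0,T],E)$ write $w_u(\delta):=\sup_{\vert t-s\vert\le\delta}\norm{u(t)-u(s)}_E$ for its modulus of continuity. The intermediate claim I would isolate is
\begin{equation*}
\text{for all }\eta,\gamma>0\text{ there is }\delta>0\text{ with }\ \sup_{n\in\N}\Prob\big\{w_{X_n}(\delta)\ge\eta\big\}\le\gamma. \tag{$\star$}
\end{equation*}

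Granting $(\star)$, the construction is routine. For each $k\in\N$ apply $(\star)$ with $\eta=\tfrac1k$ and $\gamma=\varepsilon\,2^{-k}$ to obtain $\delta_k>0$ with $\sup_n\Prob\{w_{X_n}(\delta_k)\ge\tfrac1k\}\le\varepsilon\,2^{-k}$, and set
\begin{equation*}
A_\varepsilon:=\bigcap_{k\in\N}\Big\{u\in C([0,T],E):\ w_u(\delta_k)\le\tfrac1k\Big\}.
\end{equation*}
Since $u\mapsto w_u(\delta)$ is Lipschitz (hence continuous) on $C([0,T],E)$, each set in the intersection is closed, so $A_\varepsilon$ is Borel. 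A union bound gives $\Prob^{X_n}(A_\varepsilon^c)\le\sum_{k}\Prob\{w_{X_n}(\delta_k)\ge\tfrac1k\}\le\sum_{k}\varepsilon\,2^{-k}=\varepsilon$, i.e. $\Prob^{X_n}(A_\varepsilon)\ge1-\varepsilon$ uniformly in $n$. Moreover, given $\eta>0$ pick $k$ with $\tfrac1k<\eta$; for $\delta\le\delta_k$ and $u\in A_\varepsilon$ monotonicity of $w_u$ yields $w_u(\delta)\le w_u(\delta_k)\le\tfrac1k<\eta$, so $\sup_{u\in A_\varepsilon}w_u(\delta)\to0$ as $\delta\to0$, which is the second displayed property.

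Everything therefore rests on $(\star)$, which is the genuine content and the main obstacle: it is the ``equicontinuity in probability'' half of Aldous' tightness criterion and is where the Aldous condition $[A]$ enters. I would prove it by a first--exit/oscillation argument. Fix $\eta,\gamma>0$ and introduce the successive stopping times $\tau_0:=0$ and $\tau_{k+1}:=\inf\{t>\tau_k:\norm{X_n(t)-X_n(\tau_k)}_E\ge\tfrac\eta4\}$, which are stopping times since $X_n$ is continuous and adapted, and which satisfy $\norm{X_n(\tau_{k+1})-X_n(\tau_k)}_E=\tfrac\eta4$ on $\{\tau_{k+1}\le T\}$. A short triangle--inequality computation shows that on the event $\{w_{X_n}(\delta)\ge\eta\}$ some pair of \emph{consecutive} exit times satisfies $\tau_{k+1}-\tau_k<\delta$ and $\tau_{k+1}\le T$; hence $\{w_{X_n}(\delta)\ge\eta\}$ is contained in a union of events each of which is an increment of $X_n$ between two stopping times lying within $\delta$ of one another.

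The delicate point is that $[A]$ as stated only controls forward increments $X_n((\tau+\theta)\wedge T)-X_n(\tau)$ over a \emph{deterministic} step $\theta\le\delta$, whereas the oscillation events involve the \emph{random} step $\tau_{k+1}-\tau_k$, and a crude union bound over the $\sim T/\delta$ possible oscillations is too lossy. I would handle this by first upgrading $[A]$, via a standard randomization of the time step, to the equivalent two--stopping--time estimate $\sup_n\Prob\{\norm{X_n(\sigma_2)-X_n(\sigma_1)}_E\ge\eta\}\le\gamma$ valid for all stopping times $\sigma_1\le\sigma_2\le(\sigma_1+\delta)\wedge T$, and then controlling the \emph{expected number} of fast $\tfrac\eta4$--oscillations rather than union--bounding their probabilities, as in Aldous' original argument. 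These technical details are precisely those carried out in \cite{Motyl}, Lemma A.7, to which I would ultimately appeal.
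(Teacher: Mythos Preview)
Your proposal is correct and aligns with the paper's treatment: the paper does not give a proof of this lemma but simply refers to \cite{Motyl}, Lemma A.7, which is precisely the reference you invoke for the technical step $(\star)$. Your reduction of the statement to $(\star)$ via the Borel--Cantelli construction of $A_\varepsilon$ is standard and correct, so you have in fact written out more detail than the paper itself provides.
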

%\begin{proof}
%	See Lemma A.7 in Elzbieta Motyl, Stochastic Navier Stokes Equations driven by Levy noise in unbounded 3d domains
%\end{proof}

The deterministic compactness result in Proposition $\ref{CompactnessDeterministic}$ and the last Lemma can be used to get the following criterion for tightness in $Z_T.$

\begin{Prop}\label{TightnessCriterion}
	Let $(X_n)_{n\in\N}$ be a sequence of continuous adapted $\EAdual$-valued processes satisfying the Aldous condition $[A]$ in $\EAdual$ and
	\begin{align*}
	\sup_{n\in\N} \E \left[\Vert X_n\Vert_\LinftyEA^2\right] <\infty.
	\end{align*}
	Then the sequence $\left({\Prob}^{X_n}\right)_{n\in\N}$ is tight in $Z_T,$ i.e. for every $\varepsilon>0$ there is a compact set $K_\varepsilon\subset Z_T$ with
	\begin{align*}
	\Prob^{X_n}(K_\varepsilon)\ge 1- \varepsilon
	\end{align*}
	for all $n\in\N.$
\end{Prop}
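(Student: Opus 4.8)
The plan is to deduce tightness from the deterministic compactness criterion in Proposition \ref{CompactnessDeterministic} by verifying that, with high probability, the paths of $X_n$ land in a set satisfying hypotheses a) and b) of that Proposition. The strategy is to construct, for each $\varepsilon>0$, a compact set $K_\varepsilon\subset Z_T$ as an intersection of two events: one controlling the $\LinftyEA$-norm uniformly (corresponding to a) in Proposition \ref{CompactnessDeterministic}), and one controlling equicontinuity in $\StetigEAdual$ (corresponding to b)). I would then show each of these events has probability at least $1-\varepsilon/2$ uniformly in $n$, so their intersection has probability at least $1-\varepsilon$.

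For the norm bound, I would use the uniform moment estimate $\sup_{n\in\N} \E [\Vert X_n\Vert_\LinftyEA^2]<\infty$ together with Chebyshev's inequality: given $\varepsilon>0$, choose $R_\varepsilon>0$ large enough that
\begin{align*}
\Prob\left\{\Vert X_n\Vert_\LinftyEA> R_\varepsilon\right\}\le \frac{1}{R_\varepsilon^2}\,\E\left[\Vert X_n\Vert_\LinftyEA^2\right]\le \frac{\varepsilon}{2},\qquad n\in\N.
\end{align*}
This pins down the radius $r=R_\varepsilon$ needed in hypothesis a). For the equicontinuity, I would invoke Lemma \ref{AldousLemma}: since $(X_n)_{n\in\N}$ satisfies the Aldous condition $[A]$ in $\EAdual$ and the processes are continuous, the Lemma furnishes, for the given $\varepsilon$, a measurable set $A_\varepsilon\subset \StetigEAdual$ with $\Prob^{X_n}(A_\varepsilon)\ge 1-\varepsilon/2$ for all $n$ and with the uniform equicontinuity property $\lim_{\delta\to 0}\sup_{u\in A_\varepsilon}\sup_{\vert t-s\vert\le \delta}\Vert u(t)-u(s)\Vert_\EAdual=0$, which is exactly hypothesis b).

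I would then set $K_\varepsilon:=\overline{A_\varepsilon\cap\{u: \Vert u\Vert_\LinftyEA\le R_\varepsilon\}}$, the closure being taken in $Z_T$. By Proposition \ref{CompactnessDeterministic}, the set $A_\varepsilon\cap\{u:\Vert u\Vert_\LinftyEA\le R_\varepsilon\}$ is relatively compact in $Z_T$, so $K_\varepsilon$ is compact. Finally, a union bound gives
\begin{align*}
\Prob^{X_n}(K_\varepsilon)\ge \Prob^{X_n}\left(A_\varepsilon\cap\{\Vert\cdot\Vert_\LinftyEA\le R_\varepsilon\}\right)\ge 1-\frac{\varepsilon}{2}-\frac{\varepsilon}{2}=1-\varepsilon
\end{align*}
uniformly in $n$, which is the claimed tightness.

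The main obstacle, and the point requiring the most care, is measurability and the precise topological bookkeeping: one must ensure that the set $A_\varepsilon\cap\{\Vert\cdot\Vert_\LinftyEA\le R_\varepsilon\}$ is a legitimate measurable subset of $Z_T$ equipped with its Borel $\sigma$-algebra, and that the law $\Prob^{X_n}$ evaluates on it correctly (the $X_n$ are a priori only $\EAdual$-valued continuous processes, so I must first argue that the uniform energy bound forces their paths into $Z_T$, using the Strauss-type embedding as in Lemma \ref{convergenceStetigBall}). A secondary subtlety is that $Z_T$ is only a locally convex space, not metrizable on all of $\EA$, so relative compactness must be read through Proposition \ref{CompactnessDeterministic} rather than via sequential arguments on the full space; however, since that Proposition is already stated for subsets of $Z_T$, this reduces the tightness proof to the two probabilistic estimates above.
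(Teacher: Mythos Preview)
Your proposal is correct and follows essentially the same approach as the paper: Chebyshev for the $\LinftyEA$-bound, Lemma~\ref{AldousLemma} for the equicontinuous set, then take the closure of the intersection and apply Proposition~\ref{CompactnessDeterministic} for compactness, with a union bound for the probability estimate. The paper does not spell out the measurability and topological bookkeeping you flag as obstacles, so your treatment is in fact slightly more careful on those points.
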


\begin{proof}
	Let $\varepsilon>0.$ With $R_1:= \left(\frac{2}{\varepsilon} \sup_{n\in\N} \E \left[ \Vert X_n\Vert_\LinftyEA^2\right]\right)^{\frac{1}{2}},$ we obtain
	\begin{align*}
	\Prob\left\{ \Vert X_n\Vert_\LinftyEA> R_1\right\}\le \frac{1}{R_1^2}\E \left[\Vert X_n\Vert_\LinftyEA^2\right]\le \frac{\varepsilon}{2}.
	\end{align*}
	By Lemma $\ref{AldousLemma}$, one can use the Aldous condition $[A]$ to get a Borel subset $A$ of $\StetigEAdual$ with
	\begin{align*}
	\Prob^{X_n}\left(A\right)\ge 1-\frac{\varepsilon}{2},\quad n\in\N, \qquad \lim_{\delta\to 0} \sup_{u\in A} \sup_{\vert t-s\vert\le \delta} \Vert u(t)-u(s)\Vert_\EAdual=0.
	\end{align*}
	We define $K:= \overline{A\cap B}$ where $B:= \left\{u\in Z_T: \Vert u\Vert_\LinftyEA\le R_1 \right\}.$ This set $K$ is compact by Proposition $\ref{CompactnessDeterministic}$ and we can estimate
	\begin{align*}
	\Prob^{X_n}(K)\ge \Prob^{X_n}\left(A\cap B\right)\ge \Prob^{X_n}\left(A\right)-\Prob^{X_n}\left( B^c\right)\ge 1-\frac{\varepsilon}{2}-\frac{\varepsilon}{2}=1-\varepsilon
	\end{align*}
	for all $n\in \N.$
\end{proof}

In metric spaces, one can apply Prokhorov Theorem (see \cite{parthasaraty}, Theorem II.6.7) and Skorohod Theorem (see \cite{Billingsley}, Theorem 6.7.) to obtain convergence from tightness. Since the space $Z_T$ is a locally convex space, we use the  following generalization to nonmetric spaces.

\begin{Prop}[Skorohod-Jakubowski]\label{SkohorodJakubowski}
	Let $\mathcal{X}$ be a topological space such that there is a sequence of continuous functions $f_m: \mathcal{X}\to \C$ that separates points of $\mathcal{X}.$ Let $\mathcal{A}$ be the $\sigma$-algebra  generated by $\left(f_m\right)_m.$ Then, we have the following assertions:
	\begin{enumerate}
		\item[a)] Every compact set $K\subset \mathcal{X}$ is metrizable.
		\item[b)] Let $\left(\mu_n\right)_{n\in\N}$ be a tight sequence of probability measures on $\left(\mathcal{X}, \mathcal{A}\right).$
		Then, there are a subsequence $\left(\mu_{n_k}\right)_{k\in\N},$ random variables $X_k,$ $X$ for $k\in\N$ on a common probability space $(\tilde{\Omega},\tilde{\Filtration},\tildeProb)$ with $\tildeProb^{X_k}=\mu_{n_k}$ for $k\in\N,$ and $X_k \to X$ $\tildeProb$-almost surely for $k\to \infty.$
	\end{enumerate}
\end{Prop}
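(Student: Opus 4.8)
The plan is to realise $\mathcal{X}$ inside the Polish space $\C^{\N}$ through the separating family and to transport both assertions to this metrisable model. I would set $\Phi:=(f_m)_{m\in\N}\colon \mathcal{X}\to \C^{\N}$, where $\C^{\N}$ carries the product topology; this map is continuous because each $f_m$ is, and injective because $(f_m)_m$ separates points. For part a), let $K\subset\mathcal{X}$ be compact. Since $\C^{\N}$ is metrisable, being a countable product of metric spaces, and hence Hausdorff, the restriction $\Phi|_K$ is a continuous bijection from a compact space onto the Hausdorff space $\Phi(K)$, and is therefore a homeomorphism. Consequently $K$ is homeomorphic to the subspace $\Phi(K)$ of the metric space $\C^{\N}$, which shows that $K$ is metrisable.

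For part b), I would first push the measures forward. Set $\nu_n:=\mu_n\circ \Phi^{-1}$, a Borel probability measure on $\C^{\N}$. Tightness transfers cleanly: given $\varepsilon>0$, tightness of $(\mu_n)_n$ yields a compact $K_\varepsilon\subset\mathcal{X}$ with $\mu_n(K_\varepsilon)\ge 1-\varepsilon$ for all $n$; then $\Phi(K_\varepsilon)$ is compact, hence closed, in $\C^{\N}$ and, by injectivity, $\Phi^{-1}(\Phi(K_\varepsilon))=K_\varepsilon$, so that $\nu_n(\Phi(K_\varepsilon))=\mu_n(K_\varepsilon)\ge 1-\varepsilon$. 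Thus $(\nu_n)_n$ is tight on the Polish space $\C^{\N}$, and Prokhorov's Theorem provides a weakly convergent subsequence $\nu_{n_k}\rightharpoonup\nu$. The classical Skorohod Theorem on $\C^{\N}$ then gives, on a common probability space $(\tilde\Omega,\tilde\F,\tildeProb)$, random variables $Y_k,Y$ with $\tildeProb^{Y_k}=\nu_{n_k}$ and $Y_k\to Y$ $\tildeProb$-almost surely in $\C^{\N}$.

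Next I would confine the representatives to $\Phi(\mathcal{X})$. Choosing compacts $K_j\subset\mathcal{X}$ with $\sup_n\mu_n(\mathcal{X}\setminus K_j)\le 2^{-j}$ and putting $S:=\bigcup_{j}\Phi(K_j)$, the set $S$ is $\sigma$-compact, hence Borel, and contained in $\Phi(\mathcal{X})$; since each $\Phi(K_j)$ is closed, the Portmanteau theorem gives $\nu(\Phi(K_j))\ge\limsup_k\nu_{n_k}(\Phi(K_j))\ge 1-2^{-j}$, whence $\nu(S)=1$ and likewise $\nu_{n_k}(S)=1$. Therefore $Y_k,Y\in S\subset\Phi(\mathcal{X})$ almost surely, and I may define $X_k:=\Phi^{-1}(Y_k)$ and $X:=\Phi^{-1}(Y)$, fixing an arbitrary value on the exceptional null set. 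These are $\mathcal{A}$-measurable because $\Phi\circ X_k=Y_k$ while $\mathcal{A}=\sigma(f_m)=\Phi^{-1}(\mathcal{B}(\C^{\N}))$; moreover, injectivity of $\Phi$ gives $\{X_k\in\Phi^{-1}(D)\}=\{Y_k\in D\}$ for every Borel set $D$, so that $\tildeProb^{X_k}=\mu_{n_k}$, as required for the laws.

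The main obstacle is the final step: upgrading the almost sure convergence $Y_k\to Y$ in $\C^{\N}$ to $X_k\to X$ in the original topology of $\mathcal{X}$. On each compact $K_j$ the map $\Phi|_{K_j}$ is a homeomorphism by part a), so $\Phi^{-1}$ is continuous on $\Phi(K_j)$, and convergence pulls back along any single compact. The difficulty is that the trajectory $(Y_k(\omega))_k$ may leave every $\Phi(K_j)$, i.e. the smallest index $j$ with $Y_k(\omega)\in\Phi(K_j)$ may tend to infinity, so that continuity on one fixed compact does not apply directly. This is exactly the point where Jakubowski's argument does its real work: one refines the Skorohod coupling so that, almost surely, the whole sequence $(X_k)_k$ together with its limit $X$ remains in a common compact $K_j$, equivalently so that the representatives are relatively compact in $\mathcal{X}$, after which continuity of $\Phi^{-1}$ on that compact together with injectivity forces $X_k\to X$ in $\mathcal{X}$. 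Since this refinement is precisely the content of the cited Skorohod--Jakubowski representation, I would invoke it at this stage rather than reproduce the combinatorial construction.
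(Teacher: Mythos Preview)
The paper does not prove this proposition; it states it and cites \cite{BrzezniakOndrejat} and \cite{Jakubowski} for the proof. Your proposal therefore goes further than the paper: your argument for part a) via the embedding $\Phi=(f_m)_m\colon\mathcal{X}\to\C^{\N}$ is complete and correct, and for part b) your reduction to Prokhorov and the classical Skorohod theorem on the Polish space $\C^{\N}$, together with the confinement to the $\sigma$-compact set $S=\bigcup_j\Phi(K_j)$, is set up correctly.

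You are also right that the genuine difficulty is the last step, and you locate it precisely: almost sure convergence $Y_k\to Y$ in $\C^{\N}$ only yields $X_k\to X$ in $\mathcal{X}$ once the representatives are trapped in a common compact on which $\Phi^{-1}$ is continuous. Your proposal defers this refinement to Jakubowski's original construction. Strictly speaking this makes the argument circular as a self-contained proof of the proposition, since the confinement-to-a-common-compact coupling \emph{is} the heart of Jakubowski's theorem; but as the paper itself treats the whole statement as a citation, your treatment is consistent with---and more informative than---what the paper does.
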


We stated Proposition $\ref{SkohorodJakubowski}$ in the form of \cite{BrzezniakOndrejat} (see also \cite{Jakubowski}) where it was first used to construct martingale solutions for stochastic evolution equations.
We apply %the Propositions $\ref{Prokhorov}$ and $\ref{Skohorod}$
this result to the concrete situation and obtain the final result of this section.

\begin{Korollar}\label{corollaryEstimatesToASconvergence}
	Let $(X_n)_{n\in\N}$ be a sequence of adapted $\EAdual$-valued processes satisfying the Aldous condition $[A]$ in $\EAdual$ and
	\begin{align*}
	\sup_{n\in\N} \E \left[\Vert X_n\Vert_\LinftyEA^2\right] <\infty.
	\end{align*}
	Then, there are a subsequence $(X_{n_k})_{k\in\N}$ and random variables $\tilde{X}_k,$ $\tilde{X}$ for $k\in\N$ on a second probability space $(\tilde{\Omega},\tilde{\Filtration},\tildeProb)$ with $\tildeProb^{\tilde{X}_k}=\Prob^{X_{n_k}}$ for $k\in\N,$  and $\tilde{X}_k \to \tilde{X}$ $\tildeProb$-almost surely in $Z_T$ for $k\to \infty.$
\end{Korollar}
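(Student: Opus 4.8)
The plan is to obtain the assertion by combining the tightness criterion of Proposition \ref{TightnessCriterion} with the Skorohod--Jakubowski Theorem \ref{SkohorodJakubowski}, the only genuine work being the verification that the nonmetrizable space $Z_T$ falls within the scope of the latter. First I would record that, under the stated hypotheses, each $X_n$ has $\Prob$-almost surely paths in $Z_T$: continuity of $X_n$ into $\EAdual$ together with the uniform bound in $\LinftyEA$ yields weak continuity into $\EA$ by the Strauss Lemma \ref{StraussLemma} (exactly as in the proof of Lemma \ref{convergenceStetigBall}), while $\LinftyEA \hookrightarrow \LalphaPlusEinsAlphaPlusEins$ supplies the $\tilde{Z}_T$-component. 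Hence each $\Prob^{X_n}$ is a well-defined Borel probability measure on $Z_T$, and Proposition \ref{TightnessCriterion} applies verbatim to give tightness of $\left(\Prob^{X_n}\right)_{n\in\N}$ in $Z_T$.

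The main point is to produce a countable family of continuous functions on $Z_T$ that separates points, so that Proposition \ref{SkohorodJakubowski} becomes available. Here I would exploit that the inclusion $Z_T \hookrightarrow \StetigEAdual$ is injective and continuous, the locally convex topology of $Z_T$ being finer than the norm topology inherited from $\StetigEAdual$. Since $\EAdual$ is a separable Hilbert space, $\StetigEAdual=C([0,T],\EAdual)$ is a separable Banach space, so a countable dense subset and the Hahn--Banach theorem produce a countable family $\left(f_m\right)_{m\in\N}$ of continuous linear functionals separating the points of $\StetigEAdual$. Their restrictions to $Z_T$ are continuous for the finer topology and still separate points, because distinct elements of $Z_T$ are already distinct in $\StetigEAdual$. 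I expect this step to be the main obstacle, precisely because the weakly continuous component $\weaklyContinousEA$ destroys metrizability and forces one to pass from the classical Skorohod Theorem to its Jakubowski extension.

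Finally I would apply Proposition \ref{SkohorodJakubowski} b) to the tight sequence $\left(\Prob^{X_n}\right)_{n\in\N}$ on $Z_T$ equipped with the $\sigma$-algebra $\mathcal{A}$ generated by $\left(f_m\right)_{m\in\N}$. This yields a subsequence $\left(X_{n_k}\right)_{k\in\N}$, a common probability space $\left(\tilde{\Omega},\tilde{\F},\tildeProb\right)$ and random variables $\tilde{X}_k,\tilde{X}$ with $\tildeProb^{\tilde{X}_k}=\Prob^{X_{n_k}}$ and $\tilde{X}_k \to \tilde{X}$ $\tildeProb$-almost surely in $Z_T$, which is the claim. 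The one verification remaining is that the laws, a priori defined on the Borel $\sigma$-algebra of $Z_T$, are determined by their restriction to $\mathcal{A}$; this is guaranteed because on the metrizable compact sets supporting the tight measures (Proposition \ref{SkohorodJakubowski} a)) the two $\sigma$-algebras coincide, so the identification $\tildeProb^{\tilde{X}_k}=\Prob^{X_{n_k}}$ holds as Borel measures.
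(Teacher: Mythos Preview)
Your proposal is correct and follows the same overall architecture as the paper: tightness via Proposition \ref{TightnessCriterion}, then Skorohod--Jakubowski. The difference lies in how you verify the hypothesis of Proposition \ref{SkohorodJakubowski}. The paper treats the three factors of $Z_T$ separately: for the separable Banach spaces $\StetigEAdual$ and $\LalphaPlusEinsAlphaPlusEins$ it invokes their duals directly, and for $\weaklyContinousEA$ it builds the explicit family $f_{m,t}(u)=\duality{u(t)}{h_m}$ with $\{h_m\}$ dense in $\EAdual$ and $t\in[0,T]\cap\Q$, then checks that the union of these families generates the full Borel $\sigma$-algebra of $Z_T$. This makes $\mathcal{A}=\text{Borel}(Z_T)$, so the identification $\tildeProb^{\tilde X_k}=\Prob^{X_{n_k}}$ is immediate on Borel sets.

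You instead observe that the single injection $Z_T\hookrightarrow\StetigEAdual$ already yields a countable separating family, and you absorb the potential gap $\mathcal{A}\subsetneq\text{Borel}(Z_T)$ a posteriori via the metrizability of compacts from part a) of Proposition \ref{SkohorodJakubowski} and tightness. This is more economical (one factor instead of three) and perfectly valid; the trade-off is that the paper's route avoids the final $\sigma$-algebra argument entirely, whereas yours requires the observation that on each compact $K\subset Z_T$ the trace $\mathcal{A}|_K$ coincides with $\text{Borel}(K)$, so tight Borel measures agreeing on $\mathcal{A}$ must agree on all Borel sets. Both approaches are standard in applications of Jakubowski's theorem.
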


\begin{proof}
	We recall that $Z_T=\StetigEAdual\cap \LalphaPlusEinsAlphaPlusEins\cap \weaklyContinousEA$ is a locally convex space. Therefore, the assertion follows by an application of the Propositions $\ref{TightnessCriterion}$ and $\ref{SkohorodJakubowski}$ if for each of the spaces in the definition of $Z_T$ we find a sequence $f_m: Z_T \to \R$ of continuous functions separating points which generates the Borel $\sigma$-algebra.  The separable Banach spaces  $\StetigEAdual$ and $\LalphaPlusEinsAlphaPlusEins$ have this property. \\
	Let $\left\{h_m: m\in \N \right\}$ be a dense subset of $\EAdual.$ Then, we define the countable set \\$F:=\left\{f_{m,t}: m\in\N, t \in [0,T]\cap \Q \right\}$ of functionals on $\weaklyContinousEA$ by
	\begin{align*}
	f_{m,t}(u):= \duality{u(t)}{h_m}
	\end{align*}
	for $m\in\N,$ $t\in [0,T]\cap \Q$ and $u\in \weaklyContinousEA.$ \\
	The set $F$ separates points, since for $u,v \in \weaklyContinousEA$ with $f_{m,t}(u)=f_{m,t}(v)$ for all $m\in\ N$ and $t\in [0,T]\cap \Q,$ we get $\duality{u}{h_m}=\duality{v}{h_m}$ on $[0,T]$ for all $m\in\N$ by continuous continuation and therefore $u=v$ on $[0,T].$ \\
	Furthermore, the density of $\{h_m: m\in\N\}$ and the definition of the locally convex topology yield that $\left(f_{m,t}\right)_{m\in\N, t\in [0,T]\cap \Q}$ generate the Borel $\sigma$-algebra on $\weaklyContinousEA.$
\end{proof}

\section{The Galerkin Approximation}\label{sectionGalerkin}	
In this section, we \dela{treat}{introduce} the Galerkin approximation, which will be used for the proof of the existence of a solution to $\eqref{ProblemStratonovich}$. We prove the well-posedness of the approximated equation and uniform estimates for the solutions that are sufficient to apply Corollary $\ref{corollaryEstimatesToASconvergence}.$\\

By the functional calculus of the selfadjoint operator $S$ from Assumption and Notation \ref{spaceAssumptions}, we define the operators $P_n: H\to H$ by $P_n:=\mathbf{1}_{(0,2^{n+1})}(S)$ for $n\in\N_0.$
Recall from Lemma $\ref{spaceLemma},$ that $S$ has the representation
\begin{align*}
S x=\sum_{m=1}^\infty \lambda_m \skpH{x}{h_m} h_m, \quad x\in \D(S)=\left\{x\in H: \sum_{m=1}^\infty \lambda_m^2 \vert \skpH{x}{h_m}\vert^2<\infty\right\},
\end{align*}
with an orthonormal basis $\left(h_m\right)_{m\in\N}$ and eigenvalues $\lambda_m>0$ such that $\lambda_m\to \infty$ as $m\to \infty.$ 
%	The positive fractional domains are given by
%	\begin{align*}
%				\D(A^s)=\left\{x\in H: \sum_{n=1}^\infty \lambda_n^{2 s} \vert \skpH{x}{h_n}\vert^2<\infty\right\}
%	\end{align*}
%	for $s\ge 0$ and  $\D(A^{-s})$ is the completion of
%	\begin{align*}
%		\left\{x\in H: \sum_{n=1}^\infty \lambda_n^{-2 s} \vert \skpH{x}{h_n}\vert^2<\infty\right\}
%	\end{align*}
%	with respect to
For $n\in \N_0,$ we set
\begin{align*}
H_n:=\operatorname{span}\left\{h_m: m\in\N, \lambda_m< 2^{n+1}\right\}
\end{align*}
and observe that $P_n$ is the orthogonal projection from $H$ to $H_n.$ Moreover, we have 
\begin{align*}
P_n x = \sum_{\lambda_m< 2^{n+1}} \skpH{x}{h_m}h_m, \qquad x\in {H}.
\end{align*}
Note that we have $h_m\in \bigcap_{k\in\N}\D(S^k)$ for $m\in\N$ and thus, we obtain by the assumption 	$\D(S^k)\hookrightarrow E_A$
for some $k\in\N$ that $H_n$ is a closed subspace of $\EA$ for $n\in\N_0.$ In particular, $H_n$ is a closed subspace of $\EAdual.$
The fact that the operators $S$ and $A$ commute by Assumption \ref{spaceAssumptions} implies that $P_n$ and $A^\frac{1}{2}$ commute. We obtain
\begin{align}\label{PnHeinsKontraktiv}
\norm{P_n x}_{\EA}^2=\norm{P_n x}_{H}^2+\norm{\sqrtA P_n x}_{H}^2
=\norm{P_n x}_{H}^2+\norm{ P_n \sqrtA x}_{H}^2
\le \norm{x}_{\EA}^2,\qquad x\in \EA,
\end{align} 
and 
\begin{align*}
\norm{P_n v}_{\EAdual}=\sup_{\norm{x}_\EA\le 1}\vert \skpH{P_n v}{x}\vert
\le \norm{v}_\EAdual \sup_{\norm{x}_\EA\le 1} \norm{P_n x}_\EA
\le \norm{v}_\EAdual.
\end{align*}
By density, we can extend $P_n$ to an operator $P_n: \EAdual\to H_n$ with $\norm{P_n}_{\EAdual\to \EAdual}\le 1$ and
\begin{align}\label{PnInEAdual}
\duality{v}{P_n v}\in \R, \qquad \duality{v}{P_n w}=\skpH{P_n v}{w}, \qquad v\in \EAdual, \quad w\in \EA.
\end{align}

Despite their nice behaviour as orthogonal projections, it turns out that the operators $P_n,$ $n\in\N,$ lack the crucial property needed in the proof of the a priori estimates of the stochastic terms. In general, they are not uniformly bounded from $\LalphaPlusEins$ to $\LalphaPlusEins.$ To overcome this deficit, we construct another sequence $\left(S_n\right)_{n\in\N}$ of operators $S_n: H \to H_n$ using  functional calculus techniques and the general Littlewood-Paley decomposition from \cite{KrieglerWeis}.\\

%The Assumption $\ref{spaceAssumptions}$ ii) implies that the restriction of $\left(T(t)\right)_{t\ge 0}$ to $\LalphaPlusEins$ defines a $c_0$-semigroup on $\LalphaPlusEins,$ see Theorem 7.1. in \cite{OuhabazHeatEquations}. We denote the corresponding generator by $B_{\alpha+1}.$
%	given by
%	\begin{align*}
%		P_n x= \sum_{\lambda_n<2^{N+1}} \skpH{x}{h_n} h_n \quad x\in H
%	\end{align*}
%		Note that $\dim N(A)<\infty$ by the spectral theorem and there is an orthogonal  decomposition $H=\overline{R(A)}\oplus N(A).$ The orthogonal projection $Q: H \to N(A)$ has the representation $Q x= \lim_{\lambda\to 0-} \lambda R(\lambda, A)x$ for $x\in H.$
%		Indeed,
%		\begin{align*}
%			\norm{\lambda R(\lambda,A)x-Qx}_H^2=\sum_{m=\dim N(A)+1}^\infty \frac{\lambda^2}{(\lambda-\lambda_m)^2} \vert \skpH{x}{h_m}\vert^2 \to 0, \quad \lambda\to 0-.
%		\end{align*}
%To apply the results from \cite{KrieglerWeis}, we need the injective part of $A_{\alpha+1}.$	Following \cite{KrieglerWeis}, Remark 4.3,  we decompose
%		\begin{align*}
%			L^{\alpha+1}(M)=\overline{R(A_{\alpha+1})} \oplus N(A_{\alpha+1}).
%		\end{align*}
%		 The restriction of $A_{\alpha+1}$ to $\overline{R(A_{\alpha+1})}$ is called $\tilde{A}: \D(\tilde{A}) \subset\overline{R(A_{\alpha+1})}\to \overline{R(A_{\alpha+1})}$ with $\D(\tilde{A})=\D(A_{\alpha+1})\cap \overline{R(A_{\alpha+1})}.$
%
%		
We take a function $\rhodot\in C_c^\infty(0,\infty)$ with $\operatorname{supp} \rhodot \subset [\frac{1}{2},2]$ and $\sum_{m\in\Z} \rhodot(2^{-m} t)=1$ for all $t>0.$ We define
$ \rho_m= \rhodot(2^{-m} \cdot)$ for $m\in \N$ and $\rho_0:=\sum_{m=-\infty}^0 \rhodot(2^{-m} \cdot),$ 		
so that we have $\sum_{m=0}^\infty \rho_m(t)=1$ for all $t>0.$ The sequence $\left(\rho_m\right)_{m\in \N_0}$ is called $\emph{dyadic partition of unity}.$
%The bext Lemma states the Littlewood-Paley Theorem for the operator $B.$

\begin{Lemma}\label{LittlewoodPaleyLemma}
	We have the norm equivalence
	\begin{align}\label{KrieglerEquivalence}
	\norm{ x}_{\LalphaPlusEins}\eqsim \sup_{\norm{a}_{l^\infty(\N_0)}\le 1}\bigNorm{\sum_{m=0}^\infty a_m \rho_m(S) x}_{\LalphaPlusEins},
	\end{align}
	where the operators $\rho_m(S),$ $m\in\N,$ are defined by the functional calculus for selfadjoint operators.
\end{Lemma}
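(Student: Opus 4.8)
The plan is to split \eqref{KrieglerEquivalence} into the two inequalities $\norm{x}_{\LalphaPlusEins} \lesssim \sup_{\norm{a}_{l^\infty(\N_0)}\le 1}\bigNorm{\sum_{m=0}^\infty a_m \rho_m(S) x}_{\LalphaPlusEins}$ and its reverse, the substance lying entirely in the Mihlin $\mathcal{M}^\beta$ functional calculus of $S$ on $\LalphaPlusEins$ made available by Remark \ref{GaussianRemark} c). Throughout I use that Assumption \ref{spaceAssumptions} iv) forces $\alpha+1\in(2,p_0')\subset(p_0,p_0')$, so that $\LalphaPlusEins$ lies in the range $L^p$, $p\in(p_0,p_0')$, on which the generalized Gaussian bounds \eqref{generalizedGaussianEstimate} yield a bounded $\mathcal{M}^\beta$-calculus of $S$ for some $\beta>0$.

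For the first inequality I would test the supremum at the truncated constant sequences $a^{(N)}=(1,\dots,1,0,0,\dots)$ with $N+1$ ones, each of $l^\infty$-norm $1$. Since $\sum_{m=0}^\infty \rho_m\equiv 1$ on $(0,\infty)$ by construction of the dyadic partition of unity and $S$ is strictly positive, the partial sums $\sum_{m=0}^N\rho_m(S)$ form a uniformly bounded sequence of operators on $\LalphaPlusEins$ converging strongly to the identity; hence $\bigNorm{\sum_{m=0}^N a_m^{(N)}\rho_m(S)x}_{\LalphaPlusEins}\to\norm{x}_{\LalphaPlusEins}$, and the supremum dominates $\norm{x}_{\LalphaPlusEins}$ with constant $1$.

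For the reverse inequality I fix $a=(a_m)_{m\in\N_0}$ with $\norm{a}_{l^\infty(\N_0)}\le 1$ and put $f_a:=\sum_{m=0}^\infty a_m\rho_m$. The key step is the uniform Mihlin bound $\norm{f_a}_{\mathcal{M}^\beta}\lesssim \norm{a}_{l^\infty(\N_0)}\le 1$. This rests on the dyadic scaling $\rho_m=\rhodot(2^{-m}\cdot)$: at each $t>0$ only boundedly many $\rho_m$ are nonzero, and $t^k\rho_m^{(k)}(t)=(2^{-m}t)^k\rhodot^{(k)}(2^{-m}t)$ is bounded, uniformly in $m$, by the $C_c^\infty$-bump $\rhodot$, so that the derivatives controlling the $\mathcal{M}^\beta$-norm are dominated by $\norm{a}_{l^\infty(\N_0)}$. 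The bounded $\mathcal{M}^\beta$-calculus of $S$ on $\LalphaPlusEins$ then gives $\bigNorm{\sum_{m=0}^\infty a_m\rho_m(S)x}_{\LalphaPlusEins}=\norm{f_a(S)x}_{\LalphaPlusEins}\lesssim\norm{f_a}_{\mathcal{M}^\beta}\norm{x}_{\LalphaPlusEins}\lesssim\norm{x}_{\LalphaPlusEins}$, and taking the supremum over admissible $a$ closes the estimate.

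The main obstacle is precisely this uniform Mihlin bound on $f_a$: since the order $\beta$ produced by the Gaussian bounds is in general fractional and $\norm{\cdot}_{\mathcal{M}^\beta}$ is a Sobolev-type rather than a pointwise condition, the scaling argument above must be carried out so that the bounded overlap of the supports of $(\rho_m)_{m\in\N_0}$ together with the smoothness of $\rhodot$ absorb the $\beta$ derivatives uniformly in $a$. Once this is in place, the existence of the calculus and the range inclusion $\alpha+1\in(p_0,p_0')$ are furnished directly by Assumption \ref{spaceAssumptions} and Remark \ref{GaussianRemark} c) via \cite{KrieglerWeis}, and the equivalence, first established on the dense core $\bigcap_{k}\D(S^k)$, extends to all of $\LalphaPlusEins$ by the uniform bounds just obtained.
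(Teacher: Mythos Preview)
Your proposal is correct and rests on the same underlying mechanism as the paper's proof: both use the generalized Gaussian bounds to obtain a Mihlin $\mathcal{M}^\beta$-calculus for $S$ on $L^{\alpha+1}(M)$ (since $\alpha+1\in(p_0,p_0')$), and then derive the Littlewood--Paley equivalence from that calculus. The only difference is one of presentation: the paper simply invokes \cite[Theorem~4.1]{KrieglerWeis} as a black box delivering \eqref{KrieglerEquivalence} directly for any $0$-sectorial operator with $\mathcal{M}^\beta$-calculus, whereas you unpack the argument into its two inequalities, the reverse one being exactly the uniform $\mathcal{M}^\beta$-bound $\sup_a\norm{f_a}_{\mathcal{M}^\beta}<\infty$ that \cite[Theorem~4.1]{KrieglerWeis} also relies on internally. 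Your identification of this uniform multiplier bound as the crux is accurate; note only that your strong-convergence claim for the partial sums in the first inequality tacitly uses the uniform boundedness from the second, so the logical order is to establish $\lesssim$ first and then pass to the limit for $\gtrsim$, as you indicate at the end.
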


\begin{proof}
	By Assumption $\ref{spaceAssumptions}$ ii), we obtain that the restriction of $\left(T(t)\right)_{t\ge 0}$ to $\LalphaPlusEins$ defines a $c_0$-semigroup on $\LalphaPlusEins,$ see Theorem 7.1. in \cite{OuhabazHeatEquations}. We denote the corresponding generator by $S_{\alpha+1}.$
	Lemma 6.1. in \cite{KrieglerWeis} implies that  the operator $S_{\alpha+1}$ is $0$-sectorial and has a Mihlin $M^\beta$-calculus for some $\beta>0.$ For a definition of these properties, we refer to \cite{KrieglerWeis}, Section 2. The estimate $\eqref{KrieglerEquivalence}$ follows from Theorem 4.1 in \cite{KrieglerWeis}. \\
	%	Note that we used the representation		 $Q x= \lim_{\lambda\to 0-} \lambda R(\lambda, A_{\alpha+1})x$ for $x\in \LalphaPlusEins$ which follows from  the identity $R(\lambda,A)|_\LalphaPlusEins=R(\lambda,A_{\alpha+1})$ for all $\lambda \in \rho(A)=\rho(A_{\alpha+1})$ (see \cite{Arendt}, Proposition 1.1.).
\end{proof}

In the next Proposition, we use the estimate from Lemma $\ref{LittlewoodPaleyLemma}$ to construct the sequence $\left(S_n\right)_{n\in\N}$ which we will employ in our Galerkin approximation of the problem $\eqref{ProblemStratonovich}.$	For a more direct proof which employs spectral multiplier theorems from \cite{Uhl}, \cite{kunstmannUhl} rather than the abstract Littlewood-Paley theory from \cite{KrieglerWeis}, we refer to  \cite{FhornungDiss}.
Moreover, we would like to remark that in the meantime, a similar construction has also been applied to use the Galerkin method in the context of stochastic Maxwell equation, see  \cite{hornung2017strong}.

\begin{Prop}\label{PaleyLittlewoodLemma}
	There \dela{is}{exists} a sequence $\left(S_n\right)_{n\in\N_0}$ of selfadjoint operators $S_n: H \to H_n$ for $n\in\N_0$ with
	$S_n \psi \to \psi$ in $E_A$ for $n\to \infty$ and $\psi \in E_A$  and the uniform norm estimates
	\begin{align}\label{SnUniformlyBounded}
	\sup_{n\in\N_0}\norm{S_n}_{{\mathcal{L}(H)}}\le 1, \quad \sup_{n\in\N_0} \norm{S_n}_{\mathcal{L}(\EA)}\le 1, \quad \sup_{n\in\N_0} \norm{S_n}_{\mathcal{L}(L^{\alpha+1})}<\infty.
	\end{align}
\end{Prop}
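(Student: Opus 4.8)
The plan is to define the operators $S_n$ directly through the dyadic partition of unity via the functional calculus of $S$. Concretely, I would set $g_n := \sum_{m=0}^n \rho_m$ and $S_n := g_n(S) = \sum_{m=0}^n \rho_m(S)$ for $n\in\N_0$. Since each $\rho_m$ is real-valued, each $\rho_m(S)$, and hence $S_n$, is selfadjoint by the spectral theorem for the selfadjoint operator $S$. Because $\rho_0$ is supported in $(0,2]$ and $\rho_m=\rhodot(2^{-m}\cdot)$ is supported in $[2^{m-1},2^{m+1}]$, the symbol $g_n$ is supported in $(0,2^{n+1}]$, and evaluating at $2^{n+1}$ leaves only the term $\rhodot(2)=0$; thus $g_n(\lambda)=0$ for all $\lambda\ge 2^{n+1}$. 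Using the representation of $S$ from Lemma \ref{spaceLemma} c), this gives $S_n h_j = g_n(\lambda_j) h_j = 0$ whenever $\lambda_j\ge 2^{n+1}$, so $S_n$ indeed maps $H$ into $H_n$.

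For the first two uniform bounds I would exploit that $(\rho_m)_m$ is a nonnegative partition of unity (the standard choice), so that $0\le g_n\le 1$ pointwise on $(0,\infty)$. The $L^2$-functional calculus then yields $\norm{S_n}_{\mathcal{L}(H)} = \sup_{t\in\sigma(S)}|g_n(t)| \le 1$. Since $A$ and $S$ commute by Assumption \ref{spaceAssumptions} ii), the operator $\sqrtA$ commutes with $g_n(S)$; writing out the energy norm and bounding each summand with the $H$-estimate gives $\norm{S_n\psi}_{\EA}^2 = \norm{g_n(S)\psi}_H^2 + \norm{g_n(S)\sqrtA\psi}_H^2 \le \norm{\psi}_{\EA}^2$, i.e. $\norm{S_n}_{\mathcal{L}(\EA)}\le 1$.

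The crucial and genuinely nontrivial estimate is the uniform $\LalphaPlusEins$-bound, and this is precisely where Lemma \ref{LittlewoodPaleyLemma} enters. I would write $S_n x = \sum_{m=0}^\infty a_m^{(n)}\rho_m(S)x$ with $a_m^{(n)}=1$ for $m\le n$ and $a_m^{(n)}=0$ otherwise, so that $\norm{a^{(n)}}_{l^\infty(\N_0)}=1$. Inserting this admissible multiplier sequence into the supremum on the right-hand side of the norm equivalence \eqref{KrieglerEquivalence} bounds $\norm{S_n x}_{\LalphaPlusEins}$ by a fixed constant multiple of $\norm{x}_{\LalphaPlusEins}$, uniformly in $n$. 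One first notes, as part of the setup of Lemma \ref{LittlewoodPaleyLemma}, that the $\rho_m(S)$ extend to bounded operators on $\LalphaPlusEins$, which is legitimate since $\alpha+1\in(p_0,p_0')$.

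Finally, for the strong convergence $S_n\psi\to\psi$ in $\EA$, I would use that $g_n\to 1$ pointwise on $(0,\infty)$ with $|g_n|\le 1$, together with $0\notin\sigma(S)$ by strict positivity. Dominated convergence against the spectral measure of $S$ then gives $g_n(S)\to I$ strongly on $H$. Applying this both to $\psi$ and to $\sqrtA\psi$ (which lies in $H$ for $\psi\in\EA$), and again invoking the commutativity of $\sqrtA$ with $g_n(S)$, shows that $\norm{S_n\psi-\psi}_H\to 0$ and $\norm{\sqrtA(S_n\psi-\psi)}_H\to 0$, whence $S_n\psi\to\psi$ in $\EA$. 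The main obstacle is the uniform $\LalphaPlusEins$-estimate: the remaining properties are routine consequences of the spectral calculus once the $S_n$ are defined, whereas that bound is exactly the reason the general Littlewood--Paley machinery of \cite{KrieglerWeis} was invoked in Lemma \ref{LittlewoodPaleyLemma}.
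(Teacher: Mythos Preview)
Your proposal is correct and follows essentially the same route as the paper: you define $S_n=\sum_{m=0}^n\rho_m(S)$, derive selfadjointness and the range $S_n(H)\subset H_n$ from the support of $g_n$, get the $H$- and $\EA$-contractivity from $0\le g_n\le 1$ together with the commutation of $S_n$ and $\sqrtA$, and obtain the uniform $\LalphaPlusEins$-bound by plugging the indicator sequence $a^{(n)}$ into the Littlewood--Paley equivalence \eqref{KrieglerEquivalence}. Your treatment of the strong convergence in $\EA$ via dominated convergence for the spectral measure, applied separately to $\psi$ and $\sqrtA\psi$, is a slightly more explicit version of what the paper summarizes as ``the convergence property of the functional calculus''.
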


\begin{proof}
	%
	%
	%	
	%		First, we consider the case, that the operator $A$ is injective and define operators $S_n: H \to H_n$ by
	%		\begin{align*}
	%			S_n:=\sum_{m=0}^n \rho_m(A)
	%		\end{align*}
	%		where $\rho_m(A),$ $m\in \N_0$ is defined by the functional calculus for selfadjoint operators. Indeed, we get the series representation
	%		\begin{align*}
	%			S_n x=\sum_{\lambda_m < 2^n} \skpH{x}{h_m} h_m+ \sum_{\lambda_m \in [ 2^n,2^{n+1})} \rho_n(\lambda_m) \skpH{x}{h_m} h_m
	%		\end{align*}
	%		by using the properties of the partition of unity. In particular, we have
	%		\begin{align*}
	%					\sup_{n\in\N}\norm{S_n}_{{\mathcal{L}(H)}}\le 1, \quad \sup_{n\in\N} \norm{S_n}_{\mathcal{L}(\EA)}\le 1.
	%		\end{align*}
	%		For the uniform boundedness in $\LalphaPlusEins,$ we use the estimate
	%		\begin{align*}
	%			\norm{x}_\LalphaPlusEins \eqsim \sup_{\norm{a}_{l^\infty(\N_0)}\le 1} \bigNorm{\sum_{m=0}^\infty a_m \rho_m(A)x}_\LalphaPlusEins.
	%		\end{align*}
	%		by \cite{KrieglerWeis}, Theorem 4.1.,
	%		which implies
	%		\begin{align*}
	%		\sup_{n\in \N}\norm{S_n}_{\mathcal{L}(L^{\alpha+1})}<\infty.
	%		\end{align*}
	
	%	 We obtain
	%	 \begin{align}\label{KrieglerEquivalence}
	%	 	\norm{ x}_{\LalphaPlusEins}\eqsim \norm{Q x}_{L^{\alpha+1}}+\sup_{\norm{a}_{l^\infty(\N_0)}\le 1}\bigNorm{\sum_{m=0}^n a_m \rho_m(\tilde{A}) (x-Qx)}_{\LalphaPlusEins}
	%	 \end{align}
	%	for $x\in \LalphaPlusEins$ and
	
	We fix $n\in\N$ and define the operators $S_n : H \to H$ for $n\in \N_0$ by
	$S_n:= \sum_{m=0}^n \rho_m(S)$
	via the functional calculus for selfadjoint operators. 	The operator $\rho_m(S)$ is selfadjoint for each $m,$ since $\rho_m$ is real-valued. Hence, $S_n$ is selfadjoint.
	By the convergence property of the functional calculus, we get $S_n \varphi \to \varphi$ in $\EA$ for all $\varphi\in \EA.$ A straightforward calculation using the properties of the dyadic partition of unity leads to
	\begin{align*}
	S_n x=\sum_{\lambda_m < 2^n} \skpH{x}{h_m} h_m+ \sum_{\lambda_m \in [ 2^n,2^{n+1})} \rho_n(\lambda_m) \skpH{x}{h_m} h_m,\quad u\in H.
	\end{align*}
	%\adda{ZB: Can you write down the calculation? F.H.
	%	\begin{align*}
	%		S_n x= \sum_{m=0}^n \sum_{k=1}^{\infty}\rho_m(\lambda_k) \skpH{x}{h_k}h_k
	%	\end{align*}
	%By Def. of $\rho_0$ and the support of $\dot{\rho}$ in the first step and the partition of unity in the last step
	%\begin{align*}
	%	S_n x&= \sum_{m=-\infty}^n \sum_{\lambda_k\in [2^{m-1},2^{m+1})}\dot{\rho}(2^{-m}\lambda_k) \skpH{x}{h_k}h_k\\
	%	&=\sum_{m=-\infty}^n \sum_{\lambda_k\in [2^{m-1},2^{m})}\dot{\rho}(2^{-m}\lambda_k) \skpH{x}{h_k}h_k
	%	+\sum_{m=-\infty}^n \sum_{\lambda_k\in [2^{m},2^{m+1})}\dot{\rho}(2^{-m}\lambda_k) \skpH{x}{h_k}h_k\\
	%	&=\sum_{m=-\infty}^{n-1} \sum_{\lambda_k\in [2^{m},2^{m+1})}\left[\dot{\rho}(2^{-(m+1)}\lambda_k)+\dot{\rho}(2^{-m}\lambda_k)\right] \skpH{x}{h_k}h_k
	%	+\sum_{\lambda_k\in [2^{n},2^{n+1})}\rho_n(\lambda_k) \skpH{x}{h_k}h_k\\
	%		&=\sum_{m=-\infty}^{n-1} \sum_{\lambda_k\in [2^{m},2^{m+1})} \skpH{x}{h_k}h_k
	%		+\sum_{\lambda_k\in [2^{n},2^{n+1})}\rho_n(\lambda_k) \skpH{x}{h_k}h_k
	%\end{align*}
	%Shall we include this calculation in the paper?	
	%	
	%	
	%	
	%	}		

	Therefore, $S_n$ maps $H$ to $H_n$ and we have  $\sup_{n\in\N_0}\norm{S_n}_{{\mathcal{L}(H)}}\le 1.$ The second estimate in \eqref{SnUniformlyBounded} can be derived as in \eqref{PnHeinsKontraktiv}, since $S_n$ and $\sqrtA$ commute. 
	To prove the third estimate, we employ Lemma \ref{PaleyLittlewoodLemma} 				
	with $\left(a_m\right)_{m\in\N_0}$ as $a_m=1$ for $m\le n$ and $a_m=0$ for $m>n$ and obtain for $x\in\LalphaPlusEins$
	\begin{align*}
	\norm{S_n x}_\LalphaPlusEins&= \bigNorm{\sum_{m=0}^\infty a_m \rho_m(S)x }_{\LalphaPlusEins}\le \sup_{\norm{a}_{l^\infty(\N_0)}\le 1}\bigNorm{\sum_{m=0}^\infty a_m \rho_m(S)x  }_{\LalphaPlusEins}
	\lesssim \norm{x}_\LalphaPlusEins.
	\end{align*}
	%	For selfadjointness, we take $x,y\in H$ and calculate
	%	\begin{align*}
	%		\skpH{S_n x}{y}&= \skpH{Q x}{y}+\sum_{m=0}^n \skpH{\rho_n(A)x}{y}= \skpH{ x}{Q y}+\sum_{m=0}^n \big({\rho_m(\tilde{A})(x-Qx)},{y-Qy}\big)_{\overline{R(A)}}\\
	%		&= \skpH{ x}{Q y}+\sum_{m=0}^n \big({x-Qx},{\rho_m(\tilde{A})(y-Qy)}\big)_{\overline{R(A)}}=\skpH{x}{S_n y},
	%	\end{align*}
	%	where we used that $Q$ and $\rho_m(\tilde{A})$ are selfadjoint operators.
	
	%		Alternatively, this also follows from the explicit calculation
	%		\begin{align*}
	%		\norm{(I-S_n)\varphi}_\EA^2=&\norm{(\varphi-Q \varphi)-\sum_{m=1}^n \rho_m(\tilde{A})(\varphi-Q \varphi)}_\EA^2\nonumber\\=&\sum_{\lambda_m\ge 2^{n+1}} \lambda_n^2 \vert \skpH{\varphi-Q \varphi}{h_m}\vert^2+\sum_{2^n\le \lambda_m< 2^{n+1}} (\rho_n(\lambda_m)-1)^2\lambda_n^2 \vert \skpH{\varphi-Q \varphi}{h_m}\vert^2 \nonumber\\ \le&\sum_{\lambda_m\ge 2^{n}} \lambda_n^2 \vert \skpH{\varphi-Q \varphi}{h_m}\vert^2\to 0, \quad n\to \infty.
	%		\end{align*}
	%		for $\varphi \in \EA.$
\end{proof}	

Using the operators $P_n$ and $S_n,$ $n\in\N,$ we approximate our original problem $\eqref{ProblemStratonovich}$ by the stochastic differential equation in $H_n$ given by	
\begin{equation*}
%\begin{gathered}
%(\operatorname{SNLS})\end{gathered}
\left\{
\begin{aligned}
\df u_n(t)&= \left(-\im A u_n(t)-\im P_n F\left( u_n(t)\right) \right) \df t-\im  S_n B(S_n u_n(t)) \circ \df W(t),
%\hspace{0,3 cm} t\in (0,T),
\\
u_n(0)&=P_n u_0.
\end{aligned}\right.
\end{equation*}
%	Note that the nonlinear term is well defined, since $H_n \subset \LInfty$ and the estimate
%	\begin{align*}
%		\norm{\vert v\vert^{\alpha-1}v}_{H} \le \norm{v}_\LInfty^{\alpha-1}\norm{v}_{H}<\infty
%	\end{align*}
%	guarantee $\vert v\vert^{\alpha-1}v \in {H}$ for $v\in H_n.$
%Note that  $A$ leaves $H_n$ invariant by the choice of the basis and the nonlinear term is welldefined by the mapping property $F: H_n \to H.$
With the Stratonovich correction term
\begin{align*}
\mu_n := -\frac{1}{2} \sumM \left(S_n B_m S_n\right)^2,
\end{align*}
the approximated problem can also be written in the It\^o form 	
\begin{equation}\label{galerkinEquation}
%\begin{gathered}
%(\operatorname{SNLS})\end{gathered}
\left\{
\begin{aligned}
\df u_n(t)&= \left(-\im A u_n(t)-i P_n F\left(  u_n(t)\right)+ \mu_n \left(u_n(t)\right) \right) \df t-\im  S_n B (S_n u_n(t)) \df W(t),
%\hspace{0,3 cm} t\in (0,T),
\\
u_n(0)&=P_n u_0.
\end{aligned}\right.
\end{equation}

By the well known theory for finite dimensional stochastic differential equations with locally Lipschitz coefficients, we get a local wellposedness result for $\eqref{galerkinEquation}.$

\begin{Prop}\label{localSolutionGalerkin}
	For each $n\in\N,$ there is a unique local solution $u_n$ of $\eqref{galerkinEquation}$ with continuous paths in $H_n$ and maximal existence time $\tau_n,$ which is a blow-up time in the sense that we have $\limsup_{ t \nearrow \tau_n(\omega)} \norm{u_n(t,\omega)}_{H_n}=\infty$ for almost all $\omega\in\Omega$ with $\tau_n(\omega)<\infty.$
\end{Prop}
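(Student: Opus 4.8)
The plan is to recognise \eqref{galerkinEquation} as a stochastic differential equation on the \emph{finite-dimensional} Hilbert space $H_n$ and to apply the standard local well-posedness theory for such equations under locally Lipschitz coefficients. Since $H_n=\operatorname{span}\{h_m:\lambda_m<2^{n+1}\}$ is finite-dimensional, the norms $\norm{\cdot}_H$, $\norm{\cdot}_\EA$, $\norm{\cdot}_\LalphaPlusEins$ and $\norm{\cdot}_\EAdual$ are all equivalent on $H_n$; this equivalence is the key enabling fact that transfers the analytic estimates of the preceding assumptions into the Lipschitz conditions required here.

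First I would verify that the drift
\[
b_n(u):=-\im A u-\im P_n F(u)+\mu_n(u),\qquad u\in H_n,
\]
is locally Lipschitz as a self-map of $H_n$. The restriction of $A$ to $H_n$ is a bounded linear operator (a finite combination of the eigenvalues $\lambda_m$), so $u\mapsto -\im Au$ is globally Lipschitz. The correction $\mu_n$ is a bounded linear operator on $H$: by \eqref{SnUniformlyBounded} and \eqref{noiseBoundsH} one has $\sumM\norm{(S_nB_mS_n)^2}_{\mathcal{L}(H)}\le \norm{S_n}_{\mathcal{L}(H)}^4\sumM\norm{B_m}_{\mathcal{L}(H)}^2<\infty$, so the defining series converges in $\mathcal{L}(H)$ and $\mu_n$ is globally Lipschitz. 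Finally $u\mapsto P_nF(u)$ is locally Lipschitz: by \eqref{nonlinearityLocallyLipschitz} the map $F$ is Lipschitz on bounded sets of $\LalphaPlusEins$ with values in $\LalphaPlusEinsDual$, $P_n\colon\EAdual\to H_n$ is bounded by \eqref{PnInEAdual}, and the equivalence of norms on $H_n$ turns this into local Lipschitz continuity.

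Next I would treat the diffusion coefficient $\sigma_n(u):=-\im S_nB(S_nu)$. Since $B\in\mathcal{L}(H,\HS(Y,H))$ and $S_n\in\mathcal{L}(H)$, the map $u\mapsto\sigma_n(u)$ is \emph{linear} and bounded from $H_n$ into $\HS(Y,H_n)$, with $\norm{\sigma_n(u)}_{\HS(Y,H)}\le\norm{S_n}_{\mathcal{L}(H)}^2\norm{B}_{\mathcal{L}(H,\HS(Y,H))}\norm{u}_H$; in particular it is globally Lipschitz and of linear growth, and the Hilbert--Schmidt property guarantees that the stochastic integral against the $Y$-cylindrical Wiener process is well defined with values in $H_n$.

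With locally Lipschitz coefficients in hand, I would conclude by the classical truncation argument. For $R>0$ replace $b_n,\sigma_n$ by globally Lipschitz coefficients agreeing with them on $\{\norm{u}_{H_n}\le R\}$; the resulting equation has a unique global solution $u_n^R$ by the standard fixed-point theory for SDEs on a finite-dimensional state space driven by a cylindrical Wiener process. Setting $\tau_n^R:=\inf\{t\ge 0:\norm{u_n^R(t)}_{H_n}\ge R\}$, pathwise uniqueness forces consistency of the $u_n^R$ on $[0,\tau_n^R\wedge\tau_n^{R'}]$ as $R$ grows, so $\tau_n:=\lim_{R\to\infty}\tau_n^R$ and $u_n:=u_n^R$ on $[0,\tau_n^R)$ define the unique maximal local solution with continuous $H_n$-valued paths. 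On $\{\tau_n<\infty\}$ one must have $\limsup_{t\nearrow\tau_n}\norm{u_n(t)}_{H_n}=\infty$, since otherwise the solution would remain in some ball up to $\tau_n$ and could be restarted, contradicting maximality. The only genuinely nonroutine point is the local Lipschitz property of $P_nF$, which rests entirely on the equivalence of norms on the finite-dimensional $H_n$ together with \eqref{nonlinearityLocallyLipschitz}; everything else is the textbook finite-dimensional SDE theory.
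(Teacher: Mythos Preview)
Your proposal is correct and aligns precisely with the paper's approach: the paper does not give a proof but simply invokes ``the well known theory for finite dimensional stochastic differential equations with locally Lipschitz coefficients,'' and you have correctly supplied the details---equivalence of norms on the finite-dimensional $H_n$, global Lipschitz continuity of the linear drift and diffusion terms, local Lipschitz continuity of $P_nF$ via \eqref{nonlinearityLocallyLipschitz}, and the standard truncation/maximality argument.
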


The global existence for equation $\eqref{galerkinEquation}$ is based on the conservation of the $L^2$-norm of solutions.

\begin{Prop}\label{MassEstimateGalerkinSolution}
	For each $n\in\N,$ there is a unique global solution $u_n$ of $\eqref{galerkinEquation}$ with continuous paths in $H_n$ and we have the estimate
	\begin{align}\label{LzweiEstimate}
	\norm{u_n(t)}_{H_n}=\norm{u_n(t)}_{H}=\norm{P_n u_0}_{H}\le \norm{u_0}_{H}
	\end{align}
	almost surely for all $t\ge 0.$
\end{Prop}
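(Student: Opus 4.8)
The plan is to upgrade the local solution from Proposition~\ref{localSolutionGalerkin} to a global one by proving that the $L^2$-norm is almost surely conserved, which prevents blow-up in the finite-dimensional space $H_n$ (all norms on $H_n$ being equivalent, boundedness in $H$ rules out $\limsup_{t\nearrow\tau_n}\norm{u_n(t)}_{H_n}=\infty$). First I would apply the finite-dimensional It\^o formula to the function $u\mapsto\norm{u}_H^2$ along the solution of \eqref{galerkinEquation} on the stochastic interval $[0,\tau_n)$. The differential has three contributions: the drift terms $-\im A u_n$ and $-\im P_n F(u_n)$, the Stratonovich correction $\mu_n(u_n)$, and the It\^o correction arising from the quadratic variation of the noise $-\im S_n B(S_n u_n)\,\df W$.

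The key computation is that each contribution vanishes. For the dispersive term, $\Real\skpH{-\im A u_n}{u_n}=\Real(-\im)\norm{\sqrtA u_n}_H^2=0$; for the nonlinearity, $\Real\skpH{-\im P_n F(u_n)}{u_n}=\Real\duality{-\im F(u_n)}{P_n u_n}=\Real\duality{-\im F(u_n)}{u_n}=0$ by \eqref{nonlinearityComplex}, using that $P_n u_n=u_n$ since $u_n$ takes values in $H_n$ and $P_n$ is the orthogonal projection. The crucial cancellation is between the Stratonovich correction and the It\^o-Doob term: writing the noise coefficient as $G_m u_n:=-\im S_n B_m S_n u_n$, the It\^o correction equals $\frac12\sum_m\norm{G_m u_n}_H^2=\frac12\sum_m\norm{S_n B_m S_n u_n}_H^2$, while the drift contribution from $\mu_n(u_n)=-\frac12\sum_m (S_nB_mS_n)^2 u_n$ gives $2\Real\skpH{\mu_n(u_n)}{u_n}=-\sum_m\Real\skpH{(S_nB_mS_n)^2 u_n}{u_n}=-\sum_m\norm{S_nB_mS_n u_n}_H^2$, where I use that $S_n$ and $B_m$ are selfadjoint on $H$, so $S_nB_mS_n$ is selfadjoint and $\skpH{(S_nB_mS_n)^2 u_n}{u_n}=\norm{S_nB_mS_n u_n}_H^2$. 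These two terms cancel exactly, and the remaining martingale part $-2\Real\sum_m\skpH{\im S_nB_mS_n u_n}{u_n}\,\df\beta_m$ also vanishes, since $\skpH{\im S_nB_mS_n u_n}{u_n}=\im\norm{\cdots}$ is purely imaginary and its real part is zero; hence $\df\norm{u_n(t)}_H^2=0$ on $[0,\tau_n)$.

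It follows that $\norm{u_n(t)}_H=\norm{P_n u_0}_H$ almost surely for all $t\in[0,\tau_n)$, and since $\norm{P_n u_0}_H\le\norm{u_0}_H$ by contractivity of the orthogonal projection, the solution stays in a fixed ball of $H_n$. By the blow-up alternative in Proposition~\ref{localSolutionGalerkin} this forces $\tau_n=\infty$ almost surely, giving the global solution and the estimate \eqref{LzweiEstimate}. The main obstacle I anticipate is purely bookkeeping: making the selfadjointness of $S_nB_mS_n$ and the exact matching of the Stratonovich correction against the It\^o correction fully rigorous, and justifying the interchange of the infinite sum over $m$ with the It\^o formula, which is licensed by the summability \eqref{noiseBoundsH} together with $\sup_n\norm{S_n}_{\mathcal{L}(H)}\le1$ from \eqref{SnUniformlyBounded}. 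One should in principle work on a localizing sequence of stopping times and pass to the limit, but in finite dimensions with bounded coefficients on bounded sets this is standard.
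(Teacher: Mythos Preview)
Your proposal is correct and follows essentially the same route as the paper: apply It\^o's formula to $\norm{\cdot}_H^2$, use selfadjointness of $S_nB_mS_n$ to see that the Stratonovich drift cancels the It\^o correction and the martingale integrand is purely imaginary, then invoke the blow-up alternative. One small slip: the It\^o correction for $\norm{u_n}_H^2$ is $\sum_m\norm{S_nB_mS_nu_n}_H^2$, not $\tfrac12\sum_m$, since $\Phi''=2\Real\skpH{\cdot}{\cdot}$ already carries the factor $2$; with this correction the cancellation against $2\Real\skpH{\mu_n(u_n)}{u_n}=-\sum_m\norm{S_nB_mS_nu_n}_H^2$ is indeed exact, as you claim.
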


\begin{proof}
	\emph{Step 1:} We fix $n\in\N$ and take the unique maximal  solution $(u_n,\tau_n)$ from Proposition $\ref{localSolutionGalerkin}.$ We show that the estimate $\eqref{LzweiEstimate}$ holds almost surely on $\{t\le \tau_n\}.$
	The function $\varPhi: H_n \to \R$  defined by  $\varPhi(v):=\norm{v}_{H}^2$ for $v\in H_n$ is  twice continuously Fr\'{e}chet-differentiable with
	\begin{align*}
	\varPhi'[v]h_1&= 2 \Real \skpH{v}{ h_1}, \qquad
	\varPhi''[v] \left[h_1,h_2\right]= 2 \Real \skpH{ h_1}{h_2}
	\end{align*}
	for $v, h_1, h_2\in H_n.$ For the sequence $\left(\tau_{n,k}\right)_{k\in\N}$ of stopping times
	\begin{align*}
	\tau_{n,k}:=\inf \left\{t\in [0,\tau_n]: \norm{u_n(t)}_{H_n}\ge k\right\}\land \tau_n,\qquad k\in\N,
	\end{align*}
	we have  $\tau_{n,k} \nearrow \tau_n$ almost surely and the It\^o process $u_n$ has the representation
	\begin{align*}
	u_n(t)= P_n u_0&+ \int_0^t \left[-\im A u_n(s)-\im P_n F\left( u_n(s)\right)+\mu_n(u_n(s))\right] \df s- \im \int_0^t S_n B (S_n u_n(s)) \df W(s)
	\end{align*}
	almost surely on $\{t\le \tau_{n,k}\}$ for all $k\in \N.$ We fix $k\in\N.$ Since we have
	\begin{align*}
	\tr \Big(\varPhi''[u_n(s)]&\left(-\im S_n B \left(S_n u_n(s)\right),-\im S_n B \left(S_n u_n(s)\right)\right)\Big) \\
	=& \sumM 2 \Real \skpH{-\im S_n B \left(S_n u_n(s)\right)f_m}{-\im S_n B \left(S_n u_n(s)\right)f_m} \\
	=&2 \sumM \norm{S_n B_m S_n u_n(s)}_{H}^2
	\end{align*}
	for $s\in \{t\le \tau_{n,k}\},$ the It\^o lemma yields
	\begin{align*}
	\norm{u_n(t)}_{H}^2=&\norm{P_n u_0}_{H}^2+2 \int_0^t \Real \skpH{ u_n(s)}{ -\im A u_n(s)-\im P_n F\left(u_n(s)\right)+\mu_n(u_n(s))} \df s\nonumber \\
	&+2 \int_0^t \Real \skpH{u_n(s)}{ -\im S_n B(S_n u_n(s)) \df W(s)} +\sumM \int_0^t
	\Vert  S_n B_m S_n u_n(s)\Vert_{H}^2\df s\nonumber
	\end{align*}
	almost surely in $\{t\le \tau_{n,k}\}.$ We fix $v\in H_n$ and $m\in\N$ and calculate
	\begin{align*}
	\Real \skpH{ v}{ -\im A v}&=\Real \left[\im \Vert \sqrtA v\Vert_{H}^2\right]=0,\\
	\Real \skpH{ v}{ -\im P_n F\left( v\right)}&=\Real \duality{\im v}{   F\left( v\right)}=0, \\
	2\Real \skpH{ v}{\mu_n(v)}&= -\sumM\Real  \skpH{ v}{\left(S_n B_m S_n\right)^2 v}=-\sumM  \Vert{ S_n B_m S_n v}\Vert_{H}^2,
	\end{align*}
	where we used $\eqref{PnInEAdual}$ and Assumption $\ref{nonlinearAssumptions}$ i) for the second term and the fact, that the operator $S_n B_mS_n $ is selfadjoint for the third term. Analogously, we get
	\begin{align*}
	\Real \skpH{v}{ -\im S_n B(S_n v)f_m}&=\Real \skpH{v}{ -\im S_n B_m S_n v}= \Real \left[\im\skpH{ v}{S_n   B_m S_n v}\right]=0.
	\end{align*}
	Thus, we obtain
	$
	\norm{u_n(t)}_{H}^2=\norm{P_n u_0}_{H}^2\le \Vert u_0 \Vert_{H}^2
	$
	almost surely in $\{t\le \tau_{n,k}\}.$\\
	
	%		\emph{Step 2:} In the first step, we have shown
	%		\begin{align*}
	%			\sup_{0\le t < \tau_n} \norm{u_n(t)}_{H_n}\le \norm{u_0}_{H}<\infty
	%		\end{align*}
	%		almost surely. In view of the blow-up alternative from Proposition $\ref{localSolutionGalerkin}$ this means $\tau_n=T$ almost surely. Hence the solution $u_n$ is global and the estimate $\eqref{LzweiEstimate}$ holds up to time $T.$
	
	%\adda{Comment FH: I agree with your argument that we can take $T=\infty$ in Prop 5.3. But here, in Step 2,  you wanted $T.$ Why? Is the following wrong?}

	\emph{Step 2.} To show $\tau_n=\infty$ almost surely, we assume the contrary. Therefore, there is $\Omega_0 \in \F$ with $\Prob(\Omega_0)>0$ such that $\tau_n(\omega)<\infty$ and $\tau_{n,k}(\omega)\nearrow \tau_n(\omega)$ for all $\omega \in \Omega_0.$ Hence, $\tau_{n,k}<\infty$ on $\Omega_0$ and by the continuity of the paths of $u_n$ and the definition of $\tau_{n,k},$ we get \\
	$\norm{u_n(\tau_{n,k}(\omega),\omega)}_{H_n}=k$ for all $\omega\in \Omega_0$ and $k\in \N.$ This is a contradiction to Step 1, where we obtained $\norm{u_n(t)}_{H} \le \norm{u_0}_{H}$ almost surely in $\{t\le \tau_{n,k}\}.$
	Therefore, $u_n$ is a global solution and we have
	\begin{align*}
	\norm{u_n(t)}_{H_n}=\norm{u_n(t)}_{H}=\norm{P_n u_0}_{H}\le \norm{u_0}_{H}
	\end{align*}
	almost surely for all $t\ge 0.$
\end{proof}

The  next goal  is to find uniform energy estimates for the global solutions of the equation $\eqref{galerkinEquation}.$  Recall that by Assumption $\ref{nonlinearAssumptions},$ the nonlinearity $F$ has a real antiderivative denoted by $\Fhat.$

\begin{Definition}
	We define the energy $\energy(u)$ of $u\in \EA$ by
	\begin{align*}
	\energy(u):= \frac{1}{2} \Vert A^{\frac{1}{2}} u \Vert_{H}^2+\Fhat(u),\qquad u\in \EA.
	\end{align*}	
\end{Definition}
Note that $\energy(u)$ is welldefined by the  embedding $\EA \hookrightarrow L^{\alpha+1}(M).$ 
In contrast to the uniform $L^2$-estimate in $[0,\infty),$ we cannot exclude the growth of the energy in an infinity time interval. So, we fix $T>0$ from now on.	
As a preparation, we formulate a Lemma, which simplifies the arguments, when the Burkholder-Davis-Gundy inequality is used.

%\begin{Lemma}\label{LemmaYOmegaNachLzweiZeit}
%	Let $\varepsilon>0,$ $T>0,$ $\left(Y(t)\right)_{t\in[0,T]}$ be an adapted continuous process with  $Y\ge 0$ and  $\tau: \Omega \rightarrow [0,T]$ a stopping time. Then, we have
%	\begin{align*}
%	\E \left(\int_0^{t\land \tau} Y(s)^2 \df s \right)^{\frac{1}{2}}\le \varepsilon \E\Big[ \sup_{s\in[0,{t\land \tau}]} Y(s)\Big]+ \frac{1}{4 \varepsilon} \int_0^t \E \Big[\sup_{r\in[0,s\land \tau]} Y(r)\Big] \df s
%	\end{align*}
%	for each $t\in[0,T].$
%\end{Lemma}
%
%\begin{proof}
%	We have
%	\begin{align*}
%	\E \left(\int_0^{t\land \tau} Y(s)^2 \df s \right)^{\frac{1}{2}}
%	&\le \E \left[\left(\sup_{s\in[0,{t\land \tau}]} Y(s)^{\frac{1}{2}}\right) \left(\int_0^{t\land \tau} Y(s) \df s \right)^{\frac{1}{2}} \right]\\
%	&\le \left(\E \sup_{s\in[0,{t\land \tau}]} Y(s)\right)^{\frac{1}{2}} \left(\E\int_0^{t\land \tau} Y(s) \df s \right)^{\frac{1}{2}}.
%	\end{align*}
%	From the elementary inequality $\sqrt{a b}\le \varepsilon a+ \frac{1}{4\varepsilon}b$ for $a,b\ge 0$ and $\varepsilon>0,$ we obtain
%	\begin{align*}
%	\E \left(\int_0^{t\land \tau} Y(s)^2 \df s \right)^{\frac{1}{2}}
%	&\le \varepsilon \E \Big[\sup_{s\in[0,{t\land \tau}]} Y(s)\Big]+ \frac{1}{4\varepsilon} \E\int_0^{t\land \tau} Y(s) \df s\\
%	&\le \varepsilon \E \Big[\sup_{s\in[0,{t\land \tau}]} Y(s)\Big]+ \frac{1}{4\varepsilon}\int_0^t \E \Big[\sup_{r\in[0,{s\land \tau}]} Y(s)\Big] \df s.		
%	\end{align*}
%\end{proof}

\begin{Lemma}\label{LemmaYOmegaNachLzweiZeit}
	Let $r\in [1,\infty),$ $\varepsilon>0,$ $T>0$ and $X\in L^r(\Omega,L^\infty(0,\dela{t}{T})).$ Then,
	\begin{align*}
	\norm{X}_{L^r(\Omega,L^2(0,t))}\le \varepsilon \norm{X}_{L^r(\Omega,L^\infty(0,t))} +\frac{1}{4\varepsilon}\int_0^t \norm{X}_{L^r(\Omega,L^\infty(0,s))} \df s,\qquad t\in[0,T].
	\end{align*}
\end{Lemma}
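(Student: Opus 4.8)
The plan is to reduce the statement to a single pathwise inequality and only take the $L^r(\Omega)$-norm at the very end, applying a scalar Young inequality and Minkowski's integral inequality as the last steps. First I would introduce the running supremum $Y(s):=\sup_{\sigma\in[0,s]}\vert X(\sigma)\vert$, which is nondecreasing in $s$ and satisfies $\norm{Y(s)}_{L^r(\Omega)}=\norm{X}_{L^r(\Omega,L^\infty(0,s))}$ for every $s\in[0,T]$, directly by the definition of the $L^\infty(0,s)$-norm.

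The key pathwise step is an elementary factorisation: for $\Prob$-a.e.\ $\omega$ and all $s\le t$ one has $\vert X(s)\vert^2\le Y(s)\,Y(t)$, since $\vert X(s)\vert\le Y(s)\le Y(t)$; integrating in $s$ gives
\[ \int_0^t \vert X(s)\vert^2 \,\df s \le Y(t)\int_0^t Y(s)\,\df s. \]
Taking square roots and applying the scalar Young inequality $\sqrt{ab}\le \varepsilon a+\tfrac{1}{4\varepsilon}b$ for $a,b\ge 0$ (which holds because $\varepsilon a+\tfrac{1}{4\varepsilon}b-\sqrt{ab}=\big(\sqrt{\varepsilon a}-\tfrac{1}{2\sqrt{\varepsilon}}\sqrt{b}\big)^2\ge 0$) with $a=Y(t)$ and $b=\int_0^t Y(s)\,\df s$ yields the pathwise bound
\[ \Big(\int_0^t \vert X(s)\vert^2\,\df s\Big)^{\frac12}\le \varepsilon\, Y(t)+\frac{1}{4\varepsilon}\int_0^t Y(s)\,\df s. \]
Finally I would take the $L^r(\Omega)$-norm of both sides: the left-hand side is exactly $\norm{X}_{L^r(\Omega,L^2(0,t))}$, while the triangle inequality in $L^r(\Omega)$ together with Minkowski's integral inequality $\norm{\int_0^t Y(s)\,\df s}_{L^r(\Omega)}\le \int_0^t\norm{Y(s)}_{L^r(\Omega)}\,\df s$ bounds the right-hand side by $\varepsilon\norm{Y(t)}_{L^r(\Omega)}+\tfrac{1}{4\varepsilon}\int_0^t\norm{Y(s)}_{L^r(\Omega)}\,\df s$. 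Substituting $\norm{Y(s)}_{L^r(\Omega)}=\norm{X}_{L^r(\Omega,L^\infty(0,s))}$ then gives the claim.

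The only point requiring care, and the step I expect to be the main obstacle, is covering the full range $r\in[1,\infty)$ rather than just $r\ge 2$. The naive route of writing $\norm{X}_{L^r(\Omega,L^2(0,t))}^2=\norm{\int_0^t\vert X(s)\vert^2\,\df s}_{L^{r/2}(\Omega)}$ and invoking Minkowski's integral inequality in $L^{r/2}(\Omega)$ breaks down for $r<2$, since $L^{r/2}(\Omega)$ is then not a normed space. Performing the Young splitting \emph{pathwise} first, so that Minkowski's integral inequality is only ever used in $L^r(\Omega)$ with $r\ge 1$, circumvents this difficulty entirely; the monotonicity of $Y$ is what makes the pathwise factorisation $\vert X(s)\vert^2\le Y(s)Y(t)$ available and isolates the single factor $Y(t)$ outside the time integral.
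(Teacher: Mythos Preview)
Your proof is correct and essentially coincides with the paper's argument: the paper also works pathwise, writing $\norm{X}_{L^2(0,t)}\le \norm{X}_{L^\infty(0,t)}^{1/2}\norm{X}_{L^1(0,t)}^{1/2}$ (i.e.\ $\vert X(s)\vert^2\le Y(t)\vert X(s)\vert$ rather than your $\vert X(s)\vert^2\le Y(t)Y(s)$), applies the same scalar Young inequality, and only then takes the $L^r(\Omega)$-norm with Minkowski's integral inequality, finishing with $\norm{X(s)}_{L^r(\Omega)}\le \norm{X}_{L^r(\Omega,L^\infty(0,s))}$. Your observation that the pathwise Young step is what allows the full range $r\ge 1$ is exactly the point, and the paper handles it the same way.
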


\begin{proof}
	By interpolation of $L^2(0,t)$ between $L^\infty(0,t)$ and $L^1(0,t)$ and the elementary inequality $\sqrt{a b}\le \varepsilon a+ \frac{1}{4\varepsilon}b$ for $a,b\ge 0$ and $\varepsilon>0,$ we obtain
	\begin{align*}
	\norm{X}_{L^2(0,t)}\le \norm{X}_{L^\infty(0,t)}^\frac{1}{2} \norm{X}_{L^1(0,t)}^\frac{1}{2}\le \varepsilon \norm{X}_{L^\infty(0,t)} +\frac{1}{4\varepsilon} \norm{X}_{L^1(0,t)}.
	\end{align*}
	Now, we take the $L^r(\Omega)$-norm and apply Minkowski's inequality to get
	\begin{align*}
	\norm{X}_{L^r(\Omega,L^2(0,t))}&\le \varepsilon \norm{X}_{L^r(\Omega,L^\infty(0,t))} +\frac{1}{4\varepsilon}\int_0^t \norm{X(s)}_{L^r(\Omega)} \df s\\
	&\le\varepsilon \norm{X}_{L^r(\Omega,L^\infty(0,t))} +\frac{1}{4\varepsilon}\int_0^t \norm{X}_{L^r(\Omega,L^\infty(0,s))} \df s.
	\end{align*}
\end{proof}

The next Proposition is the key step to show that we can apply Corollary $\ref{corollaryEstimatesToASconvergence}$ to the sequence of solutions $(u_n)_{n\in\N}$ of the equation $\eqref{galerkinEquation}$ in the defocusing case.

\begin{Prop}\label{EstimatesGalerkinSolution}
	Under Assumption $\ref{focusing}$ i), the following assertions hold.
	\begin{enumerate}
		\item[a)]  For all $q\in [1,\infty)$ there is a constant $C=C(q,\norm{u_0}_{\EA}, \alpha, F, \left(B_m\right)_{m\in\N},T)>0$ with
		\begin{align*}
		\sup_{n\in\N}\E \Big[\sup_{t\in[0,T]} \left[\norm{u_n(t)}_{H}^2+\energy(u_n(t))\right]^q\Big]\le C
		\end{align*}
		In particular, for all $r\in[1,\infty)$ there is $C_1=C_1(r,\norm{u_0}_{\EA}, \alpha, F, \left(B_m\right)_{m\in\N},T)>0$
		\begin{align*}
		\sup_{n\in\N}\E \Big[\sup_{t\in[0,T]} \norm{u_n(t)}_\EA^{r}\Big]\le C_1.
		%						,\qquad
		%						\sup_{n\in\N}\E \Big[\sup_{t\in[0,T]} \norm{u_n(t)}_\LalphaPlusEins^{q(\alpha+1)}\Big]\lesssim_{T,q,\norm{u_0}_\EA}1.
		\end{align*}
		
		\item[b)] The sequence $(u_n)_{n\in\N}$ satisfies the Aldous condition $[A]$ in $\EAdual.$
	\end{enumerate}
\end{Prop}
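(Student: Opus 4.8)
The plan is to obtain both assertions from an It\^o analysis of the energy $\energy(u_n(t))=\frac12\norm{\sqrtA u_n(t)}_H^2+\Fhat(u_n(t))$, combined with the mass conservation of Proposition \ref{MassEstimateGalerkinSolution} and the uniform bounds $\sup_n\norm{S_n}_{\mathcal{L}(\EA)}\le 1$ and $\sup_n\norm{S_n}_{\mathcal{L}(\LalphaPlusEins)}<\infty$ from Proposition \ref{PaleyLittlewoodLemma}. First I would record the consequences of the defocusing Assumption \ref{focusing} i): since $\Fhat\ge 0$ and $\norm{u}_\LalphaPlusEins^{\alpha+1}\lesssim\Fhat(u)$ by \eqref{boundantiderivative}, the quantity $X_n(t):=\norm{u_n(t)}_H^2+\energy(u_n(t))$ is nonnegative and dominates both $\norm{u_n(t)}_\EA^2$ and $\norm{u_n(t)}_\LalphaPlusEins^{\alpha+1}$, while $\norm{u_n(t)}_H=\norm{P_nu_0}_H$ is constant in $t$. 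Because $\energy$ restricted to the finite-dimensional $H_n$ is twice continuously differentiable (the quadratic part is smooth and $\Fhat$ is $C^2$ by Assumption \ref{nonlinearAssumptions} ii)), It\^o's formula applies to $\energy(u_n)$ along \eqref{galerkinEquation}. Writing the derivative of $\energy$ as $\Real\duality{Au_n+F(u_n)}{\cdot}$, the Hamiltonian drift $-\im Au_n-\im P_nF(u_n)$ contributes \emph{nothing}: expanding into four pairings and using the complex-structure identity $\Real\skpH{w}{-\im w}=0$ together with the orthogonality of $P_n$ (and the cancellation \eqref{nonlinearityComplex} behind the mass estimate), every term is purely imaginary. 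It is essential here that the deterministic part uses the orthogonal projection $P_n$ and not $S_n$.

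Next I would combine the Stratonovich correction $\mu_n(u_n)=-\frac12\sumM(S_nB_mS_n)^2u_n$ with the second-order It\^o term $\frac12\sumM\norm{\sqrtA S_nB_mS_nu_n}_H^2+\frac12\sumM\Fhat''[u_n](-\im S_nB_mS_nu_n,\cdot)$ and estimate the residual \emph{crudely}, rather than through commutator cancellations. With $C_m:=S_nB_mS_n$, the $\sqrtA$-contributions are bounded, using $\norm{C_m w}_\EA\le\norm{B_m}_{\mathcal{L}(\EA)}\norm{w}_\EA$, by $C\sumM\norm{B_m}_{\mathcal{L}(\EA)}^2\norm{u_n}_\EA^2$; the $\Fhat$-contributions are controlled through \eqref{nonlinearityEstimate}, \eqref{deriveNonlinearBound} and $\sup_n\norm{S_n}_{\mathcal{L}(\LalphaPlusEins)}<\infty$ by $C\sumM\norm{B_m}_{\mathcal{L}(\LalphaPlusEins)}^2\norm{u_n}_\LalphaPlusEins^{\alpha+1}$. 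By the summability \eqref{noiseBoundsEnergy} and the domination of both norms by $X_n$, the full It\^o drift is $\lesssim 1+X_n(s)$, and the quadratic variation of the martingale part is $\lesssim(1+X_n(s))^2$.

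For part a) I would then localise with the stopping times $\tau_{n,k}$ of Proposition \ref{MassEstimateGalerkinSolution}, on which $\norm{u_n}_\EA$ is bounded since $H_n$ is finite-dimensional, raise $X_n$ to the power $q$, and take the supremum in time and expectation. Burkholder--Davis--Gundy turns the martingale into an $L^q(\Omega,L^2(0,t))$-norm of $1+X_n$, and Lemma \ref{LemmaYOmegaNachLzweiZeit} (Young's inequality in disguise) converts this into a small multiple of $\E\sup_{s\le t}(1+X_n(s))^q$ plus a time integral; choosing the parameter small absorbs the supremum into the left-hand side, and Gronwall closes the estimate uniformly in $n$ and $k$. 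Letting $k\to\infty$ by Fatou gives $\sup_n\E\sup_{t\le T}X_n(t)^q\le C$, and the second estimate follows from $\norm{u_n}_\EA^2\lesssim X_n$.

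Finally, for part b) I would use the integrated form \eqref{galerkinEquation} on the random interval $[\tau_n,(\tau_n+\theta)\wedge T]$ and estimate each term in $\EAdual$: $\norm{Au_n}_\EAdual\le\norm{u_n}_\EA$ by Lemma \ref{spaceLemma} a), $\norm{P_nF(u_n)}_\EAdual\lesssim\norm{u_n}_\EA^\alpha$, and $\norm{\mu_n(u_n)}_\EAdual\lesssim\norm{u_n}_H$ via \eqref{noiseBoundsH}, so the drift contributes $O(\theta)$ in expectation by part a). Writing the stochastic integral as $\int_0^T\mathbf{1}_{(\tau_n,(\tau_n+\theta)\wedge T]}(s)S_nB(S_nu_n)\,\df W(s)$ and applying the It\^o isometry with $\norm{S_nB_mS_nu_n}_\EAdual\lesssim\norm{B_m}_{\mathcal{L}(H)}\norm{u_n}_H$ bounds it by $C\theta\norm{u_0}_H^2$ in $L^2(\Omega,\EAdual)$. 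Hence $\E\norm{u_n((\tau_n+\theta)\wedge T)-u_n(\tau_n)}_\EAdual\lesssim\theta^{1/2}$ uniformly in $n$ and in the stopping time, and Chebyshev's inequality yields the Aldous condition $[A]$. The main obstacle is the energy It\^o analysis in part a): verifying the exact vanishing of the Hamiltonian drift and realising that the crude bounds suffice precisely because, in the defocusing regime, the energy controls both $\norm{u_n}_\EA^2$ and $\norm{u_n}_\LalphaPlusEins^{\alpha+1}$ — which is exactly where the uniform $\LalphaPlusEins$-boundedness of $S_n$ is indispensable.
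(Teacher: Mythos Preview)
Your proposal is correct and follows essentially the same strategy as the paper: It\^o's formula for $\energy(u_n)$, the exact cancellation of the Hamiltonian drift (which hinges, as you note, on using $P_n$ rather than $S_n$ in the deterministic part), crude control of the Stratonovich and second-order It\^o terms by $\sumM\|B_m\|_{\mathcal{L}(\EA)}^2$ and $\sumM\|B_m\|_{\mathcal{L}(\LalphaPlusEins)}^2$ (this is where the uniform $\LalphaPlusEins$-bound on $S_n$ is used), BDG combined with Lemma~\ref{LemmaYOmegaNachLzweiZeit}, and Gronwall. Part~b) is identical to the paper's argument.

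There is one minor methodological difference worth noting. To obtain the $q$-th moment, the paper applies a \emph{second} It\^o formula to $\varPhi(x)=(x+\delta)^q$ (the shift by $\delta>0$ ensuring $\varPhi\in C^2$ on the range of the process) and then takes $\E\sup$; you instead take the $L^q(\Omega,L^\infty(0,t))$-norm of the identity for $X_n$ directly and feed the martingale through BDG into an $L^q(\Omega,L^2(0,t))$-norm of $1+X_n$. Your route is in fact the one the paper itself uses in the focusing case (Proposition~\ref{EstimatesGalerkinSolutionFocusing}), and both are equivalent here. One small point: the stopping times $\tau_{n,k}$ from Proposition~\ref{MassEstimateGalerkinSolution} are based on the $H$-norm and are trivially equal to $T$ once mass conservation is established; since $H_n$ is finite-dimensional and $\|u_n(t)\|_H$ is constant, $\|u_n(t)\|_\EA$ is already deterministically bounded (by an $n$-dependent constant) on $[0,T]$, so no localisation or Fatou passage is actually needed to justify finiteness before Gronwall.
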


\begin{proof}
	\emph{ad a):}
	By Assumption $\ref{nonlinearAssumptions}$ ii) and iii), the restriction of the energy $\energy: H_n \to \R$ is twice continuously Fr\'{e}chet-differentiable with
	\begin{align*}
	\energy'[v]h_1=&\Real \duality{Av+F(v)}{ h_1}; \\
	\energy''[v] \left[h_1,h_2\right]=& \Real \skpH{\sqrtA h_1}{\sqrtA h_2}+\Real \duality{F'[v]h_2}{h_1}
	\end{align*}
	for $v, h_1, h_2\in H_n.$ We compute
	\begin{align*}
	\tr \Big(\energy''[u_n(s)]&\left(-\im S_n B \left(S_n u_n(s)\right),-\im S_n B \left(S_n u_n(s)\right)\right)\Big)\\
	=&\sumM \energy''[u_n(s)]\left(-\im S_n B_m S_n u_n(s),-\im S_n B_m S_n u_n(s)\right)\\
	=& \sumM  \Vert \sqrtA S_n B_m S_n u_n(s)\Vert_{H}^2
	+\sumM \Real \duality{F'[u_n(s)] \left(S_n B_m S_n u_n(s)\right)}{ S_n B_m S_n u_n(s)}
	\end{align*}	
	and therefore,   It\^o's formula and Proposition $\ref{MassEstimateGalerkinSolution}$ lead to the identity
	\begin{align}\label{ItoEnergyStart}
	\norm{u_n(t)}_{H}^2+\energy\left(u_n(t)\right)=&\norm{P_n u_0}_{H}^2+\energy\left(P_n u_0\right)
	%				+\int_0^t \Real \skpH{A u_n(s)}{ -\im A u_n(s)-\im P_n F(u_n(s))}\df s
	\nonumber\\
	&+ \int_0^t\Real \duality{A u_n(s)+F(u_n(s))}{ -\im A u_n(s)-\im P_n F(u_n(s))}\df s\nonumber \\
	&+\int_0^t
	\Real \duality{A u_n(s)+F(u_n(s))}{ \mu_n(u_n(s))} \df s\nonumber \\
	&+\int_0^t \Real \duality{A u_n(s)+F(u_n(s))}{ -\im S_n B \left(S_n u_n(s)\right)\df W(s)}\nonumber \\
	&+\frac{1}{2}\sumM \int_0^t  \Vert \sqrtA S_n B_m S_n u_n(s)\Vert_{H}^2\df s\nonumber\\
	&+\frac{1}{2} \int_0^t \sumM \Real \duality{F'[u_n(s)] \left(S_n B_m S_n u_n(s)\right)}{ S_n B_m S_n u_n(s)} \df s
	\end{align}
	almost surely for all $t\in [0,T].$
	We can use $\eqref{PnInEAdual}$ for
	\begin{align*}
	\Real \duality{F(v)}{ -\im P_n F(v)}=\Real \left[\im \duality{F(v)}{  P_n F(v)}\right]=0;
	\end{align*}
	\begin{align*}
	\Real \left[\duality{A v}{ -\im P_n F(v)}+\duality{F(v)}{ -\im A v}\right]
	&=\Real \left[-\duality{A v}{ \im F(v)}+\overline{\duality{  A v}{\im F(v)}}\right]=0;
	\end{align*}
	\begin{align*}
	\Real \skpH{A v}{ -\im A v}=\Real \left[\im \norm{A v}_{H}^2\right]=0
	\end{align*}
	for all $v\in H_n$ to simplify $\eqref{ItoEnergyStart}$ and get				
	\begin{align}\label{ItoEnergyWithoutExponent}
	\norm{u_n(t)}_{H}^2+\energy\left(u_n(t)\right)=&\norm{P_n u_0}_{H}^2+\energy\left(P_n u_0\right)
	+\int_0^t \Real \duality{A u_n(s)+F(u_n(s))}{ \mu_n(u_n(s))} \df s\nonumber \\
	&+\int_0^t \Real \duality{A u_n(s)+F(u_n(s))}{ -\im S_n B \left(S_n u_n(s)\right)\df W(s)}\nonumber \\
	&+\frac{1}{2}\sumM \int_0^t  \Vert \sqrtA S_n B_m S_n u_n(s)\Vert_{H}^2\df s\nonumber\\
	&+\frac{1}{2}\int_0^t \sumM \Real \duality{F'[u_n(s)] \left(S_n B_m S_n u_n(s)\right)}{ S_n B_m S_n u_n(s)} \df s
	\end{align}	
	almost surely for all $t\in [0,T].$ 												
	Next, we fix $\delta>0,$ $q>1$ and apply \dela{It\^{o}'s}{the It\^o} formula to the process on the LHS of $\eqref{ItoEnergyWithoutExponent}$ and the function $\varPhi: (-\frac{\delta}{2},\infty)\to \R$ defined by $\varPhi(x):=\left(x+\delta\right)^q.$ The derivatives are given by
	\begin{align*}
	\varPhi'(x)=q \left(x+\delta\right)^{q-1},\qquad 	\varPhi''(x)=q (q-1)\left(x+\delta\right)^{q-2},\quad x\in \left(-\frac{\delta}{2},\infty\right).
	\end{align*}
	With the short notation
	\begin{align*}
	Y(s):=\delta+\norm{u_n(s)}_{H}^2+\energy\left(u_n(s)\right),\qquad s\in [0,T],
	\end{align*}
	we obtain
	\begin{align}\label{ItoEnergy}
	Y(t)^{q}=&\left[\delta+\norm{P_n u_0}_{H}^2+\energy\left(P_n u_0\right)\right]^q
	+q\int_0^t Y(s)^{q-1} \Real \duality{A u_n(s)+F(u_n(s))}{ \mu_n(u_n(s))} \df s\nonumber \\
	&+q\int_0^t Y(s)^{q-1} \Real \duality{A u_n(s)+F(u_n(s))}{ -\im S_n B \left(S_n u_n(s)\right)\df W(s)}\nonumber \\
	&+\frac{q}{2}\sumM \int_0^t  Y(s)^{q-1} \Vert \sqrtA S_n B_m S_n u_n(s)\Vert_{H}^2\df s\nonumber\\
	&+\frac{q}{2} \sumM \int_0^t  Y(s)^{q-1} \Real \duality{F'[u_n(s)] \left(S_n B_m S_n u_n(s)\right)}{ S_n B_m S_n u_n(s)} \df s\nonumber\\
	&+\frac{q}{2}(q-1)\sumM \int_0^t Y(s)^{q-2} \left[\Real \duality{A u_n(s)+F(u_n(s))}{-\im S_n B_m S_n u_n(s)}\right]^2 \df s
	\end{align}					
	almost surely for all $t\in [0,T].$
	In order to treat the stochastic integral, we  use  Propositions $\ref{PaleyLittlewoodLemma}$ and  $\ref{MassEstimateGalerkinSolution}$ to estimate for fixed $s\in [0,T]$
	\begin{align}\label{energyEstimateEins}
	\vert\skpH{A u_n(s)}{-\im   S_n B_m S_n u_n(s)}\vert
	&\le \Vert \sqrtA u_n(s)\Vert_{H} \Vert \sqrtA S_n B_m S_n u_n(s) \Vert_{H}\nonumber\\
	&\le \Vert \sqrtA u_n(s)\Vert_{H} \Vert   S_n B_m S_n u_n(s)\Vert_\EA\nonumber\\
	&\le \Vert \sqrtA u_n(s) \Vert_{H} \norm{S_n}_{\mathcal{L}(\EA)}^2\Vert B_m\Vert_{{\mathcal{L}(\EA)}} \Vert  u_n(s)\Vert_\EA\nonumber\\
	&\le \left( \norm{u_n(s)}_{H}^2+\Vert \sqrtA u_n(s) \Vert_{H}^2\right) \Vert B_m\Vert_{{\mathcal{L}(\EA)}}\nonumber\\
	&\lesssim Y(s) \Vert B_m\Vert_{{\mathcal{L}(\EA)}}
	%				\hspace{0,5cm} \text{a.s.},				
	\end{align}
	and $\eqref{nonlinearityEstimate},$ $\eqref{boundantiderivative}$ and Proposition $\ref{PaleyLittlewoodLemma}$ to estimate
	\begin{align}\label{energyEstimatezwei}
	\vert \duality{F(u_n(s))}{ -\im S_n B_m S_n u_n(s) }\vert
	&\le \Vert F(u_n(s))\Vert_\LalphaPlusEinsDual \Vert  S_n B_m S_n u_n(s) \Vert_\LalphaPlusEins\nonumber\\
	&\le   \Vert u_n(s)\Vert_\LalphaPlusEins^{\alpha+1}  \Vert S_n\Vert_{\mathcal{L}(L^{\alpha+1})}^2 \Vert B_m\Vert_{\mathcal{L}(L^{\alpha+1})}\nonumber\\
	&\lesssim \Fhat(u_n(s))   \Vert B_m\Vert_{\mathcal{L}(L^{\alpha+1})}
	\nonumber\\
	&\lesssim Y(s)   \Vert B_m\Vert_{\mathcal{L}(L^{\alpha+1})}.
	\end{align}
	The Burkholder-Gundy-Davis inequality, the estimates $\eqref{energyEstimateEins}$ and $\eqref{energyEstimatezwei},$ Assumption $\ref{stochasticAssumptions}$ and Lemma $\ref{LemmaYOmegaNachLzweiZeit}$ applied to the process $X=Y^q$ with $r=1$  yield for any $\varepsilon>0$
	\begin{align}\label{burkholderEstimate1}
	\E \Big[ &\sup_{s\in[0,t]} \left\vert \int_0^s Y(r)^{q-1}\Real \duality{A u_n(r)+F(u_n(r))}{-\im S_n B  \left(S_n u_n(r)\right) \df W(r)}\right\vert\Big]\nonumber\\
	&\lesssim\E \Big[  \left(\int_0^t \sumM \left\vert Y(r)^{q-1} \duality{A u_n(r)+F(u_n(r))}{-\im S_n B_m S_n u_n(r)}\right\vert^2 \df r\right)^{\frac{1}{2}}\Big]\nonumber\\
	&\lesssim \E \Bigg[  \Bigg(\int_0^t Y(r)^{2q}  \df r\Bigg)^{\frac{1}{2}}\Bigg]	
	%			&\le \E \Bigg[  \Bigg(\int_0^t \Big[4 \left(\Vert u_0 \Vert_{H}^4+\Vert \sqrtA u_n(r) \Vert_{H}^4\right) \sumM\norm{B_m}_{{\mathcal{L}(\EA)}}^2\\&\hspace{6cm}+2\Vert u_n(r)\Vert_\LalphaPlusEins^{2(\alpha+1)} \sumM\norm{B_m}_{\mathcal{L}(L^{\alpha+1})}^2  \Big]\df r\Bigg)^{\frac{1}{2}}\Bigg]\\
	\le \varepsilon \E\Big[ \sup_{s\in[0,{t}]} Y(s)^q\Big]+ \frac{1}{4 \varepsilon} \int_0^t \E \Big[\sup_{r\in[0,s]} Y(r)^q\Big] \df s\nonumber\\		
	%			&\lesssim \E \Big[  \left(\int_0^t  \norm{u_0}_{H}^4+\energy(u_n(r))^2  \df r\right)^{\frac{1}{2}}\Big]\\
	%						&\le \E \Big[  \left(\sup_{r\in[0,t]}\energy(u_n(r))\int_0^t \energy(u_n(r))  \df r\right)^{\frac{1}{2}}\Big]\\
	%						&\le \varepsilon \E   \sup_{r\in[0,t]}\energy(u_n(r)) +\frac{1}{4 \varepsilon} \int_0^t \E \energy(u_n(r))  \df r\\
	%						&\le \varepsilon \E   \sup_{r\in[0,t]}\energy(u_n(r)) +\frac{1}{4 \varepsilon} \int_0^t \E\sup_{s\in[0,r]}  \energy(u_n(s))  \df r.\\
	\end{align}
	%				Analogously, we get
	%								\begin{align}\label{burkholderEstimate2}
	%								\E \Big[ &\sup_{s\in[0,t]} \left\vert \int_0^s \Real \duality{F(u_n(r))}{-\im S_n B  \left(S_n u_n(r)\right) \df W(r)}\right\vert\Big]\nonumber\\	
	%								&\le \left(\varepsilon+\frac{1}{4 \varepsilon}t\right) \norm{u_0}_{H}^2+\varepsilon\E\Big[ \sup_{s\in[0,{t}]} \energy(u_n(s))\Big]+ \frac{1}{4 \varepsilon} \int_0^t \E \Big[\sup_{r\in[0,s]} \energy(u_n(r))\Big] \df s.		
	%								%			&\lesssim \E \Big[  \left(\int_0^t  \norm{u_0}_{H}^4+\energy(u_n(r))^2  \df r\right)^{\frac{1}{2}}\Big]\\
	%								%						&\le \E \Big[  \left(\sup_{r\in[0,t]}\energy(u_n(r))\int_0^t \energy(u_n(r))  \df r\right)^{\frac{1}{2}}\Big]\\
	%								%						&\le \varepsilon \E   \sup_{r\in[0,t]}\energy(u_n(r)) +\frac{1}{4 \varepsilon} \int_0^t \E \energy(u_n(r))  \df r\\
	%								%						&\le \varepsilon \E   \sup_{r\in[0,t]}\energy(u_n(r)) +\frac{1}{4 \varepsilon} \int_0^t \E\sup_{s\in[0,r]}  \energy(u_n(s))  \df r.\\
	%								\end{align}
	%				using $\eqref{energyEstimatezwei}$ instead of $\eqref{energyEstimateEins}.$		
	The integrands of the deterministic integrals can be estimated  by using  the bounds $\eqref{SnUniformlyBounded},$  Proposition $\ref{MassEstimateGalerkinSolution}$ for the linear and $\eqref{nonlinearityEstimate}$ as well as $\eqref{boundantiderivative}$ for the nonlinear part. We fix $s\in [0,T]$ and get
	\begin{align}\label{energyEstimateDrei}
	\Real \skpH{A u_n(s)}{ \left(S_n B_m S_n \right)^2 u_n(s)}
	&\le \Vert \sqrtA u_n(s)\Vert_{H} \Vert \sqrtA \left(S_n B_m S_n \right)^2 u_n(s)\Vert_{H}\nonumber\\
	&\le \Vert \sqrtA u_n(s)\Vert_{H} \Vert  \left(S_n B_m S_n\right)^2 u_n(s)\Vert_\EA\nonumber\\
	&\le \Vert \sqrtA u_n(s) \Vert_{H} \Vert S_n\Vert_{{\mathcal{L}(\EA)}}^4 \Vert B_m\Vert_{{\mathcal{L}(\EA)}}^2 \Vert  u_n(s)\Vert_\EA\nonumber\\
	&\le \left(\Vert u_n(s) \Vert_{H}^2+\Vert \sqrtA u_n(s) \Vert_{H}^2\right) \Vert B_m\Vert_{{\mathcal{L}(\EA)}}^2\nonumber\\
	&\lesssim Y(s) \Vert B_m\Vert_{{\mathcal{L}(\EA)}}^2; 
	%				\hspace{0,5cm} \text{a.s.};
	\end{align}
	\begin{align}\label{energyEstimateVier}
	\Real \duality{F(u_n(s))}{ \left(S_n B_m S_n \right)^2 u_n(s)}
	&\le \Vert F(u_n(s))\Vert_\LalphaPlusEinsDual \Vert \left(S_n B_m S_n\right)^2 u_n(s) \Vert_\LalphaPlusEins\nonumber\\
	&\lesssim  \Vert u_n(s)\Vert_\LalphaPlusEins^{\alpha+1} \Vert S_n\Vert_{\mathcal{L}(L^{\alpha+1})}^4 \Vert B_m\Vert_{\mathcal{L}(L^{\alpha+1})}^2\nonumber\\
	&\lesssim \Fhat(u_n(s))   \Vert B_m\Vert_{\mathcal{L}(L^{\alpha+1})}^2
	\lesssim Y(s) \Vert B_m\Vert_{\mathcal{L}(L^{\alpha+1})}^2;
	\end{align}				
	\begin{align}\label{energyEstimateFunf}
	\Vert \sqrtA S_n B_m S_n u_n(s)\Vert_{H}^2
	&\le\Vert  S_n B_m S_n u_n(s)\Vert_\EA^2
	\le \Vert S_n\Vert_{{\mathcal{L}(\EA)}}^4 \Vert B_m\Vert_{{\mathcal{L}(\EA)}}^2 \Vert u_n(s)\Vert_\EA^2\nonumber\\
	&\le \Vert B_m\Vert_{{\mathcal{L}(\EA)}}^2 \left(\Vert  u_n(s) \Vert_{H}^2+\Vert \sqrtA u_n(s)\Vert_{H}^2\right)\nonumber\\
	&\lesssim \Vert B_m\Vert_{{\mathcal{L}(\EA)}}^2 Y(s)
	%				\hspace{0,5cm} \text{a.s.}
	\end{align}
	for $m\in\N$ and $s\in[0,T].$
	%In the case $\alpha > 3,$ we obtain by the H\"older inequality with $1=\frac{\alpha-3}{\alpha+1}+\frac{2}{\alpha+1}+\frac{2}{\alpha+1} $
	%				The H\"older inequality with the dual exponents $\frac{\alpha+1}{\alpha-1}$ und $\frac{\alpha+1}{2}$ yields
	%				\begin{align*}
	%				\int_M \vert u_n(s)\vert^{\alpha-3}&\left[\Real(\overline{u_n(s)}(-\im )P_n B_m u_n(s) ) \right]^2 \df x
	%				\le \int_M \vert u_n(s)\vert^{\alpha-1}\left\vert P_n B_m u_n(s) \right\vert^2 \df x\\
	%				&\le \Vert u_n(s)\Vert_\LalphaPlusEins^{\alpha-1}  \Vert P_n B_m u_n(s)\Vert_\LalphaPlusEins^2\\
	%				&\le \Vert u_n(s)\Vert_\LalphaPlusEins^{\alpha+1}  \Vert P_n\Vert_{\mathcal{L}(L^{\alpha+1})}^2 \Vert B_m\Vert_{\mathcal{L}(L^{\alpha+1})}^2\\
	%				&\lesssim \Vert u_n(s)\Vert_\LalphaPlusEins^{\alpha+1}   \Vert B_m\Vert_{\mathcal{L}(L^{\alpha+1})}^2
	%				\end{align*}
	%				and
	%				\begin{align*}
	%				\int_M \vert u_n(s)\vert^{\alpha-1}  \left\vert P_n B_m u_n(s)\right\vert^2 \df x&\le \Vert u_n(s)\Vert_\LalphaPlusEins^{\alpha-1} \Vert P_n B_m u_n(s)\Vert_\LalphaPlusEins^2\\
	%				&\le \Vert u_n(s)\Vert_\LalphaPlusEins^{\alpha+1}  \Vert P_n\Vert_{\mathcal{L}(L^{\alpha+1})}^2 \Vert B_m\Vert_{\mathcal{L}(L^{\alpha+1})}^2\\
	%				&\lesssim \Vert u_n(s)\Vert_\LalphaPlusEins^{\alpha+1}   \Vert B_m\Vert_{\mathcal{L}(L^{\alpha+1})}^2.
	%				\end{align*}
	By the bounds $\eqref{SnUniformlyBounded}$ of $S_n$ and the Assumptions $\eqref{deriveNonlinearBound}$ and $\eqref{boundantiderivative}$ on the nonlinearity
	\begin{align}\label{energyEstimateSechs}
	\Real \duality{F'[u_n(s)] \left(S_n B_m S_n u_n(s)\right)}{ S_n B_m S_n u_n(s)}
	&\lesssim \norm{F'[u_n(s)]}_{L^{\alpha+1}\to L^\frac{\alpha+1}{\alpha}}  \Vert S_n B_m S_n  u_n(s)\Vert_\LalphaPlusEins^2\nonumber\\
	&\lesssim  \Vert u_n(s)\Vert_\LalphaPlusEins^{\alpha+1}  \Vert S_n\Vert_{\mathcal{L}(L^{\alpha+1})}^4 \Vert B_m\Vert_{\mathcal{L}(L^{\alpha+1})}^2\nonumber\\
	&\lesssim \Fhat(u_n(s))   \Vert B_m\Vert_{\mathcal{L}(L^{\alpha+1})}^2
	\lesssim Y(s)   \Vert B_m\Vert_{\mathcal{L}(L^{\alpha+1})}^2
	\end{align}
	Substituting the inequalities $\eqref{burkholderEstimate1}$ to $\eqref{energyEstimateSechs},$   into the identity $\eqref{ItoEnergy},$ we get for each $t\in [0,T]$
	\begin{align}\label{GronwallInequality}
	\E \big[\sup_{s\in[0,t]}Y(s)^q\big]\lesssim_q& \left[\delta+\norm{P_n u_0}_{H}^2+\energy(P_n u_0)\right]^q		
	+\E \int_0^t  \sumM \norm{B_m}_{{\mathcal{L}(\EA)}}^2 Y(s)^q\df s\nonumber\\
	&+\E \int_0^t \sumM\norm{B_m}_{{\mathcal{L}(L^{\alpha+1})}}^2Y(s)^q \df s \nonumber\\
	&+\varepsilon \E  \Big[ \sup_{r\in[0,t]}Y(s)^q\Big] +\frac{1}{4 \varepsilon} \int_0^t \E\Big[\sup_{s\in[0,r]}  Y(s)^q\Big]  \df r\nonumber\\
	&+\E \sumM \int_0^t  \norm{B_m}_{{\mathcal{L}(\EA)}}^2 Y(s)^q \df s + \E \int_0^t\sumM\norm{B_m}_{{\mathcal{L}(L^{\alpha+1})}}^2Y(s)^q \df s\nonumber\\
	&+\E \int_{}^t Y(s)^{q} \sumM \max\{\norm{B_m}_{{\mathcal{L}(\EA)}}^2,\norm{B_m}_{\mathcal{L}(L^{\alpha+1})}^2\} \df s\nonumber\\
	\lesssim& \left[\delta+\norm{u_0}_{H}^2+\energy(P_n u_0)\right]^q		
	+\E \int_0^t   Y(s)^q\df s\nonumber\\
	&+\varepsilon \E  \Big[ \sup_{r\in[0,t]}Y(s)^q\Big] +\frac{1}{4 \varepsilon} \int_0^t \E\Big[\sup_{s\in[0,r]}  Y(s)^q\Big]  \df r\nonumber\\
	\lesssim_T& \left[\delta+\norm{u_0}_{H}^2+\energy(P_n u_0)\right]^q
	+\varepsilon \E  \Big[ \sup_{r\in[0,t]}Y(s)^q\Big] + \int_0^t \E \Big[\sup_{s\in[0,r]} Y(s)^q\Big]  \df r.
	\end{align}
	%				Note that we have used
	%				\begin{align*}
	%					\sumM \max\{\norm{B_m}_{{\mathcal{L}(\EA)}}^4,\norm{B_m}_{\mathcal{L}(L^{\alpha+1})}^4\}
	%					<\infty
	%				\end{align*}
	%				which follows from Assumption $\ref{stochasticAssumptions}$ by $l^2(\N)\subset l^4(\N).$				
	Choosing $\varepsilon>0$ small enough in inequality $\eqref{GronwallInequality}$, the Gronwall lemma yields
	\begin{align*}
	\E \big[\sup_{s\in[0,t]}Y(s)^q\big]\le C\left[\delta+\norm{u_0}_{H}^2+\energy(P_n u_0)\right]^q e^{C t},\qquad t\in [0,T],
	\end{align*}
	with a constant $C>0$, which is uniform in $n\in\N.$ Because of 
	\begin{align*}
	\energy(P_n u_0)\lesssim \norm{\sqrtA P_n u_0}_{H}^2+\norm{P_n u_0}_\LalphaPlusEins^{\alpha+1}\lesssim \norm{P_n u_0}_\EA^2+\norm{P_n u_0}_\EA^{\alpha+1}\lesssim 1,
	\end{align*}
	we obtain the assertion of Proposition \ref{EstimatesGalerkinSolution}, part a). \\
	
	\emph{ad b):} Now, we continue with the proof of the Aldous condition. We have 
	\begin{align*}
	u_n(t)- P_n u_0=&  -\im \int_0^t A u_n(s) \df s-\im \int_0^t P_n F(u_n(s)) \df s+\int_0^t \mu_n(u_n(s)) \df s\\
	\hspace{1cm}&- \im \int_0^t S_n B (S_n u_n(s)) \df W(s)\\
	=&:J_1(t)+J_2(t)+J_3(t)+J_4(t)
	\end{align*}
	in $H_n$ almost surely for all $t\in [0,T]$ and therefore
	\begin{align*}
	\Vert u_n((\tau_n+\theta)\land T)-u_n(\tau_n)\Vert_\EAdual\le\sum_{k=1}^4 \Vert J_k((\tau_n+\theta)\land T)-J_k(\tau_n)\Vert_{\EAdual}
	\end{align*}
	for each sequence $\left(\tau_n\right)_{n\in\N}$ of stopping times and $\theta>0.$ 
	%		 \coma{You remarked that $\tau_n+\theta$ should be $\le T.$ This problems seems to be everywhere, when we deal with Aldous condition. How shall we modify it?}
	Hence, we get
	\begin{align}\label{AldousStartingEstimate}
	\Prob \left\{\Vert u_n((\tau_n+\theta)\land T)-u_n(\tau_n)\Vert_\EAdual\ge \eta \right\}\le \sum_{k=1}^4 \Prob \left\{\Vert J_k((\tau_n+\theta)\land T)-J_k(\tau_n)\Vert_{\EAdual}\ge \frac{\eta}{4}\right\}
	\end{align}
	for a fixed $\eta>0$. We aim to apply Tschebyscheff's inequality and  estimate the expected value of each term in the sum. We use part a) for
	\begin{align*}
	\E\Vert J_1((\tau_n+\theta)\land T)-J_1(\tau_n)\Vert_{\EAdual}&\le \E \int_{\tau_n}^{(\tau_n+\theta)\land T} \Vert A u_n(s)\Vert_\EAdual \df s
	\le   \E\int_{\tau_n}^{(\tau_n+\theta)\land T} \Vert \sqrtA u_n(s)\Vert_{H} \df s\\
	&\lesssim \theta \E \big[ \sup_{s\in[0,T]} \norm{u_n(s)}_\EA\big]
	\le \theta \E \big[ \sup_{s\in[0,T]} \norm{u_n(s)}_\EA^2\big]^{\frac{1}{2}}\le \theta C_1;
	\end{align*}
	the embedding $ \LalphaPlusEinsDual\hookrightarrow \EAdual$ and the estimate $\eqref{nonlinearityEstimate}$ of the nonlinearity $F$ for
	\begin{align*}
	\E\Vert J_2((\tau_n+\theta)\land T)&-J_2(\tau_n)\Vert_{\EAdual}\le \E \int_{\tau_n}^{(\tau_n+\theta)\land T} \Vert P_n F(u_n(s))\Vert_\EAdual \df s\\
	&\le \E \int_{\tau_n}^{(\tau_n+\theta)\land T} \Vert  F(u_n(s))\Vert_\EAdual \df s\lesssim  \E\int_{\tau_n}^{(\tau_n+\theta)\land T} \Vert  F(u_n(s))\Vert_\LalphaPlusEinsDual \df s\\&\lesssim  \E\int_{\tau_n}^{(\tau_n+\theta)\land T} \Vert  u_n(s)\Vert_\LalphaPlusEins^\alpha \df s
	\lesssim \theta \E \big[\sup_{s\in[0,T]} \norm{u_n(s)}_\EA^\alpha\big]
	\le \theta C_2
	\end{align*}
	Propositions $\ref{PaleyLittlewoodLemma}$ and $\ref{MassEstimateGalerkinSolution}$ for
	\begin{align*}
	\E\Vert J_3((\tau_n+\theta)\land T)-J_3(\tau_n)\Vert_{\EAdual}
	&= \frac{1}{2}\E \left\Vert \int_{\tau_n}^{(\tau_n+\theta)\land T}\sumM \left(S_n B_m S_n\right)^2 u_n(s) \df s \right\Vert_\EAdual\\	
	&\le \frac{1}{2}\E \int_{\tau_n}^{(\tau_n+\theta)\land T} \sumM\Vert \left(S_n B_m S_n\right)^2 u_n(s)\Vert_\EAdual \df s\\
	&\lesssim \E  \int_{\tau_n}^{(\tau_n+\theta)\land T}\sumM \Vert \left(S_n B_m S_n\right)^2 u_n(s)\Vert_{H} \df s\\
	&\le  \E \int_{\tau_n}^{(\tau_n+\theta)\land T} \sumM \norm{B_m}_{{\mathcal{L}(H)}}^2\Vert  u_n(s)\Vert_{H} \df s\\
	&\lesssim \theta \E \big[ \sup_{s\in[0,T]} \Vert  u_n(s)\Vert_{H}\big]
	=C_3 \theta
	\end{align*}
	Finally, we use the It\^o isometry and again the Propositions  $\ref{PaleyLittlewoodLemma}$ and $\ref{MassEstimateGalerkinSolution}$ for
	\begin{align*}
	\E\Vert J_4((\tau_n+\theta)\land T)-J_4(\tau_n)\Vert_{\EAdual}^2&
	\le \E \left\Vert\int_{\tau_n}^{(\tau_n+\theta)\land T} S_n B \left(S_n u_n(s)\right) \df W(s)\right\Vert_{H}^2\\
	&= \E \left[\int_{\tau_n}^{(\tau_n+\theta)\land T}\Vert S_n B\left(S_n u_n(s)\right)\Vert_{\HS(Y,{H})}^2 \df s\right]\\
	&= \E \left[\int_{\tau_n}^{(\tau_n+\theta)\land T}\sumM\Vert S_n B_m S_n u_n(s)\Vert_{H}^2 \df s\right]\\
	&\le \E \left[ \int_{\tau_n}^{(\tau_n+\theta)\land T}\sumM \norm{B_m}_{{\mathcal{L}(H)}}^2\Vert  u_n(s)\Vert_{H}^2 \df s\right]\\
	&\lesssim \theta \E \big[\sup_{s\in[0,T]} \Vert  u_n(s)\Vert_{H}^2\big]
	=\theta C_4 
	\end{align*}		
	By the Tschebyscheff inequality, we obtain for a given $\eta>0$
	\begin{align}\label{AldousEins}
	\Prob \left\{\Vert J_k((\tau_n+\theta)\land T)-J_k(\tau_n)\Vert_{\EAdual}\ge \frac{\eta}{4}\right\}\le \frac{4}{\eta} \E\Vert J_k((\tau_n+\theta)\land T)-J_k(\tau_n)\Vert_{\EAdual}\le \frac{ 4C_k \theta}{\eta}
	\end{align}
	for $k\in \{1,2,3\}$ and
	\begin{align}\label{AldousZwei}
	\Prob \left\{\Vert J_4((\tau_n+\theta)\land T)-J_4(\tau_n)\Vert_{\EAdual}\ge \frac{\eta}{4}\right\}\le \frac{16}{\eta^2} \E\Vert J_4((\tau_n+\theta)\land T)-J_4(\tau_n)\Vert_{\EAdual}^2\le \frac{16 C_4 \theta}{\eta^2}.
	\end{align}
	Let us fix $\varepsilon>0.$ Due to  estimates $\eqref{AldousEins}$ and $\eqref{AldousZwei}$ we can choose $\delta_1,\dots,\delta_4>0$ such that
	\begin{align*}
	\Prob \left\{\Vert J_k((\tau_n+\theta)\land T)-J_k(\tau_n)\Vert_{\EAdual}\ge \frac{\eta}{4}\right\}\le \frac{\varepsilon}{4}
	\end{align*}
	for $0<\theta\le \delta_k$ and $k=1,\dots,4.$ With $\delta:= \min \left\{\delta_1,\dots,\delta_4\right\},$  using $\eqref{AldousStartingEstimate}$ we get
	\begin{align*}
	\Prob \left\{\Vert J_k((\tau_n+\theta)\land T)-J_k(\tau_n)\Vert_{\EAdual}\ge \eta\right\}\le \varepsilon
	\end{align*}
	for all $n\in\N$ and $0<\theta\le \delta$ and therefore, the Aldous condition $[A]$ holds in $E_A^*.$
\end{proof}

We continue with the a priori estimate for solutions of $\eqref{galerkinEquation}$ with a focusing nonlinearity. Note that this case is harder since the expression
\begin{align*}
\norm{v}_{H}^2+\energy(v):=\norm{v}_\EA^2+\Fhat(v), \qquad v\in H_n,
\end{align*}
does not dominate $\norm{v}_\EA^2,$ because $\Fhat$ is negative.

\begin{Prop}\label{EstimatesGalerkinSolutionFocusing}
	Under Assumption $\ref{focusing}$ i'), the following assertions hold:
	\begin{enumerate}
		\item[a)] For all $r\in[1,\infty),$ there is a constant $C=C(r,\norm{u_0}_\EA, \alpha, F,  \left(B_m\right)_{m\in\N},T)>0$ with
		\begin{align*}
		\sup_{n\in\N}\E \Big[\sup_{t\in[0,T]} \norm{u_n(t)}_\EA^{r}\Big]\le C
		\end{align*}
		%			and	
		%			\begin{align*}
		%						\sup_{n\in\N}\E \Big[\sup_{t\in[0,T]} \norm{u_n(t)}_\LalphaPlusEins^{\alpha+1}\Big]\le C
		%			\end{align*}}		
		\item[b)] The sequence $(u_n)_{n\in\N}$ satisfies the Aldous condition $[A]$ in $\EAdual.$
	\end{enumerate}
\end{Prop}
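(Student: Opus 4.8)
The plan is to mirror the argument of Proposition~\ref{EstimatesGalerkinSolution}, the only genuinely new point being that in the focusing regime the quantity $\norm{u_n(s)}_H^2+\energy(u_n(s))$ no longer dominates $\norm{u_n(s)}_\EA^2$, since $\Fhat\le 0$. I would first show that Assumption~\ref{focusing}~i') forces the energy to be bounded below along the flow, and that this lower bound is precisely what restores control of the energy norm; once this is done, the stochastic part of the proof is essentially a verbatim repetition of the defocusing case.

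For the preliminary step I would combine the mass conservation $\norm{u_n(s)}_H\le\norm{u_0}_H$ from Proposition~\ref{MassEstimateGalerkinSolution} with the subquadratic interpolation inequality \eqref{boundantiderivativeFocusing2}. Writing $\norm{\sqrtA u_n(s)}_H^2=\norm{u_n(s)}_\EA^2-\norm{u_n(s)}_H^2$ and using, by \eqref{boundantiderivativeFocusing1} and \eqref{boundantiderivativeFocusing2},
$-\Fhat(u_n(s))\lesssim\norm{u_n(s)}_\LalphaPlusEins^{\alpha+1}\lesssim\norm{u_n(s)}_H^{\beta_1}\norm{u_n(s)}_\EA^{\beta_2}$
with $\beta_2<2$, the bound on $\norm{u_n(s)}_H$ turns the right-hand side into a constant times $\norm{u_n(s)}_\EA^{\beta_2}$, and Young's inequality absorbs this into the quadratic part. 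This produces a deterministic constant $C_0=C_0(\norm{u_0}_H,\alpha,F)$ with $\energy(u_n(s))\ge -C_0$ and, simultaneously, $\norm{u_n(s)}_\EA^2\lesssim \energy(u_n(s))+C_0$ almost surely. Consequently the shifted quantity
\[
Y(s):=\delta+C_0+\norm{u_n(s)}_H^2+\energy(u_n(s))
\]
satisfies $Y(s)\ge\delta>0$ and $Y(s)\eqsim \norm{u_n(s)}_\EA^2$ up to constants depending on $\delta,\norm{u_0}_H,\alpha,F$.

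With this $Y$ in hand I would rerun the It\^o computation leading to \eqref{ItoEnergyWithoutExponent} and then apply It\^o to $\Phi(x)=(x+\delta+C_0)^q$ exactly as in \eqref{ItoEnergy}; since $C_0$ is a constant, the differential identity is unchanged, and $Y\ge\delta$ makes $\Phi$ admissible. The estimates \eqref{energyEstimateEins}, \eqref{energyEstimateDrei}, \eqref{energyEstimateFunf} carry over unchanged, as they only use $\norm{u_n}_\EA^2\lesssim Y$. The sole place where the sign of $\Fhat$ was exploited is in \eqref{energyEstimatezwei}, \eqref{energyEstimateVier}, \eqref{energyEstimateSechs}, where the defocusing proof invoked $\norm{u_n}_\LalphaPlusEins^{\alpha+1}\lesssim\Fhat(u_n)\lesssim Y$. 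Here I would instead bound $\norm{u_n(s)}_\LalphaPlusEins^{\alpha+1}$ directly via \eqref{boundantiderivativeFocusing2}, getting $\norm{u_n(s)}_\LalphaPlusEins^{\alpha+1}\lesssim\norm{u_n(s)}_\EA^{\beta_2}\lesssim Y(s)^{\beta_2/2}\lesssim Y(s)$, the last step using $\beta_2<2$ together with $Y\ge\delta$. Every integrand is then again controlled by $Y(s)$ times a summable coefficient $\norm{B_m}_{\mathcal{L}(\EA)}^2$ or $\norm{B_m}_{\mathcal{L}(L^{\alpha+1})}^2$, so the Burkholder--Davis--Gundy step with Lemma~\ref{LemmaYOmegaNachLzweiZeit} and the Gronwall inequality \eqref{GronwallInequality} reappear verbatim. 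Gronwall yields $\sup_n\E[\sup_{t\le T}Y(t)^q]\le C$, and since $Y\eqsim\norm{u_n}_\EA^2$ this gives part~a) for $r=2q$, hence for arbitrary $r\in[1,\infty)$ by Jensen's inequality after choosing $q\ge r/2$.

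The main obstacle is exactly the preliminary step of rendering the negative term $\Fhat$ harmless: this is where the restriction $\theta<\tfrac{2}{\alpha+1}$, equivalently $\beta_2<2$, in Assumption~\ref{focusing}~i') is indispensable, since it makes the energy coercive modulo a constant, keeps $Y$ bounded away from zero, and ensures the superlinear quantity $Y^{\beta_2/2}$ is dominated by $Y$ so the Gronwall scheme closes. Part~b) needs no modification at all: the Aldous estimates for $J_1,\dots,J_4$ in the proof of Proposition~\ref{EstimatesGalerkinSolution} use only mass conservation and the uniform bound of part~a), neither of which sees the sign of $\Fhat$, so the identical computation shows that $(u_n)_{n\in\N}$ satisfies the Aldous condition $[A]$ in $\EAdual$.
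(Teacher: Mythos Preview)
Your argument is correct, and part~b) is handled exactly as the paper does. For part~a), however, you take a different route than the paper. The paper does \emph{not} apply It\^o to a power of the shifted energy; instead it goes back to the linear energy identity \eqref{ItoEnergyWithoutExponent}, rewrites it as $\tfrac12\norm{\sqrtA u_n(s)}_H^2=\energy(u_n(s))-\Fhat(u_n(s))$, and then takes the $L^q(\Omega,L^\infty(0,t))$-norm directly. The negative term $-\Fhat(u_n)$ is moved to the right-hand side and absorbed via \eqref{focusingNonlinearityControl}, and the Gronwall argument is closed for the quantity $\big\Vert\norm{\sqrtA u_n}_H^2\big\Vert_{L^q(\Omega,L^\infty(0,t))}$ rather than for $\E[\sup Y^q]$. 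Your approach is instead a clean transplant of the defocusing scheme: the observation that $\energy(u_n)\ge -C_0$ lets you shift $Y$ so that it stays $\ge\delta$, after which the entire machinery of \eqref{ItoEnergy}--\eqref{GronwallInequality} applies verbatim, with the single modification that $\norm{u_n}_\LalphaPlusEins^{\alpha+1}\lesssim Y^{\beta_2/2}\lesssim_\delta Y$ (via $\beta_2<2$ and $Y\ge\delta$) replaces the defocusing bound $\norm{u_n}_\LalphaPlusEins^{\alpha+1}\lesssim\Fhat(u_n)$. This is arguably tidier---it literally reuses the defocusing computation---while the paper's version keeps the $\varepsilon$-absorption step explicit and avoids the second It\^o formula. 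Both routes yield the same conclusion.
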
	

\begin{proof}
	Let $\varepsilon>0.$ Assumption $\ref{focusing}$ i') and Young's inequality imply that  there are $\gamma>0$ and $C_\varepsilon>0$ such that
	%		\begin{align*}
	%		-\Fhat(u) \lesssim \varepsilon \norm{u}_\EA^2+C_\varepsilon \norm{u}_H^\gamma
	%		\end{align*}
	\begin{align}\label{focusingNonlinearityControl}
	\norm{u}_\LalphaPlusEins^{\alpha+1} \lesssim \varepsilon \norm{u}_\EA^2+C_\varepsilon \norm{u}_H^\gamma,\qquad u\in\EA,
	\end{align}
	and therefore by Proposition $\ref{MassEstimateGalerkinSolution}$, we infer that
	\begin{align}\label{gagliardoFhat}
	-\Fhat(u_n(t))&\lesssim \norm{u_n(t)}_\LalphaPlusEins^{\alpha+1} \lesssim \varepsilon \norm{u_n(t)}_\EA^2+C_\varepsilon \norm{u_n(t)}_H^\gamma\nonumber\\
	&\lesssim \varepsilon \norm{\sqrtA u_n(t)}_{H}^2+\varepsilon \norm{u_0}_H^2+C_\varepsilon \norm{u_0}_H^\gamma,\qquad t\in [0,T].
	\end{align}
	By the same calculations as in the proof of Proposition $\ref{EstimatesGalerkinSolution}$ we get 
	%		\coma{I think it should be: for all $s\in[0,T]$ almost surely, i.e. the exceptional set depends on $s$. F.H. Why? I thought that the nullset in Ito's formula is independant of $t.$}
	\begin{align}\label{ItoFocusing}
	\frac{1}{2} \norm{\sqrtA u_n(s)}_{H}^2=&\energy(u_n(s))-\Fhat(u_n(s))\nonumber\\
	=&-\Fhat(u_n(s))+\energy\left(P_n u_0\right)
	+\int_0^s \Real \duality{A u_n(r)+F(u_n(r))}{ \mu_n(u_n(r))} \df r\nonumber\\
	&+\int_0^s \Real \duality{A u_n(r)+F(u_n(r))}{-\im S_n B\left(S_n u_n(r)\right)\df W(r)}\nonumber\\
	&+\frac{1}{2}\sumM \int_0^s  \Vert \sqrtA S_n B_m S_n u_n(r)\Vert_{H}^2\df r\nonumber\\
	&+\frac{1}{2}\int_0^s \sumM \Real \duality{F'[u_n(r)] \left(S_n B_m S_n  u_n(r)\right)}{ S_n B_m S_n u_n(r)} \df r
	\end{align}
	almost surely for all $s\in[0,T].$
	In the following, we fix $q\in [1,\infty)$ and $t\in(0,T]$ and want to apply the $L^q(\Omega,L^\infty(0,t))$-norm to the identity $\eqref{ItoFocusing}.$ We will use the notation	
	\begin{align}\label{ProcessX}
	X(s):=\left[\norm{u_0}_{H}^2+\norm{\sqrtA u_n(s)}_H^2+\norm{u_n(s)}_\LalphaPlusEins^{\alpha+1}\right], \qquad s\in [0,T],
	\end{align}								
	and estimate the stochastic integral by the Burkholder-Gundy-Davis inequality and the estimates $\eqref{energyEstimateEins}$ and $\eqref{energyEstimatezwei}$ as well as Lemma $\ref{LemmaYOmegaNachLzweiZeit}$
	\begin{align}\label{energyFocusingTerm1}
	&\left\Vert \int_0^\cdot \Real \duality{A u_n(r)+F(u_n(r))}{-\im S_n B  \left(S_n u_n(r)\right) \df W(r)}\right\Vert_{L^q(\Omega,L^\infty(0,t))}\nonumber\\
	&\hspace{2cm}\lesssim \left\Vert  \left(\sumM \vert \duality{A u_n(r)+F(u_n(r))}{-\im S_n B_m S_n u_n(r)}\vert^2\right)^\frac{1}{2}\right\Vert_{L^q(\Omega,L^2([0,t]))}\nonumber\\
	&\hspace{2cm}\lesssim \left\Vert  X\right\Vert_{L^q(\Omega,L^2([0,t]))}\nonumber\\
	&\hspace{2cm}\le \varepsilon \norm{X}_{L^q(\Omega,L^\infty(0,t))} +\frac{1}{4\varepsilon}\int_0^t \norm{X}_{L^q(\Omega,L^\infty(0,s))} \df s
	%						&\hspace{2cm}\lesssim \E \Bigg[  \Bigg(\int_0^t \left[\norm{u_0}_{H}^2+\norm{\sqrtA u_n(s)}_H^2+\norm{u_n(s)}_\LalphaPlusEins^{\alpha+1}\right]^{2}  \df r\Bigg)^{\frac{q}{2}}\Bigg]	
	%						&\hspace{2cm}\lesssim \E \Bigg[  \Bigg(\int_0^t Y(s)^{2}  \df r\Bigg)^{\frac{q}{2}}\Bigg] \nonumber\\
	%						&\hspace{2cm}\lesssim \varepsilon \norm{Y}_{L^q(\Omega,L^\infty(0,t))}+\frac{1}{4\varepsilon} \norm{Y}_{L^q(\Omega,L^1(0,t))}
	\end{align}	
	By $\eqref{gagliardoFhat},$ we get
	\begin{align}\label{energyFocusingTerm2}
	\norm{-\Fhat(u_n)}_{L^q(\Omega,L^\infty(0,t))}
	\lesssim \varepsilon \left\Vert \norm{\sqrtA u_n}_{H}^2\right\Vert_{L^q(\Omega,L^\infty(0,t))}+\varepsilon \norm{u_0}_H^2+C_\varepsilon \norm{u_0}_H^\gamma
	\end{align}	
	For the following estimates, we will use $\eqref{energyEstimateDrei}$-$\eqref{energyEstimateSechs}$ and \dela{Minkowski's}{the Minkowski} inequality and obtain		
	\begin{align}\label{energyFocusingTerm3}
	\left\Vert\int_0^\cdot\Real \duality{A u_n(s)+F(u_n(s))}{ \mu_n(u_n(s))} \df s \right\Vert_{L^q(\Omega,L^\infty(0,t))}
	&\lesssim \left\Vert \int_0^t X(s) \df s\right\Vert_{L^q(\Omega)}
	\lesssim \int_0^t \norm{X(s)}_{L^q(\Omega)} \df s;
	\end{align}			
	\begin{align}\label{energyFocusingTerm4}
	\left\Vert\sumM \int_0^\cdot  \Vert \sqrtA S_n B_m S_n u_n(s)\Vert_{H}^2\df s\right\Vert_{L^q(\Omega,L^\infty(0,t))}
	\lesssim \left\Vert \int_0^t X(s) \df s\right\Vert_{L^q(\Omega)}\lesssim \int_0^t \norm{X(s)}_{L^q(\Omega)} \df s;
	\end{align}
	\begin{align}\label{energyFocusingTerm5}
	&\left\Vert\int_0^\cdot \sumM \Real \duality{F'[u_n(s)] \left(S_n B_m S_n  u_n(s)\right)}{ S_n B_m S_n u_n(s)} \df s \right\Vert_{L^q(\Omega,L^\infty(0,t))}\nonumber\\
	&\hspace{7cm}\lesssim \left\Vert \int_0^t X(s) \df s\right\Vert_{L^q(\Omega)}\lesssim \int_0^t \norm{X(s)}_{L^q(\Omega)} \df s.
	\end{align}
	By $\eqref{ItoFocusing}$ and the estimates $\eqref{energyFocusingTerm1}$-$\eqref{energyFocusingTerm5},$ we get			
	\begin{align}\label{sqrtAFocusing}
	\left\Vert  \norm{\sqrtA u_n}_{H}^{2}\right\Vert_{L^q(\Omega,L^\infty(0,t))} \lesssim&\,\,\, \varepsilon\left\Vert \norm{\sqrtA u_n(t)}_{H}^2\right\Vert_{L^q(\Omega,L^\infty(0,t))}+\varepsilon \norm{u_0}_H^2+C_\varepsilon \norm{u_0}_H^\gamma+\norm{ u_0}_\EA
	\nonumber\\&+\int_0^t \norm{X(s)}_{L^q(\Omega)} \df s+\varepsilon \norm{X}_{L^q(\Omega,L^\infty(0,t))} +\frac{1}{4\varepsilon}\int_0^t \norm{X}_{L^q(\Omega,L^\infty(0,s))} \df s
	\nonumber\\&+\int_0^t \norm{X(s)}_{L^q(\Omega)} \df s
	\end{align}
	In order to estimate the terms with $X$ by the LHS of \eqref{sqrtAFocusing}, we exploit $\eqref{focusingNonlinearityControl}$ to get		
	\begin{align*}
	\norm{X}_{L^q(\Omega,L^\infty(0,t))}&\le \norm{u_0}_{H}^2+\E \big[\sup_{s\in [0,t]}  \norm{\sqrtA u_n(s)}_{H}^{2q}\big]^{\frac{1}{q}}+\E \big[\sup_{s\in [0,t]}  \norm{ u_n(s)}_\LalphaPlusEins^{(\alpha+1)q}\big]^{\frac{1}{q}}\\
	&\lesssim \norm{u_0}_{H}^2+\E \big[\sup_{s\in [0,t]}  \norm{\sqrtA u_n(s)}_{H}^{2q}\big]^{\frac{1}{q}}\nonumber\\
	&+\varepsilon \E \big[\sup_{s\in [0,t]}   \norm{\sqrtA u_n(s)}_{H}^{2q}\big]^{\frac{1}{q}}+\varepsilon \norm{u_0}_H^2+C_\varepsilon \norm{u_0}_H^\gamma\\
	&\lesssim \left\Vert\sup_{s\in [0,t]}  \norm{\sqrtA u_n(s)}_{H}^{2}\right\Vert_{L^q(\Omega)}+ \norm{u_0}_H^2+ \norm{u_0}_H^\gamma.
	\end{align*}			
	Hence, by \eqref{ProcessX}, we obtain
	\begin{align*}
	\left\Vert  \norm{\sqrtA u_n}_{H}^{2}\right\Vert_{L^q(\Omega,L^\infty(0,t))} \lesssim&\varepsilon \left\Vert \norm{\sqrtA u_n(t)}_{H}^2\right\Vert_{L^q(\Omega,L^\infty(0,t))}+\varepsilon \norm{u_0}_H^2+C_\varepsilon \norm{u_0}_H^\gamma+\norm{ u_0}_\EA
	\\&+\int_0^t \left\Vert  \norm{\sqrtA u_n}_{H}^{2}\right\Vert_{L^q(\Omega,L^\infty(0,s))} \df s
	+ t \norm{u_0}_H^2+ t \norm{u_0}_H^\gamma \nonumber\\
	&+\varepsilon \left\Vert  \norm{\sqrtA u_n(s)}_{H}^{2}\right\Vert_{L^q(\Omega,L^\infty(0,t))}+ \varepsilon\norm{u_0}_H^2+\varepsilon \norm{u_0}_H^\gamma.
	\end{align*}
	Choosing $\varepsilon>0$ small enough, we get
	\begin{align*}
	\left\Vert  \norm{\sqrtA u_n}_{H}^{2}\right\Vert_{L^q(\Omega,L^\infty(0,t))} \le& C_1(\norm{u_0}_\EA,T,q)+\int_0^t C_2(q)\left\Vert  \norm{\sqrtA u_n}_{H}^{2}\right\Vert_{L^q(\Omega,L^\infty(0,s))} \df s,\qquad t\in [0,T],
	\end{align*}							
	and thus, \dela{Gronwall's}{the Gronwall} Lemma yields
	\begin{align*}
	\left\Vert \norm{\sqrtA u_n(s)}_{H}^{2}\right\Vert_{L^q(\Omega,L^\infty(0,t))}\le C_1(\norm{u_0}_\EA,T,q) e^{C_2(q)t},\qquad t\in[0,T].
	\end{align*}
	This implies that there is $C>0$ with
	\begin{align*}
	\sup_{n\in\N}\E \Big[\sup_{t\in[0,T]} \norm{u_n(t)}_\EA^{2q}\Big]\le C.
	\end{align*}
	since the $H$-norm is conserved by Proposition $\ref{MassEstimateGalerkinSolution}.$ Therefore, we obtain the assertion for $r\ge 2.$ Finally, the case $r\in[1,2)$ is an application of H\"older's inequality.\\

	\emph{ad b):} Analogous to the proof of Proposition $\ref{EstimatesGalerkinSolution}$ b).
	%		We can follow the lines of the proof of Proposition $\ref{EstimatesGalerkinSolution}$ and only have to adapt the treatment of the term $J_2$ to the Assumption $\ref{focusing}$ and the available estimates from part a):
	%				 		 \begin{align*}
	%				 		 \E\Vert J_2((\tau_n+\theta)\land T)&-J_2(\tau_n)\Vert_{\EAdual}\le \E \int_{\tau_n}^{(\tau_n+\theta)\land T} \Vert P_n F(u_n(s))\Vert_\EAdual \df s\\
	%				 		 &\le \E \int_{\tau_n}^{(\tau_n+\theta)\land T} \Vert  F(u_n(s))\Vert_\EAdual \df s\lesssim  \E\int_{\tau_n}^{(\tau_n+\theta)\land T} \Vert  F(u_n(s))\Vert_\LalphaPlusEinsDual \df s\\
	%				 		 &\lesssim  \E\int_{\tau_n}^{(\tau_n+\theta)\land T} \Vert  u_n(s)\Vert_\LalphaPlusEins^\alpha \df s\le \theta \E \big[\sup_{s\in[0,T]} \Vert  u_n(s)\Vert_\LalphaPlusEins^\alpha\big]
	%				 		 \\
	%				 		 &\lesssim \theta \E \big[\sup_{s\in[0,T]} \Vert  u_n(s)\Vert_\EA^{\alpha}\big]
	%				 		 \le \theta C(t,\alpha,u_0).
	%				 		 \end{align*}
\end{proof}

\section{Construction of a martingale solution}
The aim  of this section is the construction of a solution of equation $\eqref{ProblemStratonovich}$ by a suitable limiting process in the Galerkin equation $\eqref{galerkinEquation}$ using the results from the previous  sections. Let us recall that
\begin{align*}
Z_T:=\StetigEAdual\cap \LalphaPlusEinsAlphaPlusEins \cap \weaklyContinousEA.
\end{align*}

\begin{Prop} \label{PropAlmostSureConvergence}
	Let $\left(u_n\right)_{n\in\N}$ be the sequence of solutions to the Galerkin equation $\eqref{galerkinEquation}.$
	\begin{enumerate}
		\item[a)] There are a subsequence $\left(u_{n_k}\right)_{k\in\N}$, a probability space $\left(\tilde{\Omega},\tilde{\F},\tilde{\Prob}\right)$ and random variables $v_k, v:\tilde{\Omega} \rightarrow Z_T$ with $\tilde{\Prob}^{v_k}=\Prob^{u_{n_k}}$ such that $v_k\to v$ $\tilde{\Prob}$-a.s.  in $Z_T$ for $k\to \infty.$
		\item[b)] We have $v_k \in  C\left([0,T],H_k\right)$ $\tilde{\Prob}$-a.s. and for all $r\in [1,\infty),$ there is
		$C>0$
		with
		\begin{align*}
		\sup_{k\in\N} \Etilde \left[ \norm{v_k}_\LinftyEA^r\right]\le C
		%					,\qquad \sup_{k\in\N} \Etilde \left[ \norm{v_k}_{L^\infty(0,T;\LalphaPlusEins)}^{\alpha+1}\right]\le C.
		\end{align*}
		\item[c)] For all $r\in [1,\infty),$ we have
		% $v\in\SchwachStetigEA$ $\tildeProb$-a.s. and
		\begin{align*}
		\Etilde \left[ \norm{v}_\LinftyEA^r\right]\le C
		%				 ,
		%				 \qquad \Etilde \left[ \norm{v}_{L^\infty(0,T;\LalphaPlusEins)}^{\alpha+1}\right]\le C
		\end{align*}
		with the same constant $C>0$ as in $b).$
	\end{enumerate}
\end{Prop}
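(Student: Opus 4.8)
The plan is to obtain all three parts directly from the abstract Skorohod–Jakubowski statement in Corollary \ref{corollaryEstimatesToASconvergence}, applied to the sequence $X_n=u_n$, and then to transport the a priori bounds of Section \ref{sectionGalerkin} across the change of probability space using only the equality of laws. First, for part (a) I verify the two hypotheses of Corollary \ref{corollaryEstimatesToASconvergence}. By Propositions \ref{localSolutionGalerkin} and \ref{MassEstimateGalerkinSolution}, each $u_n$ is a continuous, $\Filtration$-adapted process with paths in the finite-dimensional space $H_n\subset\EAdual$, hence in particular a continuous adapted $\EAdual$-valued process. The uniform bound $\sup_n\E[\norm{u_n}_\LinftyEA^2]<\infty$ and the Aldous condition $[A]$ in $\EAdual$ are precisely Proposition \ref{EstimatesGalerkinSolution} a),b) under Assumption \ref{focusing} i), and Proposition \ref{EstimatesGalerkinSolutionFocusing} a),b) under Assumption \ref{focusing} i'). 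In either case Corollary \ref{corollaryEstimatesToASconvergence} yields a subsequence $(u_{n_k})_{k}$, a probability space $(\tilde\Omega,\tilde\F,\tildeProb)$ and random variables $v_k,v\colon\tilde\Omega\to Z_T$ with $\tildeProb^{v_k}=\Prob^{u_{n_k}}$ and $v_k\to v$ $\tildeProb$-a.s.\ in $Z_T$, which is exactly (a).

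For part (b) both assertions are pushed forward through the identity $\tildeProb^{v_k}=\Prob^{u_{n_k}}$. For the path regularity, since $H_{n_k}$ is a finite-dimensional, hence closed, subspace of $\EA$, the set $\{z\in Z_T: z(t)\in H_{n_k}\text{ for all }t\in[0,T]\}$ is a Borel subset of $Z_T$: on the component $\weaklyContinousEA$ membership in the closed subspace $H_{n_k}$ can be tested by countably many of the separating functionals $z\mapsto\duality{z(t)}{g_m}$, $t\in[0,T]\cap\Q$, used in the proof of Corollary \ref{corollaryEstimatesToASconvergence}. As $u_{n_k}$ has continuous paths in $H_{n_k}$ $\Prob$-a.s., this set has full $\Prob^{u_{n_k}}$-measure, hence full $\tildeProb^{v_k}$-measure, giving $v_k\in C([0,T],H_{n_k})$ $\tildeProb$-a.s. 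For the uniform bound I realize $z\mapsto\norm{z}_\LinftyEA$ as a Borel functional on $Z_T$: with a countable dense subset $\{g_m\}$ of the unit ball of $\EAdual$ and weak lower semicontinuity of the $\EA$-norm,
\begin{align*}
\norm{z}_\LinftyEA=\sup_{t\in[0,T]}\norm{z(t)}_\EA=\sup_{t\in[0,T]\cap\Q}\sup_{m\in\N}\left\vert\duality{z(t)}{g_m}\right\vert,
\end{align*}
a countable supremum of continuous maps on $\weaklyContinousEA$, hence Borel. The image-measure formula then gives, for every $r\in[1,\infty)$,
\begin{align*}
\Etilde\left[\norm{v_k}_\LinftyEA^r\right]=\E\left[\norm{u_{n_k}}_\LinftyEA^r\right]\le C,
\end{align*}
with $C$ independent of $k$ by Proposition \ref{EstimatesGalerkinSolution} a) (resp.\ \ref{EstimatesGalerkinSolutionFocusing} a)), which is (b).

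For part (c) I use that $v_k\to v$ $\tildeProb$-a.s.\ in $Z_T$ entails $v_k\to v$ in $\weaklyContinousEA$, so that $v_k(t)\rightharpoonup v(t)$ in $\EA$ for every $t\in[0,T]$ along the almost sure sequence. Weak lower semicontinuity of the $\EA$-norm gives, $\tildeProb$-a.s.,
\begin{align*}
\norm{v(t)}_\EA\le\liminf_{k\to\infty}\norm{v_k(t)}_\EA\le\liminf_{k\to\infty}\norm{v_k}_\LinftyEA,\qquad t\in[0,T],
\end{align*}
and taking the supremum over $t$ yields $\norm{v}_\LinftyEA\le\liminf_k\norm{v_k}_\LinftyEA$ $\tildeProb$-a.s. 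Raising to the power $r$ and applying Fatou's lemma together with the bound from (b),
\begin{align*}
\Etilde\left[\norm{v}_\LinftyEA^r\right]\le\Etilde\left[\liminf_{k\to\infty}\norm{v_k}_\LinftyEA^r\right]\le\liminf_{k\to\infty}\Etilde\left[\norm{v_k}_\LinftyEA^r\right]\le C,
\end{align*}
with the same $C$, proving (c).

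The genuinely delicate points, and the only real obstacle, are the measurability and semicontinuity issues forced by the non-metric weak topology on $\weaklyContinousEA$: one must know that $\norm{\cdot}_\LinftyEA$ is a Borel, lower semicontinuous functional on $Z_T$ and that $C([0,T],H_{n_k})$ is a Borel subset, so that both the moment bound and the finite-dimensional path regularity survive the passage to the Skorohod space, where only equality of laws is at hand. Once these are in place, the estimates are immediate consequences of Propositions \ref{EstimatesGalerkinSolution}/\ref{EstimatesGalerkinSolutionFocusing} combined with the image-measure formula and Fatou's lemma.
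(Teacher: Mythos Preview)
Your proof is correct. Parts (a) and (b) follow essentially the same route as the paper: invoke Corollary \ref{corollaryEstimatesToASconvergence} with input from Propositions \ref{EstimatesGalerkinSolution}/\ref{EstimatesGalerkinSolutionFocusing}, and then push the path regularity and the moment bounds through the identity of laws. The paper handles the measurability in (b) by noting that $C([0,T],H_{n_k})$ is closed in $\StetigEAdual$ and that, on this finite-dimensional slice, $u\mapsto\norm{u}_\LinftyEA^r$ is continuous; your version, working directly on $Z_T$ via countable suprema of weak functionals, is a valid alternative.

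The genuine difference is in (c). The paper argues indirectly: from the a.s.\ convergence $v_k\to v$ in $\LalphaPlusEinsAlphaPlusEins$ and the uniform $L^{\alpha+1}(\tilde\Omega\times[0,T]\times M)$-bound (via $\LinftyEA\hookrightarrow\LalphaPlusEinsAlphaPlusEins$), Vitali gives $v_k\to v$ in $L^2(\tilde\Omega,\LalphaPlusEinsAlphaPlusEins)$; the uniform bounds from (b) then supply, by Banach--Alaoglu, a weak-$*$ limit $\tilde v$ in $L^r(\tilde\Omega,\LinftyEA)$ with $\norm{\tilde v}\le C$, and uniqueness of limits forces $v=\tilde v$. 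Your argument is more elementary: the convergence in $\weaklyContinousEA$ already gives $v_k(t)\rightharpoonup v(t)$ in $\EA$ pointwise in $t$, so weak lower semicontinuity of the norm and Fatou's lemma deliver the bound directly, with the same constant. Your route avoids Vitali, the auxiliary $L^{\alpha+1}$ space, and the subsequence extraction; the paper's route, on the other hand, produces along the way the strong $L^2(\tilde\Omega,\LalphaPlusEinsAlphaPlusEins)$ convergence, which is not needed here but is in the spirit of the later limit arguments.
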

For the precise dependence of the constants, we refer to the Propositions \ref{EstimatesGalerkinSolution} and \ref{EstimatesGalerkinSolutionFocusing}.
\begin{proof}
	\emph{ad a):}  The estimates to apply Corollary $\ref{corollaryEstimatesToASconvergence}$ are provided by  Propositions \ref{EstimatesGalerkinSolution} and \ref{EstimatesGalerkinSolutionFocusing}.
	
	\emph{ad b):} Since we have $u_{n_k} \in C\left([0,T],H_k\right)$ $\Prob$-a.s. and $C\left([0,T],H_k\right)$ is closed in  $\StetigEAdual$ and therefore a Borel set
	%			\coma{How do you deduce that it is a Borel subset of $Z_T$? I think we  need this. F.H. I suggest the following explanation. Since $C\left([0,T],H_k\right)\subset Z_T$ and $C\left([0,T],H_k\right)$ is closed in $\StetigEAdual,$ we get that it is closed in $Z_T$ and thus a Borel set. This uses that the definition of the topology in $Z_T$ is such that the canonical map $Z_T\to \StetigEAdual$ is continuous, i.e. the inverse image of a closed set is closed.}
	, we conclude $v_k \in  C\left([0,T],H_k\right)$ $\tilde{\Prob}$-a.s. by the identity of the laws. Furthermore, the map $C\left([0,T],H_k\right)\ni u \mapsto \norm{u}_{\LinftyEA}^r\in [0,\infty)$ is continuous and therefore measurable, \dela{such}{so} that we can conclude that
	\begin{align*}
	\Etilde \left[\norm{v_k}_\LinftyEA^r\right]
	&=\int_{C([0,T],H_k)}\norm{u}_\LinftyEA^r \df \tildeProb^{v_k}(u)
	=\int_{C([0,T],H_k)}\norm{u}_\LinftyEA^r \df \Prob^{u_{n_k}}(u)\\
	&=\E \left[\norm{u_{n_k}}_\LinftyEA^r \right].
	\end{align*}
	Use the Propositions $\ref{EstimatesGalerkinSolution}$ in the defocusing respectively $\ref{EstimatesGalerkinSolutionFocusing}$ in the focusing case to get the assertion.\\

	\emph{ad c):}
	%			We show $v\in \LinftyEA$ a.s. with
	%							\begin{align*}
	%							\Etilde \left[ \norm{v}_\LinftyEA^2\right]<\infty.
	%							\end{align*}
	%							Then, the assertion follows from $z\in \StetigEAdual$ and Lemma $\ref{StraussLemma}.$\\
	We have $v_n \to v$ almost surely in $\LalphaPlusEinsAlphaPlusEins$ by part a). From part b) and the embedding $\LinftyEA\hookrightarrow \LalphaPlusEinsAlphaPlusEins$, we obtain that the sequence $\left(v_n\right)_{n\in\N}$ is bounded in $L^{\alpha+1}(\tilde{\Omega}\times [0,T]\times M).$ By Vitali's Theorem (see \cite{Elstrodt}, Theorem VI, 5.6), we  conclude
	\begin{align*}
	v_n \to v \hspace{0,3cm} \text{in} \hspace{0,3 cm} L^2(\tilde{\Omega},\LalphaPlusEinsAlphaPlusEins)
	\end{align*}
	for $n\to \infty.$
	On the other hand, part b) yields the existence of $\tilde{v}\in L^r(\tilde{\Omega},\LinftyEA)$ for all $r\in[1,\infty)$ with norm less than the constant $C=C(\norm{u_0}_\EA,T,r)>0$  and a subsequence $\left(v_{n_k}\right)_{k\in\N},$ such that $v_{n_k} \rightharpoonup^* \tilde{v}$ for $k\to \infty.$
	Especially, $v_{n_k} \rightharpoonup^* \tilde{v}$ for $k\to \infty$ in $L^2(\tilde{\Omega},\LalphaPlusEinsAlphaPlusEins)$ and hence,
	\begin{align*}
	v=\tilde{v}\in L^r(\tilde{\Omega},\LinftyEA)
	\end{align*}
	%			by the embedding $ L^2(\tilde{\Omega},\LinftyEA)\hookrightarrow L^1(\tilde{\Omega},\LalphaPlusEinsAlphaPlusEins).$\\
	%			The second estimate in part b) yields $\hat{v}\in L^{\alpha+1}(\tilde{\Omega},L^\infty(0,T;\LalphaPlusEins))$ with norm less than  $C=C(\norm{u_0}_\EA)>0$  and a subsequence $\left(v_{n_k}\right)_{k\in\N},$ such that $v_{n_k} \rightharpoonup^* \hat{v}$  for $k\to \infty.$ We obtain
	%			\begin{align*}
	%			v=\hat{v}\in L^{\alpha+1}(\tilde{\Omega},L^\infty(0,T;\LalphaPlusEins))
	%			\end{align*}
	%			by the embedding $ L^{\alpha+1}(\tilde{\Omega},L^\infty(0,T;\LalphaPlusEins)\hookrightarrow L^1(\tilde{\Omega},\LalphaPlusEinsAlphaPlusEins).$
	
\end{proof}

The next Lemma shows, how convergence in $Z_T$ can be used for the convergence of the terms appearing in the Galerkin equation.
\begin{Lemma}\label{LemmaConvergences}
	Let $z_n\in C([0,T],H_n)$ for $n\in\N$ and $z\in Z_T.$ Assume $z_n \to z$ for $n\to \infty$ in $Z_T.$ Then, for $t\in[0,T]$ and $\psi \in \EA$ as $n\to \infty$ 			
	\begin{align*}
	\skpH{z_n(t)}{\psi} \to \duality{z(t)}{\psi},
	\end{align*}
	\begin{align*}
	\int_0^t \skpH{A z_n(s)}{\psi} \df s \to \int_0^t \duality{A z(s)}{ \psi} \df s,
	\end{align*}
	\begin{align*}
	\int_0^t \skpH{\mu_n\left(z_n(s)\right)}{\psi} \df s \to \int_0^t \duality{\mu\left(z(s)\right)}{ \psi} \df s,					
	\end{align*}	
	\begin{align*}
	\int_0^t \skpH{P_n F(z_n(s))}{\psi} \df s \to \int_0^t \duality{F( z(s))}{\psi} \df s.
	\end{align*}				
\end{Lemma}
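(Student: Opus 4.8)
The plan is to establish the four convergences one at a time, exploiting the three modes of convergence that are bundled into $z_n\to z$ in $Z_T$: uniform convergence in $\StetigEAdual$, strong convergence in $\LalphaPlusEinsAlphaPlusEins$, and convergence in $\weaklyContinousEA$, i.e. $\sup_{s\in[0,T]}\vert\duality{z_n(s)-z(s)}{h}\vert\to0$ for every $h\in\EAdual$. A preliminary step I would carry out first is to record the uniform bound $R:=\sup_{n\in\N}\sup_{s\in[0,T]}\norm{z_n(s)}_\EA<\infty$: since $t\mapsto\duality{z(t)}{h}$ is continuous on the compact interval $[0,T]$ and $z_n\to z$ uniformly against each $h\in\EAdual$, the family $\{z_n(s):n\in\N,\,s\in[0,T]\}$ is weakly bounded in the reflexive Hilbert space $\EA$, so the uniform boundedness principle yields the norm bound $R$. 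This supplies the integrable dominating functions needed below. I would also note the pointwise facts $z_n(s)\rightharpoonup z(s)$ in $\EA$ (hence in $H$) and $z_n(s)\to z(s)$ in $\EAdual$ for every fixed $s$, which are immediate from the definitions.

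The first convergence is then essentially the definition of convergence in $\StetigEAdual$: using the Gelfand identification $\skpH{w}{\psi}=\duality{w}{\psi}$ for $w\in H$ and $\psi\in\EA$, one has $\skpH{z_n(t)}{\psi}=\duality{z_n(t)}{\psi}\to\duality{z(t)}{\psi}$ because $z_n(t)\to z(t)$ in $\EAdual$. For the second, I would first observe that $H_n\subset\D(A)$ (as $A$ and $S$ commute they preserve the finite-dimensional eigenspaces of $S$), so that $\skpH{A z_n(s)}{\psi}=\skpH{\sqrtA z_n(s)}{\sqrtA\psi}=\skp{z_n(s)}{\psi}_\EA-\skpH{z_n(s)}{\psi}$ and the right-hand side converges pointwise in $s$ to $\skp{z(s)}{\psi}_\EA-\skpH{z(s)}{\psi}=\duality{A z(s)}{\psi}$ by the weak convergence in $\EA$ and $H$. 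The bound $\vert\skpH{A z_n(s)}{\psi}\vert\le R\norm{\psi}_\EA$ then permits dominated convergence in the time integral.

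The third convergence is the most delicate and I expect it to be the main obstacle, since it couples the strong convergence $S_n\to I$ with the weak convergence of $z_n$ and an infinite sum. Exploiting that $S_n$, $B_m$, and hence $S_n B_m S_n$, are selfadjoint on $H$, I would move the operators onto the fixed test function, writing $\skpH{(S_n B_m S_n)^2 z_n(s)}{\psi}=\skpH{z_n(s)}{(S_n B_m S_n)^2\psi}$, and prove $(S_n B_m S_n)^2\psi\to B_m^2\psi$ in $\EA$ from $S_n\psi\to\psi$ in $\EA$, $B_m\in\mathcal{L}(\EA)$, and the uniform bound $\norm{S_n}_{\mathcal{L}(\EA)}\le1$. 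A weak--strong pairing (weak convergence of $z_n(s)$ in $H$ against the strongly convergent $(S_n B_m S_n)^2\psi$) then gives convergence of each summand to $\skpH{z(s)}{B_m^2\psi}=\duality{B_m^2 z(s)}{\psi}$. The estimate $\vert\skpH{(S_n B_m S_n)^2 z_n(s)}{\psi}\vert\le\norm{B_m}_{\mathcal{L}(H)}^2 R\norm{\psi}_H$, together with $\sumM\norm{B_m}_{\mathcal{L}(H)}^2<\infty$, justifies interchanging the limit with the sum over $m$ by dominated convergence, and a further constant dominating function handles the time integral.

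For the fourth convergence I would use the duality identity $\eqref{PnInEAdual}$ to write $\skpH{P_n F(z_n(s))}{\psi}=\duality{F(z_n(s))}{P_n\psi}$ and split it as $\duality{F(z_n(s))}{P_n\psi-\psi}+\duality{F(z_n(s))-F(z(s))}{\psi}$. The first term is controlled by $\norm{F(z_n(s))}_\LalphaPlusEinsDual\norm{P_n\psi-\psi}_\LalphaPlusEins\lesssim R^\alpha\norm{P_n\psi-\psi}_\LalphaPlusEins$ via $\eqref{nonlinearityEstimate}$, where $P_n\psi\to\psi$ in $\EA\hookrightarrow\LalphaPlusEins$; the second is treated by the local Lipschitz bound $\eqref{nonlinearityLocallyLipschitz}$ and Hölder's inequality, which turn $\int_0^t\norm{F(z_n(s))-F(z(s))}_\LalphaPlusEinsDual\df s$ into a multiple of $\big(\int_0^T\norm{z_n(s)-z(s)}_\LalphaPlusEins^{\alpha+1}\df s\big)^{1/(\alpha+1)}$, which tends to $0$ precisely because $z_n\to z$ in $\LalphaPlusEinsAlphaPlusEins$. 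Both contributions vanish after integration in $s$, completing the proof.
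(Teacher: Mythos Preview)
Your argument is correct, and it tracks the paper's proof closely in spirit, though you organize several steps differently. The first and fourth convergences match the paper's approach almost exactly (same split for the nonlinear term, same use of $\eqref{PnInEAdual}$ and $\eqref{nonlinearityLocallyLipschitz}$). For the linear term, the paper is slightly slicker: it simply writes $\duality{A z_n(s)}{\psi}=\duality{z_n(s)}{A\psi}$ with $A\psi\in\EAdual$ and uses the uniform convergence built into $z_n\to z$ in $\weaklyContinousEA$ directly, avoiding your dominated-convergence detour and the need for the uniform bound $R$. For the $\mu_n$ term, the paper instead telescopes $(S_n B_m S_n)^2 z_n - B_m^2 z$ into four pieces and pairs each with $\psi$ via the $\EAdual$--$\EA$ duality, controlling everything with $\norm{z_n}_{\StetigEAdual}$ and $\norm{(S_n-I)\varphi}_\EA\to0$; your route (pushing the full operator $(S_n B_m S_n)^2$ onto $\psi$ via selfadjointness, then invoking a weak--strong pairing in $H$) is equally valid and arguably more transparent. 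The trade-off is that your version requires the preliminary uniform boundedness step to furnish the dominating constants, whereas the paper gets by with the $\StetigEAdual$-norm bounds that are automatic from convergence.
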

\begin{proof}
	\emph{Step 1:} We fix $\psi \in \EA$ and $t\in[0,T].$
	Recall, that the assumption implies $z_n \to z$ for $n\to \infty$ in $\StetigEAdual.$ This can be used to deduce
	\begin{align*}
	\left\vert \skpH{z_n(t)}{\psi} - \duality{z(t)}{\psi}\right\vert \le \Vert z_n - z\Vert_\StetigEAdual \norm{\psi}_\EA \to 0.
	\end{align*}
	By $z_n \to z$  in $\weaklyContinousEA$ we get
	$\sup_{s\in [0,T]} \vert \duality{z_n(s)-z(s)}{\varphi}\vert \to 0$
	for $n\to \infty$ and all $\varphi \in \EAdual.$ We plug in $\varphi=A \psi$  and use
	$\duality{A z_n(s)}{\psi}=\duality{z_n(s)}{A \psi}$
	for $n\in\N$ and $s\in[0,t]$ to get
	\begin{align*}
	\int_0^t \left\vert\skpH{A z_n(s)}{\psi}- \duality{z(s)}{A \psi}\right\vert \df s&=				\int_0^t \left\vert \duality{z_n(s)-z(s)}{A \psi}\right\vert \df s\\
	&\le T \sup_{s\in [0,T]} \vert \duality{z_n(s)-z(s)}{A \psi}\vert \to 0,\quad n\to \infty.
	\end{align*}

	\emph{Step 2:}			
	First, we fix $m\in \N.$ Using that the operators $B_m$  and  $S_n$ are selfadjoint, we get
	\begin{align*}
	\int_0^t &\left\vert \skpHBig{(S_n B_m S_n)^2 z_n(s)}{\psi} - \duality{B_m^2 z(s)}{ \psi}\right\vert \df s	\\
	\le& \int_0^t \left\vert \skpHBig{(S_n-I ) B_m S_n^2 B_m S_n z_n(s)}{\psi} \right\vert \df s	+\int_0^t \left\vert \skpHBig{ B_m (S_n^2-I ) B_m S_n  z_n(s)}{\psi} \right\vert \df s	\\
	&\hspace{1cm}+ \int_0^t \left\vert \skpHBig{ B_m^2 (S_n-I) z_n(s)}{\psi} \right\vert \df s	+\int_0^t \left\vert  \duality{B_m^2 \left(z_n(s)-z(s)\right)}{ \psi}\right\vert \df s\\
	\le&T \norm{z_n}_\StetigEAdual \norm{B_m}_{{\mathcal{L}(\EA)}}^2 \norm{S_n}_{{\mathcal{L}(\EA)}}^3 \norm{(S_n-I)\psi}_\EA	\\
	&\hspace{1cm}+T \norm{z_n}_\StetigEAdual \norm{S_n}_{{\mathcal{L}(\EA)}} \norm{B_m}_{{\mathcal{L}(\EA)}} \norm{S_n+I}_{{\mathcal{L}(\EA)}}\norm{(S_n-I) \left(B_m \psi\right)}_\EA	\\
	&\hspace{1cm}+T \norm{z_n}_\StetigEAdual  \norm{(S_n-I) \left(B_m^2 \psi\right)}_\EA	\\
	&\hspace{1cm}+T \norm{z_n-z}_\StetigEAdual \norm{B_m^2 }_{{\mathcal{L}(\EA)}} \norm{\psi}_\EA\longrightarrow 0, \quad n\to \infty,
	\end{align*}
	since $S_n \varphi \to \varphi$ in $\EA$ for $\varphi \in \EA$ by Proposition $\ref{PaleyLittlewoodLemma}$  and $z_n \to z$ in $\StetigEAdual.$
	By the estimate
	\begin{align*}
	\int_0^t &\left\vert \skpHBig{(S_n B_m S_n)^2 z_n(s)}{\psi} - \duality{B_m^2 z(s)}{ \psi}\right\vert \df s\\
	&\le T \norm{\psi}_\EA \left[\norm{(S_n B_m S_n)^2}_{{\mathcal{L}(\EA)}} \norm{z_n}_\StetigEAdual+\norm{B_m^2}_{{\mathcal{L}(\EA)}} \norm{z}_\StetigEAdual\right]\\
	&\lesssim_{T,\psi} \norm{B_m}_{{\mathcal{L}(\EA)}}^2 \in l^1(\N)
	\end{align*}
	and Lebesgue's convergence Theorem, we obtain
	\begin{align*}
	\sum_{m=1}^\infty \int_0^t \left\vert \skpHBig{(S_n B_m S_n)^2 z_n(s)}{\psi} - \duality{B_m^2 z(s)}{ \psi}\right\vert \df s\longrightarrow 0, \qquad n\to \infty,
	\end{align*}
	and therefore
	\begin{align*}
	\int_0^t \skpH{\mu_n\left(z_n(s)\right)}{\psi} \df s \to \int_0^t \duality{\mu\left(z(s)\right)}{ \psi} \df s,\qquad n\to \infty.	
	\end{align*}	
	
	\emph{Step 3.}			
	Before we prove the last assertion, we recall $z_n \to z$ in $\LalphaPlusEinsAlphaPlusEins$ for $n\to \infty.$  We estimate
	\begin{align}\label{NonlinearConvergenceStart}
	\int_0^t& \left\vert \skpH{P_n F(z_n(s))}{\psi} - \duality{F(z(s))}{\psi} \right\vert \df s\nonumber\\
	\le& \int_0^t \left\vert \duality{F(z_n(s))}{(P_n-I)\psi} \right\vert \df s+\int_0^t \left\vert  \duality{F(z_n(s))-F(z(s))}{\psi} \right\vert \df s
	\end{align}
	where we used $\eqref{PnInEAdual}.$ For the first term in $\eqref{NonlinearConvergenceStart}$, we look at
	\begin{align*}
	\int_0^t \left\vert \duality{F(z_n(s))}{(P_n-I)\psi} \right\vert \df s
	&\le \norm{F (z_n)}_{L^1(0,T;\EAdual)} \norm{(P_n-I)\psi}_\EA\\
	&\lesssim \norm{F(z_n)}_{L^1(0,T;\LalphaPlusEinsDual)} \norm{(P_n-I)\psi}_\EA\\
	&\lesssim \norm{ z_n}_{L^\alpha(0,T;\LalphaPlusEins)}^\alpha \norm{(P_n-I)\psi}_\EA\\
	&\lesssim \norm{ z_n}_{L^{\alpha+1}(0,T;\LalphaPlusEins)}^\alpha \norm{(P_n-I)\psi}_\EA\longrightarrow 0, \quad n\to \infty.
	\end{align*}			
	By Assumption $\eqref{nonlinearAssumptions}$ (see $\eqref{deriveNonlinearBound}$), we get
	\begin{align*}
	\norm{  F(z_n(s))-F(z(s))}_{\LalphaPlusEinsDual}
	&\lesssim \left(\norm{z_n(s)}_\LalphaPlusEins+\norm{z(s)}_\LalphaPlusEins\right)^{\alpha-1} \norm{z_n(s)-z(s)}_\LalphaPlusEins		
	\end{align*}
	for $s\in [0,T].$
	Now, we  apply H\"{o}lder's inequality in time with $\frac{1}{\alpha+1}+\frac{1}{\alpha+1}+\frac{\alpha-1}{\alpha+1}=1$
	\begin{align*}
	\norm{F(z_n)-F(z)}_{L^1(0,T; \LalphaPlusEinsDual)}
	&\le T^\frac{1}{\alpha+1}\left(\norm{z_n}_\LalphaPlusEinsAlphaPlusEins+ \norm{z}_\LalphaPlusEinsAlphaPlusEins\right)^{\alpha-1}\\&\hspace{1cm} \norm{z_n-z}_\LalphaPlusEinsAlphaPlusEins\rightarrow 0,\qquad n\to \infty.
	\end{align*}
	This leads to the last claim.
	%			\begin{align*}
	%			\int_0^t  \skpH{P_n F(z_n(s))}{\psi} \to \int_0^t \duality{F(z(s))}{\psi}  \df s
	%			\end{align*}
	%			for $n\to \infty.$\\
\end{proof}		

By the application of the Skorohod-Jakubowski Theorem, we have replaced the Galerkin solutions $u_n$ by the processes $v_n$ on $\tilde{\Omega}.$ Now, we want to transfer the properties given by the Galerkin equation $\eqref{galerkinEquation}.$
Therefore, we define the process $N_n: \tilde{\Omega} \times [0,T] \rightarrow H_n$  by
\begin{align*}
N_n(t)=-v_n(t)+ P_n u_0&+ \int_0^t \left[-\im A v_n(s)-\im P_n F(v_n(s))+\mu_n(v_n(s))\right] \df s
\end{align*}
for $n\in \N$ and $t\in[0,T]$ and in the following lemma, we prove its martingale property. Note that in this section, we consider ${H}$ as a real Hilbert space equipped with the real scalar product
$\Real \skpH{u}{v}$ for $u,v\in{H}$
in order to be consistent with the martingale theory from \cite{daPrato} we use.

\begin{Lemma}\label{NnIsMartingale}
	For each $n\in \N,$ the process $N_n$ is  an ${H}$-valued continuous square integrable martingale w.r.t the filtration $\tilde{\F}_{n,t}:=\sigma \left(v_n(s): s\le t\right).$ The quadratic variation of $N_n$ is given by
	\begin{align*}
	\quadVar{N_n}_t\psi= \sumM \int_0^t \im S_n B_m S_n v_n(s) \skpHReal{S_n B_m S_n v_n(s)}{\psi} \df s
	\end{align*}
	for all $\psi \in {H}.$
\end{Lemma}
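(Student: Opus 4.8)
The plan is to transport the martingale structure of the original Galerkin solution $u_n$ to its Skorohod copy $v_n$ by exploiting the equality of laws $\tilde{\Prob}^{v_n}=\Prob^{u_n}$ from Proposition \ref{PropAlmostSureConvergence}. On the original probability space I would first rewrite the Galerkin equation \eqref{galerkinEquation} as
\begin{align*}
M_n(t):=u_n(t)-P_n u_0-\int_0^t\left[-\im A u_n(s)-\im P_n F(u_n(s))+\mu_n(u_n(s))\right]\df s=-\im\int_0^t S_n B(S_n u_n(s))\df W(s).
\end{align*}
Since the integrand takes values in the finite-dimensional space $H_n$ and $S_n B(S_n\cdot)\colon H\to\HS(Y,H_n)$ is bounded, the standard theory of Hilbert-space valued stochastic integrals (\cite{daPrato}, Chapter 4) shows that $M_n$ is a continuous, square integrable $\Filtration$-martingale whose operator quadratic variation acts on $\psi\in H$ by $\quadVar{M_n}_t\psi=\int_0^t\Sigma_s\Sigma_s^*\psi\,\df s$ with $\Sigma_s=-\im S_n B(S_n u_n(s))$; expanding $\Sigma_s\Sigma_s^*$ in the orthonormal basis $(f_m)_m$ of $Y$ produces exactly the formula in the statement with $u_n$ in place of $v_n$.

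Next I would observe that both the martingale property and the quadratic variation pass to the natural filtration $\Filtration^{u_n}_t:=\sigma(u_n(r)\colon r\le t)\subset\F_t$. Indeed, the displayed identity exhibits $M_n(t)$ as a measurable functional of $u_n|_{[0,t]}$, so $M_n(t)$ is $\Filtration^{u_n}_t$-measurable, and the tower property yields $\E[M_n(t)\mid\Filtration^{u_n}_s]=\E[M_n(s)\mid\Filtration^{u_n}_s]=M_n(s)$ for $s\le t$. The same argument applies to the real scalar processes $\skpHReal{M_n(\cdot)}{\psi_1}\skpHReal{M_n(\cdot)}{\psi_2}-\skpHReal{\quadVar{M_n}_\cdot\psi_1}{\psi_2}$, which are $\Filtration^{u_n}$-martingales for all $\psi_1,\psi_2\in H$ and are likewise path functionals of $u_n$.

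The decisive step is to rephrase these facts as identities between expectations of path functionals and then to invoke $\tilde{\Prob}^{v_n}=\Prob^{u_n}$. Reading off from the definition of $N_n$ the measurable maps $\Phi_t\colon Z_T\to H_n$ with $N_n(t)=\Phi_t(v_n)$ and the corresponding $\Phi_t(u_n)=-M_n(t)$ (these are well defined and measurable on the almost sure event $\{v_n\in C([0,T],H_n)\}$ because on the finite-dimensional space $H_n$ the point evaluation, the restriction of $A$, the map $F$ and $\mu_n$ are continuous), I would check for all $0\le s\le t\le T$, $\psi\in H$ and every bounded continuous $\gamma$ depending only on the path up to time $s$ that
\begin{align*}
\Etilde\left[\skpHReal{N_n(t)-N_n(s)}{\psi}\,\gamma(v_n)\right]=\E\left[\skpHReal{\Phi_t(u_n)-\Phi_s(u_n)}{\psi}\,\gamma(u_n)\right]=-\E\left[\skpHReal{M_n(t)-M_n(s)}{\psi}\,\gamma(u_n)\right]=0,
\end{align*}
the first equality by equality of laws and the last by the $\Filtration^{u_n}$-martingale property of $M_n$; the integrability making these expectations finite comes from Proposition \ref{PropAlmostSureConvergence} b) together with \eqref{SnUniformlyBounded} and \eqref{nonlinearityEstimate}. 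This yields that $N_n$ is a continuous square integrable $\tilde{\F}_{n,\cdot}$-martingale. An entirely analogous transfer applied to the quadratic-variation functional shows that $\skpHReal{N_n(\cdot)}{\psi_1}\skpHReal{N_n(\cdot)}{\psi_2}$ minus the claimed expression tested against $\psi_1,\psi_2$ is a martingale, and uniqueness of the quadratic variation then identifies $\quadVar{N_n}$ with the stated formula.

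I expect the only genuine obstacle to be measure-theoretic rather than probabilistic: one must verify carefully that $N_n(t)$, the quadratic-variation integrand and the conditioning functionals are all bona fide measurable functionals of the path $v_n\in Z_T$ and are sufficiently integrable for the law-transfer to be legitimate. The finite-dimensionality of $H_n$ and the uniform bound $\sup_{n}\Etilde[\norm{v_n}_{\LinftyEA}^r]\le C$ from Proposition \ref{PropAlmostSureConvergence} reduce both points to routine verifications, after which the remainder is a mechanical rewriting of classical stochastic-integral identities.
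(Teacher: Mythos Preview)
Your proposal is correct and follows essentially the same route as the paper: define the analogue $M_n$ of $N_n$ on the original space, identify it as the stochastic integral from \eqref{galerkinEquation}, read off its quadratic variation via $\Sigma_s\Sigma_s^*$, rephrase both the martingale and the quadratic-variation properties as expectation identities against bounded continuous path functionals on $C([0,T],H_n)$, and transfer them to $v_n$ using $\tilde{\Prob}^{v_n}=\Prob^{u_n}$. The only cosmetic differences are that the paper establishes square integrability directly from the mass conservation $\norm{u_n(s)}_H\le\norm{u_0}_H$ rather than from Proposition \ref{PropAlmostSureConvergence} b), and it works throughout on the Polish space $C([0,T],H_n)$ so that the measurability issues you anticipate are automatic.
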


\begin{proof}
	Fix $n\in\N.$ We define  $M_n: \Omega \times [0,T] \rightarrow H_n$ by
	\begin{align*}
	M_n(t):=-u_n(t)+ P_n u_0&+ \int_0^t \left[-\im A u_n(s)-\im P_n F(u_n(s))+\mu_n(u_n(s))\right] \df s
	\end{align*}
	for $t\in[0,T].$ Since $u_n$ is a solution of the Galerkin equation $\eqref{galerkinEquation}$, we obtain  the representation
	\begin{align*}
	M_n(t)=  \im \int_0^t S_n B_m (S_n u_n(s)) \df W(s)
	\end{align*}
	$\Prob$-a.s. for all $t\in[0,T].$ The estimate
	\begin{align*}
	\E \left[\sumM \int_0^T \norm{S_n B_m S_n u_n(s)}_{H}^2 \df s\right]&\le \sumM \norm{B_m}_{{\mathcal{L}({H})}}^2 \E \left[\int_0^T \norm{u_n(s)}_{H}^2 \df s\right]\\
	&\le T \sumM \norm{B_m}_{{\mathcal{L}({H})}}^2 \norm{u_0}_{H}^2
	<\infty
	\end{align*}
	yields, that $M_n$ is a square integrable continuous martingale w.r.t. the filtration $\left(\F_t\right)_{t\in[0,T]}.$
	From the definition of $M_n$ we get, that for each $t\in [0,T],$ $M_n(t)$ is measurable w.r.t. the smaller $\sigma$-field
	$\F_{n,t}:=\sigma \left(u_n(s): s\le t\right).$\\
	The adjoint of the operator $\varPhi_n(s):=\im S_n B(S_n u_n(s)):Y\to {H}$ for $s\in[0,T]$ is given by \\$\varPhi^*(s)\psi= \sumM \skpHReal{\im S_n B_m S_n u_n(s)}{\psi} f_m$ for $\psi\in {H}.$  Therefore
	\begin{align*}
	\varPhi(s)\varPhi^*(s)\psi= \sumM \skpHReal{\im S_n B_m S_n u_n(s)}{\psi} \im S_n B_m S_n u_n(s)
	\end{align*}
	for $\psi\in {H}$ and $s\in [0,T].$
	Hence, $M_n$ is a $\left(\F_{n,t}\right)$-martingale with  quadratic variation
	\begin{align*}
	\quadVar{M_n}_t\psi= \sumM \int_0^t \im S_n B_m S_n u_n(s) \skpHReal{\im S_n B_m S_n u_n(s)}{\psi} \df s
	\end{align*}
	for $\psi \in{H}$ (see \cite{daPrato}, Theorem 4.27).
	This property can be rephrased as
	\begin{align*}
	\E \left[ \skpHReal{M_n(t)-M_n(s)}{\psi} h(u_n|_{[0,s]})\right]=0
	\end{align*}
	and
	\begin{align*}
	\E \Bigg[& \Bigg(\skpHReal{M_n(t)}{\psi}\skpHReal{M_n(t)}{\varphi}-\skpHReal{M_n(s)}{\psi}\skpHReal{M_n(s)}{\varphi}\\
	&\hspace{1 cm}-\sumM \int_0^t  \skpHReal{\im S_n B_m S_n u_n(s)}{\psi} \skpHReal{\im S_n B_m S_n u_n(s)}{\varphi} \df s\Bigg) h(u_n|_{[0,s]})\Bigg]=0
	\end{align*}
	for all $\psi, \varphi \in {H}$ and bounded, continuous functions $h$ on $C([0,T],{H}).$
	
	We use the identity of the laws of $u_n$ and $v_n$ on $C([0,T],H_n)$ to obtain
	\begin{align*}
	\Etilde \left[ \skpHReal{N_n(t)-N_n(s)}{\psi} h(v_n|_{[0,s]})\right]=0
	\end{align*}
	and
	\begin{align*}
	\Etilde \Bigg[ \Bigg(&\skpHReal{N_n(t)}{\psi}\skpHReal{N_n(t)}{\varphi}-\skpHReal{N_n(s)}{\psi}\skpHReal{N_n(s)}{\varphi}\\
	&\hspace{1 cm}-\sumM \int_0^t  \skpHReal{\im S_n B_m S_n v_n(s)}{\psi} \skpHReal{\im S_n B_m S_n v_n(s)}{\varphi} \df s\Bigg) h(v_n|_{[0,s]})\Bigg]=0
	\end{align*}		
	for all $\psi, \varphi \in {H}$ and bounded, continuous functions $h$ on $C([0,T],H_n).$ Hence, $N_n$ is  a continuous square integrable martingale w.r.t $\tilde{\F}_{n,t}:=\sigma \left(v_n(s): s\le t\right)$ and the quadratic variation is given as claimed in the lemma.
\end{proof}

We define a process $N$ on $\tilde{\Omega} \times [0,T]$ by
\begin{align*}
N(t):=-v(t)+ u_0+ \int_0^t \left[-\im A v(s)-\im  F(v(s))+\mu(v(s))\right] \df s, \quad t\in[0,T].
\end{align*}
By Proposition $\ref{PropAlmostSureConvergence}$, we infer that $v\in \StetigEAdual$ almost surely and
\begin{align*}
\norm{F(v)}_{L^\infty(0,T;\EAdual)}
\lesssim \norm{F(v)}_{L^\infty(0,T;\LalphaPlusEinsDual)}
= \norm{v}_{L^\infty(0,T;\LalphaPlusEins)}^\alpha<\infty \hspace{0,3cm} \text{a.s.}
\end{align*}
\begin{align*}
\norm{A  v}_{L^\infty(0,T;\EAdual)}\le \norm{  v}_{L^\infty(0,T;\EA)}<\infty \hspace{0,3cm} \text{a.s.}
\end{align*}
Because of $\mu\in \mathcal{L}(\EAdual),$ we infer that $\mu(v)\in \StetigEAdual$ almost surely. Hence,  $N$ has $\EAdual$-valued continous paths.

Let $\iota: \EA \hookrightarrow {H}$ be the usual embedding, $\iota^*: {H} \rightarrow \EA$ its Hilbert-space-adjoint,
i.e. $\skpH{\iota u}{v}=\skp{u}{\iota^* v}_\EA$ for $u\in\EA$ and $v\in{H}.$
Further, we set $L:=\left(\iota^*\right)': \EAdual \rightarrow {H}$  as the dual operator of $\iota^*$ with respect to the Gelfand triple $\EA\hookrightarrow H\eqsim H^*\hookrightarrow\EAdual.$
%					\coma{FH: This is what I mean. I agree that these notation are misleading. Shall we change $\EAdual$ into $E_A^'$ in the whole paper?}

In the next Lemma, we use the martingale property of $N_n$ for $n\in\N$ and a limiting process based on Proposition $\ref{PropAlmostSureConvergence}$ and Lemma $\ref{LemmaConvergences}.$	to conclude that $L N$ is also an $H$-valued martingale.		

\begin{Lemma}\label{NisMartingale}
	The process $L N$ is  an ${H}$-valued continuous square integrable martingale with respect to the filtration $\tilde{\Filtration}=\left(\tilde{\F}_t\right)_{t\in[0,T]},$ where $\tilde{\F}_{t}:=\sigma \left(v(s): s\le t\right).$ The quadratic variation is given by
	\begin{align*}
	\quadVar{L N}_t\zeta= \sumM \int_0^t\im  L B_m v(s) \skpHReal{\im L B_m v(s)}{\zeta} \df s
	\end{align*}
	for all $\zeta \in {H}.$	
\end{Lemma}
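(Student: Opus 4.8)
The plan is to pass to the limit $n\to\infty$ in the two martingale identities for $N_n$ established in the proof of Lemma \ref{NnIsMartingale}, transported to $\tilde{\Omega}$ via the identity of the laws of $u_n$ and $v_n$. First I record the structural properties: since $N$ has $\EAdual$-valued continuous paths (as noted above) and $L\in\mathcal{L}(\EAdual,H)$, the process $LN$ has $H$-valued continuous paths, and its square integrability follows from the bounds on $v$, $Av$, $F(v)$ and $\mu(v)$ in $L^\infty(0,T;\EAdual)$ together with Proposition \ref{PropAlmostSureConvergence}. The key algebraic link between the two test regimes is the adjoint relation $\skpH{L f}{\zeta}=\duality{f}{\iota^*\zeta}$ for $f\in\EAdual$, $\zeta\in H$, which for $f\in H$ reduces to $\skpH{L f}{\zeta}=\skpH{f}{\iota^*\zeta}$. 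Thus testing $LN$ against $\zeta\in H$ amounts to testing $N$ against $\iota^*\zeta\in\EA$, which is exactly the class of test functions handled by Lemma \ref{LemmaConvergences}.

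Next I would show the pointwise (in $t$, $\tildeProb$-a.s.) convergence $\skpH{N_n(t)}{\iota^*\zeta}\to\skpH{LN(t)}{\zeta}$: summing the four convergences of Lemma \ref{LemmaConvergences} with $\psi=\iota^*\zeta\in\EA$ gives $\skpH{N_n(t)}{\iota^*\zeta}\to\duality{N(t)}{\iota^*\zeta}=\skpH{LN(t)}{\zeta}$. To exchange limit and expectation in the increment identity
\[
\Etilde\big[\skpHReal{N_n(t)-N_n(s)}{\iota^*\zeta}\,h(v_n|_{[0,s]})\big]=0,
\]
I would combine this a.s.\ convergence with the continuity $h(v_n|_{[0,s]})\to h(v|_{[0,s]})$ (valid since $v_n\to v$ in $\StetigEAdual$ and $h$ is bounded continuous on that space, a class rich enough to generate $\tilde{\F}_s$ because $v$ has continuous $\EAdual$-paths) and with uniform integrability. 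The latter follows from the uniform moment bound $\sup_k\Etilde[\norm{v_k}_\LinftyEA^r]\le C$ of Proposition \ref{PropAlmostSureConvergence} and the growth estimates \eqref{nonlinearityEstimate} and \eqref{noiseBoundsH}, which bound $\skpHReal{N_n(t)}{\iota^*\zeta}$ in $L^2(\tilde{\Omega})$ uniformly in $n$. Passing to the limit then yields $\Etilde[(\skpHReal{LN(t)}{\zeta}-\skpHReal{LN(s)}{\zeta})h(v|_{[0,s]})]=0$, i.e.\ the martingale property of $LN$ with respect to $\tilde{\F}_t$.

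For the quadratic variation I would pass to the limit in the corresponding second-order identity with $\psi=\iota^*\zeta$ and $\varphi=\iota^*\zeta'$ for $\zeta,\zeta'\in H$; the boundary products converge by the same argument. The decisive point is the convergence of the integrand: using the selfadjointness of $S_n$ and $B_m$, I would rewrite $\skpHReal{\im S_n B_m S_n v_n(r)}{\iota^*\zeta}$ as a pairing of $v_n(r)$ against $S_n B_m S_n\iota^*\zeta$, split off the term $S_n B_m S_n\iota^*\zeta-B_m\iota^*\zeta\to 0$ in $H$ (from $S_n\to I$ strongly, Proposition \ref{PaleyLittlewoodLemma}) controlled by $\sup_n\norm{v_n(r)}_H<\infty$, and treat the remainder $\skpH{v_n(r)}{B_m\iota^*\zeta}\to\skpH{v(r)}{B_m\iota^*\zeta}$ via the weak-$\EA$ (hence weak-$H$) convergence in $\weaklyContinousEA$; by $\skpH{L B_m v(r)}{\zeta}=\skpH{B_m v(r)}{\iota^*\zeta}$ this identifies the limit as $\skpHReal{\im L B_m v(r)}{\zeta}$. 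Dominated convergence then lets me exchange $\lim_n$ with $\int_0^t$ and $\sumM$, the dominating function being $\lesssim\norm{v_n(r)}_H^2\,\norm{B_m}_{\mathcal{L}(H)}^2$ with $\sumM\norm{B_m}_{\mathcal{L}(H)}^2<\infty$ by Assumption \ref{stochasticAssumptions}, and a final uniform integrability step removes $\Etilde$ from the limit. I expect the main obstacle to be precisely this last step: justifying the simultaneous interchange of the limit with the infinite sum, the time integral and the expectation in the quadratic-variation term, where the summability conditions \eqref{noiseBoundsH}--\eqref{noiseBoundsEnergy} and the uniform moment bounds of Proposition \ref{PropAlmostSureConvergence} are indispensable.
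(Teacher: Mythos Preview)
Your proposal follows essentially the same route as the paper: pass to the limit in the martingale and quadratic-variation identities for $N_n$ against test functions $\psi\in\EA$ via Lemma~\ref{LemmaConvergences} and uniform integrability, then substitute $\psi=\iota^*\zeta$ to obtain the identities for $LN$. The only substantive difference is that the paper obtains the uniform higher moments of $N_n(t)$ by transferring the stochastic-integral representation of $M_n$ via identity of laws and applying the Davis inequality, whereas you bound $N_n(t)$ directly from its defining formula using Proposition~\ref{PropAlmostSureConvergence}; both work. One small point to tighten: your dominating function $\norm{v_n(r)}_H^2\norm{B_m}_{\mathcal{L}(H)}^2$ still depends on $n$, so you need an a.s.\ $n$-uniform bound on $\norm{v_n(r)}_H$---either invoke mass conservation (Proposition~\ref{MassEstimateGalerkinSolution}) transferred to $v_n$ via the identity of laws on $C([0,T],H_n)$, or, as the paper does, pair in $\EAdual$--$\EA$ and use $\sup_n\norm{v_n}_{\StetigEAdual}<\infty$ a.s.\ together with $\norm{B_m}_{\mathcal{L}(\EA)}^2$.
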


\begin{proof}
	\emph{Step 1:} Let $t\in [0,T].$ We will first show that
	$\Etilde \left[\norm{N(t)}_\EAdual^2\right]<\infty.$
	By Lemma $\ref{LemmaConvergences}$, we have $N_n(t) \to N(t)$ almost surely in $\EAdual$ for $n\to \infty.$
	By the Davis inequality for continuous martingales (see \cite{Pardoux76}), Lemma $\ref{NnIsMartingale}$ and Proposition $\ref{PropAlmostSureConvergence}$ , we conclude
	\begin{align}\label{VitaliBoundAlpha}
	\Etilde \left[\sup_{t\in [0,T]}\norm{N_n(t)}_{H}^{\alpha+1}\right]
	&\lesssim \Etilde \left[\left(\sumM \int_0^T \norm{S_n B_m S_n v_n(s)}_{H}^2 \df s\right)^{\frac{\alpha+1}{2}}\right]\nonumber\\
	&\le \left(\sumM \norm{B_m }_{{\mathcal{L}({H})}}^2\right)^{\frac{\alpha+1}{2}} \Etilde \left[ \left(\int_0^T \norm{ v_n(s)}_{H}^2 \df s\right)^{\frac{\alpha+1}{2}}\right]\nonumber\\
	&\lesssim   \Etilde \left[ \int_0^T \norm{ v_n(s)}_{H}^{\alpha+1} \df s\right]\lesssim   \Etilde \left[ \int_0^T \norm{ v_n(s)}_\LalphaPlusEins^{\alpha+1} \df s\right]\nonumber\\
	&\le T \sup_{n\in\N}\Etilde \left[ \norm{v_n}_{L^\infty(0,T;\LalphaPlusEins)}^{\alpha+1}\right] \le T C.
	\end{align}
	Since $\alpha+1>2$, we deduce $N(t)\in L^2(\tilde{\Omega},\EAdual)$ by the Vitali Theorem and $N_n(t)\to N(t)$ in $L^2(\tilde{\Omega},\EAdual)$ for $n\to \infty.$\\

	\emph{Step 2:}
	Let $\psi, \varphi\in \EA$ and $h$ be a bounded continuous function on $\StetigEAdual.$ \\
	For $0\le s\le t\le T,$  we define the random variables
	\begin{align*}
	f_n(t,s):=&\skpHReal{N_n(t)-N_n(s)}{\psi} h(v_n|_{[0,s]}),\qquad
	f(t,s):=\dualityReal{N(t)-N(s)}{\psi} h(v|_{[0,s]}).
	\end{align*}
	The $\tildeProb$-a.s.-convergence $v_n \to v$ in $Z_T$ for $n\to \infty$ yields by Lemma $\ref{LemmaConvergences}$
	$f_n(t,s)\to f(t,s)$
	$\tildeProb$-a.s. for all $0\le s\le t\le T.$ We use $\left(a+b\right)^p\le 2^{p-1} \left(a^p+b^p\right)$ for $a,b\ge 0$ and $p\ge 1$ and the estimate $\eqref{VitaliBoundAlpha}$ for
	\begin{align*}
	\Etilde \vert f_n(t,s)\vert^{\alpha+1}&\le 2^\alpha \norm{h}_\infty^{\alpha+1} \norm{\psi}_{H}^{\alpha+1} \Etilde \left[\norm{N_n(t)}_{H}^{\alpha+1}+\norm{N_n(s)}_{H}^{\alpha+1}\right]\\
	&\le 2^\alpha \norm{h}_\infty^{\alpha+1} \norm{\psi}_{H}^{\alpha+1} 2 T C
	\end{align*}
	In view of the Vitali Theorem, 			we get
	\begin{align*}
	0=\lim_{n\to \infty}\Etilde f_n(t,s)= \Etilde f(t,s), \qquad 0\le s\le t\le T.
	\end{align*}\\
	
	\emph{Step 3:} For $0\le s\le t\le T,$ we define			
	%			\begin{align*}
	%				g_n(t,s):=\Bigg(\duality{N_n(t)}{\psi}&\duality{N_n(t)}{\varphi}-\duality{N_n(s)}{\psi}\duality{N_n(s)}{\varphi}\\
	%				&+\sumM \int_0^t  \duality{P_n \left(e_m v_n(s)\right)}{\psi} \duality{P_n \left(e_m v_n(s)\right)}{\varphi} \df s\Bigg) h(v_n|_{[0,s]})
	%			\end{align*}
	\begin{align*}
	g_{1,n}(t,s):=\Big(\skpHReal{N_n(t)}{\psi}&\skpHReal{N_n(t)}{\varphi}-\skpHReal{N_n(s)}{\psi}\skpHReal{N_n(s)}{\varphi}\Big) h(v_n|_{[0,s]})
	\end{align*}
	and
	%			\begin{align*}
	%						g(t,s):=\Bigg(\duality{N(t)}{\psi}&\duality{N(t)}{\varphi}-\duality{N(s)}{\psi}\duality{N(s)}{\varphi}+\sumM \int_0^t  \duality{ v(s)}{\psi} \duality{ v(s)}{\varphi} \df s\Bigg) h(v|_{[0,s]}).
	%			\end{align*}
	\begin{align*}
	g_1(t,s):=\Big(\dualityReal{N(t)}{\psi}&\dualityReal{N(t)}{\varphi}-\dualityReal{N(s)}{\psi}\dualityReal{N(s)}{\varphi}\Big) h(v|_{[0,s]}).
	\end{align*}
	By Lemma $\ref{LemmaConvergences},$ we obtain
	$g_{1,n}(t,s)\to g_1(t,s)$
	$\tildeProb$-a.s. for all $0\le s\le t\le T.$ In order to get uniform integrability, we set $r:={\frac{\alpha+1}{2}}>1$ and estimate
	\begin{align*}
	\Etilde \vert g_{1,n}(t,s)\vert^r\le& 2^{r} \norm{h}_\infty^{r}  \Etilde \left[\vert \skpHReal{N_n(t)}{\psi}\skpHReal{N_n(t)}{\varphi}\vert^r+\vert \skpHReal{N_n(s)}{\psi}\skpHReal{N_n(s)}{\varphi}\vert^r\right]\\
	\le& 2^r \norm{h}_\infty^r  \norm{\psi}_{H}^r \norm{\varphi}_{H}^r \Etilde \left[\norm{N_n(t)}_{H}^{\alpha+1}+\norm{N_n(s)}_{H}^{\alpha+1}\right]
	\le 2^r \norm{h}_\infty^r  \norm{\psi}_{H}^r \norm{\varphi}_{H}^r 2 T C,
	\end{align*}
	where we used $\eqref{VitaliBoundAlpha}$ again.			
	As above, Vitali's Theorem yields
	\begin{align*}
	0=\lim_{n\to \infty}\Etilde g_{1,n}(t,s)= \Etilde g_1(t,s), \qquad 0\le s\le t\le T
	\end{align*} \\			
	
	\emph{Step 4:} For $0\le s\le t\le T,$  we define	
	\begin{align*}
	g_{2,n}(t,s)&:=h(v_n|_{[0,s]}) \sumM \int_s^t  \skpHReal{S_n B_m S_n v_n(\tau)}{\psi} \skpHReal{S_n B_m S_n v_n(\tau)}{\varphi}\df \tau \\
	g_{2}(t,s)&:=h(v|_{[0,s]}) \sumM \int_s^t  \dualityReal{B_m v(\tau)}{\psi} \dualityReal{B_m v(\tau)}{\varphi}\df \tau .
	\end{align*}
	Because of $h(v_n |_{[0,s]}) \to h(v|_{[0,s]})$ $\tildeProb$-a.s. and the continuity of the inner product $L^2([s,t]\times \N),$ the convergence
	\begin{align*}
	\skpHReal{S_n B_m S_n v_n}{\psi} \to \dualityReal{B_m v}{\psi}
	\end{align*}
	$\tildeProb$-a.s. in $L^2([s,t]\times \N)$ already implies
	$g_{2,n}(t,s)\to g_2(t,s)$ $\tildeProb$-a.s. Therefore, we consider 			
	%			\begin{align*}
	%				\sumM \int_s^t &\vert \skpH{P_n\left(e_m v_n(\tau)\right)}{\psi} - \duality{e_m v(\tau)}{\psi}\vert^2 \df \tau\\
	%				\le& \norm{\psi}_\EA^2 \sumM \int_s^t  \norm{P_n\left(e_m v_n(\tau)\right)-e_m v(\tau)}_\EAdual^2 \df \tau\\
	%				\le& \norm{\psi}_\EA^2 T \sumM \norm{P_n\left(e_m v_n\right)-e_m v}_\StetigEAdual^2 \\
	%				\le& \norm{\psi}_\EA^2 T \sumM \Big(\norm{P_n\left(e_m (v_n-v)\right)}_\StetigEAdual+\norm{\left(P_n-I\right)\left(e_m v\right)}_\StetigEAdual\Big)^2 \\
	%				\lesssim& \norm{\psi}_\EA^2 T \sumM \left(\norm{e_m}_\WEinsInfty^2\norm{ v_n-v}_\StetigEAdual^2+\norm{\left(P_n-I\right)\left(e_m v\right)}_\StetigEAdual^2 \right) \to 0
	%			\end{align*}
	\begin{align*}
	\Vert&\skpHReal{S_n B_m S_n v_n}{\psi} - \dualityReal{B_m v}{\psi}\Vert_\LzweiTimeSum\\
	&\le \norm{\skpHReal{B_m S_n v_n}{\left(S_n-I\right)\psi} }_\LzweiTimeSum+
	\norm{\skpHReal{ v_n}{\left(S_n-I\right)B_m \psi} }_\LzweiTimeSum \\
	&\hspace{1cm}+\norm{ \dualityReal{B_m \left(v_n-v\right)}{\psi}}_\LzweiTimeSum \\
	&\le \norm{B_m S_n v_n}_\LzweiTimeSumHminusEins \norm{\left(S_n-I\right)\psi}_\EA+\norm{\skpHReal{ v_n}{\left(S_n-I\right)B_m \psi} }_\LzweiTimeSum\\
	&\hspace{1cm}+\norm{\psi}_\EA		\norm{B_m (v_n-v)}_\LzweiTimeSumHminusEins\\
	&\le \left(\sumM \norm{B_m}_{{\mathcal{L}(\EA)}}^2\right)^{\frac{1}{2}}	T^\frac{1}{2} \norm{v_n}_\StetigEAdual \norm{\left(P_n-I\right)\psi}_\EA +\norm{\skpHReal{ v_n}{\left(S_n-I\right)B_m \psi} }_\LzweiTimeSum\\
	&\hspace{1cm}+\left(\sumM \norm{B_m}_{{\mathcal{L}(\EA)}}^2\right)^{\frac{1}{2}}	T^\frac{1}{2} \norm{v_n-v}_\StetigEAdual \norm{\psi}_\EA.
	\end{align*}
	The first and the third term tend to 0 as $n\to \infty$ by Proposition $\ref{PropAlmostSureConvergence}$ and for the second one, this follows by the estimate
	\begin{align*}
	\vert\skpHReal{ v_n(s)}{\left(S_n-I\right)B_m \psi}\vert^2 \le 4 \norm{v_n(s)}_\EAdual^2 \norm{B_m}_{{\mathcal{L}(\EA)}}^2 \norm{\psi}_\EA^2 \in L^1([s,t]\times \N)
	\end{align*}
	and Lebesgue's convergence Theorem.
	Hence, we  conclude
	\begin{align*}
	\Vert\skpHReal{S_n B_m S_n v_n}{\psi} - \dualityReal{B_m v}{\psi}\Vert_\LzweiTimeSum \to 0
	\end{align*}
	$\tildeProb$-a.s. as $n\to \infty.$ Furthermore, we estimate
	\begin{align*}
	\sumM \int_s^t \vert \skpHReal{S_n B_m S_n v_n(\tau)}{\psi} \vert^2 \df \tau \le&  \int_0^T\norm{v_n(\tau)}_\EAdual^2 \df \tau\norm{\psi}_\EA^2  \sumM \norm{B_m}_{{\mathcal{L}(\EA)}}^2
	\end{align*}
	and continue with $r:=\frac{\alpha+1}{2}>1$ and
	\begin{align*}
	\Etilde \vert g_{2,n}(t,s)\vert^r
	&\le \Etilde \Big[\norm{\dualityReal{S_n B_m S_n v_n}{\psi}}_{L^2([s,t]\times \N)} ^r\norm{\dualityReal{S_n B_m S_n v_n}{\varphi}}_{L^2([s,t]\times \N)}^r \vert h(v_n|_{[0,s]})\vert^r\Big]\\
	&\le \Etilde \left[\left(\int_0^T\norm{v_n(\tau)}_\EAdual^2 \df \tau\right)^r\right] \norm{\psi}_\EA^r  \norm{\varphi}_\EA^r \left(\sumM \norm{B_m}_{{\mathcal{L}(\EA)}}^2\right)^r \norm{h}_\infty^r\\
	&\lesssim \Etilde \left[\int_0^T\norm{v_n(\tau)}_\EAdual^{\alpha+1} \df \tau\right]
	\lesssim \sup_{n\in\N} \Etilde \left[ \norm{v_n}_\LalphaPlusEinsAlphaPlusEins^{\alpha+1}\right]\le C T .
	\end{align*}
	Using Vitali's Theorem, we obtain
	\begin{align*}
	\lim_{n\to \infty} \Etilde \left[g_{2,n}(t,s)\right]=\Etilde \left[g_{2}(t,s)\right],\qquad 0\le s\le t\le T.
	\end{align*} \\
	
	\emph{Step 5:} From step 2, we have
	\begin{align}\label{NmartingaleHminusDrei1}
	\Etilde \left[\dualityReal{N(t)-N(s)}{\psi} h(u|_{[0,s]})\right]=0
	\end{align}
	and step 3, step 4 and Lemma $\ref{NnIsMartingale}$ yield
	\begin{align}\label{NmartingaleHminusDrei2}
	\Etilde \Bigg[ \Bigg(\dualityReal{N(t)}{\psi}&\dualityReal{N(t)}{\varphi}-\dualityReal{N(s)}{\psi}\dualityReal{N(s)}{\varphi}\nonumber\\
	&+\sumM \int_s^t  \dualityReal{B_m v(\tau)}{\psi} \dualityReal{B_m v(\tau)}{\varphi} \df \tau\Bigg) h(v|_{[0,s]})\Bigg]=0.
	\end{align}

	Now, let $\eta,\zeta \in{H}.$ Then $\iota^* \eta,\iota^* \zeta \in \EA$ and for all $z\in \EAdual,$ we have $\skpHReal{L z}{\eta}=\dualityReal{z}{\iota^* \eta}.$ By the first step, $L N$ is a continuous, sqare integrable process in ${H}$ and the identities $\eqref{NmartingaleHminusDrei1}$ and $\eqref{NmartingaleHminusDrei2}$ imply
	\begin{align*}
	\Etilde \left[\skpHReal{L N(t)-L N(s)}{\eta} h(u|_{[0,s]})\right]=0
	\end{align*}
	and
	\begin{align*}
	\Etilde \Bigg[ \Bigg(\skpHReal{L N(t)}{\eta}&\skpHReal{L N(t)}{\zeta}-\skpHReal{L N(s)}{\eta}\skpHReal{L N(s)}{\zeta}\\
	&+\sumM \int_s^t  \skpHReal{L  B_m v(\tau)}{\eta} \skpHReal{L B_m v(\tau)}{\zeta} \df \tau\Bigg) h(v|_{[0,s]})\Bigg]=0.
	\end{align*}
	Hence, $L N$ is a continuous, square integrable martingale in ${H}$ with respect to the $\tilde{\F}_{n,t}:=\sigma \left(v(s): s\le t\right)$ and quadratic variation						
	\begin{align*}
	\quadVar{L N}_t\zeta= \sumM \int_0^t \im L B_m v(s) \skpHReal{\im L B_m v(s)}{\zeta} \df s
	\end{align*}
	for all $\zeta \in {H}.$	
\end{proof}

Finally, we can prove our main result Theorem $\ref{MainTheorem}$ using the Martingale Representation Theorem from \cite{daPrato}, Theorem 8.2.	

\begin{proof}[Proof of Theorem 1.1]
	We choose  $H=L^2(M),$ $Q=I$ and $\varPhi(s):= \im L B\left( v(s)\right)$ for all  $s\in[0,T].$ The adjoint $\varPhi(s)^*$ is given by
	$\varPhi(s)^*\zeta:= \sumM \skpHReal{\im L B_m v(s)}{\zeta} f_m$
	and  hence,
	\begin{align*}
	\left(\varPhi(s) Q^{\frac{1}{2}}\right)\left(\varPhi(s) Q^{\frac{1}{2}}\right)^*\zeta =\varPhi(s)\varPhi(s)^*\zeta =\sumM \skpHReal{\im L B_m v(s)} {\zeta}\im L B_m v(s)
	\end{align*}
	for $\zeta \in {H}.$
	Clearly, $v$ is continuous in $\EAdual$ and adapted to the filtration $\tilde{\Filtration}$ given by $\tilde{\mathcal{F}}_t=\sigma \left(v(s): 0\le s\le t\right)$ for $s\in [0,T].$ Hence, $\varPhi$ is continuous in ${H}$ and adapted to $\tilde{\Filtration}$ and therefore progressively measurable.\\ 	
	By an application of Theorem 8.2 in \cite{daPrato} to the process $L N$ from Lemma $\ref{NisMartingale},$ we obtain a cylindrical Wiener process $\tilde{W}$ on $Y$ defined on a probability space
	\begin{align*}
	\left(\Omega',\F',\Prob'\right)=\left(\tilde{\Omega} \times \tilde{\tilde{\Omega}}, \tilde{\F}\otimes \tilde{\tilde{\F}},  \tilde{\Prob}\otimes\tilde{\tilde{\Prob}}\right)
	\end{align*}
	with
	\begin{align*}
	L N(t)=\int_0^t \varPhi(s) \df \tilde{W}(s)=\int_0^t \im L B\left( v(s)\right) \df \tilde{W}(s)
	\end{align*}
	for $t\in [0,T].$ The estimate
	\begin{align*}
	\norm{B v}_{L^2([0,T]\times \Omega,\HS(Y,\EAdual))}^2=&\E \int_0^T \sumM \norm{B_m v(s)}_\EAdual^2 \df s
	\lesssim\E \int_0^T \sumM \norm{B_m v(s)}_\EA^2 \df s\\
	\le& \E \int_0^T \left(\sumM \norm{B_m}_{{\mathcal{L}(\EA)}}^2\right) \norm{v(s)}_\EA^2 \df s
	\lesssim \E \int_0^T  \norm{v(s)}_\EA^2 \df s\\
	\le& T \norm{v}_{L^2(\Omega,\LinftyEA)}^2\le T C		
	\end{align*}
	yields that the stochastic integral $\int_0^\cdot  B\left( v(s)\right) \df \tilde{W}(s)$ is a continuous martingale in $\EAdual$ and using the continuity of the operator $L$, we get
	\begin{align*}
	\int_0^t \im L B\left( v(s)\right) \df \tilde{W}(s)=L \left(\int_0^t \im  B\left( v(s)\right) \df \tilde{W}(s)\right)
	\end{align*}
	for all $t\in [0,T].$
	The definition of $N$ and the injectivity of $L$ yield the equality
	\begin{align}\label{vIsSolution}		
	\int_0^t \im B  v(s) \df \tilde{W}(s)=-v(t)+ u_0+ \int_0^t \left[-\im A v(s)-\im  F(v(s))+\mu(v(s))\right] \df s
	\end{align}
	%		in $\EAdual$ for $t\in [0,T].$ Since $v$ is almost surely contained in the sets $\StetigEAdual,$  $\LalphaPlusEinsAlphaPlusEins$ and $ \LinftyEA,$  we  conclude, that the identity holds in $\EAdual$ for almost every $(t,\omega')\in [0,T]\times \Omega'.$ Indeed, we use
	in $\EAdual$ for $t\in [0,T].$
	%	By the estimate
	%	\begin{align*}
	%	\norm{F(v)}_{L^1([0,T],\EAdual)}
	%	\lesssim \norm{F(v)}_\LalphaPlusEinsAlphaPlusEinsDual
	%	= \norm{v}_\LalphaPlusEinsAlphaPlusEins^\alpha<\infty \hspace{0,3cm} \text{a.s.}
	%	\end{align*}
	%	for the nonlinear term  and $v\in \StetigEAdual,$ we conclude, that the equation $\eqref{vIsSolution}$ holds in fact almost surely in $\EAdual$ for all $t\in [0,T].$ 	
	The weak continuity of the paths of $v$ in $\EA$ and the estimates for property $\eqref{propertySolution}$ have already been shown in Proposition $\ref{PropAlmostSureConvergence}.$ Hence, the system $\left(\tilde{\Omega},\tilde{\F},\tilde{\Prob},\tilde{W},\tilde{\Filtration},v\right)$ is a martingale solution of equation $\eqref{ProblemStratonovich}.$
\end{proof} 

It remains to prove the mass conservation from Theorem \ref{MainTheorem}. In Proposition $\ref{MassEstimateGalerkinSolution},$ we proved a similar result for the approximating equation. Since this property is not invariant under the limiting procedure from above, we have to repeat the calculation in infinite dimensions and justify it by a regularization procedure.

\begin{Prop}\label{MassConservationMartingaleSolution}
	Let $\left(\tilde{\Omega},\tilde{\F},\tilde{\Prob},\tilde{W},\tilde{\Filtration},u\right)$ be a martingale solution of $\eqref{ProblemStratonovich}.$
	%	and assume
	%	\begin{align*}
	%	u_j\in L^{r}(\tilde{\Omega},L^\infty(0,T;H^1(M))).
	%	\end{align*}
	%	for an $r>\max\{2\alpha, (\alpha-1)\alpha\}.$
	Then, we have 
	$\norm{u(t)}_{L^2}=\norm{u_0}_{L^2}$
	almost surely for all $t\in[0,T].$
\end{Prop}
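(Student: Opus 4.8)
The plan is to reproduce, at the level of the martingale solution $u$, the cancellations from the finite-dimensional computation in Proposition \ref{MassEstimateGalerkinSolution}. The only obstruction is that $u$ is merely continuous in $\EAdual$ and weakly continuous in $\EA$, so the It\^o formula for $\norm{\cdot}_H^2$ cannot be applied to $u$ directly. To circumvent this I would regularize with the Yosida-type operators $\Yosida:=\lambda(\lambda+A)^{-1}$ for $\lambda>0$, which are self-adjoint, commute with $A$ (hence with $\sqrtA$), are contractions on $H$, $\EA$ and $\EAdual$, map $\EAdual$ boundedly into $\EA$ (by Lax--Milgram, as in Lemma \ref{spaceLemma}), and satisfy $\Yosida\to I$ strongly on each of these spaces as $\lambda\to\infty$.

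Applying $\Yosida$ to the identity \eqref{ItoFormSolution}, which holds in $\EAdual$, yields
\begin{align*}
\Yosida u(t)=\Yosida u_0+\int_0^t \Yosida\big[-\im A u(s)-\im F(u(s))+\mu(u(s))\big]\,\df s-\im\int_0^t \Yosida B u(s)\,\df \tilde{W}(s).
\end{align*}
Since $\Yosida$ maps $\EAdual$ into $\EA\hookrightarrow H$, the process $\Yosida u$ has continuous $H$-valued paths, and using property \eqref{propertySolution} together with \eqref{nonlinearityEstimate} and \eqref{noiseBoundsH} the drift lies in $L^1(0,T;H)$ and the diffusion $\Yosida B u$ in $L^2(\tilde{\Omega}\times[0,T];\HS(Y,H))$ for each fixed $\lambda$. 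Hence $\Yosida u$ is a genuine $H$-valued It\^o process, and the classical Hilbert space It\^o formula (see \cite{daPrato}) applied to $\varPhi(v)=\norm{v}_H^2$ gives
\begin{align*}
\norm{\Yosida u(t)}_H^2=\norm{\Yosida u_0}_H^2
&+2\int_0^t\Real\skpH{\Yosida u(s)}{\Yosida\big(-\im A u(s)-\im F(u(s))+\mu(u(s))\big)}\,\df s\\
&+\sumM\int_0^t\norm{\Yosida B_m u(s)}_H^2\,\df s
+2\int_0^t\Real\skpH{\Yosida u(s)}{-\im \Yosida B u(s)\,\df\tilde{W}(s)}
\end{align*}
$\tilde{\Prob}$-a.s. for all $t\in[0,T]$.

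The final step is to let $\lambda\to\infty$, where the Galerkin cancellations reappear. Because $\Yosida$ commutes with $A$, the term $\Real\skpH{\Yosida u}{-\im A\Yosida u}=\Real[\im\norm{\sqrtA\Yosida u}_H^2]$ vanishes identically. Writing $\mu(u)=-\tfrac12\sumM B_m^2 u$ and using that $B_m$ is self-adjoint, the $\mu$-drift converges to $-\sumM\norm{B_m u}_H^2$ and cancels the limit $\sumM\norm{B_m u}_H^2$ of the It\^o correction $\sumM\norm{\Yosida B_m u}_H^2$. The nonlinear drift converges, via $\Yosida\to I$ strongly and $F(u)\in\LalphaPlusEinsDual\hookrightarrow\EAdual$, to a real multiple of $\Real\duality{\im u}{F(u)}$, which vanishes by \eqref{nonlinearityComplex}. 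Finally, the stochastic integral converges to $2\int_0^t\Real\skpH{u(s)}{-\im B u(s)\,\df\tilde{W}(s)}$, which is the null process because its integrand $\Real\skpH{u}{-\im B_m u}=\Real[\im\skpH{u}{B_m u}]$ vanishes for each $m$ by self-adjointness of $B_m$. Passing to the limit on the left-hand side yields $\norm{u(t)}_H^2=\norm{u_0}_H^2$.

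I expect the main obstacle to be the rigorous justification of these limits. The deterministic integrals require dominated convergence with majorants furnished by \eqref{noiseBoundsEnergy}, \eqref{nonlinearityEstimate} and the moment bound $u\in L^q(\tilde{\Omega};\LinftyEA)$, while the stochastic integral must be controlled by the Burkholder--Davis--Gundy inequality so as to obtain convergence in $L^2(\tilde{\Omega})$ uniformly in $t\in[0,T]$, again using \eqref{noiseBoundsH}. Since the finite-variation terms also converge uniformly in $t$, a subsequence then gives the identity $\norm{u(t)}_H^2=\norm{u_0}_H^2$ $\tilde{\Prob}$-a.s. simultaneously for all $t\in[0,T]$, which is the assertion.
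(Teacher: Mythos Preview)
Your proposal is correct and follows essentially the same route as the paper: regularize with $\Yosida=\lambda(\lambda+A)^{-1}$, apply the $H$-valued It\^o formula to $\norm{\Yosida u}_H^2$, and pass to the limit using the same cancellations (commutation with $A$, \eqref{nonlinearityComplex}, self-adjointness of $B_m$).

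One technical difference is worth noting. The proposition is stated for an \emph{arbitrary} martingale solution, whose definition only gives $u\in L^2(\tilde\Omega\times[0,T];\EAdual)$ together with a.s.\ paths in $\weaklyContinousEA$; it does not include the moment bound \eqref{propertySolution} that you invoke for the passage to the limit. The paper avoids this by handling the deterministic integrands with \emph{pathwise} dominated convergence (the a.s.\ bound $u\in\LinftyEA$ suffices) and treating the stochastic integral via localization with the stopping times $\tau_K=\inf\{t:\norm{u(t)}_H>K\}$, so that the It\^o isometry and Doob's inequality apply with the trivial majorant $K^4\norm{B_m}_{\mathcal{L}(H)}^2$. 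Your BDG argument with \eqref{propertySolution} is fine for the solution constructed in Theorem~\ref{MainTheorem}, but proves a slightly weaker statement than what is asserted.
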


\begin{proof}
	\emph{Step 1.}
	Given $\lambda>0,$ we define $\Yosida:=\lambda\left(\lambda+A\right)^{-1}.$ Using the series representation, one can verify
	\begin{align}\label{YosidaProperties}
	&\Yosida f \to f \quad \text{in}\quad {X},\quad \lambda \to \infty, \quad f\in{X}\nonumber\\
	&\hspace{1,5cm}\norm{\Yosida}_{{\mathcal{L}(X)}}\le 1
	\end{align}
	for $X\in \left\{{H}, \EA, \EAdual\right\}.$ Moreover, $\Yosida(\EAdual)=\EA$ and hence,
	the equation
	\begin{align}\label{ItoFormSolutionDifferenceYosida}
	\Yosida u(t)=  \Yosida u_0+\int_0^t \left[-\im  \Yosida A u(s)-\im \Yosida F(u(s))+\Yosida \mu(u(s))\right] \df s- \im \int_0^t \Yosida B u(s) \df \tilde{W}(s)
	\end{align}	
	holds almost surely in $\EA$ for all $t\in[0,T].$ The function $\mass: {{H}} \to \R$ defined by\\ $\mass(v):=\norm{v}_{{H}}^2$ is twice continuously Fr\'{e}chet-differentiable with
	\begin{align*}
	\mass'[v]h_1&= 2 \Real \skpH{v}{ h_1}, \qquad
	\mass''[v] \left[h_1,h_2\right]= 2 \Real \skpH{ h_1}{h_2}
	\end{align*}
	for $v, h_1, h_2\in {{H}}.$ Therefore, we get
	\begin{align}\label{LzweiDifferenceYosida}
	\norm{\Yosida u(t)}_{{H}}^2=&\norm{\Yosida u_0}_{H}^2+2 \int_0^t \Real \skpH{\Yosida u(s)}{-\im \Yosida A u(s)-\im\Yosida F(u(s))+\Yosida\mu(u(s))} \df s\nonumber\\
	&- 2 \int_0^t \Real \skpH{\Yosida u(s)}{\im\Yosida B u(s) \df \tilde{W}(s)}
	+\sumM \int_0^t
	\Vert \Yosida B_m u(s)\Vert_{{H}}^2\df s
	\end{align}
	almost surely for all $t\in[0,T].$\\
	
	\emph{Step 2.} In the following, we deal with the behaviour of the terms  in $\eqref{LzweiDifferenceYosida}$ for  $\lambda\to\infty.$
	Since $\Yosida$ and $A$ commute, we get
	\begin{align}\label{CancellationYosida}
	\Real\skpH{\Yosida u(s)}{-\im \Yosida A u(s)}=\Real\skpH{\Yosida u(s)}{-\im A \Yosida  u(s)}=0,\quad s\in [0,T],\quad \lambda>0.
	\end{align}
	For $s\in [0,T],$ we have
	%	\begin{align}\label{KonvergenzRestterme}
	%	&\Real \skpH{\Yosida w(s)}{-\im\Yosida F(u_1(s))+\im\Yosida F(u_2(s))+\Yosida\mu(w(s))}\nonumber\\&\hspace{2cm}\to
	%	\Real \skpH{ w(s)}{-\im F(u_1(s))+\im F(u_2(s))+\mu(w(s))},\quad \lambda\to \infty
	%	\end{align}
	\begin{align}\label{KonvergenzRestterme}
	&\Real \skpH{\Yosida u(s)}{-\im\Yosida F(u(s))}\to
	\Real \duality{ u(s)}{-\im F(u(s))}=0\nonumber\\
	&\Real \skpH{\Yosida u(s)}{\Yosida \mu(u(s))}\to
	\Real \skpH{ u(s)}{\mu(u(s))},\qquad \lambda\to\infty.
	\end{align}
	by $\eqref{YosidaProperties}.$
	In order to apply the dominated convergence Theorem by Lebesgue, we estimate
	\begin{align*}
	\vert \Real &\skpH{\Yosida u(s)}{-\im\Yosida  F(u(s))+\Yosida\mu(u(s))}\vert\nonumber\\
	&\hspace{2cm}\le \norm{u(s)}_{\EA} \bigNorm{-\im F(u(s))+\mu(u(s))}_{\EAdual}\\
	&\hspace{2cm}\lesssim \norm{u(s)}_{\EA} \left(\norm{F(u(s))}_{\LalphaPlusEinsDual}+\sumM \norm{B_m}_{\mathcal{L}({H})}^2\norm{u(s)}_{{H}}\right)
	\\
	&\hspace{2cm}\lesssim \norm{u(s)}_{\EA} \left(\norm{u(s)}_{\LalphaPlusEins}^\alpha+\norm{u(s)}_{{H}}\right)
	\\&\hspace{2cm}\lesssim \norm{u(s)}_\EA^{\alpha+1}+\norm{u(s)}_\EA^2
	\end{align*}
	using $\eqref{YosidaProperties}$ and the Sobolev embeddings $\LalphaPlusEinsDual\hookrightarrow \EAdual$ and $\EA \hookrightarrow \LalphaPlusEins.$  \\
	Since $u\in C_w([0,T],\EA)$ almost surely and $C_w([0,T],\EA)\subset \LinftyEA$, we obtain
	\begin{align}
	\int_0^t \Real \skpH{\Yosida u(s)}{-\im\Yosida F(u_1(s))+\Yosida\mu(u(s))} \df s\nonumber\to \int_0^t \Real \skpH{u(s)}{\mu(u(s))}\df s
	,\qquad \lambda\to \infty,
	\end{align}
	almost surely for all $t\in[0,T].$	
	Moreover, the pointwise convergence
	\begin{align*}
	\Vert \Yosida B_m u(s)\Vert_{{H}} \to \Vert  B_m u(s)\Vert_{{H}}, \qquad m\in\N, \quad \text{f.a.a. }s\in[0,T]
	\end{align*}
	and the estimate
	\begin{align*}
	\Vert \Yosida B_m u(s)\Vert_{{H}}^2\le  \Vert B_m\Vert_{{\mathcal{L}({H})}}^2 \Vert u(s)\Vert_{{H}}^2 \in L^1([0,T]\times \N)
	\end{align*}
	lead to, by Lebesgue DCT, 
	\begin{align}\label{convergenceItoTerm}
	\sumM \int_0^t
	\Vert \Yosida B_m u(s)\Vert_{{H}}^2\df s \to \sumM \int_0^t
	\Vert  B_m u(s)\Vert_{{H}}^2\df s,\quad \lambda\to \infty
	\end{align}
	almost surely for all $t\in[0,T]$.
	For the stochastic term, we fix $K\in\N$ and define a stopping time  $\tau_K$ by %$\tau_K:=\tau_K^1\land \tau_K^2$ with
	\begin{align*}
	\tau_K:=\inf \left\{t\in [0,T]: \norm{u(t)}_{{H}}>K\right\}.
	\end{align*}
	Then, we infer that
	\begin{align*}
	\Real \skpH{\Yosida u(s)}{\im\Yosida B_m u(s)}\to \Real \skpH{ u(s)}{\im B u(s)}=0 \quad \text{a.s.},\quad m\in\N, s\in[0,T]
	\end{align*}
	and
	\begin{align*}
	\mathbf{1}_{[0,\tau_K]}(s) \vert  \Real \skpH{\Yosida u(s)}{\im\Yosida B_m u(s)}\vert^2
	&\le  \mathbf{1}_{[0,\tau_K]}(s) \norm{u(s)}_{{H}}^4 \norm{B_m}_{{\mathcal{L}({H})}}^2\\
	&\le K^4 \norm{B_m}_{{\mathcal{L}({H})}}^2\in L^1(\tilde{\Omega}\times[0,T]\times\N)
	\end{align*}
	to get
	\begin{align*}
	\Etilde\sumM\int_0^{\tau_K}\left[\Real \skpH{\Yosida u(s)}{\im\Yosida B_m u(s)}\right]^2 \df s\to 0,\quad \lambda\to \infty,
	\end{align*}
	by Lebesgue. The It\^o isometry and the Doob inequality yield
	\begin{align*}
	\Etilde\left[\sup_{t\in[0,\tau_K]}\left\vert\int_0^t \Real \skpH{\Yosida u(s)}{\im\Yosida B u(s) \df W(s)} \right\vert^2 \right] \to 0,\quad \lambda \to \infty,
	\end{align*}
	After passing to a subsequence, we get
	\begin{align}\label{convergenceStochasticTerm}
	\int_0^t \Real \skpH{\Yosida u(s)}{\im\Yosida B u(s) \df W(s)}\to 0,\quad \lambda\to \infty,
	\end{align}
	almost surely in $\left\{t\le \tau_K\right\}.$ By
	\begin{align*}
	\bigcup_{K\in\N} \left\{t\le \tau_K\right\}=[0,T]\qquad \text{a.s.},
	\end{align*}
	we conclude that $\eqref{convergenceStochasticTerm}$ holds almost surely on $[0,T].$\\
	
	\emph{Step 3.}
	Using $\eqref{CancellationYosida},$  $\eqref{convergenceItoTerm}$ and $\eqref{convergenceStochasticTerm}$ in $\eqref{LzweiDifferenceYosida},$ we obtain
	\begin{align*}
	\norm{ u(t)}_{{H}}^2=&\norm{ u_0}_{{H}}^2+2 \int_0^t \Real \skpH{ u(s)}{\mu(u(s))} \df s
	+\sumM \int_0^t
	\Vert B_m u(s)\Vert_{{H}}^2\df s
	\end{align*}
	almost surely for all $t\in[0,T].$
	By the selfadjointness of $B_m,$ $m\in\N,$ we simplify
	\begin{align*}
	2 \Real \skpH{u(s)}{\mu(u(s))}=-\sumM \Real \skpH{u(s)}{B_m^2 u(s)}
	=-\sumM \norm{B_m u(s)}_{{H}}^2.
	\end{align*}
	Therefore, we have
	$\norm{ u(t)}_{{H}}^2=\norm{ u_0}_{{H}}^2$
	almost surely for all $t\in[0,T].$
\end{proof}

\section{Regularity and Uniqueness of solutions on 2d manifolds}\label{UniquenessSection}

In this section, we want to study pathwise uniqueness of solutions to \eqref{ProblemStratonovich} and we consider the case of a 2-dimensional riemannian manifold without boundary $M.$ We drop the assumption that $M$ is compact and replace it by
\begin{align}\label{ManifoldAssumption}
\text{$M$ is complete, has a positive injectivity radius and a bounded geometry.}
\end{align}
We refer to \cite{Triebel}, chapter 7,  for the definitions of the notions above and background references on differential geometry.
%We emphasize that \eqref{ManifoldAssumption} is satisfied by compact manifolds, see \cite{Aubin}.
We equip $M$ with the canonical volume $\mu$ and suppose that $M$ satisfies the doubling property: For all $x\in \tilde{M}$ and $r>0,$ we have
$\mu(B(x,r))<\infty$ and
\begin{align}\label{doublingUniqueness}
\mu(B(x,2r))\lesssim \mu(B(x,r)).
\end{align}	
We emphasize that  \eqref{ManifoldAssumption} is satisfied by compact manifolds. Examples for manifolds with the property \eqref{doublingUniqueness} are given by compact manifolds and manifolds with non-negative Ricci-curvature, see \cite{CoulhonRuss}.

Let $A=-\Delta_g$ be the Laplace-Beltrami operator  $F=F_\alpha^\pm$ be the model nonlinearity from section \ref{ExampleSection}.
The proof is based on an additional regularity of the solution, which we obtain by applying the deterministic and the  stochastic Strichartz estimates from \cite{Bernicot} and \cite{BrzezniakStrichartz}.

In two dimensions, the mapping properties of the nonlinearity improve, as we will see in the first Lemma.

\begin{Lemma}\label{nonlinearEstimateUniqueness}
	Let $d=2,$  $\alpha>1,$ $s\in ( \smin,1]$ and $\tilde{s}\in (0,1-\alpha+s\alpha]\cap (0,1).$	
	Then, we have $F_\alpha^\pm: H^s(M)\to H^{\tilde{s}}(M)$  and
	\begin{align*}
	\norm{F_\alpha^\pm(u)}_{H^{\tilde{s}}}\lesssim \norm{u}_{H^s}^{\alpha},\qquad u\in H^s(M).
	\end{align*}
\end{Lemma}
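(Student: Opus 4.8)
The plan is to reduce the Bessel-potential norm to an elementary, pointwise-computable form. Since $0<\tilde{s}<1$, on a manifold with bounded geometry the space $H^{\tilde{s}}(M)$ coincides with the Slobodeckij space $W^{\tilde{s},2}(M)$ (see \cite{Triebel}, Chapter 7 and Proposition \ref{PropertiesFractionalSobolev}), so that with $\rho$ the geodesic distance and $d=2$,
\[
\norm{F_\alpha^\pm(u)}_{H^{\tilde{s}}}^2 \eqsim \norm{F_\alpha^\pm(u)}_{L^2}^2 + \int_M\int_M \frac{\vert F_\alpha^\pm(u)(x)-F_\alpha^\pm(u)(y)\vert^2}{\rho(x,y)^{2+2\tilde{s}}}\,\df\mu(x)\,\df\mu(y).
\]
The $L^2$-part is immediate: $\norm{F_\alpha^\pm(u)}_{L^2}=\norm{u}_{L^{2\alpha}}^\alpha\lesssim\norm{u}_{H^s}^\alpha$, where the embedding $H^s(M)\hookrightarrow L^{2\alpha}(M)$ holds precisely because $s>\smin$ forces $\tfrac{1}{2\alpha}\ge\tfrac{1-s}{2}$. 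For the double integral I would apply the elementary pointwise inequality $\vert F_\alpha^\pm(a)-F_\alpha^\pm(b)\vert\lesssim(\vert a\vert^{\alpha-1}+\vert b\vert^{\alpha-1})\vert a-b\vert$, valid for all $\alpha>1$, which after symmetrisation reduces matters to estimating
\[
J:=\int_M \vert u(x)\vert^{2(\alpha-1)}\Big(\int_M\frac{\vert u(x)-u(y)\vert^2}{\rho(x,y)^{2+2\tilde{s}}}\,\df\mu(y)\Big)\df\mu(x).
\]

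Next I would split $J$ by Hölder's inequality in $x$ with conjugate exponents $(\lambda,\lambda')$. The first factor becomes $\norm{u}_{L^{2(\alpha-1)\lambda}}^{2(\alpha-1)}$, while the second factor is exactly the $L^{2\lambda'}$-norm of the first-difference square function characterising the Triebel--Lizorkin space $F^{\tilde{s}}_{2\lambda',2}(M)=H^{\tilde{s},2\lambda'}(M)$, hence controlled by $\norm{u}_{\dot{H}^{\tilde{s},2\lambda'}}^2$. Writing $p_1:=2(\alpha-1)\lambda$ and $p_2:=2\lambda'$, the relation $\tfrac1\lambda+\tfrac1{\lambda'}=1$ becomes $\tfrac{\alpha-1}{p_1}+\tfrac{1}{p_2}=\tfrac12$. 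It remains to choose $p_1,p_2\ge 2$ so that both embeddings $H^s(M)\hookrightarrow L^{p_1}(M)$ and $H^s(M)\hookrightarrow\dot{H}^{\tilde{s},p_2}(M)$ hold; in $d=2$ these require $\tfrac{1}{p_1}\ge\tfrac{1-s}{2}$ and $\tfrac1{p_2}\ge\tfrac{\tilde{s}-s+1}{2}$ respectively. Adding the two lower bounds and using $\tfrac{\alpha-1}{p_1}+\tfrac1{p_2}=\tfrac12$, such a choice exists if and only if $(\alpha-1)(1-s)+(\tilde{s}-s+1)\le 1$, that is $\tilde{s}\le 1-\alpha+s\alpha$; and the non-emptiness of the target range, equivalently $1-\alpha+s\alpha>0$, is exactly $s>\smin$. (One checks additionally that the exponents may be taken $\ge 2$, which is again guaranteed by $s>\smin$.) Thus the admissible range in the statement is dictated precisely by this exponent count.

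The main obstacle is not the algebra but the rigorous justification of the function-space machinery on the non-compact manifold $M$: the identification $H^{\tilde{s}}=W^{\tilde{s},2}$, the square-function (Triebel--Lizorkin) characterisation $F^{\tilde{s}}_{p,2}=H^{\tilde{s},p}$, and the fractional Sobolev embeddings must all be invoked in the bounded-geometry setting, for which I would rely on \cite{Triebel}, Chapter 7, together with Proposition \ref{PropertiesFractionalSobolev}. A secondary technical point is the case $1<\alpha<2$, where $z\mapsto\vert z\vert^{\alpha-1}z$ is only $C^1$ with Hölder-continuous derivative; here the elementary pointwise inequality above, rather than a second-order Taylor expansion, keeps the argument uniform in $\alpha$. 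At the endpoint $\tilde{s}=1-\alpha+s\alpha$ both Sobolev embeddings are used at their critical line simultaneously, which is still admissible since the target integrabilities $p_1,p_2$ remain finite (we never embed into $L^\infty$). As an alternative route one may invoke directly the fractional chain rule $\norm{F_\alpha^\pm(u)}_{\dot{H}^{\tilde{s},p}}\lesssim\norm{u}_{L^{p_1}}^{\alpha-1}\norm{u}_{\dot{H}^{\tilde{s},p_2}}$ with $\tfrac1p=\tfrac{\alpha-1}{p_1}+\tfrac1{p_2}$, which packages the same exponent bookkeeping.
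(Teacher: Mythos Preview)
Your argument is correct in outline and the exponent bookkeeping is right, but the route you take is genuinely different from the paper's. The paper does not work with the Slobodeckij seminorm on $M$ at all. Instead it distinguishes two cases. For $s=1$ it uses a chain-rule estimate $\norm{F_\alpha^\pm(u)}_{H^{1,r}}\lesssim\norm{u}_{H^1}^\alpha$ with $\tfrac1r=\tfrac12+\tfrac{\alpha-1}{q}$ and $H^1\hookrightarrow L^q$, then embeds $H^{1,r}\hookrightarrow H^{\tilde{s}}$. For $s\in(\smin,1)$ it invokes the Christ--Weinstein fractional chain rule on $\R^2$, namely $\norm{\vert\nabla\vert^s F_\alpha^\pm(u)}_{L^r}\lesssim\norm{u}_{L^q}^{\alpha-1}\norm{\vert\nabla\vert^s u}_{L^2}$ with $r=\tfrac{2}{(1-s)\alpha+s}$ and $q=\tfrac{2}{1-s}$, obtains $\norm{F_\alpha^\pm(u)}_{H^{s,r}(\R^2)}\lesssim\norm{u}_{H^s(\R^2)}^\alpha$, embeds $H^{s,r}(\R^2)\hookrightarrow H^{\tilde{s}}(\R^2)$, and finally transfers to $M$ via the chart definition of $H^{\tilde{s}}(M)$. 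In other words, the paper's proof is precisely the ``alternative route'' you mention in your last paragraph.

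The trade-off is this: your primary approach is intrinsic to $M$ and conceptually uniform in $s\in(\smin,1]$, but it leans on the difference characterisation of $F^{\tilde{s}}_{p,2}(M)$ via the Gagliardo square function on a manifold with bounded geometry, which is not the form in which these spaces are introduced in \cite{Triebel}, Chapter~7, and would require additional justification. The paper's approach avoids this by localising to $\R^2$ with charts, where the Christ--Weinstein inequality and the Bessel-potential Sobolev embeddings are standard; the price is a case split and the chart-transfer step at the end. Your identification of the obstacle (``the rigorous justification of the function-space machinery on the non-compact manifold'') is exactly the reason the authors chose the other path.
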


\begin{proof}
	\emph{Step 1.} First, we consider the case $s=1.$
	Take $q\in [2,\infty)$ and $r\in(2,\infty)$ with
	\begin{align}\label{exponentsScaling}
	q\ge\frac{2(\alpha-1)}{1-\tilde{s}},\qquad \frac{1}{r}=\frac{1}{2}+\frac{\alpha-1}{q}.
	\end{align}
	% We have the embedding $H^1(M)\hookrightarrow L^q(M)$ since $d=2.$
	Due to $d=2,$ we have $H^1(M)\hookrightarrow L^q(M)$ and by	\cite{Bolleyer}, Lemma, III. 1.4., we get
	\begin{align*}
	\norm{F_\alpha^\pm(u)}_{H^{1,r}}\lesssim \norm{u}_{H^1}^{\alpha},\qquad u\in H^1(M).
	\end{align*}		
	%		Hence
	%		\begin{align*}
	%		\norm{F_\alpha^\pm(u)}_{H_{r}^1}\lesssim \norm{u}_{H^1}^{\alpha}
	%		\end{align*}
	%		for all $r\in (1,2)$\\
	The condition $\eqref{exponentsScaling}$ yields
	\begin{align*}
	\tilde{s}-1\le-\frac{2(\alpha-1)}{q}=1-\frac{2}{r}
	\end{align*}
	and therefore, the assertion follows by applying the Sobolev embedding $H^{1,r}(M)\hookrightarrow H^{\tilde{s}}(M).$ \\
	
	\emph{Step 2.} Next, we consider $s\in (\smin,1).$
	Let $r=\frac{2}{(1-s)\alpha+s}\in (1,2)$ and $q=\frac{2}{1-s}\in (2\alpha,\infty).$
	%	From the assumptions, we infer  $\frac{\alpha-2}{\alpha-1}<\smin<s<1$ and therefore, we get $r\in(1,\infty)$ and $\frac{q}{\alpha-1}\in (2,\infty).$ 	
	Then, we have  $\frac{1}{r}=\frac{1}{2}+\frac{\alpha-1}{q}.$ Thus, we can apply \cite{ChristWeinstein}, Proposition 3.1, and obtain
	\begin{align}\label{fractionalChainRule}
	\norm{\vert \nabla\vert^s F_\alpha^\pm(u)}_{L^r}\lesssim \norm{u}_{L^q}^{\alpha-1} \norm{\vert \nabla\vert^s u}_{L^2}.
	\end{align}
	Furthermore, we have
	\begin{align*}
	s-1=-\frac{2}{q},\qquad s-1=\frac{s}{\alpha}-\frac{2}{r\alpha}\ge -\frac{2}{r\alpha},
	\end{align*}  which implies
	\begin{align*}
	H^s(\R^2)\hookrightarrow L^q(\R^2),\qquad H^s(\R^2)\hookrightarrow L^{r\alpha}(\R^2).
	\end{align*}	
	Together with \eqref{fractionalChainRule} and $\norm{F_\alpha^\pm(u)}_{L^r}=\norm{u}_{L^{r\alpha}}^\alpha$ for $u\in L^{r\alpha}(\R^2),$ this implies
	\begin{align}\label{NonlinearityAtRegularityS}
	\norm{F_\alpha^\pm(u)}_{H^{s,r}(\R^2)}\lesssim \norm{u}_{H^s(\R^2)}^{\alpha},\qquad u\in H^s(\R^2).
	\end{align}
	Since we have the Sobolev embedding $H^{s,r}(\R^2)\hookrightarrow H^{\tilde{s}}(\R^2)$ as a consequence of $0<\tilde{s}\le 1-\alpha+s\alpha\le s,$ we obtain
	\begin{align*}
	\norm{F_\alpha^\pm(u)}_{H^{\tilde{s}}(\R^2)}\lesssim \norm{u}_{H^s(\R^2)}^{\alpha},\qquad u\in H^s(M).
	\end{align*}
	This completes the proof in the case $M=\R^2.$ 	For a general manifold $M$, the estimate follows by the definition of fractional Sobolev spaces via charts, see Appendix B.  	
\end{proof}

In the following Proposition, we  reformulate problem $\eqref{ProblemStratonovich}$ in a mild form and use this to show additional regularity properties of solutions of $\eqref{ProblemStratonovich}$. Let us therefore recall the notation
\begin{align*}
\mu=-\frac{1}{2}\sumM B_m^2.
\end{align*}

\begin{Prop}\label{extraqIntegrability}
	Assume $d=2$ and choose $2<p,q<\infty$ with
	\begin{align*}
	\frac{2}{p}+\frac{2}{q}=1.
	\end{align*}
	Let $\varepsilon\in (0,1),$ $\alpha>1,$ $s\in [1+\frac{1+\varepsilon}{q\alpha}-\frac{1}{\alpha},1],$ $r>1$ and $\beta:= \max\{\alpha,2\}.$ Let $\left(\tilde{\Omega},\tilde{\F},\tilde{\Prob},\tilde{W},\tilde{\Filtration},u\right)$ be a solution to $\eqref{ProblemStratonovich}$ with $F=F_\alpha^\pm$ and assume
	%	\begin{align}\label{originalSmoothnessSolutions}
	%		u\in L^{q\alpha}(\tilde{\Omega},L^\infty(0,T;H^1(M))).
	%	\end{align}
	\begin{align}\label{originalSmoothnessSolutions}
	u\in L^{r\alpha}(\tilde{\Omega},L^\beta(0,T;H^s(M))).
	\end{align}
	%	with
	%	\begin{align*}
	%	\Etilde \left[ \norm{u}_\LinftyHeins^q\right]\le C_q
	%	\end{align*}
	%	(\textbf{TO REMOVE!})
	Then, for each
	%	$\tilde{s}\in (0,1-\alpha+s \alpha)$ 	
	$\tilde{s}\in [\frac{1+\varepsilon}{q},1-\alpha+s\alpha]\cap (0,1),$  we have
	\begin{align}\label{extraIntegrabilityInclusion}
	u\in L^r(\tilde{\Omega}, C([0,T],H^{\tilde{s}}(M))\cap L^q(0,T;H^{\tilde{s}-\frac{1+\varepsilon}{q},p}(M)))
	\end{align}
	and almost surely in $H^{\tilde{s}}(M)$ for all $t\in [0,T]$
	\begin{align}\label{mildEquation}
	\im u(t)=\im e^{-\im tA}u_0+\int_0^t e^{-\im (t-\tau)A}F_\alpha^\pm(u(\tau))\df \tau
	+\int_0^t e^{-\im (t-\tau)A}\mu(u(\tau))\df \tau
	+\int_0^t e^{-\im (t-\tau)A}B(u(\tau))\df W(\tau).
	\end{align}
	
\end{Prop}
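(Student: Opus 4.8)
The plan is to pass from the weak (It\^o) formulation \eqref{ItoFormSolution} to the mild formulation \eqref{mildEquation}, and then to estimate the four resulting terms separately in the Strichartz norm
\[
\norm{v}_{C([0,T],H^{\tilde s}(M))}+\norm{v}_{L^q(0,T;H^{\tilde s-\frac{1+\varepsilon}{q},p}(M))},
\]
using that $\bigl(e^{-\im tA}\bigr)_{t\in\R}$ is a unitary group on every $H^\sigma(M)$, together with the deterministic Strichartz estimates of Bernicot and Samoyeau \cite{Bernicot} and the stochastic Strichartz estimates of Brze\'zniak and Millet \cite{BrzezniakStrichartz}. On a manifold with bounded geometry these estimates carry a derivative loss of $\tfrac{1+\varepsilon}{q}$, which is exactly the source of the shift in the second space and explains the admissibility relation $\tfrac{2}{p}+\tfrac{2}{q}=1$.

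First I would justify \eqref{mildEquation}. Because $A$ is unbounded I would not differentiate $s\mapsto e^{-\im(t-s)A}u(s)$ directly, but apply It\^o's formula to $s\mapsto e^{-\im(t-s)A}\Yosida u(s)$ with the Yosida approximation $\Yosida=\lambda(\lambda+A)^{-1}$, as in the proof of Proposition \ref{MassConservationMartingaleSolution}. Since $\Yosida$ maps $\EAdual$ into $\D(A)$ and commutes with both $A$ and $e^{-\im tA}$, the drift contribution $-\im e^{-\im(t-s)A}A\Yosida u$ coming from \eqref{ItoFormSolution} cancels the $s$-derivative $\im A e^{-\im(t-s)A}\Yosida u$ of the group, leaving a mild identity for $\Yosida u$ in $\EA$. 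Letting $\lambda\to\infty$, using $\Yosida f\to f$ in every $H^\sigma(M)$ and $\norm{\Yosida}_{\mathcal{L}(H^\sigma)}\le1$ together with the It\^o isometry for the stochastic convolution (licit since $B(u)\in L^2(\tilde\Omega\times[0,T],\HS(Y,\EAdual))$ by Assumption \ref{stochasticAssumptions}), yields \eqref{mildEquation} almost surely in $\EAdual$; it then holds in $H^{\tilde s}(M)$ once the right-hand side is shown to take values there.

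Next I would estimate the four terms of \eqref{mildEquation}. The free term $e^{-\im tA}u_0$ lies in the target space by the deterministic Strichartz estimate applied to $u_0\in\EA=H^1(M)\hookrightarrow H^{\tilde s}(M)$. For the deterministic convolution of $F_\alpha^\pm(u)$ I would invoke Lemma \ref{nonlinearEstimateUniqueness}, which gives $F_\alpha^\pm\colon H^s(M)\to H^{\tilde s}(M)$ with $\norm{F_\alpha^\pm(u)}_{H^{\tilde s}}\lesssim\norm{u}_{H^s}^\alpha$ precisely on the range $\tilde s\le 1-\alpha+s\alpha$; integrating in time and using $\beta=\max\{\alpha,2\}\ge\alpha$ shows $F_\alpha^\pm(u)\in L^1(0,T;H^{\tilde s}(M))$, so the inhomogeneous Strichartz estimate applies, and raising to the power $r$ and taking $\Etilde$ turns \eqref{originalSmoothnessSolutions} into $\Etilde\norm{\int_0^\cdot e^{-\im(\cdot-\tau)A}F_\alpha^\pm(u)\df\tau}^{r}\lesssim\norm{u}_{L^{r\alpha}(\tilde\Omega,L^\beta(0,T;H^s(M)))}^{r\alpha}<\infty$. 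The correction term $\mu(u)=-\tfrac12\sumM B_m^2u$ is treated identically but more easily, since $\mu\in\mathcal{L}(H^{\tilde s}(M))$. Finally, the stochastic convolution is controlled by the stochastic Strichartz estimate of \cite{BrzezniakStrichartz}, which bounds its $L^r(\tilde\Omega)$-Strichartz norm by a multiple of $\Etilde\bigl(\int_0^T\norm{B(u(\tau))}_{\HS(Y,H^{\tilde s}(M))}^2\df\tau\bigr)^{r/2}$; the summability $\sumM\norm{B_m}_{\mathcal{L}(H^{\tilde s})}^2<\infty$ (inherited from \eqref{noiseBoundsH}--\eqref{noiseBoundsEnergy} by interpolation since $\tilde s\in[0,1]$), together with $\tilde s\le s$ and $\beta\ge2$, reduces this to $\Etilde\bigl(\int_0^T\norm{u(\tau)}_{H^s}^2\df\tau\bigr)^{r/2}\lesssim\norm{u}_{L^{r\alpha}(\tilde\Omega,L^\beta(0,T;H^s(M)))}^r<\infty$.

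The main obstacle is the stochastic convolution: one must check that the hypotheses of the stochastic Strichartz estimate of \cite{BrzezniakStrichartz} are met in the present geometric setting, in particular that $B(u)$ is sufficiently radonifying into $H^{\tilde s}(M)$ and that the derivative-loss bookkeeping is compatible with the constraint $\tilde s\ge\tfrac{1+\varepsilon}{q}$, which keeps the exponent $\tilde s-\tfrac{1+\varepsilon}{q}$ nonnegative and is exactly what makes the admissible range for $\tilde s$ nonempty given the lower bound on $s$. A secondary but unavoidable technical point is the rigorous passage from \eqref{ItoFormSolution} to \eqref{mildEquation}, where the interchange of the unbounded group with the stochastic integral is justified by the Yosida regularisation above rather than by a formal Duhamel computation.
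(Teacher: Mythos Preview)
Your proposal is correct and follows essentially the same route as the paper: derive the mild equation \eqref{mildEquation}, then control the four terms in the Strichartz space $L^q(0,T;H^{\tilde s-\frac{1+\varepsilon}{q},p})\cap C([0,T],H^{\tilde s})$ via the homogeneous, inhomogeneous, and stochastic Strichartz estimates collected in Lemma~\ref{StrichartzLemma}, together with Lemma~\ref{nonlinearEstimateUniqueness} for the nonlinearity and the interpolation bound $\sumM\norm{B_m}_{\mathcal{L}(H^{\tilde s})}^2<\infty$ for the noise.

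The one genuine methodological difference is in the justification of \eqref{mildEquation}. You propose a Yosida regularisation $R_\lambda u$ followed by a limit $\lambda\to\infty$, mirroring the proof of Proposition~\ref{MassConservationMartingaleSolution}. The paper instead avoids any regularisation by observing that the Schr\"odinger group extends to a $C_0$-group on every negative-order space $H^{s_0}(M)$, with generator $A_{s_0}$ having domain $H^{s_0+2}(M)$; it then applies the It\^o formula directly to $\varPhi(\tau,x)=e^{-\im(t-\tau)A}x$ as a $C^{1,2}$ map from $[0,t]\times H^{s-2}(M)$ to $H^{s-4}(M)$, obtaining \eqref{mildEquation} in $H^{s-4}(M)$ in one stroke. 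Your approach is equally valid but longer, since you must verify convergence of the stochastic convolution under $R_\lambda$; the paper's approach trades that limit argument for a bookkeeping check that all terms of \eqref{ItoFormSolution} already live in $H^{s-2}(M)$, which they do because $u\in H^s$ implies $Au\in H^{s-2}$ and $F_\alpha^\pm(u)\in H^{\tilde s}\hookrightarrow H^{s-2}$.
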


\begin{Remark}
	Of course, \eqref{extraIntegrabilityInclusion} also holds for  $\varepsilon\ge 1,$ but then $u\in L^r(\tilde{\Omega},L^q(0,T;H^{\tilde{s}-\frac{1+\varepsilon}{q},p}(M)))$ would be trivial by the Sobolev embedding $H^{\tilde{s}}(M)\hookrightarrow H^{\tilde{s}-\frac{1+\varepsilon}{q},p}(M).$ Being able to choose $\varepsilon\in (0,1)$ means a gain of regularity which will be used below via $H^{\tilde{s}-\frac{1+\varepsilon}{q},p}(M)\hookrightarrow \LInfty$ for an appropriate choice of the parameters.
\end{Remark}

\begin{proof}[Proof of Proposition \ref{extraqIntegrability}]
	\emph{Step 1.}	
	First, we will show that it is possible to rewrite the equation $\eqref{ItoFormSolution}$ from the definition of solutions for $\eqref{ProblemStratonovich}$  in the mild form $\eqref{mildEquation}.$ \\
	We note that for each $s_0<0$ the semigroup $\left(e^{-\im t A}\right)_{t\ge 0}$ on $L^2(M)$ extends to a semigroup $\left(T_{s_0}(t)\right)_{t\ge 0}$ with the generator $A_{s_0}$ that extends $A$ to $\D(A_{s_0})=H^{s_0+2}(M).$ To keep the notation simple, we also call this semigroup $\left(e^{-\im t A}\right)_{t\ge 0}.$\\
	We apply \dela{It\^{o}'s}{the It\^o} formula to $\varPhi \in C^{1,2}([0,t]\times H^{s-2}(M), H^{s-4}(M))$ defined by
	\begin{align*}
	\varPhi(\tau,x):=e^{-\im (t-\tau)A}x,\qquad \tau\in [0,t],\quad  x\in H^{s-2}(M)
	\end{align*}
	and obtain
	\begin{align*}
	\im u(t)=\im e^{-\im tA}u_0+\int_0^t e^{-\im (t-\tau)A}F_\alpha^\pm(u(\tau))\df \tau
	+\int_0^t e^{-\im (t-\tau)A}\mu(u(\tau))\df \tau
	+\int_0^t e^{-\im (t-\tau)A}B(u(\tau))\df W(\tau)
	\end{align*}
	almost surely in $H^{s-4}(M)$ for all $t\in [0,T].$ \\
	
	\emph{Step 2.}
	%	First note that the interval $[\frac{1+\varepsilon}{q},1-\alpha+s\alpha]\cap (0,1)$ is non-empty due to $s\in [\frac{1+\varepsilon}{q\alpha}+\frac{\alpha-1}{\alpha},1].$
	Using the Strichartz estimates from Lemma $\ref{StrichartzLemma}$ we deal with the free term and each convolution term on the right hand site to get $\eqref{extraIntegrabilityInclusion}$ and the identity $\eqref{mildEquation}$ in $H^{\tilde{s}}(M).$
	For this purpose, we define
	\begin{align*}
	Y_T:=L^q(0,T; H^{\tilde{s}-\frac{1+\varepsilon}{q},p}(M))\cap L^\infty(0,T;H^{\tilde{s}}(M)).
	\end{align*}
	%	Recall that we have
	%	\begin{align}\label{originalSmoothnessSolutions}
	%		u\in L^{r}(\tilde{\Omega},L^\infty(0,T; H^1))
	%	\end{align}
	%for all $r\in [1,\infty)$ by Corollary $\ref{CorollaryManifold}.$
	By $\eqref{homogenousStrichartzEstimate-1}$ we obtain
	\begin{align*}
	\norm{e^{-\im tA}u_0}_{L^r(\tilde{\Omega},Y_T)}\lesssim \norm{u_0}_{H^{\tilde{s}}}\lesssim \norm{u_0}_{H^s}<\infty
	\end{align*}
	and by $\eqref{inhomogenousStrichartzEstimate}$ and Lemma  $\ref{nonlinearEstimateUniqueness}$, we get
	\begin{align*}
	\bigNorm{\int_0^t e^{-\im (t-\tau)A}F_\alpha^\pm(u(\tau))\df \tau}_{Y_T}\lesssim \norm{F_\alpha^\pm(u)}_{L^1(0,T; H^{\tilde{s}})}\lesssim \norm{u}_{L^\alpha(0,T; H^s)}^\alpha.
	%		\lesssim \norm{u}_{L^\infty(0,T; H^1)}^\alpha.
	\end{align*}
	Integration over $\tilde{\Omega}$ and $\eqref{originalSmoothnessSolutions}$ yields
	\begin{align*}
	\bigNorm{\int_0^t e^{-\im (t-\tau)A}F_\alpha^\pm(u(\tau))\df \tau}_{L^r(\tilde{\Omega},Y_T)}\lesssim \norm{u}_{L^{r\alpha}(\tilde{\Omega},L^\alpha(0,T; H^s))}^\alpha<\infty.
	\end{align*}
	To estimate the other convolutions, we need that  $\mu$ is bounded in $H^{\tilde{s}}(M)$ and $B$ is bounded from $H^{\tilde{s}}(M)$ to $\HS(Y,H^{\tilde{s}}(M)).$ This can be deduced from the following estimate, which follows from
	complex interpolation (see \cite{Lunardi}, Theorem 2.1.6), H\"{o}lder's inequality and Assumption $\ref{stochasticAssumptions}$:
	\begin{align}\label{HsEstimateNoise}
	\sumM \norm{B_m}_{{\mathcal{L}(H^{\tilde{s}})}}^2&\le\sumM \norm{B_m}_{{\mathcal{L}(H^1)}}^{2{\tilde{s}}} \norm{B_m}_{{\mathcal{L}({H})}}^{2(1-{\tilde{s}})}\nonumber\\
	&\le \left(\sumM \norm{B_m}_{{\mathcal{L}(H^1)}}^{2}\right)^{{\tilde{s}}} \left(\sumM\norm{B_m}_{{\mathcal{L}({H})}}^{2}\right)^{1-{\tilde{s}}}<\infty.
	\end{align}
	Therefore, by $\eqref{inhomogenousStrichartzEstimate},$ $\eqref{HsEstimateNoise}$ and $\eqref{originalSmoothnessSolutions}$
	\begin{align*}
	\bigNorm{\int_0^t e^{-\im (t-\tau)A}\mu(u(\tau))\df \tau}_{L^r(\tilde{\Omega},Y_T)} &\lesssim \norm{\mu(u)}_{L^r(\tilde{\Omega}, L^1(0,T; H^{\tilde{s}}))}
	\lesssim \norm{u}_{L^r(\tilde{\Omega}, L^1(0,T; H^{\tilde{s}}))}
	\\&\lesssim \norm{u}_{L^{r\alpha}(\tilde{\Omega}, L^\beta(0,T; H^s)}<\infty.
	\end{align*}
	The estimates $\eqref{stochasticStrichartzEstimate},$ $\eqref{HsEstimateNoise}$ and $\eqref{originalSmoothnessSolutions}$ imply
	\begin{align*}
	\bigNorm{\int_0^t e^{-\im (t-\tau)A}B(u(\tau))\df W(\tau)}_{L^r(\tilde{\Omega},Y_T)}
	&\lesssim \norm{B(u)}_{L^r(\tilde{\Omega}, L^2(0,T; \HS(Y,H^{\tilde{s}}))}
	\lesssim \norm{u}_{L^r(\tilde{\Omega}, L^2(0,T; H^{\tilde{s}}))}\\
	&\lesssim \norm{u}_{L^{r\alpha}(\tilde{\Omega}, L^\beta(0,T; H^s))}<\infty.
	\end{align*}
	Hence, the mild equation $\eqref{mildEquation}$ holds almost surely in $H^{\tilde{s}}(M)$ for each $t\in [0,T]$ and thus, we get $\eqref{extraIntegrabilityInclusion}$ by the pathwise continuity of deterministic and stochastic integrals.
\end{proof}

%With the additional regularity from Proposition $\ref{extraqIntegrability}$ we can study uniqueness in two dimensions. First, we recall two concepts of uniqueness.
%\begin{Definition}
%	We call martingale solutions of problem $\eqref{ProblemStratonovich}$ $\dots$
%	\begin{enumerate}
%		\item[a)] $\dots$ \emph{pathwise unique}, if for all martingale solutions  $\left(\tilde{\Omega},\tilde{\F},\tilde{\Prob},\tilde{W},\tilde{\Filtration},u_j\right),$  with $u_j(0)=u_0,$ for $j=1,2,$ we have $u_1(t)=u_2(t)$ almost surely in $L^2(M)$ for all $t\in [0,T].$
%		\item[b)] $\dots$ \emph{unique in law}, if for all martingale solutions  $\left(\Omega_j,\F_j,\Prob_j,W_j,\Filtration_j,u_j\right)$  with $u_j(0)=u_0,$ for $j=1,2,$ we have $\Prob_1^{u_1}=\Prob_2^{u_2}$ almost surely in $C([0,T],L^2(M)).$
%	\end{enumerate}
%\end{Definition}

As a preparation for the proof of pathwise uniqueness, we show a formula for the $L^2$-norm of the difference of two solutions of $\eqref{ProblemStratonovich}$.

% By Definition $\ref{MartingaleSolutionDef},$ solutions are It\^o processes only in ${H^{-1}(M)}.$ Therefore, we choose a regularization procedure via Yosida approximations to justify the application of It\^o's formula.

\begin{Lemma}\label{SolutionDifferenceLemma}
	Let $\left(\tilde{\Omega},\tilde{\F},\tilde{\Prob},\tilde{W},\tilde{\Filtration},u_j\right),$ $j=1,2,$ be solutions of $\eqref{ProblemStratonovich}$ with $F=F_\alpha^\pm$ for $\alpha>1.$
	%	and assume
	%	\begin{align*}
	%	u_j\in L^{r}(\tilde{\Omega},L^\infty(0,T;H^1(M))).
	%	\end{align*}
	%	for an $r>\max\{2\alpha, (\alpha-1)\alpha\}.$
	Then,
	\begin{align}\label{SolutionDifferenceFormula}
	\norm{u_1(t)-u_2(t)}_{L^2}^2=&2 \int_0^t \Real \skpLzwei{u_1(\tau)-u_2(\tau)}{-\im F_\alpha^\pm(u_1(\tau))+\im F_\alpha^\pm(u_2(\tau))} \df \tau
	\end{align}
	almost surely for all $t\in[0,T].$
\end{Lemma}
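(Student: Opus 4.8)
The plan is to follow closely the proof of Proposition \ref{MassConservationMartingaleSolution}. Setting $w:=u_1-u_2$ and subtracting the two copies of \eqref{ItoFormSolution}, the linearity of $A$, $\mu$ and $B$ shows that
\begin{align*}
w(t)=\int_0^t\left[-\im A w(s)-\im\left(F_\alpha^\pm(u_1(s))-F_\alpha^\pm(u_2(s))\right)+\mu(w(s))\right]\df s-\im\int_0^t Bw(s)\df\tilde W(s)
\end{align*}
holds almost surely in $\EAdual$ for all $t\in[0,T]$, with $w(0)=0$ since both solutions start at $u_0$. Almost all paths of $w$ lie in $\weaklyContinousEA\subset\LinftyEA$, and by \eqref{nonlinearityLocallyLipschitz} we have $F_\alpha^\pm(u_1)-F_\alpha^\pm(u_2)\in L^\infty(0,T;\LalphaPlusEinsDual)$ almost surely; hence every integrand below is dominated by fixed powers of $\norm{u_1}_\EA+\norm{u_2}_\EA$, which makes the limiting arguments admissible.

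First I would regularise with the Yosida approximation $\Yosida:=\lambda(\lambda+A)^{-1}$, which by \eqref{YosidaProperties} is a contraction on $H$, $\EA$ and $\EAdual$, converges strongly to the identity, commutes with $A$, and maps $\EAdual$ into $\EA$. Applying $\Yosida$ turns the difference equation into an identity in $\EA$, to which the Itô formula in the real Hilbert space $H$ applies with $\mass(v)=\norm v_H^2$, $\mass'[v]h=2\Real\skpH vh$ and $\mass''[v][h_1,h_2]=2\Real\skpH{h_1}{h_2}$. This yields, almost surely for $t\in[0,T]$,
\begin{align*}
\norm{\Yosida w(t)}_H^2=&\,2\int_0^t\Real\skpH{\Yosida w(s)}{-\im\Yosida Aw(s)-\im\Yosida\left(F_\alpha^\pm(u_1(s))-F_\alpha^\pm(u_2(s))\right)+\Yosida\mu(w(s))}\df s\\
&-2\int_0^t\Real\skpH{\Yosida w(s)}{\im\Yosida Bw(s)\df\tilde W(s)}+\sumM\int_0^t\norm{\Yosida B_m w(s)}_H^2\df s,
\end{align*}
with no initial term because $\Yosida w(0)=0$.

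The cancellations are then the same as in the mass conservation proof. The $A$-term vanishes already at finite $\lambda$ since $\Yosida$ and $A$ commute and $\Real\skpH{\Yosida w}{-\im A\Yosida w}=\Real[\im\norm{\sqrtA\Yosida w}_H^2]=0$. Letting $\lambda\to\infty$ with \eqref{YosidaProperties} and dominated convergence, the nonlinear drift converges to $2\int_0^t\Real\skpLzwei{w(s)}{-\im F_\alpha^\pm(u_1(s))+\im F_\alpha^\pm(u_2(s))}\df s$, the remaining drift to $2\int_0^t\Real\skpH{w(s)}{\mu(w(s))}\df s$, and the trace term to $\sumM\int_0^t\norm{B_m w(s)}_H^2\df s$; the last two cancel because the selfadjointness of $B_m$ gives $2\Real\skpH{w}{\mu(w)}=-\sumM\norm{B_m w}_H^2$. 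For the stochastic integral I would localise with $\tau_K:=\inf\{t:\norm{w(t)}_H>K\}$, dominate $\left|\Real\skpH{\Yosida w}{\im\Yosida B_m w}\right|^2$ on $[0,\tau_K]$ by $K^4\norm{B_m}_{\mathcal{L}(H)}^2$, and use the Itô isometry together with Doob's inequality to send this term to $0$ along a subsequence on each $\{t\le\tau_K\}$, hence almost surely on $[0,T]$ since $\bigcup_K\{t\le\tau_K\}=[0,T]$ a.s. Collecting the limits yields \eqref{SolutionDifferenceFormula}. The genuine difficulty is not the cancellation algebra but the rigorous justification of the infinite-dimensional Itô formula and of the limit $\lambda\to\infty$ in the stochastic term; these are exactly the steps already performed in Proposition \ref{MassConservationMartingaleSolution}, the only change being that the pairing $\Real\skpLzwei{w}{-\im F_\alpha^\pm(u_1)+\im F_\alpha^\pm(u_2)}$ of the nonlinearity against $w$ no longer vanishes and therefore survives as the right-hand side of \eqref{SolutionDifferenceFormula}.
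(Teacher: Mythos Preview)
Your proposal is correct and follows essentially the same route as the paper: the authors simply refer back to the proof of Proposition~\ref{MassConservationMartingaleSolution}, and your write-up is precisely that argument applied to $w=u_1-u_2$ with the observation that the nonlinear pairing no longer cancels. The only cosmetic difference is that the paper remarks that in $d=2$ the regularity result Lemma~\ref{nonlinearEstimateUniqueness} (namely $F_\alpha^\pm:H^s\to H^{\tilde s}$) lets one treat the nonlinear drift as an $H$-valued term and hence pass to the limit $\lambda\to\infty$ more directly, whereas you handle it via the $\EA$--$\EAdual$ duality using~\eqref{nonlinearityLocallyLipschitz}; both justifications are valid here.
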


\begin{proof}
	The proof is similar to Proposition \ref{MassConservationMartingaleSolution}. In fact, it is even simpler, since the regularity of $F_\alpha^\pm$ due to Lemma $\ref{nonlinearEstimateUniqueness}$ simplifies the proof of the convergence for $\lambda\to \infty.$
\end{proof}

%\begin{Remark}
%	With the same proof, one can show the mass conservation
%	\begin{align*}
%		\norm{u(t)}_{L^2}=\norm{u_0}_{L^2} \qquad \text{a.s},\quad t\in [0,T],
%	\end{align*}
%	for martingale solutions of equation $\eqref{ProblemStratonovich}.$ \coma{Maybe we should add this mass conservation to the formulation of Theorem ?}
%\end{Remark}

Finally, we are ready to prove the pathwise uniqueness of solutions to $\eqref{ProblemStratonovich}.$

%\begin{Theorem}\label{Uniqueness2d}
%	Assume $d=2$ and $F(u)=F_\alpha^\pm(u)=\pm \vert u\vert^{\alpha-1}u$ with $\alpha\in (1,\infty).$
%%	and either i) or ii)
%%	\begin{itemize}
%%		\item[i)] $F(u):=F_\alpha^+(u):=\vert u\vert^{\alpha-1}u$ with $\alpha\in (1,\infty);$
%%		\item[ii)] $F(u):=F_\alpha^-(u):=-\vert u\vert^{\alpha-1}u$ with $\alpha\in (1,3).$
%%	\end{itemize}
%	Then, solutions of problem $\eqref{ProblemStratonovich}$ are pathwise unique, i.e. given two solutions $\left(\tilde{\Omega},\tilde{\F},\tilde{\Prob},\tilde{W},\tilde{\Filtration},u_j\right),$ $j=1,2$ with $u_j\in L^{r}(\tilde{\Omega},L^\infty(0,T;H^1(M))),$ for all $r\in [1,\infty),$ we have $u_1(t)=u_2(t)$ almost surely in ${L^2(M)}$ for all $t\in [0,T].$
%\end{Theorem}

\begin{Theorem}\label{Uniqueness2d}
	Let $d=2$ and $F(u)=F_\alpha^\pm(u)=\pm \vert u\vert^{\alpha-1}u$ with $\alpha\in (1,\infty).$
	%	and either i) or ii)
	%	\begin{itemize}
	%		\item[i)] $F(u):=F_\alpha^+(u):=\vert u\vert^{\alpha-1}u$ with $\alpha\in (1,\infty);$
	%		\item[ii)] $F(u):=F_\alpha^-(u):=-\vert u\vert^{\alpha-1}u$ with $\alpha\in (1,3).$
	%	\end{itemize}
	Let $r> \alpha,$ $\beta\ge \max\{\alpha,2\}$ and
	\begin{align*}
	s \in \begin{cases}
	(1-\frac{1}{2\alpha},1] & \text{for } \alpha\in (1,3], \\
	(1-\frac{1}{\alpha(\alpha-1)},1] & \text{for } \alpha>3.
	\end{cases}
	\end{align*}
	Then, solutions of problem $\eqref{ProblemStratonovich}$ are pathwise unique in $L^{r}(\tilde{\Omega},L^\beta(0,T;H^s(M)))$,
	i.e. given two solutions $\left(\tilde{\Omega},\tilde{\F},\tilde{\Prob},\tilde{W},\tilde{\Filtration},u_j\right)$  with
	\begin{align*}
	u_j\in L^{r}(\tilde{\Omega},L^\beta(0,T;H^s(M))),
	\end{align*}
	for $j=1,2,$
	we have $u_1(t)=u_2(t)$ almost surely in ${L^2(M)}$ for all $t\in [0,T].$
\end{Theorem}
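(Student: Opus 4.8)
The plan is to combine the energy identity for the difference of two solutions from Lemma \ref{SolutionDifferenceLemma} with a pathwise Grönwall argument, whose success hinges on a time-integrable spatial $\LInfty$-bound for the solutions. Since the two solutions share the stochastic basis $(\tilde{\Omega},\tilde{\F},\tilde{\Prob},\tilde{W},\tilde{\Filtration})$ and the initial datum $u_0$, Lemma \ref{SolutionDifferenceLemma} already gives, after the stochastic and conservative correction terms have cancelled,
\[
\norm{u_1(t)-u_2(t)}_{L^2}^2=2 \int_0^t \Real \skpLzwei{u_1(\tau)-u_2(\tau)}{-\im F_\alpha^\pm(u_1(\tau))+\im F_\alpha^\pm(u_2(\tau))} \df \tau
\]
$\tildeProb$-almost surely for all $t\in[0,T]$. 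To control the right-hand side I would use the elementary pointwise bound $\vert F_\alpha^\pm(u_1)-F_\alpha^\pm(u_2)\vert \lesssim (\vert u_1\vert^{\alpha-1}+\vert u_2\vert^{\alpha-1})\vert u_1-u_2\vert$ together with Hölder's inequality to arrive at
\[
\norm{u_1(t)-u_2(t)}_{L^2}^2\lesssim \int_0^t \left(\norm{u_1(\tau)}_{\LInfty}^{\alpha-1}+\norm{u_2(\tau)}_{\LInfty}^{\alpha-1}\right)\norm{u_1(\tau)-u_2(\tau)}_{L^2}^2 \df \tau .
\]
Thus the entire problem reduces to showing $\int_0^T\norm{u_j(\tau)}_{\LInfty}^{\alpha-1}\df\tau<\infty$ almost surely.

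First I would establish this time-integrable $\LInfty$-control through the Strichartz bootstrap of Proposition \ref{extraqIntegrability}. Writing $r=\alpha r'$ with $r'>1$ (possible since $r>\alpha$), the hypothesis $u_j\in L^{r}(\tilde\Omega,L^\beta(0,T;H^s(M)))$ is exactly the input required by Proposition \ref{extraqIntegrability}, whose conclusion places $u_j$ in $L^{r'}(\tilde\Omega,L^q(0,T;H^{\tilde s-\frac{1+\varepsilon}{q},p}(M)))$ for an admissible Strichartz pair $\frac2p+\frac2q=1$ and suitable $\tilde s,\varepsilon$. The key is to choose the parameters so that $H^{\tilde s-\frac{1+\varepsilon}{q},p}(M)\hookrightarrow \LInfty$, which in $d=2$ means $(\tilde s-\frac{1+\varepsilon}{q})p>2$, equivalently $\tilde s>1-\frac{1-\varepsilon}{q}$, while simultaneously respecting the admissible window $\tilde s\le 1-\alpha+s\alpha$ of Proposition \ref{extraqIntegrability} and the integrability threshold $q\ge\alpha-1$ that is needed for $\norm{u_j}_{\LInfty}^{\alpha-1}$ to lie in $L^1(0,T)$. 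Matching the embedding condition against the upper window yields the compatibility requirement $s>1-\frac{1-\varepsilon}{q\alpha}$; letting $\varepsilon\downarrow0$ and taking $q$ as small as the constraints $q>2$ and $q\ge\alpha-1$ allow drives $q\to 2^+$ when $\alpha\le3$ and $q\to(\alpha-1)^+$ when $\alpha>3$, which is exactly where the two ranges $s>1-\frac{1}{2\alpha}$ and $s>1-\frac{1}{\alpha(\alpha-1)}$ of the statement come from. For $s$ in the asserted interval one can therefore fix admissible $p,q,\varepsilon,\tilde s$ and conclude $u_j\in L^q(0,T;\LInfty(M))$ $\tildeProb$-almost surely with $q\ge\alpha-1$, so that indeed $\int_0^T\norm{u_j(\tau)}_{\LInfty}^{\alpha-1}\df\tau<\infty$ almost surely.

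Finally I would fix a path on the full-measure set where both integrals $\int_0^T\norm{u_j(\tau)}_{\LInfty}^{\alpha-1}\df\tau$ are finite and the $L^2$-identity holds. Setting $\varphi(t):=\norm{u_1(t)-u_2(t)}_{L^2}^2$ and $g(\tau):=C(\norm{u_1(\tau)}_{\LInfty}^{\alpha-1}+\norm{u_2(\tau)}_{\LInfty}^{\alpha-1})\in L^1(0,T)$, the difference estimate becomes $\varphi(t)\le\int_0^t g(\tau)\varphi(\tau)\df\tau$ with $\varphi(0)=0$, and the integral form of Grönwall's lemma forces $\varphi\equiv0$. Hence $u_1(t)=u_2(t)$ in $L^2(M)$ for all $t\in[0,T]$, almost surely.

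I expect the main obstacle to be the parameter bookkeeping of the regularity step rather than the analytic estimates themselves: one must simultaneously satisfy the Strichartz admissibility $\frac2p+\frac2q=1$, the smoothness hypothesis $s\ge1-\frac1\alpha+\frac{1+\varepsilon}{q\alpha}$ of Proposition \ref{extraqIntegrability}, the $\LInfty$-embedding condition, the upper bound $\tilde s\le1-\alpha+s\alpha$, and the integrability threshold $q\ge\alpha-1$, and then verify that the feasible region for $s$ is nonempty precisely on the stated interval. The $\alpha\le3$ versus $\alpha>3$ dichotomy, reflecting whether $q>2$ or $q\ge\alpha-1$ is the binding lower bound on $q$, is the delicate point of this optimisation; by contrast, the energy identity and the Grönwall closure are routine once the $\LInfty$-integrability has been secured.
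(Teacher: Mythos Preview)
Your proposal is correct and follows essentially the same route as the paper: the $L^2$-identity from Lemma~\ref{SolutionDifferenceLemma}, the pointwise Lipschitz bound for $F_\alpha^\pm$ leading to a Gr\"onwall inequality with coefficient $\norm{u_1}_{L^\infty}^{\alpha-1}+\norm{u_2}_{L^\infty}^{\alpha-1}$, the Strichartz bootstrap of Proposition~\ref{extraqIntegrability} combined with the embedding $H^{\tilde s-\frac{1+\varepsilon}{q},p}(M)\hookrightarrow L^\infty(M)$, and the parameter optimisation that splits into the cases $q\to 2^+$ for $\alpha\le 3$ and $q=\alpha-1$ for $\alpha>3$. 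The only cosmetic difference is that the paper fixes $q=\alpha-1$ directly in the second case rather than passing to a limit, but your derivation of the threshold $s>1-\frac{1-\varepsilon}{q\alpha}$ and its specialisation to the two stated ranges is exactly the paper's argument.
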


\begin{proof}\emph{Step 1.}
	Take two solutions $\left(\tilde{\Omega},\tilde{\F},\tilde{\Prob},\tilde{W},\tilde{\Filtration},u_j\right)$   of $\eqref{ProblemStratonovich}$ with $u_j\in L^{r}(\tilde{\Omega},L^\infty(0,T;H^s(M)))$ for $j=1,2,$ and define $w:=u_1-u_2.$ From Lemma $\ref{SolutionDifferenceLemma},$ we conclude		
	\begin{align*}
	\norm{w(t)}_{L^2}^2=&2 \int_0^t \Real \skpLzwei{w(\tau)}{-\im F(u_1(\tau))+\im F(u_2(\tau))} \df \tau
	%										\\
	%					\le& 2 \int_0^t \norm{w(\tau)}_{L^2}^2 \left[\norm{u_1(\tau)}_\LInfty^{\alpha-1}+\norm{u_2(\tau)}_\LInfty^{\alpha-1}\right] \df \tau
	\end{align*}
	almost surely for all $t\in[0,T].$
	%								\begin{align}\label{GronwallIntegrand}
	%								\norm{b}_{L^1(0,T)}
	%								&\lesssim \norm{u_1}_{L^{q}(0,T;W^{\hat{s}-\frac{1}{q},p})}^{\alpha-1}+\norm{u_2}_{L^{q}(0,T;W^{\hat{s}-\frac{1}{q},p})}^{\alpha-1}
	%								<\infty \quad \text{a.s.}
	%								\end{align}
	The estimate
	\begin{align*}
	\vert F_\alpha^\pm(z_1)-F_\alpha^\pm(z_2)\vert\lesssim \left(\vert z_1\vert^{\alpha-1}+\vert z_2\vert^{\alpha-1}\right)\vert z_1-z_2\vert,\qquad z_1,z_2\in\C,
	\end{align*}
	yields
	\begin{align}\label{Gronwall}
	\norm{w(t)}_{L^2}^2\lesssim&  \int_0^t \int_M \vert w(\tau,x)\vert^2 \left[\vert{u_1(\tau,x)}\vert^{\alpha-1}+\vert{u_2(\tau,x)}\vert^{\alpha-1}\right] \df x \df \tau\nonumber\\
	\le& \int_0^t \norm{w(\tau)}_{L^2}^2 \left[\Vert{u_1(\tau)}\Vert_\LInfty^{\alpha-1}+\Vert{u_2(\tau)}\Vert_\LInfty^{\alpha-1}\right] \df \tau
	\end{align}
	almost surely for all $t\in[0,T].$ \\
	
	\emph{Step 2.}
	First, we deal with the case $\alpha\in (1,3].$
	By $s> 1-\frac{1}{2\alpha},$ we can choose $q>2$ and $\varepsilon\in (0,1)$ with
	\begin{align*}
	1-\frac{1}{2 \alpha}<1-\frac{1}{2 \alpha}+\frac{q-2+2\varepsilon}{2q\alpha}=1-\frac{1}{q\alpha}+\frac{\varepsilon}{q\alpha}<s.
	\end{align*}
	Hence, we have  $\frac{1+\varepsilon}{q}+1-\frac{2}{q}< 1-\alpha+s\alpha$ and in particular, there is $\tilde{s}\in (\frac{1+\varepsilon}{q}+1-\frac{2}{q}, 1-\alpha+s\alpha).$ 				
	%				with $\tilde{s}>1-\frac{1}{q}+\frac{\varepsilon}{q}.$
	If we choose $p>2$ according to $\frac{2}{p}+\frac{2}{q}=1,$ Proposition $\ref{PropertiesFractionalSobolev}$ leads to $H^{\tilde{s}-\frac{1+\varepsilon}{q},p}(M)\hookrightarrow \LInfty$ because of
	%					\begin{align*}
	%						\left(\tilde{s}-\frac{1+\varepsilon}{q}\right)-\frac{2}{p}=\tilde{s}-\frac{\varepsilon}{q}+\frac{1}{q}-1>0.
	%				\end{align*}
	\begin{align*}
	\left(\tilde{s}-\frac{1+\varepsilon}{q}\right)-\frac{2}{p}=\tilde{s}-\frac{1+\varepsilon}{q}+\frac{2}{q}-1= \tilde{s}-\left(\frac{1+\varepsilon}{q}+1-\frac{2}{q}\right)>0.
	\end{align*}
	Moreover, we have $u_j\in L^q(0,T;H^{\tilde{s}-\frac{1+\varepsilon}{q},p}(M))$ almost surely for $j=1,2$ by Proposition \ref{extraqIntegrability}.
	Hence, the process $b$ defined by
	\begin{align}\label{definitionGronwallIntegrand}
	b(\tau):=\left[\norm{u_1(\tau)}_{L^\infty}^{\alpha-1}+\norm{u_2(\tau)}_{L^\infty}^{\alpha-1}\right],\qquad \tau\in[0,T],	
	\end{align}
	satisfies
	\begin{align}\label{bMajorante}
	\norm{b}_{L^1(0,T)}
	&\lesssim \norm{u_1}_{L^{q}(0,T;H^{s-\frac{1+\varepsilon}{q},p})}^{\alpha-1}+\norm{u_2}_{L^{q}(0,T;H^{s-\frac{1+\varepsilon}{q},p})}^{\alpha-1}
	<\infty \quad \text{a.s.},
	\end{align}
	where we used $q>2\ge \alpha-1$ and the H\"older inequality in time.
	Because of $\eqref{Gronwall},$ we can apply Gronwall's Lemma to get
	\begin{align*}
	u_1(t)=u_2(t)\quad \text{a.s. in ${L^2(M)}$ for all $t\in [0,T].$}
	\end{align*}
	
	\emph{Step 3.} Now, let $\alpha>3.$ Then, we set $q:=\alpha-1$ and choose $p>2$ with $\frac{2}{p}+\frac{2}{q}=1.$ Using $s> 1-\frac{1}{\alpha(\alpha-1)},$ we fix $\varepsilon\in (0,1)$ with
	\begin{align*}
	1-\frac{1}{\alpha(\alpha-1)}<1-\frac{1}{q\alpha}+\frac{\varepsilon}{q\alpha}<s.
	\end{align*}
	As above, we can choose $\tilde{s}\in (\frac{1+\varepsilon}{q}+1-\frac{2}{q}, 1-\alpha+s\alpha).$ We therefore get  $H^{\tilde{s}-\frac{1+\varepsilon}{q},p}(M)\hookrightarrow \LInfty$ and $u_j\in L^q(0,T;H^{\tilde{s}-\frac{1+\varepsilon}{q},p}(M))$ almost surely for $j=1,2.$ We obtain $b\in L^1(0,T)$ almost surely for $b$ from \eqref{definitionGronwallIntegrand} and Gronwall's Lemma implies
	\begin{align*}
	u_1(t)=u_2(t)\quad \text{a.s. in ${L^2(M)}$ for all $t\in [0,T].$}
	\end{align*}
\end{proof}

\begin{Remark}
	In \cite{BrzezniakStrichartz}, Brze{\'{z}}niak and Millet proved  pathwise uniqueness of solutions in the space $L^q(\Omega,C([0,T],H^1(M))\cap L^q([0,T],H^{1-\frac{1}{q},p}(M)))$ with $\frac{2}{q}+\frac{2}{p}=1$ and $q>\alpha+1.$ Since they used the deterministic Strichartz estimates from \cite{Burq} instead of \cite{Bernicot}, their result is restricted to compact manifolds $M.$
	Comparing the result in \cite{BrzezniakStrichartz} with Theorem \ref{Uniqueness2d} in the present article, we see that the assumptions of Theorem $\ref{Uniqueness2d}$ are weaker with respect to space and time. On the other hand, the assumptions on the required moments is slightly weaker in \cite{BrzezniakStrichartz}.
\end{Remark}

\begin{Remark}
	A similar Uniqueness-Theorem can also be proved on bounded domains in $\R^2$ using the Strichartz inequalities by Blair, Smith and Sogge from \cite{blairStrichartz}. We also want to mention the classical strategy by Vladimirov (see \cite{Vladimirov}, \cite{Ogawa}, \cite{OgawaOzawa} and \cite{Cazenave}) to prove uniqueness of $H^1$-solutions using Trudinger type inequalities which can be seen as the limit case of Sobolev's embedding, see also \cite{Adams}, Theorem 8.27. Since this proof only relies on the formula \eqref{SolutionDifferenceFormula} and the property of solutions to be in $H^1,$ it can be directly transfered to the stochastic setting. This strategy does not use Strichartz estimates, but it suffers from a restriction to $\alpha\in (1,3]$ and it cannot be transfered to $H^s$ for $s<1.$
\end{Remark}

Now, we give the definition of the concepts of strong solutions and uniqueness in law used in Corollary $\ref{CorollaryManifold2d}.$

\begin{Definition}\label{StrongSolutionDef}
	\begin{enumerate}
		\item[a)] 	Let $T>0$ and $u_0\in E_A.$
		%	  $\left(\Omega,\F,\Prob}\right)$  be a probability space,  $W$
		%	a $Y$-valued cylindrical Wiener process on $\Omega$ and  $\Filtration=\left(\F_t\right)_{t\in [0,T]}$
		%	 a filtration  with the usual conditions.	
		Then, a \emph{strong solution} of the equation $\eqref{ProblemStratonovich}$ is  a continuous, $\tilde{\Filtration}$-adapted process with values in $\EAdual$ such that $u\in L^2(\Omega\times [0,T],\EAdual)$ and almost all paths are in $\weaklyContinousEA$
		with
		\begin{align*}
		u(t)=  u_0+ \int_0^t \left[-\im A u(s)-\im F(u(s))+\mu(u(s))\right] \df \tau- \im \int_0^t B u(s) \df W(s)
		\end{align*}
		almost surely in $\EAdual$ for all $t\in [0,T].$
		\item[b)] The solutions of $\eqref{ProblemStratonovichManifold}$ are called \emph{unique in law}, if for all martingale solutions \\ $\left(\Omega_j,\F_j,\Prob_j,W_j,\Filtration_j,u_j\right)$  with $u_j(0)=u_0,$ for $j=1,2,$ we have $\Prob_1^{u_1}=\Prob_2^{u_2}$ almost surely in $C([0,T],L^2(M)).$
	\end{enumerate}
	
\end{Definition}

We finish this section with the proof of Corollary $\ref{CorollaryManifold2d}.$

\begin{proof}[Proof of Corollary $\ref{CorollaryManifold2d}$]
	The existence of a martingale solution from Corollary $\ref{CorollaryManifold}$ and the pathwise uniqueness from Theorem  $\ref{Uniqueness2d}$ yield the assertion by \cite{OndrejatUniqueness}, Theorem 2 and 12.1.
\end{proof}

\appendix

\section{Auxilary Results from Functional Analysis}

%We start with the definition of the space of weakly continuous functions, which already occured in Theorem $\ref{MainTheorem}.$

In this appendix, we collect some abstract notions and results needed in Section $\ref{CompactnessSection}.$
For a  Banach space $X$ and $r>0,$ we denote
\begin{align*}
\mathbb{B}_X^r:=\left\{u\in X: \norm{u}_X\le r \right\}.
\end{align*}
The weak topology on $\mathbb{B}_X^r$ is metrizable if the dual $X^*$ is separable and a metric is given by
\begin{align*}
q(x_1,x_2)=\sum_{k=1}^\infty 2^{-k} \vert \duality{x_1-x_2}{x_k^*}\vert,\qquad x_1,x_2\in X,
\end{align*}
for a dense sequence $\left(x_k^*\right)_{k\in\N}\in \left(B_{X^*}^1\right)^\N,$ see \cite{Brezis}, Theorem 3.29. If $X$ is also separable, then $\stetigBallX$ is a complete separable metric space with metric
$\rho(u,v):=\sup_{t\in[0,T]} q(u(t),v(t))$
for $u,v\in \stetigBall.$

\begin{Definition}\label{DefinitionWeakTopologySpaces}
	%	Let $T>0$,
	%	\begin{enumerate}
	%		\item[a)]
	We define
	\begin{align*}
	C_w([0,T],X):=\left\{ u: [0,T]\to X: [0,T]\ni t\to \duality{u(t)}{x^*}\in \C \text{ is cont. for all $x^*\in X^*$} \right\}
	\end{align*}
	and equip $C_w([0,T],X)$ with the locally convex topology induced by the family $P$ of seminorms given by
	\begin{align*}
	P:= \{p_{x^*}: x^* \in X^*\},\qquad p_{x^*}(u):=\sup_{t\in[0,T]} \left\vert \duality{u(t)}{x^*}\right\vert.
	\end{align*}
	%		\item[b)]  and define
	%		\begin{align*}
	%		\stetigBallX:= \left\{ u\in C_w([0,T],X): \sup_{t\in [0,T]} \norm{u(t)}_X \le r \right\}.
	%		\end{align*}
	%	\end{enumerate}
\end{Definition}

We continue with some auxiliary results.

\begin{Lemma}\label{ConvergenceInStetigBall}
	Let $r>0$ and $u_n, u\in C_w([0,T],X)$ with $\sup_{t\in[0,T]} \norm{u_n(t)}_X\le r$ and $u_n\to u$ in $C_w([0,T],X).$ Then, we have $u_n\to u$ in $\stetigBallX.$
\end{Lemma}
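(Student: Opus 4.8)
The plan is to use the explicit metric $\rho$ on $\stetigBallX$ recorded just before the statement, so that convergence in $\stetigBallX$ amounts to showing $\rho(u_n,u)=\sup_{t\in[0,T]}q(u_n(t),u(t))\to 0$, where $q(x_1,x_2)=\sum_{k=1}^\infty 2^{-k}\vert\duality{x_1-x_2}{x_k^*}\vert$ for a fixed sequence $(x_k^*)_{k\in\N}$ dense in $\mathbb{B}_{X^*}^1$. Here $X$ and $X^*$ are tacitly assumed separable, as in the application $X=\EA$, so that this metric is available.

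First I would check that the limit belongs to $\stetigBallX$. The convergence $u_n\to u$ in $C_w([0,T],X)$ gives, for each fixed $t\in[0,T]$ and each $x^*\in X^*$, that $\duality{u_n(t)}{x^*}\to\duality{u(t)}{x^*}$, i.e. $u_n(t)\rightharpoonup u(t)$ weakly in $X$. By weak lower semicontinuity of the norm and the uniform bound $\sup_{t}\norm{u_n(t)}_X\le r$, we get $\norm{u(t)}_X\le\liminf_{n}\norm{u_n(t)}_X\le r$ for every $t$, so $u$ (and trivially each $u_n$) takes values in $\mathbb{B}_X^r$ and is weakly continuous, hence lies in $\stetigBallX$.

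The core estimate is a standard tail-splitting argument. Fix $\varepsilon>0$. Since $x_k^*\in\mathbb{B}_{X^*}^1$ and $\norm{u_n(t)-u(t)}_X\le 2r$ for all $t$ and $n$, each term satisfies $\vert\duality{u_n(t)-u(t)}{x_k^*}\vert\le 2r$. Hence I choose $N\in\N$ with $\sum_{k=N+1}^\infty 2^{-k}\cdot 2r<\tfrac{\varepsilon}{2}$, which controls the tail of $q$ uniformly in $t$ and $n$. For the remaining finite sum, the hypothesis $u_n\to u$ in $C_w([0,T],X)$ yields, for each $k\le N$, that $\sup_{t\in[0,T]}\vert\duality{u_n(t)-u(t)}{x_k^*}\vert=p_{x_k^*}(u_n-u)\to 0$ as $n\to\infty$; therefore there is $n_0$ such that $\sum_{k=1}^N 2^{-k}\sup_{t}\vert\duality{u_n(t)-u(t)}{x_k^*}\vert<\tfrac{\varepsilon}{2}$ for $n\ge n_0$. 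Combining the two bounds gives $\sup_{t\in[0,T]}q(u_n(t),u(t))<\varepsilon$ for $n\ge n_0$, i.e. $\rho(u_n,u)\to 0$, which is the desired convergence in $\stetigBallX$.

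I do not expect a genuine obstacle here; the only point requiring care is the uniform control of the series tail, which is exactly why the uniform bound $\sup_{t}\norm{u_n(t)}_X\le r$ is needed, since it lets me interchange the supremum over $t$ with the infinite sum defining $q$. The membership $u\in\stetigBallX$ is guaranteed by weak lower semicontinuity of the norm rather than being part of the hypotheses.
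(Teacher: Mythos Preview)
Your proof is correct and follows essentially the same route as the paper: both interchange the supremum over $t$ with the series defining $q$ and then use the uniform bound $\sup_t\norm{u_n(t)-u(t)}_X\le 2r$ to control the tail, the only cosmetic difference being that the paper packages the tail-splitting as an application of Lebesgue's dominated convergence theorem for the counting measure on $\N$. Your explicit verification that $u\in\stetigBallX$ via weak lower semicontinuity is in fact more careful than the paper, which tacitly uses $\sup_t\norm{u(t)}_X\le r$ in its dominating bound without justifying it.
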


\begin{proof}
	By Lebesgue's Convergence Theorem,
	\begin{align*}
	\rho(u_n,u)\le \sum_{k=1}^\infty 2^{-k} \sup_{t\in [0,T]}\vert \duality{u_n(t)-u(t)}{x_k^*}\vert\to 0,\qquad n\to \infty,
	\end{align*}
	where we used the definition of convergence in $C_w([0,T],X)$ for fixed $k\in\N$ and
	\begin{align*}
	\sup_{t\in [0,T]}\vert \duality{u_n(t)-u(t)}{x_k^*}\le \left(\sup_{t\in [0,T]} \norm{u_n(t)}_X+\sup_{t\in [0,T]} \norm{u(t)}_X\right) \norm{x_k^*}_{X^*}\le 2 r.
	\end{align*}
\end{proof}

\begin{Lemma}[Strauss]\label{StraussLemma}
	Let $X,Y$ be Banach spaces with $X \hookrightarrow Y$ and $T>0.$ Then, we have the inclusion
	\begin{align*}
	L^\infty(0,T;X)\cap C_w([0,T],Y) \subset C_w([0,T],X).
	\end{align*}
\end{Lemma}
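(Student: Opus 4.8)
The plan is to work with the weakly-$Y$-continuous representative $u$ (an element of $C_w([0,T],Y)$ is a genuine function $[0,T]\to Y$) which is in addition essentially bounded as an $X$-valued map, and to show it is in fact $X$-valued at \emph{every} $t$ and weakly continuous into $X$. Set $M:=\norm{u}_{L^\infty(0,T;X)}$. I would lean on two facts: (i) closed balls of $X$ are weakly sequentially compact, which holds because $X$ is reflexive -- the relevant case being the Hilbert space $X=E_A$ in the intended application; and (ii) the continuous embedding $j\colon X\hookrightarrow Y$ is injective and weak-to-weak continuous, so that any weak-$X$ limit is simultaneously a weak-$Y$ limit and is thereby uniquely pinned down by testing against $Y^*$.

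\emph{Step 1 (upgrading the essential bound to a pointwise bound).} The membership $u\in L^\infty(0,T;X)$ yields $\norm{u(t)}_X\le M$ only off a null set $N$. I would fix $t_0\in[0,T]$ and choose $t_n\to t_0$ with $t_n\notin N$, so that $\left(u(t_n)\right)_n$ is bounded by $M$ in $X$. By (i), a subsequence satisfies $u(t_{n_k})\rightharpoonup \ell$ in $X$ for some $\ell\in X$ with $\norm{\ell}_X\le M$ (weak lower semicontinuity of the norm). By (ii) the same subsequence converges weakly to $\ell$ in $Y$; but weak-$Y$ continuity of $u$ gives $u(t_{n_k})\rightharpoonup u(t_0)$ in $Y$, and since $Y^*$ separates points of $Y$ the two weak limits coincide, so $u(t_0)=\ell\in X$ with $\norm{u(t_0)}_X\le M$. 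Hence $u(t)\in X$ and $\norm{u(t)}_X\le M$ for \emph{every} $t\in[0,T]$.

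\emph{Step 2 (weak continuity into $X$).} With the pointwise bound in hand, I would fix $t_0$ and an arbitrary sequence $t_n\to t_0$; now $\left(u(t_n)\right)_n$ is genuinely bounded in $X$. Every subsequence has, by (i), a further weakly-$X$-convergent subsequence, and by the identification-through-$Y$ argument of Step 1 its limit must be $u(t_0)$. The standard subsequence principle (every subsequence admitting a sub-subsequence with the common limit $u(t_0)$) then forces $u(t_n)\rightharpoonup u(t_0)$ in $X$ along the full sequence, i.e. $\duality{u(t_n)}{x^*}\to\duality{u(t_0)}{x^*}$ for every $x^*\in X^*$. This is exactly the continuity of $t\mapsto \duality{u(t)}{x^*}$ for all $x^*\in X^*$, so $u\in C_w([0,T],X)$.

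The main obstacle, and the reason the inclusion is not immediate, is precisely the mismatch between the almost-everywhere nature of the $L^\infty(0,T;X)$ bound and the everywhere nature of weak continuity: one cannot test weak-$X$ convergence directly, so both the pointwise bound and the identification of weak limits must be routed through the weaker space $Y$, relying on injectivity of the embedding and weak compactness of balls in $X$. Once these two ingredients are in place, the remainder is a routine subsequence extraction.
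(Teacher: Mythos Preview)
Your argument is the standard one and is correct under the reflexivity assumption on $X$ that you make explicit; the paper does not give a proof at all but simply cites \cite{Temam77}, Chapter~3, Lemma~1.4, where precisely this argument is carried out (also under a reflexivity hypothesis). So there is nothing to compare at the level of technique: you have supplied the proof the paper outsources.

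One remark worth recording: the lemma as \emph{stated} in the paper omits the reflexivity assumption on $X$, but your proof genuinely needs it (for weak sequential compactness of bounded sets), and without it the inclusion can fail. You are right that in every use the paper makes of the lemma one has $X=E_A$, a Hilbert space, so the gap is harmless in context. Your explicit flagging of this hypothesis is an improvement over the paper's formulation.
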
	

\begin{proof}
	See \cite{Temam77}, Chapter 3, Lemma 1.4.
\end{proof}

\begin{Lemma}[Lions]\label{LionsLemma}
	Let $X,X_0,X_1$ be Banach spaces with $X_0 \hookrightarrow X \hookrightarrow X_1$ where the first embedding is compact. Assume furthermore that $X_0, X_1$ are reflexive and $p\in [1,\infty).$ Then, for each $\varepsilon>0$ there is $C_\varepsilon>0$ with
	\begin{align*}
	\Vert x \Vert_X^p \le \varepsilon \Vert x\Vert_{X_0}^p+C_\varepsilon \Vert x\Vert_{X_1}^p,\quad x\in X_0.
	\end{align*}
\end{Lemma}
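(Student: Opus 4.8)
The plan is to prove this by contradiction, following the classical Ehrling argument; the $p$-th power version can be obtained in one stroke, without first treating $p=1$ separately. Suppose the claimed inequality fails. Then there is some $\varepsilon_0>0$ such that for every $n\in\N$ one finds $x_n\in X_0$ with
\begin{align*}
\norm{x_n}_X^p > \varepsilon_0 \norm{x_n}_{X_0}^p + n \norm{x_n}_{X_1}^p.
\end{align*}
Since all three norms are positively homogeneous, I may rescale and assume $\norm{x_n}_X=1$ for all $n$ (note $x_n\neq 0$). The displayed inequality then forces $\norm{x_n}_{X_0}^p < \varepsilon_0^{-1}$ and $\norm{x_n}_{X_1}^p < n^{-1}$, so that $(x_n)_{n\in\N}$ is bounded in $X_0$ while $\norm{x_n}_{X_1}\to 0$.

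Next I would exploit the two structural hypotheses. Because $X_0$ is reflexive and $(x_n)_{n\in\N}$ is bounded in $X_0$, the Banach--Alaoglu theorem yields a subsequence $(x_{n_k})_{k\in\N}$ and an element $x\in X_0$ with $x_{n_k}\rightharpoonup x$ weakly in $X_0$. The crucial step is to upgrade this to strong convergence in $X$: since the embedding $X_0\hookrightarrow X$ is compact, it maps bounded weakly convergent sequences to strongly convergent ones, whence $x_{n_k}\to x$ in $X$. In particular $\norm{x}_X=\lim_k \norm{x_{n_k}}_X=1$, so $x\neq 0$.

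Finally I would derive the contradiction from the continuous embedding $X\hookrightarrow X_1$: strong convergence $x_{n_k}\to x$ in $X$ implies $x_{n_k}\to x$ in $X_1$, hence $\norm{x}_{X_1}=\lim_k\norm{x_{n_k}}_{X_1}=0$, forcing $x=0$ by definiteness of the $X_1$-norm. This contradicts $x\neq 0$ and proves the lemma. The main obstacle is precisely the passage from weak convergence in $X_0$ to strong convergence in $X$; everything else is bookkeeping. I note that reflexivity of $X_0$ is what secures the weak subsequence, while the compactness of $X_0\hookrightarrow X$ does the real work, and the reflexivity of $X_1$ is in fact not needed for this argument. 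An alternative, non-contradiction route is to first establish the case $p=1$ by the same compactness argument and then recover general $p$ by applying it with a small parameter $\delta$ and using $(a+b)^p\le 2^{p-1}(a^p+b^p)$ with the choice $\delta=(\varepsilon/2^{p-1})^{1/p}$.
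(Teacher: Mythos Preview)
Your argument is correct and is the standard Ehrling-type contradiction proof. The paper itself does not give a proof but simply cites \cite{Lions}, p.~58, so there is nothing to compare at the level of strategy.

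One minor remark: you can in fact dispense with reflexivity of $X_0$ as well. Since $(x_n)$ is bounded in $X_0$ and the embedding $X_0\hookrightarrow X$ is compact, one directly obtains a subsequence $(x_{n_k})$ converging strongly in $X$ to some $x\in X$, without first passing through weak convergence in $X_0$. The rest of your argument then goes through unchanged. So neither reflexivity hypothesis is actually used; they are presumably stated for consistency with the formulation in \cite{Lions}.
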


\begin{proof}
	See \cite{Lions}, p. 58.
\end{proof}

\section{Sobolev Spaces on manifolds and Strichartz estimates}

In the Sections $\ref{ExampleSection}$ and $\ref{UniquenessSection},$ we need some results about Sobolev spaces on manifolds and their connection with the fractional domains of the Laplace-Beltrami operator. In this appendix, we recall the basic definitions and Sobolev embeddings. Moreover, we state the deterministic and stochastic Strichartz estimates for the Schr\"odinger group $\left(e^{\im t\Delta_g}\right)_{t\in\R}.$

Let $(M,g)$ be a $d$-dimensional riemannian manifold without boundary with
\begin{align}\label{ManifoldAssumptionAppendix}
\text{$M$ is complete, has a positive injectivity radius and a bounded geometry.}
\end{align}
We equip $M$ with the canonical volume $\mu$ and suppose that $M$ satisfies the doubling property: For all $x\in \tilde{M}$ and $r>0,$ we have
$\mu(B(x,r))<\infty$ and
\begin{align}\label{doublingAppendix}
\mu(B(x,2r))\lesssim \mu(B(x,r)).
\end{align}	
\begin{Definition}\label{DefinitionSobolevManifold}\begin{enumerate}
		\item[a)] Let $s\ge 0,$ $p\in (1,\infty),$ $\mathcal{A}:=\left(U_i,\kappa_i\right)_{i\in I}$ be an atlas of $M$ and $\left(\Psi_i\right)_{i\in I}$ a partition of unity subordinate to $\mathcal{A}.$ Then, we define the fractional Sobolev spaces $H^{s,p}(M)$ by
		\begin{align*}
		H^{s,p}(M):=\left\{f\in L^p(M): \norm{f}_{H^{s,p}(M)}:=\left(\sum_{i\in I} \norm{(\Psi_i f)\circ \kappa_i^{-1}}_{H^{s,p}(\Rd)}^p\right)^\frac{1}{p}<\infty\right\},
		\end{align*}
		where $H^{s,p}(\Rd)$ is the Sobolev space on $\Rd.$ For $p=2,$ we write
		$H^s(M):=H^{s,2}(M).$
		\item[b)] For $p\in[1,\infty),$ we define $W^{1,p}(M)$ as the completion of $C_c^\infty(M)$ in the norm
		\begin{align*}
		\norm{f}_{W^{1,p}(M)}:=\norm{f}_{L^p(M)}+\norm{\nabla f}_{L^p(M)},\qquad f\in C_c^\infty(M).
		\end{align*}
	\end{enumerate}
	
\end{Definition}

Note that in b), $\nabla f$ is an element of the tangential bundle of $M.$ We refer to \cite{lablee2015spectral} for further details.
A useful characterization of fractional Sobolev spaces is in terms of the fractional powers of the Laplace-Beltrami operator.
By Strichartz, \cite{Strichartz} Theorem 3.5, the restriction of $\left(e^{t\Delta_g}\right)_{t\ge 0}$
to $L^2(M)\cap L^p(M)$ extends to a strongly continuous semigroup on $L^p(M).$ We fix  $p\in (1,\infty)$ and $s>0.$ The generator $\left(\Delta_{g,p},\D(\Delta_{g,p})\right)$ is called the Laplace-Beltrami operator on $L^p(M).$ The negative fractional powers of $I-\Delta_{g,p}$ are defined by
\begin{align*}
\D((I-\Delta_{g,p})^{-\alpha}):=&\left\{f\in L^p(M):  \int_0^{\infty} t^{\alpha-1}e^{-t}e^{t\Delta_{g,p}}f \df t \quad \text{exists}\right\}\\
(I-\Delta_{g,p})^{-\alpha} f:=& \frac{1}{\Gamma(\alpha)}\int_0^{\infty} t^{\alpha-1}e^{-t}e^{t\Delta_{g,p}}f \df t
\end{align*}
for $\alpha>0.$
Note that in the case $p=2$ this coincides with the definition via the functional calculus because of the identity $\frac{1}{\Gamma(\alpha)}\int_0^{\infty} t^{\alpha-1}e^{-\lambda t}f \df t =\lambda^{-\alpha}$ for $\lambda >0.$\\
In the following Proposition, we list characterizations and embedding properties of the Sobolev spaces from Definition $\ref{DefinitionSobolevManifold}.$

\begin{Prop}\label{PropertiesFractionalSobolev}
	Let $\left(M,g\right)$ be a $d$-dimensional Riemannian manifold that satisfies \eqref{ManifoldAssumptionAppendix}. Let $s\ge 0$ and $p\in (1,\infty).$
	\begin{enumerate}
		\item[a)] We have $H^{s,p}(M)=R((I-\Delta_{g,p})^{-\frac{s}{2}}
		)$ with $\norm{f}_{H^{s,p}}\eqsim \norm{v}_{L^p}$ for $f=(I-\Delta_{g,p})^{-\frac{s}{2}} v.$\\
		Furthermore, we have $H^{1,p}(M)=W^{1,p}(M).$
		\item[b)] For $s>\frac{d}{p},$ we have $H^{s,p}(M)\hookrightarrow \LInfty.$
		%		\begin{align*}
		%		H^{s,p}(M)\hookrightarrow \LInfty, \quad \text{if $s>\frac{d}{p}$},
		%		\end{align*}
		%		\item[c)] Define $q_{\max}(d,s):=\frac{2 d}{(d-2s)_+}.$ Then $H^{s}(M)\hookrightarrow L^q(M),$ if $q\in [2,q_{\max})$ or $d>2s$ and $q=q_{\max}.$
		%		\item[d)] The embedding $H^1(M)\hookrightarrow L^q(M)$ is compact for $q\in [2,q_{\max}(d,1)).$
		%				\begin{align*}
		%				H^{s}(M)\hookrightarrow L^q(M), \quad \text{if $q\in \left[2,\frac{2 d}{(d-2s)_+}\right)$}.
		%				\end{align*}
		\item[c)] Let $s\ge 0$ and $p\in (1,\infty).$ Suppose $p\in [2,\frac{2d}{(d-2s)_+})$ or $p=\frac{2d}{d-2s}$ if $s<\frac{d}{2}.$
		%		\begin{align}\label{SobolevIndices}
		%		s-\frac{d}{2}>-\frac{d}{p} \qquad \text{or} \qquad s-\frac{d}{2}=-\frac{d}{p},\qquad d>2s.
		%		\end{align}
		Then, the embedding $H^{s}(M)\hookrightarrow L^{p}(M)$ is continuous.
		If $M$ is compact and we have $0<s\le 1$ as well as $p\in [1,\frac{2d}{(d-2s)_+})$, the embedding $H^{s}(M)\hookrightarrow L^{p}(M)$ is compact.
		\item[d)] For $s,s_0,s_1\ge 0$ and $p,p_0,p_1\in(1,\infty)$ and $\theta\in(0,1)$ with
		\begin{align*}
		s=(1-\theta)s_0+\theta s_1,\qquad \frac{1}{p}=\frac{1-\theta}{p_0}+\frac{\theta}{p_1},
		\end{align*}
		we have
		$\left[H^{s_0,p_0}(M),H^{s_1,p_1}(M)\right]_\theta=H^{s,p}(M).$
	\end{enumerate}
\end{Prop}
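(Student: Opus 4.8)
The plan is to reduce all four assertions to known facts about the Bessel potential spaces $H^{s,p}(\Rd)$ on Euclidean space and to transfer them to $M$ by means of the uniformly locally finite atlas and subordinate partition of unity supplied by the bounded geometry assumption \eqref{ManifoldAssumptionAppendix}. The decisive structural input is Triebel's theory of function spaces on manifolds of bounded geometry (see \cite{Triebel}, Chapter 7): it guarantees that the chart-based definition of $H^{s,p}(M)$ in Definition \ref{DefinitionSobolevManifold} is independent of the atlas and the subordinate partition of unity, and that the localize-and-pullback map $f\mapsto ((\Psi_i f)\circ\kappa_i^{-1})_{i\in I}$ together with a re-summation map forms a coretraction--retraction pair between $H^{s,p}(M)$ and the sequence space $\ell^p(I;H^{s,p}(\Rd))$. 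Bounded geometry is precisely what yields uniform $C^\infty$-bounds on the transition maps and the uniform local finiteness of the cover, so that these two operators are bounded with constants independent of $i$.

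For part a), the identification $H^{s,p}(M)=R((I-\Delta_{g,p})^{-s/2})$ with the equivalence $\norm{(I-\Delta_{g,p})^{s/2}f}_{L^p}\eqsim\norm{f}_{H^{s,p}}$ asserts that the chart-based spaces coincide with the Bessel potential spaces built from the functional calculus of $\Delta_{g,p}$. I would invoke that on a manifold of bounded geometry the operator $I-\Delta_{g,p}$ admits bounded imaginary powers (equivalently, a bounded $H^\infty$-calculus) on $L^p(M)$, so that its fractional power domains form a complex interpolation scale; combined with Triebel's identification at integer orders this gives the claim for all $s\ge 0$. The supplementary identity $H^{1,p}(M)=W^{1,p}(M)$ is the $L^p$-boundedness of the Riesz transform $\nabla(I-\Delta_{g,p})^{-1/2}$ together with its inverse, which again holds under bounded geometry.

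For parts b) and c), I would push the classical Euclidean Sobolev embeddings through the retraction of part a). Since $H^{s,p}(\Rd)\hookrightarrow L^\infty(\Rd)$ for $s>d/p$ and $H^{s}(\Rd)\hookrightarrow L^p(\Rd)$ in the stated subcritical and critical ranges $1/p\ge 1/2-s/d$, localizing by the $\Psi_i$ and summing over the uniformly finite cover yields the corresponding continuous embeddings on $M$; the uniform local finiteness prevents the constants from degenerating, and the lower bound $p\ge 2$ reflects the failure of $L^2(M)\hookrightarrow L^p(M)$ for $p<2$ on infinite measure. For the compactness statement I would use that a compact $M$ carries a finite atlas, reduce to finitely many bounded Euclidean charts, and invoke the Rellich--Kondrachov theorem, i.e. the compact embedding $H^{s}(U)\hookrightarrow L^p(U)$ on bounded domains $U\subset\Rd$ in the strictly subcritical range $p<2d/(d-2s)_+$; on a finite-measure manifold this allows $p\ge 1$.

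Part d) is then a formal consequence of the coretraction--retraction structure: complex interpolation commutes with retractions (see \cite{TriebelInterpolationTheory}), and the Euclidean identity $[H^{s_0,p_0}(\Rd),H^{s_1,p_1}(\Rd)]_\theta=H^{s,p}(\Rd)$ lifts first to the sequence spaces $\ell^p(I;H^{s,p}(\Rd))$ and hence to $M$. The main obstacle is part a): proving that the chart-based norm is equivalent to the fractional-power norm genuinely requires the bounded $H^\infty$-calculus (or bounded imaginary powers) of the Laplace--Beltrami operator on $L^p(M)$, which is the one ingredient that does not follow from a soft transfer of a Euclidean fact but rests on the heat-kernel and geometry estimates for $M$. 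Everything else is a routine localization argument once the retraction of part a) is in place.
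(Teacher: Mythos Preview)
Your outline is sound and aligns closely with the paper's own treatment, which in fact consists almost entirely of references: Triebel \cite{Triebel}, Theorem~7.4.5 for part a) and Section~7.4.5, Remark~2 for part d); Bolleyer \cite{Bolleyer}, Theorem~III.1.2.d1) for part b) and the continuous embedding in c); and for the compact embedding in c) exactly your argument---a finite atlas on compact $M$, reduction to a bounded Euclidean domain $\mathcal{O}$, and Rellich--Kondrachov there (the paper cites \cite{HitchhikersGuideSobolev}). So for b), c), d) you and the paper do the same thing.

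The one genuine methodological difference is part a). The paper simply invokes Triebel's Theorem~7.4.5, which establishes the identification $H^{s,p}(M)=R((I-\Delta_{g,p})^{-s/2})$ directly from the chart-based definition. Your proposed route---bounded imaginary powers (or $H^\infty$-calculus) of $I-\Delta_{g,p}$ on $L^p(M)$, then complex interpolation between integer orders---is a valid and somewhat more modern alternative, but it is heavier: it imports spectral-multiplier machinery (heat-kernel bounds, H\"ormander-type theorems) that Triebel's direct argument does not need. Both approaches ultimately rest on the bounded-geometry hypothesis, and both yield the same result; yours makes the analytic input (calculus of $\Delta_g$) explicit, whereas Triebel's keeps it packaged inside the chart-based machinery. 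Your identification of a) as the substantive step is correct either way.
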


\begin{proof}
	\emph{ad a):} See  \cite{Triebel}, Theorem 7.4.5. We remark that in the reference, $H^{s,p}$ is defined via the range identity from the Proposition and the identity from Definition \ref{DefinitionSobolevManifold} is proved. \\
	\emph{ad b):}  See \cite{Bolleyer}, Theorem III.1.2. d1). \\
	\emph{ad c):}
	%	This is an immediate consequence of the continuous embedding in $\Rd$ and the definition via charts,
	For the first assertion, we refer to \cite{Bolleyer}, Theorem III.1.2. d1).
	If $M$ is  compact, we can choose a finite collection of charts and a finite partition of unity. Hence
	\begin{align}\label{chartsCompactnessSobolev}
	\norm{f}_{H^{s}(M)}:=\Big(\sum_{i=1}^N \norm{(\Psi_i f)\circ \varphi_i^{-1}}_{H^{s}(\Rd)}^2\Big)^\frac{1}{2}=\Big(\sum_{i=1}^N \norm{(\Psi_i f)\circ \varphi_i^{-1}}_{H^{s}(\mathcal{O})}^2\Big)^\frac{1}{2}
	\end{align}
	for a sufficiently large smooth bounded domain $\mathcal{O}\subset \Rd.$
	%	The assertion is a consequence of the compactness of $H^{s}(\mathcal{O})\hookrightarrow L^p(\mathcal{O}).$
	By \cite{HitchhikersGuideSobolev}, Corollary 7.2 and Theorem 8.2, the embedding
	$H^s(\mathcal{O})\hookrightarrow L^p(\mathcal{O})$
	is compact for $s\in (0,1)$ with $s<\frac{d}{2}$ and $p\in[1,\frac{2d}{d-2s}).$ Note that in the reference, the result is proved in terms of the Slobodetski space $W^{s,2}(\mathcal{O}),$ but we can use the identity $W^{s,2}(\mathcal{O})=H^s(\mathcal{O}).$ The embedding result combined with \eqref{chartsCompactnessSobolev} yields the assertion.\\	
	%	For the compact embedding, we refer to
	%	 Note that the author considers the more general framework of Triebel-Lizorkin spaces $F_{p,q}^s(X)$ on spaces $X$ of homogeneous type with Ahlfors regularity. The embedding in the case of fractional Sobolev spaces on compact manifolds is a consequence of the identity
	%	$H^{s,p}(M)=F_{p,2}^s(M)$ for $s\in\R$ and $1<p<\infty,$ see \cite{Triebel}, Section 7.4.5.\\
	\emph{ad d):} See \cite{Triebel}, Section 7.4.5, Remark 2.
\end{proof}

In the next Lemma, we recall the  deterministic homogeneous Strichartz estimate due to Bernicot and Samoyeau, see \cite{Bernicot}, Corollary 6.2.

\begin{Lemma}\label{homogenousStrichartzLemma}
	Let $\varepsilon>0,$ $T>0$ and $2<p<\infty,$ $2<q\le \infty$ with $\frac{2}{q}+\frac{d}{p}=\frac{d}{2}.$
	%	\begin{align*}
	%		\frac{2}{q}+\frac{d}{p}=\frac{d}{2}.
	%%		,\qquad (q,p,d)\neq (2,\infty,2).
	%	\end{align*}
	Then,
	\begin{align}\label{homogenousStrichartzEstimateOriginal}
	\norm{e^{\im  t\Delta_g}x}_{L^q(0,T;L^p(M))}\lesssim_{T,\varepsilon} \norm{x}_{H^{\frac{1+\varepsilon}{q}}(M)},\qquad x\in H^{\frac{1+\varepsilon}{q}}(M).
	\end{align}
\end{Lemma}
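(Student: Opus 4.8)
The plan is to deduce the homogeneous Strichartz estimate \eqref{homogenousStrichartzEstimateOriginal} as a direct corollary of the dispersive/Strichartz machinery of Bernicot and Samoyeau, adapted to the present geometric framework. Since the statement is attributed to \cite{Bernicot}, Corollary 6.2, the task is essentially to verify that the hypotheses under which their result holds are met by our manifold $(M,g)$, namely that $M$ is complete with positive injectivity radius, bounded geometry, and satisfies the doubling property \eqref{doublingAppendix}, and then to translate their conclusion into the scale of fractional Sobolev spaces $H^{s,p}(M)$ defined via charts in Definition \ref{DefinitionSobolevManifold}.

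First I would recall that under \eqref{ManifoldAssumptionAppendix} and \eqref{doublingAppendix}, the heat semigroup $\left(e^{t\Delta_g}\right)_{t\ge 0}$ satisfies Gaussian upper bounds, so that the abstract dispersive framework of \cite{Bernicot} applies to $A=-\Delta_g$. Their analysis yields, for the admissible Schr\"odinger pair $(q,p)$ with $\frac{2}{q}+\frac{d}{p}=\frac{d}{2}$ and $2<p<\infty$, $2<q\le\infty$, an estimate of the form
\begin{align*}
\norm{e^{\im t\Delta_g}x}_{L^q(0,T;L^p(M))}\lesssim_{T,\varepsilon}\norm{x}_{H^{\frac{1+\varepsilon}{q}}(M)},
\end{align*}
where the small loss $\varepsilon>0$ in the regularity exponent $\frac{1+\varepsilon}{q}$ compensates for the failure of the global (scale-invariant) dispersive estimate on a general manifold; on $\R^d$ one could take $\varepsilon=0$, but on a manifold of bounded geometry one only has a dispersive estimate valid on a bounded time interval, which forces both the restriction to $[0,T]$ and the extra derivative. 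The key point I would make explicit is that the characterization $H^{s}(M)=\D((-\Delta_g)^{s/2})$ (up to equivalent norms, via Proposition \ref{PropertiesFractionalSobolev} a) and $(1+\lambda)^s\eqsim_s 1+\lambda^s$) identifies the Sobolev regularity appearing in \cite{Bernicot} with the chart-based spaces $H^{s,p}(M)$ used throughout the paper.

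The main obstacle, and the only genuinely nontrivial verification, is confirming that the truncated dispersive estimate and the attendant $TT^*$-argument of \cite{Bernicot} indeed go through for our class of manifolds with the stated loss, i.e. that the geometric hypotheses \eqref{ManifoldAssumptionAppendix}--\eqref{doublingAppendix} are precisely those assumed there. I would therefore structure the proof as a short reduction: cite the Gaussian heat-kernel bounds following from bounded geometry and doubling, invoke \cite{Bernicot}, Corollary 6.2, for the admissible exponent pair, and then rewrite the resulting Sobolev norm in terms of $H^{\frac{1+\varepsilon}{q}}(M)$ using Proposition \ref{PropertiesFractionalSobolev}. No lengthy computation is needed; the content lies in matching hypotheses and in the bookkeeping of the regularity index, so I would keep the argument to a few lines and defer all kernel estimates to the cited references.
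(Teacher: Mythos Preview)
Your proposal is correct and matches the paper's approach: the paper does not give an independent proof of this lemma but simply records it as a restatement of \cite{Bernicot}, Corollary 6.2, under the standing geometric hypotheses \eqref{ManifoldAssumptionAppendix}--\eqref{doublingAppendix}. Your plan to verify that these hypotheses place $(M,g)$ within the scope of Bernicot--Samoyeau and then read off the estimate in the $H^{\frac{1+\varepsilon}{q}}(M)$ scale via Proposition \ref{PropertiesFractionalSobolev} is exactly the intended argument.
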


%\begin{proof}
%	Theorem 1 in \cite{Burq}
%\end{proof}
%\begin{proof}
%	Under Assumption i), this is Theorem 1 in \cite{Burq}, under ii) Theorem 1.1. in \cite{BlairStrichartzPolygon}.
%\end{proof}

We remark that in the special case of compact $M,$ Burq, G\'erard and Tzvetkov proved \eqref{homogenousStrichartzEstimateOriginal} even for $\varepsilon=0.$ But for our application in Section \ref{UniquenessSection}, this is not needed, such that we can prove uniqueness on non-compact manifolds with $d=2$ and
\eqref{ManifoldAssumptionAppendix}.

From Lemma $\ref{homogenousStrichartzEstimateOriginal},$ one can deduce the following Strichartz estimates for the stochastic and deterministic convolutions in fractional Sobolev spaces. Note that we choose the probability space $\Omega$ and the $Y$-valued Wiener process $W$ as in Assumption $\ref{stochasticAssumptions}$.

\begin{Lemma}\label{StrichartzLemma}
	In the situation of Lemma  $\ref{homogenousStrichartzLemma},$ we take $s\in[\frac{1+\varepsilon}{q},1]$ and $r\in (1,\infty).$
	\begin{enumerate}\item[a)] We have the homogeneous Strichartz estimate
		\begin{align}\label{homogenousStrichartzEstimate-1}
		\norm{e^{\im  t\Delta_g}x}_{L^q(0,T;H^{s-\frac{1+\varepsilon}{q},p}(M))}\lesssim_{T,\varepsilon} \norm{x}_{H^{s}(M)}
		\end{align}
		for $x\in H^{s}(M)$ and the inhomogeneous Strichartz estimate
		\begin{align}\label{inhomogenousStrichartzEstimate}
		\bigNorm{\int_0^\cdot e^{\im  (\cdot-\tau)\Delta_g}f(\tau)\df \tau}_{L^q(0,T;H^{s-\frac{1+\varepsilon}{q},p}(M))}\lesssim_{T,\varepsilon} \norm{f}_{L^1(0,T;H^{s}(M))}
		\end{align}
		for $f\in L^1(0,T;H^{s}(M)).$
		\item[b)] We have the stochastic Strichartz estimate
		\begin{align}\label{stochasticStrichartzEstimate}
		\bigNorm{\int_0^\cdot e^{\im  (\cdot-\tau)\Delta_g}B(\tau)\df W(\tau)}_{L^r(\Omega,L^q( 0,T;H^{s-\frac{1+\varepsilon}{q},p}(M)))}\lesssim_{T,\varepsilon} \norm{B}_{L^r(\Omega;L^2(0,T;\HS(Y,H^{s}(M))}
		\end{align}
		for all adapted processes
		%		$B: \Omega \times [0,T]\to  \HS(Y,H^{s}(M))$ with
		in $B\in L^r(\Omega;L^2(0,T;\HS(Y,H^{s}(M)).$
	\end{enumerate}
\end{Lemma}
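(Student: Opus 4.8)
The final statement to prove is Lemma~\ref{StrichartzLemma}, which upgrades the basic homogeneous Strichartz estimate \eqref{homogenousStrichartzEstimateOriginal} of Bernicot--Samoyeau into three estimates: a homogeneous estimate at regularity $s$, an inhomogeneous (deterministic convolution) estimate, and a stochastic convolution estimate.

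\textbf{Overall approach.} The plan is to bootstrap everything from the single input \eqref{homogenousStrichartzEstimateOriginal}. The key structural fact I would exploit is that the Schr\"odinger group $\left(e^{\im t\Delta_g}\right)_{t\in\R}$ commutes with the fractional powers $(I-\Delta_g)^{s/2}$, since both are defined by the functional calculus of the self-adjoint operator $-\Delta_g$. By Proposition~\ref{PropertiesFractionalSobolev}~a) the norm $\norm{\cdot}_{H^{s,p}}$ is equivalent to $\norm{(I-\Delta_g)^{s/2}\cdot}_{L^p}$, so applying $(I-\Delta_g)^{s/2}$ converts a statement at regularity $s-\frac{1+\varepsilon}{q}$ into one at regularity $-\frac{1+\varepsilon}{q}$, which is exactly what \eqref{homogenousStrichartzEstimateOriginal} controls.

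\textbf{Part a).} For \eqref{homogenousStrichartzEstimate-1} I would write, using the commutation just described,
\begin{align*}
\norm{e^{\im t\Delta_g}x}_{L^q(0,T;H^{s-\frac{1+\varepsilon}{q},p})}\eqsim \norm{(I-\Delta_g)^{(s-\frac{1+\varepsilon}{q})/2}e^{\im t\Delta_g}x}_{L^q(0,T;L^p)}=\norm{e^{\im t\Delta_g}\big[(I-\Delta_g)^{(s-\frac{1+\varepsilon}{q})/2}x\big]}_{L^q(0,T;L^p)},
\end{align*}
and then apply \eqref{homogenousStrichartzEstimateOriginal} with $x$ replaced by $(I-\Delta_g)^{(s-\frac{1+\varepsilon}{q})/2}x$; the right-hand side becomes $\norm{(I-\Delta_g)^{(s-\frac{1+\varepsilon}{q})/2}x}_{H^{\frac{1+\varepsilon}{q}}}\eqsim\norm{x}_{H^s}$ by the composition law for fractional powers. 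For the inhomogeneous estimate \eqref{inhomogenousStrichartzEstimate} the plan is a Minkowski-type argument: commute $(I-\Delta_g)^{(s-\frac{1+\varepsilon}{q})/2}$ inside the time integral, freeze $\tau$, and estimate for each fixed $\tau$ the group evolution $e^{\im(\cdot-\tau)\Delta_g}f(\tau)$ on $[\tau,T]$ by the already-proved homogeneous estimate \eqref{homogenousStrichartzEstimate-1}, using that the $L^q(0,T)$ norm of the convolution is bounded by the $L^1(0,T)$ integral (in $\tau$) of the $L^q$-norms of the individual evolutions. This is the standard Minkowski/Christ--Kiselev-free route, valid because the outer integrability exponent on the source is $1$.

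\textbf{Part b), the main obstacle.} The stochastic estimate \eqref{stochasticStrichartzEstimate} is the heart of the matter and I expect it to be the hardest step, since the deterministic Minkowski trick fails for It\^o integrals (one cannot pull the $L^r(\Omega)$ and the martingale structure through a pointwise-in-$\tau$ splitting). The plan is to invoke the abstract stochastic Strichartz machinery of Brze\'zniak--Millet from \cite{BrzezniakStrichartz}, which shows that whenever the deterministic homogeneous Strichartz estimate \eqref{homogenousStrichartzEstimateOriginal} holds for a group, the corresponding stochastic convolution satisfies \eqref{stochasticStrichartzEstimate}; the proof there rests on the Burkholder--Davis--Gundy inequality in the UMD/martingale-type-$2$ Banach space $H^{s-\frac{1+\varepsilon}{q},p}(M)$ together with factorization and $\gamma$-radonifying norm estimates. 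Concretely I would again commute $(I-\Delta_g)^{(s-\frac{1+\varepsilon}{q})/2}$ through the stochastic integral and the group, reducing to the case $s-\frac{1+\varepsilon}{q}$ replaced by $-\frac{1+\varepsilon}{q}$, then cite the stochastic Strichartz inequality from \cite{BrzezniakStrichartz} applied to the shifted coefficient $(I-\Delta_g)^{(s-\frac{1+\varepsilon}{q})/2}B$, whose Hilbert--Schmidt norm into $H^{\frac{1+\varepsilon}{q}}$ equals the $\HS(Y,H^s)$-norm of $B$ up to equivalence. The verification that the function spaces $H^{s-\frac{1+\varepsilon}{q},p}(M)$ on a manifold of bounded geometry meet the Banach-space hypotheses (type $2$, i.e. the $\gamma$-radonifying bounds) needed by the Brze\'zniak--Millet framework is the technical point I would need to check carefully, but it follows since $L^p$-based Sobolev spaces on such manifolds are isomorphic to closed subspaces of $L^p$ and hence have type $2$ for $p\ge 2$.
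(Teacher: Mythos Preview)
Your proposal is correct and follows essentially the same route as the paper: commute $(I-\Delta_g)^{(s-\frac{1+\varepsilon}{q})/2}$ with the group to reduce \eqref{homogenousStrichartzEstimate-1} to \eqref{homogenousStrichartzEstimateOriginal}, derive \eqref{inhomogenousStrichartzEstimate} from the homogeneous bound by a Minkowski-in-$\tau$ argument, and obtain \eqref{stochasticStrichartzEstimate} by invoking the stochastic Strichartz estimate of \cite{BrzezniakStrichartz} (their Theorem~3.10) and shifting regularity in the same way. Your extra remark on the type-$2$ property of $H^{s-\frac{1+\varepsilon}{q},p}(M)$ is a legitimate check that the paper leaves implicit in the citation.
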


\begin{proof}
	Proposition $\ref{PropertiesFractionalSobolev}$ a) and Lemma  $\ref{homogenousStrichartzLemma}$ yield
	\begin{align}\label{homogenousStrichartzEstimate-2}
	\norm{e^{\im  t\Delta_g}x}_{L^q(0,T;H^{s-\frac{1+\varepsilon}{q},p}(M))}
	&\eqsim\norm{ (1-\Delta_g)^{\frac{s}{2}-\frac{1+\varepsilon}{2q}} e^{\im  t\Delta_g}x}_{L^q(0,T;L^p(M))}\nonumber\\
	&=\norm{  e^{\im  t\Delta_g} (1-\Delta_g)^{\frac{s}{2}-\frac{1+\varepsilon}{2q}} x}_{L^q(0,T;L^p(M))}\nonumber\\
	&\lesssim_{T,\varepsilon} \norm{(1-\Delta_g)^{\frac{s}{2}-\frac{1+\varepsilon}{2q}}x }_{H^{\frac{1+\varepsilon}{q}}(M)}\eqsim\norm{x}_{H^s(M)}.
	\end{align}
	%\textbf{	Let $M$ be a riemannian manifold. }
	From \eqref{homogenousStrichartzEstimate-2}, we get
	\begin{align}\label{inhomogenousStrichartzEstimateZero}
	\bigNorm{\int_0^\cdot e^{\im  (\cdot-\tau)\Delta_g}f(\tau)\df \tau}_{L^q(0,T;H^{s-\frac{1+\varepsilon}{q},p}(M))}\lesssim_{T,\varepsilon} \bigNorm{\int_0^\cdot e^{-\im \tau\Delta_g}f(\tau)\df \tau}_{\Hs}
	\lesssim \norm{f}_{L^1(0,T;\Hs)}
	\end{align}
	and  Theorem 3.10 in \cite{BrzezniakStrichartz} implies
	\begin{align}\label{stochasticStrichartzEstimatezero}
	\bigNorm{\int_0^\cdot e^{\im  (\cdot-\tau)\Delta_g}B(\tau)\df W(\tau)}_{L^r(\Omega,L^q(0,T;L^p(M)))}\lesssim_{T,\varepsilon} \norm{B}_{L^r(\Omega;L^2(0,T;\HS(Y,H^{\frac{1+\varepsilon}{q}}(M))}.
	\end{align}
	%	and the embedding $H^{s-\frac{1}{q},p}(M)\hookrightarrow W^{s-\frac{1}{q},p}(M)$ for $p\ge 2$ implies
	%	\begin{align*}
	%		\norm{e^{\im  t\Delta_g}x}_{L^q(0,T;W^{s-\frac{1}{q},p}(M))}\lesssim_{T,\varepsilon} \norm{x}_{H^s(M)}.
	%	\end{align*}
	With the same procedure as in $\eqref{homogenousStrichartzEstimate-2},$ one can deduce the estimate  $\eqref{stochasticStrichartzEstimate}.$ 	
\end{proof}
%\coma{I think we need to be consistent in the use of full first names in the bibliography. Either all or none.}

\section*{Acknowledgement}

We thank the anonymous referees for many valuable comments. Moreover, we thank Peer Kunstmann and Dorothee Frey for their help in the context of analysis on manifolds. Finally, we  thank Nimit Rana for careful reading of the manuscript.

%\input{ContinuousDependance}
%\bibliographystyle{alpha}
%\bibliography{ReferencesAll-18Nov2017}

\Addresses
\end{document}